 \newcommand\redsout{\bgroup\markoverwith{\textcolor{red}{\rule[0.5ex]{2pt}{0.8pt}}}\ULon}
\newcommand{\cita}{}%{\scshape}
\newcommand{\R}{\mathbb{R}}
\newcommand{\Z}{\mathbb{Z}}
\newcommand{\N}{\mathbb{N}}
\newcommand\D{d\hspace{-0.5pt}}
\newcommand\pA{\phi}
\DeclarePairedDelimiter\ceil{\lceil}{\rceil}
\DeclarePairedDelimiter\floor{\lfloor}{\rfloor}
\theoremstyle{plain}
\newtheorem{Theorem}{Theorem}[section]
\newtheorem{Proposition}[Theorem]{Proposition}
\newtheorem{Lemma}[Theorem]{Lemma}
\newtheorem{Remark}{Remark}[section]
\newtheorem{Definition}{Definition}[section]
\newtheorem*{NoNumberTheorem}{Theorem}
\newtheorem*{ThmA}{Theorem A}
\newtheorem*{ThmB}{Theorem B}
\newtheorem*{ThmC}{Theorem C}
\newtheorem{Convention}{Convention}
\newtheorem*{Notation}{Notation}
\tikzset{fleche/.style args={#1:#2}{ postaction = decorate,decoration={name=markings,mark=at position #1 with {\arrow[#2,scale=2]{>}}}}}
\begin{document}{\large}
\bibliographystyle{halpha}
\title[Lengths spectrum of hyperelliptic components]
{Geodesic length spectrum of hyperelliptic connected components
%Lengths spectrum of hyperelliptic components}
}
\dedicatory{To the memory of Jean-Christophe Yoccoz}

\date{\today}
\author{Corentin Boissy and Erwan Lanneau}

\address{
%UMR CNRS 5582 \newline
Institut Fourier, Universit\'e Grenoble-Alpes, BP 74, 38402 Saint-Martin-d'H\`eres, France
}
\email{erwan.lanneau@univ-grenoble-alpes.fr}

\address{ Institut de Math\'ematiques de Toulouse,
Universit\'e Paul Sabatier F-31062 Toulouse, France}
\email{corentin.boissy@math.univ-toulouse.fr}
\subjclass[2000]{Primary: 37E05. Secondary: 37D40}
\keywords{Pseudo-Anosov, Translation surface}

\begin{abstract}
We propose a general framework for studying pseudo-Anosov homeomorphisms on 
translation surfaces. This new approach, among other consequences, allows us to 
compute the systole of the Teichm\"uller geodesic flow restricted to the hyperelliptic connected components, settling a question of~\cite{Farb}.
We stress that all proofs and computations are performed without the help of a computer.
As a byproduct, our methods give a way to describe the bottom of the lengths spectrum of the
hyperelliptic components. 
\end{abstract}

\maketitle
%\setcounter{tocdepth}{1}
%\tableofcontents

%****************************************************************
%****************************************************************
%****************************************************************
%****************************************************************
\section{Introduction}

Every affine pseudo-Anosov map $\pA$ on a half-translation surface $S$ has an expansion factor $\lambda(\pA)\in \R$
recording the exponential growth rate of the lengths of the curves under iteration of $\pA$. The set of the logarithms of all 
expansion factors (when fixing the genus $g$ of the surfaces) is a discrete subset of $\R$: this is the lengths spectrum of 
the moduli space $\mathcal M_g$.

One can also consider other lengths spectrum $\mathrm{spec}(H),\mathrm{spec}(\mathcal C) \subset \mathrm{spec}(\mathrm{Mod}_g)$
for various subgroups $H < \mathrm{Mod}(g)$ or for various connected components of strata $\mathcal C$ of the moduli spaces of 
quadratic differentials. 

These objects have been the subject of many investigations recently (we refer to the recent work of
McMullen, Farb--Leininger--Margalit, Leininger, Agol--Leininger--Margalit).

Describing $\mathrm{spec}(\mathcal C)$ is a difficult problem and, so far, only bounds on the systole $L(\mathrm{spec}(\mathcal C))$
are known for various cases. 

In this paper we present a general framework for studying pseudo-Anosov homeomorphisms.
As a consequence, we provide a complete description of $L(\mathrm{spec}(\mathcal C))$ when 
$\mathcal C=C^{\mathrm{hyp}}$ is a {\em hyperelliptic connected component} of the moduli space of Abelian differentials. This is the very first instance of an explicit computation of the systole for an infinite family of strata.
\par
\begin{ThmA}
The minimum value of the expansion factor $\lambda(\pA)$ over all affine pseudo-Anosov maps $\pA$ 
on a translation surface $S\in \mathcal C^{\mathrm{hyp}}$ is given by the largest root of the polynomial
$$
\begin{array}{lll}
%X^{2g} - X^{2g-1} - 4X^g - X + 1
X^{2g+1} - 2X^{2g-1} - 2X^2 + 1 & \textrm{if } S\in \mathcal H^{hyp}(2g-2),\\
X^{2g+2} -  2 X^{2g}  - 2 X^{g+1}  - 2  X^2 + 1 & \textrm{if } S\in  \mathcal H^{hyp}(g-1,g-1),\ g \textrm{ is even}, \\
X^{2g+2} - 2 X^{2g} -  4 X^{g+2} + 4  X^{g} + 2 X^2  - 1 & \textrm{if } S\in    \mathcal H^{hyp}(g-1,g-1),\ g \textrm{ is odd}.
\end{array}
$$
Moreover the conjugacy mapping class realizing the minimum is unique.
\end{ThmA}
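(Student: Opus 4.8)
The plan is to convert the question into a combinatorial optimization over the Rauzy--Veech renormalization of the hyperelliptic component and to solve it by hand. After a standard normalization (conjugating the derivative to diagonal form and, if necessary, replacing $\phi$ by a power and choosing an adapted horizontal transversal), any affine pseudo-Anosov $\phi$ on $S\in\mathcal{C}^{\mathrm{hyp}}$ is encoded by a closed loop $\gamma$ in the Rauzy diagram $\mathcal{R}$ of the component --- the \emph{hyperelliptic} diagram on $d$ letters, $d=2g+1$ for $\mathcal{H}^{\mathrm{hyp}}(2g-2)$ and $d=2g+2$ for $\mathcal{H}^{\mathrm{hyp}}(g-1,g-1)$ --- in such a way that $\lambda(\phi)=\rho\bigl(Q(\gamma)\bigr)$, the spectral radius of the non-negative integral matrix $Q(\gamma)\in SL(d,\Z)$ read off the arrows of $\gamma$; conversely every loop with primitive product matrix of spectral radius $>1$ arises this way. (Equivalently, via the hyperelliptic quotient this is a question about minimal-dilatation pseudo-Anosov maps on a punctured sphere, but the Rauzy picture is better suited to a uniform treatment of all three cases.) So Theorem~A becomes: $\min_\gamma\rho(Q(\gamma))=\lambda_0$, the largest root of the displayed polynomial, with a unique minimizing $\gamma$ up to the evident symmetries of $\mathcal{R}$ (cyclic rotation of the loop together with the action of the hyperelliptic involution on the diagram).

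\textbf{The lower bound --- the main obstacle.} The hard half is $\rho(Q(\gamma))\ge\lambda_0$ for \emph{every} admissible $\gamma$, of which there are infinitely many. The basic lever is the entrywise monotonicity of the spectral radius: each elementary Rauzy matrix is $\ge I$ entrywise (it is $I$ plus a single off-diagonal entry equal to $1$), so if a loop is a concatenation $\gamma=\alpha\beta$ of loops at a common vertex then $Q(\gamma)\ge Q(\alpha)$ entrywise and hence $\rho(Q(\gamma))\ge\rho(Q(\alpha))$; iterating this, and cyclically re-rooting at each repeated vertex, reduces the problem to loops of controlled combinatorial type --- morally, to the simple cycles of $\mathcal{R}$, a finite explicit list. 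Making this reduction rigorous while retaining Perron--Frobenius primitivity (a non-primitive cycle carries no information) is the technical crux, and is where the paper's framework earns its keep: one must show that every primitive loop dominates, entrywise up to conjugation, a primitive loop drawn from a short list of ``extremal'' configurations forced by the rigid shape of the hyperelliptic diagram. For that short list the remaining task is to verify an inequality of the form $\det\bigl(\lambda_0 I-Q(\gamma)\bigr)\le 0$, i.e.\ that each competitor's characteristic polynomial already has a real root $\ge\lambda_0$; this is a finite computation with sparse determinants, helped by the reciprocity and real-rootedness of Rauzy--Veech characteristic polynomials. The split into three formulas reflects that the extremal configuration, and how it meets the involution acting on $\mathcal{R}$, genuinely differ for the one-zero stratum and for the two-zero stratum according to the parity of $g$.

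\textbf{The upper bound and realization.} For the matching bound I would exhibit the minimizing loop explicitly: a ``once-around'' loop performing one distinguished Rauzy move, whose product matrix $Q_{\min}$ is an explicit companion-type integral matrix of size $d$. Expanding $\det(XI-Q_{\min})$ yields exactly the displayed degree-$d$ polynomial, which as a sanity check factors as an innocuous cyclotomic polynomial times the genuine degree-$2g$ ``dilatation factor'' --- for $g=1,2$ in $\mathcal{H}^{\mathrm{hyp}}(2g-2)$ one recovers $(X+1)(X^2-3X+1)$ and $(X+1)(X^4-X^3-X^2-X+1)$, the known minima. It then remains to check that $Q_{\min}$ is primitive with $\rho>1$ and that its zippered-rectangle suspension lands in $\mathcal{C}^{\mathrm{hyp}}$ with the correct zero data; both are routine.

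\textbf{Uniqueness.} Uniqueness of the minimizing conjugacy class comes out of the lower-bound analysis: the chain of spectral-radius inequalities degenerates to equalities precisely for the distinguished loop and its images under cyclic rotation and the involution of $\mathcal{R}$, and all of these present the \emph{same} mapping class up to conjugacy in $\mathrm{Mod}_g$.
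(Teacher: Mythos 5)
There is a genuine gap at the very first step of your reduction, and it is precisely the trap this paper is built to avoid. Encoding an affine pseudo-Anosov map by a \emph{closed} loop in the Rauzy diagram with $\lambda(\pA)=\rho(Q(\gamma))$ is Veech's construction, and its converse only applies to maps that fix a horizontal separatrix. In the hyperelliptic components, any affine pseudo-Anosov fixing a horizontal separatrix attached to a zero (or marked point) has expansion factor at least $2$ (Proposition~\ref{prop:fix:sing}, from [BL12]), whereas the minima you are trying to compute are all strictly below $2$ (indeed close to $\sqrt{2}$). So the set of closed loops in $\mathcal{D}_n$ simply does not contain the minimizers: your ``lower bound over all admissible $\gamma$'' is an optimization over the wrong set, and your ``upper bound'' cannot be realized, since no closed-loop matrix $Q_{\min}$ has spectral radius equal to the claimed $\lambda_0<2$. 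The hedge of ``replacing $\phi$ by a power'' does not repair this: passing to $\phi^2$ (which fixes a separatrix only after adding a marked point, hence in a larger marked Rauzy diagram) changes the dilatation to $\lambda^2$ and, as in [BL12], yields bounds rather than the exact systole; the paper states explicitly that this route makes the exact computation intractable. What is missing is the paper's central new idea: the \emph{symmetric} Rauzy--Veech construction, i.e.\ non-closed paths from $\pi$ to $s(\pi)$ producing orientation-reversing pseudo-Anosov maps with a regular fixed point, together with the fact that composing with the hyperelliptic involution puts every small-dilatation map in this framework (Theorem~\ref{thm:construction:converse} and Proposition~\ref{all:is:SRV}).

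Even granting the correct encoding, your reduction ``to the simple cycles of $\mathcal{R}$, a finite explicit list'' by entrywise monotonicity is too coarse. The paper does use the monotonicity lever (Proposition~\ref{prop:reduce:to:sub:path}), but only after a genuinely dynamical step: the ZRL renormalization (Theorem~\ref{thm:main:reduction}), which shows that every pure admissible path can be replaced, without changing the conjugacy class, by one starting on the central loop with first step of type $b$. Without such a renormalization there are infinitely many starting permutations and no way to re-root a non-closed path; cyclic re-rooting, which you invoke, is only available for closed loops. Finally, minor points: the characteristic polynomials here are (anti-)reciprocal but not real-rooted, and the candidate list one actually lands on ($\gamma_{n,k}$ and $\gamma_{n,k,l}$, with primitivity failures handled via $\gcd(n-1,k)$) is not a list of simple cycles, so the finite verification you describe would also have to be reorganized.
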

\par
This theorem settles a question of Farb~\cite[Problem 7.5]{Farb} for an infinite family of connected component of strata of the moduli space of Abelian differentials.

With more efforts, our method gives a way to also compute the other elements of the
spectrum $\mathrm{Spec}(\mathcal C^{\mathrm{hyp}})$ where $\mathcal C^{\mathrm{hyp}}$
ranges over all hyperelliptic connected components, for any genus. As an instance
we will also prove (see Theorem~\ref{thm:n:even}):
\begin{ThmB}
For any $g$ even, $g\not \equiv 2 \mod 3$, $g\geq 9$, the second least dilatation of an affine pseudo-Anosov maps $\pA$ 
on a translation surface $S\in \mathcal H^{hyp}(2g-2)$ is given by the largest root of the polynomial
$$
X^{2g+1} - 2X^{2g-1} - 2X^{\ceil{4g/3}}- 2X^{\floor{2g/3}+1} - 2X^2 + 1.
$$
Moreover the conjugacy mapping class realizing the minimum is unique.
\end{ThmB}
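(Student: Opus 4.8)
The plan is to argue entirely inside the combinatorial model for affine pseudo-Anosov maps on hyperelliptic components developed in the previous sections: up to conjugacy and the hyperelliptic symmetry, such a map $\pA$ on $S\in\mathcal H^{hyp}(2g-2)$ is encoded by a finite datum $w$ (a hyperelliptic generalized permutation together with a positive path/word in the associated diagram), the component and the zero type are read off from that datum, and $\lambda(\pA)$ is the spectral radius of the associated non-negative integer matrix, i.e. the largest root of an explicit characteristic polynomial $P_w(X)$. Theorem A says the minimizing datum $w_0$ is unique and $P_{w_0}(X)=X^{2g+1}-2X^{2g-1}-2X^2+1$. So proving Theorem B amounts to two things: (i) producing a datum $w_1$ valid in $\mathcal H^{hyp}(2g-2)$, pseudo-Anosov, with $P_{w_1}$ the stated polynomial and $\lambda(w_1)>\lambda(w_0)$; and (ii) showing every valid datum $w\notin\{w_0,w_1\}$ satisfies $\lambda(w)>\lambda(w_1)$.

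For the \emph{realization} step (i), I would look for $w_1$ as a minimal ``two-bump'' modification of $w_0$: two extra elementary moves inserted into the minimal path at the slots dictated by the exponents $\floor{2g/3}+1$ and $\ceil{4g/3}$. These two exponents sum to $2g+1=\deg P_{w_0}$, so the insertion respects the palindromic (reciprocal) shape of $P_{w_0}$, and I expect the two inserted moves to contribute exactly the two extra monomials, giving $P_{w_1}(X)=P_{w_0}(X)-2X^{\ceil{4g/3}}-2X^{\floor{2g/3}+1}$. One then checks that the modified datum still records a single zero of order $2g-2$ on a hyperelliptic surface, that the matrix is primitive (so $\pA$ is genuinely pseudo-Anosov and the Perron root is simple and dominant), and, since $P_{w_1}(X)-P_{w_0}(X)<0$ for $X>0$, that $\lambda(w_1)>\lambda(w_0)$; thus $w_1$ is a bona fide candidate for the second smallest value.

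The \emph{lower bound and uniqueness} step (ii) is the substance of the proof. For fixed $g$, bounding $\lambda$ bounds the matrix entries, hence the length of the path, so only finitely many valid data $w$ can have $\lambda(w)\le\lambda(w_1)$; the task is to show this finite list reduces, after removing conjugates and symmetries, to $\{w_0,w_1\}$. Concretely, writing $P_w(X)=P_{w_0}(X)-R_w(X)$ with $R_w$ a non-negative combination of intermediate monomials measuring how far $w$ is from $w_0$, and using $P_{w_0}(\lambda(w_1))>0$, any $w$ with $\lambda(w)\le\lambda(w_1)$ forces $R_w$ to be small, which restricts $w$ to differ from $w_0$ in at most the slots allowed by the model and then forces it to be $w_0$ or $w_1$ by a direct comparison of the few remaining words. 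This is exactly where the hypotheses enter: $g$ even, $g\not\equiv2\bmod 3$, and $g\ge 9$ ensure that the admissible bump positions $\floor{2g/3}+1$ and $\ceil{4g/3}$ lie strictly between $2$ and $2g-1$, are distinct, and do not collide with the slots where a single (cheaper) admissible bump would already exist — so that the genuinely second-cheapest admissible modification is the symmetric double bump $w_1$ rather than an asymmetric competitor; when $g\equiv2\bmod 3$ or $g$ is small these conditions fail and a different word becomes the runner-up, which is why we exclude those cases.

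The hard part will be the uniform-in-$g$ control of Perron roots in step (ii): one must locate the roots of an infinite family of sparse polynomials precisely enough to separate $\lambda(w_1)$ from every competing $\lambda(w)$ by a definite gap. I expect this to need Rouch\'e/sign-change estimates — evaluating the candidate polynomials at $\lambda(w_0)$ and at $\lambda(w_1)$ and controlling $P_w$ on a suitable annulus around the Perron root — combined with an honest case analysis modulo $3$ to pin down the admissible slots. By comparison, step (i) is essentially a bookkeeping computation once the right surface is written down, and the ``moreover'' uniqueness assertion comes for free from the same finite enumeration.
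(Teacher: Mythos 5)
Your top-level plan --- reduce to the finite combinatorial families coming from the symmetric Rauzy--Veech encoding, exhibit one datum realizing the stated polynomial, and separate its Perron root from all competitors by sign-change estimates --- is the paper's strategy (Theorem~\ref{thm:n:even}(2), Lemma~\ref{lm:a}, Proposition~\ref{compare:roots:second}). But two of your concrete steps would fail as written. First, the realization step rests on a wrong structural guess: the runner-up is \emph{not} a ``two-bump'' modification of the minimizer $w_0$. In the paper's coordinates $w_0=\gamma_{2g,K_n}$ (based at $\pi_n.t^{K_n}$ with $K_n=g-1$), and every path obtained from it by inserting extra moves is one of the $\gamma_{2g,K_n,l}$; Proposition~\ref{compare:roots:second}(2)--(4) shows \emph{all} of these have dilatation strictly larger than the true second minimum, and their polynomials (Proposition~\ref{prop:n:even:nKl}) are not of the form $P_{w_0}-2X^{\ceil{4g/3}}-2X^{\floor{2g/3}+1}$. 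The actual realizer is $\gamma_{2g,K_n-1}$, a path of the same ``small b-loop followed by the shortest path'' type but based at the neighboring vertex $\pi_n.t^{K_n-1}$ of the central loop; its matrix is not entrywise comparable to $V(\gamma_{2g,K_n})$, even though at the polynomial level the difference happens to be the two monomials you predicted. Relatedly, the hypothesis $g\not\equiv 2\bmod 3$ is not about avoiding ``slot collisions'': it is exactly $\gcd(2g-1,K_n-1)=\gcd(3,g-2)=1$, which guarantees that $V_{2g,K_n-1}$ is primitive, i.e.\ that the candidate is genuinely realized by a pseudo-Anosov map in this component; when it fails the Perron root collapses to one coming from a smaller diagram and the runner-up changes.

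Second, the lower-bound/uniqueness step is missing the cases that carry most of the work. Your normal form $P_w=P_{w_0}-R_w$ with $R_w$ a nonnegative combination of monomials holds only for the paths $\gamma_{2g,k}$ with $\gcd(k,2g-1)=1$. The competitors that must actually be eliminated also include: (a) the imprimitive $\gamma_{2g,k}$, which the paper handles by the reduction $\theta_{n,k}=\theta_{n',k'}$ to a \emph{smaller} hyperelliptic diagram followed by monotonicity in $n$ (Lemma~\ref{lm:reduce:gamma:nk}, Lemma~\ref{lm:decreasing}) --- a comparison across different $n$ that a fixed-$P_{w_0}$ decomposition cannot see; and (b) the loop-added paths $\gamma_{2g,K_n,l}$, whose differences from $P_{w_0}$ have mixed signs and whose extreme cases $l=L_n+1,L_n+2$ are dispatched not by polynomial estimates but by matrix lower bounds $\theta>\sqrt{3}$ and $\theta>6^{1/4}$ (Lemma~\ref{lm:comparing:matrix}). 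Moreover, ``bounding $\lambda$ bounds the entries, hence the path length, so finitely many data'' does not by itself yield a usable enumeration: the tractable, $g$-uniform finite list comes from the ZRL reduction to base points on the central loop with first step `b' together with the monotonicity of Perron roots under adding loops (Theorem~\ref{thm:main:reduction}, Propositions~\ref{prop:reduce:to:sub:path} and~\ref{prop:reduce:to:thetank}); without invoking these, your candidate set is exponentially large in $g$, and the ``Rouch\'e/sign-change estimates'' you defer to are precisely the content of Proposition~\ref{compare:roots:second}, where the hypothesis $g\ge 9$ (i.e.\ $n\ge 18$) is what makes the exponent inequalities go through.
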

It is proved in~\cite{BL12} that $L(\mathrm{Spec}(\mathcal C^{\mathrm{hyp}})) \in \left] \sqrt{2},\sqrt{2}+\frac1{2^{g-1}}\right[$.\\
For $\mathcal H^{hyp}(2)$ and $\mathcal H^{hyp}(1,1)$, the corresponding systoles are the largest root of the polynomials 
$X^{5} - 2X^{3} - 2X^2 + 1=(X+1)(X^4-X^3-X^2-X+1)$ and $X^{6} - 2X^{4}- 2X^{3} - 2X^2 + 1=(X+1)^2(X^4-2X^3+X^2-2X+1)$ 
respectively, recovering previous result of~\cite{Lanneau:Thiffeault}. \medskip
\par
For small values of $g$, as an illustration of our construction, we are able to produce a complete description of the bottom of the spectrum of $\mathcal C^{\mathrm{hyp}}$.
\begin{ThmC}
\label{theoremC}
For $g\leq 10$, the lengths $l$ of the closed Teichm\"uller geodesics on $\mathcal{H}^{hyp}(2g-2)$ satisfying $l<2$ are recorded in the table below. For genus between 4 and 10, we only indicate the number of geodesic lengths.
$$
\begin{array}{|c|l|}
\hline
g & \textrm{lengths of closed Teichm\"uller geodesics on } \mathcal{H}^{hyp}(2g-2) \textrm{ less that 2} \\%& \mathcal{H}^{hyp}(g-1,g-1)\\
\hline
2 & \textrm{Perron root of } X^5-2X^3-2X^2+1 \sim 1.72208380573904\\
\hline
3 & \textrm{Perron root of } X^7-2X^5-2X^2+1 \sim 1.55603019132268 \\
 &\textrm{Perron root of }  X^7-2X^5-X^4-X^3-2X^2+1 \sim 1.78164359860800 \\
 & \textrm{Perron root of }  X^7-3X^5-3X^2+1 \sim 1.85118903363607 \\
 &\textrm{Perron root of }  X^7-2X^5-2X^4-2X^3-2X^2+1 \sim 1.94685626827188\\
 \hline
4 & \textrm{11 geodesic lengths}\\
5 & \textrm{22 geodesic lengths}\\
6 & \textrm{79 geodesic lengths}\\
7 & \textrm{142 geodesic lengths}\\
8 & \textrm{452 geodesic lengths}\\
9 & \textrm{1688 geodesic lengths}\\
10 & \textrm{4887 geodesic lengths}\\
\hline
\end{array}
$$
\end{ThmC}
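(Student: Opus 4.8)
The proof of Theorem~C is carried out by the finite enumeration that the combinatorial machinery of the previous sections makes possible, in the spirit of the genus-two analysis of~\cite{Lanneau:Thiffeault}. Recall that our framework attaches to every conjugacy class of affine pseudo-Anosov maps $\pA$ on a surface of $\mathcal{H}^{hyp}(2g-2)$ a finite combinatorial datum $\Gamma$ — a closed path in the (symmetric) combinatorial diagram governing the component, a Rauzy-type graph, taken modulo its natural symmetries — together with a non-negative integer matrix $M(\Gamma)$ whose Perron root is exactly $\lambda(\pA)$; the presence of the central hyperelliptic involution, which any affine automorphism commutes with, is what makes this datum so rigid. The first step is the finiteness input, which I would isolate as a lemma (or quote from the general statements preceding Theorem~A): if $\lambda(\pA)<2$ then $\Gamma$ lies in an explicit finite list $\mathcal{L}_g$, because an overly long or combinatorially too dispersed path makes some power of $M(\Gamma)$ have large trace and hence forces $\lambda(\pA)\geq 2$. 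In particular $\chi_\Gamma(X):=\det(XI-M(\Gamma))$ has degree linear in $g$, which already explains the shape of the polynomials in Theorems~A and~B.

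The second step is the enumeration proper. For each $g\in\{2,\dots,10\}$ one runs through $\mathcal{L}_g$; for a path $\Gamma$ in this list one has to (i) verify that $\Gamma$ genuinely closes up to a pseudo-Anosov homeomorphism in $\mathcal{H}^{hyp}(2g-2)$ — that the invariant foliations are orientable with a single zero of the prescribed order, and that the map is neither periodic nor reducible; (ii) compute $\chi_\Gamma$ and locate its largest real root $\lambda$; and (iii) keep $\Gamma$ exactly when $\lambda<2$. Since distinct paths may encode conjugate maps — through cyclic rotation of the path, the hyperelliptic symmetry, and passage to the inverse — one finally partitions the surviving paths into conjugacy classes, equivalently into closed \Teichmuller geodesics, and records the associated polynomials. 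This yields the table: for $g=2,3$ the explicit list of characteristic polynomials (with $X^{2g+1}-2X^{2g-1}-2X^2+1$ providing the smallest root, consistently with Theorem~A), and for $4\leq g\leq 10$ the number of distinct geodesic lengths below $2$. Built-in consistency checks are that the $g=2$ answer reproduces~\cite{Lanneau:Thiffeault}, that the minima reproduce Theorem~A, and that the two smallest admissible values agree with Theorem~B.

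I expect the genuine difficulty to lie in certifying the two ends of the enumeration rather than in any single computation. On one side there is \emph{completeness}: pinning down $\mathcal{L}_g$ — that is, the precise bound on the length of $\Gamma$ forced by $\lambda<2$ — so that the list provably contains every short geodesic; a bound that is too generous merely lengthens the verification, but one that is even slightly too small would make the count in the table wrong. On the other side there is \emph{non-redundancy}: correctly recognising when two combinatorial paths are conjugate, so that the tabulated numbers are exact rather than upper bounds and the uniqueness assertions inherited from Theorem~A remain valid. A last, purely organisational, hurdle is that for $g$ near $10$ the set $\mathcal{L}_g$ is sizeable, so the validity test in step~(i) and the root estimate in step~(ii) must be phrased uniformly enough that the whole enumeration reduces to a single, mechanical (if lengthy) finite verification, transparent by hand for $g=2,3$ and routine, though bulky, beyond.
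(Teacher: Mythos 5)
Your outline does follow the route the paper itself takes: in the text Theorem~C receives no separate proof, but is presented as a byproduct of the reduction machinery --- every affine pseudo-Anosov with $\lambda<2$ is, after composing with the hyperelliptic involution, produced by the symmetric Rauzy--Veech construction from a \emph{non-closed} admissible path joining a permutation $\pi$ to $s(\pi)$ (Theorem~\ref{thm:construction:converse} and Proposition~\ref{all:is:SRV}), the ZRL renormalization replaces that path by one starting on the central loop with first step `b' (Theorem~\ref{thm:main:reduction}), and strict monotonicity of the Perron root under inserting loops (Proposition~\ref{prop:reduce:to:sub:path}) is what makes the set of such paths with $\theta<2$ finite and effectively searchable; the table is then the outcome of that finite enumeration, with characteristic polynomials computed by the rome method of Appendix~\ref{appendix:matrix}.

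Two points in your write-up are, however, more than cosmetic. First, the combinatorial datum is not a closed path in the Rauzy diagram taken modulo cyclic rotation: genuinely closed paths are exactly the classical Rauzy--Veech construction, whose maps fix a horizontal separatrix, and by Proposition~\ref{prop:fix:sing} these have expansion factor at least $2$ in $\mathcal C_n^{\mathrm{hyp}}$ --- an enumeration over closed loops would return an empty table. Completeness of the list rests precisely on the symmetric construction applied to $\tau\circ\pA$ together with the ZRL reduction, and the identifications needed for non-redundancy are ZRL moves (changes of base segment/fixed point), not rotations of a loop; your phrasing ``(symmetric) diagram modulo its natural symmetries'' gestures at this but never invokes the two statements that actually guarantee it. Second, your finiteness input --- ``a long or dispersed path forces some power of $M(\Gamma)$ to have large trace, hence $\lambda\geq 2$'' --- is asserted rather than proved, and it is exactly the point you yourself flag as the crux: without the monotonicity proposition (or an equivalent effective bound) the finite list $\mathcal L_g$ is never produced, so the search is not provably exhaustive. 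Since the content of Theorem~C is the table itself, the enumeration you defer \emph{is} the proof; checking $g=2$ against Lanneau--Thiffeault and the minima against Theorem~A is a sanity check, not a derivation of the polynomials for $g=3$ or of the counts $11,22,79,142,452,1688,4887$.
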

Obviously our techniques also provide a way to investigate the bottom of the spectrum for the stratum  $\mathcal{H}^{hyp}(g-1,g-1)$ for various $g$ and various bound (not necessarily $2$). 
This will appear in the forthcoming paper.

\subsection*{Acknowledgments} The authors thank Artur Avila and Jean-Christophe Yoccoz for helpful conversations and for asking the question on the systoles. This article would not be possible without the seminal work of Jean-Christophe Yoccoz, whose mathematics is still a source of inspiration for both authors.

This collaboration began during the program sage days at CIRM in March 2011, and continued 
during the program ``Flat Surfaces and Dynamics on Moduli Space" at Oberwolfach in March 2014. Both authors attended these programs and are grateful to the organizers, the CIRM, and MFO. This work was partially supported by the ANR Project GeoDyM and the Labex Persyval.

%****************************************************************
%****************************************************************
%****************************************************************
%****************************************************************
\section{Overall strategy}
\label{sec:strategy}

Our strategy is to convert the computation of mapping classes and their expanding factors
into a finite combinatorial problem. This is classical in pseudo-Anosov theory: the Rauzy--Veech theory
and the train track theory are now well established. However a major difficulty comes from the 
very complicated underlying combinatorics.

%****************************************************************
\subsection{Hyperelliptic connected components}

In the sequel, for any integer $n\geq 4$, we will consider the hyperelliptic Rauzy diagram $\mathcal{D}_n$ of size $2^{n-1}-1$ containing the permutation
$$
\pi_n= \begin{pmatrix}
1&2&\dots &n \\ n&n-1&\dots &1
\end{pmatrix}
$$
and by $\mathcal C_n^{\mathrm{hyp}}$ the associated connected component.
If $n=2g$ is even then $\mathcal C_n^{\mathrm{hyp}}=\mathcal{H}^{hyp}(2g-2)$ and if $n=2g+1$ is odd then
$\mathcal C_n^{\mathrm{hyp}}=\mathcal{H}^{hyp}(g-1,g-1)$. The precise description of these diagrams was given by 
Rauzy~\cite{Rauzy}.

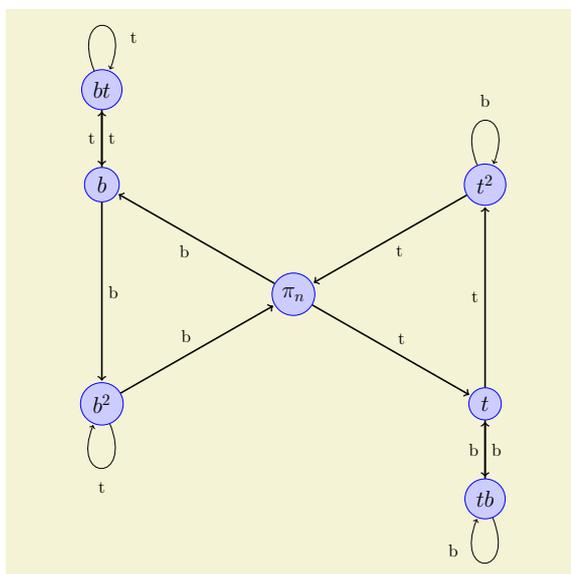
\begin{figure}[htbp]
\scalebox{0.6}{
\begin{tikzpicture}[scale=0.7,shorten >=1pt, auto, node distance=3cm, 
   edge_style/.style={line width=1pt,draw=black,->}]
   
   \fill[fill=yellow!80!black!20,even   odd  rule]   (-13,-9)  rectangle (5,9);

   \foreach [count=\i] \x/\y/\t in {-4/0/$\pi_n$,2/-3.46/$t$,2/3.46/$t^2$,-10/3.46/$b$,-10/-3.46/$b^2$,2/-6.46/$tb$,-10/6.46/$bt$}
     \node [circle,draw=blue,fill=blue!20!,font=\sffamily\Large\bfseries]
        (v\i) at (\x,\y) {\t};

   \foreach \i/\j/\t in {1/2/t, 2/3/t,3/1/t,1/4/b,4/5/b,5/1/b,2/6/b,6/2/b,4/7/t,7/4/t}
    \draw [edge_style]  (v\i) edge node {\t} (v\j); 
    
   \foreach \i/\j/\t in {3/3/b,7/7/t} \draw [->]  (v\i) ..  controls  +(-1,2.5) and +(1,2.5) .. (v\j); 
    \draw  (2,6.1) node  {b}; \draw  (1,-8.1) node  {b};

   \foreach \i/\j/\t in {5/5/t,6/6/b} \draw [->]  (v\i) ..  controls  +(1,-2.5) and +(-1,-2.5) .. (v\j); 
    \draw  (-10,-6.1) node  {t}; \draw  (-9,8.1) node  {t};
\end{tikzpicture}
}
\caption{Rauzy diagram for $n=4$}
\end{figure}
Equipped with this and the Kontsevich-Zorich~\cite{KZ} classification, 
one can convert our Main Theorem into Theorem~\ref{thm:main} below:
\par
\begin{Theorem}
\label{thm:main}
For any $n\geq 4$, the minimum value of the expansion factor $\lambda(\pA)$ over all affine pseudo-Anosov map $\pA$ 
on a translation surface $S\in \mathcal C_n^{\mathrm{hyp}}$ is given by the largest root of the polynomial
$$
\begin{array}{ll}
X^{n+1} - 2X^{n-1} - 2X^2 + 1 & \textrm{if n is even},\\
X^{n+1} -  2 X^{n-1}  - 2 X^{(n-1)/2+1}  - 2  X^2 + 1 & \textrm{if } n\equiv1 \mod 4, \\
X^{n+1} - 2 X^{n-1} -  4 X^{(n-1)/2+2} + 4  X^{(n-1)/2} + 2 X^2  - 1 & \textrm{if } n\equiv3 \mod 4. \\
\end{array}
$$
Moreover the conjugacy mapping class realizing the minimum is unique.
\end{Theorem}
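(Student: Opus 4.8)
The plan is to recast the statement as a combinatorial extremal problem on loops in the Rauzy diagram $\mathcal{D}_n$ and then to solve that problem by hand. \emph{From pseudo-Anosov maps to loops.} By the Rauzy--Veech construction, every affine pseudo-Anosov map $\pA$ on a surface $S\in\mathcal{C}_n^{\mathrm{hyp}}$ arises, up to conjugacy in the mapping class group, from a closed loop $\gamma$ in $\mathcal{D}_n$ whose Rauzy--Veech matrix $Q_\gamma$ is primitive; the expansion factor $\lambda(\pA)$ is then the Perron--Frobenius eigenvalue $\rho(Q_\gamma)$, and two loops produce conjugate maps precisely when they agree up to the natural equivalence generated by cyclic rotation and the symmetries of $\mathcal{D}_n$ (in particular the central involution fixing $\pi_n$). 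Hence Theorem~\ref{thm:main} is equivalent to computing $\min_\gamma\rho(Q_\gamma)$ over admissible loops and showing the minimiser is unique modulo that equivalence. The concrete input is Rauzy's description~\cite{Rauzy} of $\mathcal{D}_n$: it has $2^{n-1}-1$ vertices, two of which are fixed by the involution, and a self-similar, tree-like arrangement of arrows that makes products $Q_\gamma$ computable in closed form, uniformly in $n$.

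\emph{The candidate loop.} Next I would exhibit an explicit loop $\gamma_0$ of small dilatation. Following the geometry of $\mathcal{D}_n$, the natural choice traverses the diagram end to end: starting from one of the two symmetric permutations it runs down one branch by top moves and back along the conjugate branch by bottom moves, with the one or two extra arrows needed to close up, so that its combinatorial length is of order $2n$. Computing $Q_{\gamma_0}$ — a product of about $2n$ elementary matrices $I+E_{ij}$ — one checks that $\det(X\,\mathrm{Id}-Q_{\gamma_0})$ is, up to the cyclotomic factor reflecting the hyperelliptic involution, exactly the polynomial in the statement, and that this polynomial is a Perron polynomial, with a unique simple real root $\lambda_0>1$ of largest modulus. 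This already forces the three shapes of $\gamma_0$ according to $n$ even, $n\equiv1$ or $n\equiv3\bmod4$, because how the two symmetric vertices are joined and where the closing arrows sit depends on that residue; in particular it yields the upper bound $L(\mathrm{spec}(\mathcal{C}_n^{\mathrm{hyp}}))\le\lambda_0$.

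\emph{Optimality.} The heart of the proof is the reverse inequality $\rho(Q_\gamma)\ge\lambda_0$ for every admissible $\gamma$, with equality only for $\gamma\sim\gamma_0$, which I would organise in three stages. (i) \emph{Reduction}: since each arrow multiplies $Q_\gamma$ by some $I+E_{ij}$, a loop that backtracks or over-visits a vertex dominates entrywise a suitable shorter loop, hence has strictly larger Perron root; this confines minimisers to a finite list of combinatorial families of length $O(n)$, indexed by the way top and bottom moves alternate along the two branches of $\mathcal{D}_n$. (ii) \emph{A length--dilatation estimate}: for a reduced loop of length $\ell$, bound $\rho(Q_\gamma)$ below by an explicit increasing function of $\ell$ (via trace or coefficient estimates on $\det(X\,\mathrm{Id}-Q_\gamma)$) and check that this lower bound already exceeds $\lambda_0$ once $\ell$ is larger than the length of $\gamma_0$; this discards all but finitely many types. (iii) \emph{Finite comparison}: for the surviving types, compute the characteristic polynomial symbolically in $n$ and compare Perron roots with $\lambda_0$ using elementary real-variable estimates (monotonicity in $X$, localisation of sign changes), verifying that $\gamma_0$ strictly wins; the $\bmod 4$ trichotomy reflects the dependence of the extremal type on the residue. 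Uniqueness of the conjugacy class then follows by tracking the equality cases through (i)--(iii), the only residual freedom being the rotation-and-symmetry equivalence of the first step.

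\emph{Main obstacle.} The genuine difficulty is the optimality step: converting the heuristics ``long loops have large dilatation'' and ``short loops can be enumerated'' into inequalities valid for all $n\ge4$ without any computer assistance, and sharp enough to single out exactly the three extremal polynomials. The self-similarity of the hyperelliptic Rauzy diagram is what should make a uniform, by-hand treatment possible; the real work lies in making the reduction precise enough that the remaining finite comparison can be carried out in closed form.
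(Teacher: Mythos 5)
Your proposal breaks down at its very first step. You assert that every affine pseudo-Anosov map on $S\in\mathcal{C}_n^{\mathrm{hyp}}$ arises, up to conjugacy, from a \emph{closed} loop in $\mathcal{D}_n$ via the usual Rauzy--Veech construction. Veech's theorem only gives this for pseudo-Anosov maps that fix a horizontal separatrix, and by Proposition~\ref{prop:fix:sing} (Propositions 4.3--4.4 of~\cite{BL12}) any such map in a hyperelliptic component has expansion factor at least $2$. Since $L(\mathrm{Spec}(\mathcal{C}_n^{\mathrm{hyp}}))$ lies in $\left]\sqrt2,\sqrt2+2^{-(g-1)}\right[$, the minimizing conjugacy classes are precisely the ones that \emph{never} appear among closed loops in $\mathcal{D}_n$, so the extremal problem you set up does not contain the minimizer at all; your ``candidate loop'' cannot exist as a closed admissible loop with the stated characteristic polynomial. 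This is exactly the obstruction the paper is built to overcome: it introduces the Symmetric Rauzy--Veech construction on \emph{non-closed} paths $\gamma$ from $\pi$ to $s(\pi)$, producing pseudo-Anosov maps that reverse the orientation of the invariant foliations, proves the converse (Theorem~\ref{thm:construction:converse} and Proposition~\ref{all:is:SRV}, i.e.\ every $\pA$ in $\mathcal{C}_n^{\mathrm{hyp}}$ is captured after possibly composing with the hyperelliptic involution), and only then reduces to a combinatorial comparison. Passing to an enlarged diagram where all maps do fix a separatrix (as in~\cite{BL12}, via $\pA^2$ and a marked point) is possible in principle, but the paper explicitly notes that the resulting combinatorics puts the exact systole out of reach, so this cannot be waved through.

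A secondary issue: even granting a correct parametrization by paths, your optimality scheme relies on a ``length--dilatation'' lower bound, i.e.\ that $\rho(Q_\gamma)$ grows with the reduced length of $\gamma$. No such monotonicity holds uniformly (long paths winding in the central loop contribute little to the matrix, and the relevant extremal paths have length of order $2n$ while dilatation tends to $\sqrt2$). The paper's actual reduction is of a different nature: an entrywise domination argument (adding loops to a path only increases the Perron root, Proposition~\ref{prop:reduce:to:sub:path}), combined with a renormalization map $ZRL$ (Theorem~\ref{thm:main:reduction}) that moves the starting vertex to the central loop, and then explicit rome-method computations of the finitely many characteristic polynomials $P_{n,k}$, $P_{n,k,l}$ together with root comparisons. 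Without a substitute for the false length monotonicity, your stage (ii) would not discard the infinitely many remaining paths.
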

\par
The same applies to the theorem for the second least expanding factor~:
\begin{Theorem}\label{thm:main:second}
For any $n\geq 18$, if $n\not \equiv 4 \mod 6$ is even then the second least element of 
$\mathrm{Spec}(\mathcal C^{\mathrm{hyp}}_n)$ is given by the largest root of the polynomial
$$
X^{2g+1} - 2X^{2g-1} - 2X^{\ceil{4g/3}}- 2X^{\floor{2g/3}+1} - 2X^2 + 1.
$$
Moreover the conjugacy mapping class realizing the minimum is unique.
\end{Theorem}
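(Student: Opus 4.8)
The plan is to follow the same blueprint that underlies Theorem~\ref{thm:main} (and Theorem~\ref{thm:main:second} in the even case), but pushed one level further in the combinatorial enumeration. First I would recall the general machinery: every affine pseudo-Anosov map on $S\in\mathcal C_n^{\mathrm{hyp}}$ arises, up to conjugacy, from a closed loop in the hyperelliptic Rauzy diagram $\mathcal D_n$, and its expansion factor is the spectral radius of the product of the corresponding Rauzy--Veech matrices. Minimising $\lambda(\pA)$ thus becomes the finite (but exponentially large) problem of minimising the Perron root over closed paths in $\mathcal D_n$. The key reduction, established in the proof of Theorem~\ref{thm:main}, is that short loops — those producing small dilatation — are highly constrained: they must stay close to the ``central'' part of the diagram and use each of the two Rauzy moves in a nearly balanced, nearly-periodic way. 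I would isolate from that argument a \emph{structural lemma} stating that any closed path of length $\ell$ whose Perron root is below some explicit threshold $\Lambda_0>\sqrt2$ must belong to one of finitely many explicitly parametrised families, indexed by a small number of ``defect'' insertions into the minimal loop $\gamma_{\min}$ of Theorem~\ref{thm:main}.

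Second, I would carry out the classification of the \emph{second} shortest such path. Having removed $\gamma_{\min}$ (and its conjugates, by the uniqueness clause already proved) from the list, the runner-up is the shortest path in the remaining families. Here the hypotheses $n$ even, $n\not\equiv 4\bmod 6$, and $n\ge 18$ enter: evenness puts us in the stratum $\mathcal H^{hyp}(2g-2)$ where the base polynomial is the clean $X^{n+1}-2X^{n-1}-2X^2+1$; the congruence condition $n\not\equiv 4\bmod 6$ (equivalently $g\not\equiv 2\bmod 3$ after setting $n=2g$, matching Theorem~B) is exactly what is needed for the two extra insertions to land at the positions $\ceil{4g/3}$ and $\floor{2g/3}+1$ rather than colliding or overlapping with the existing exponents $2g-1$ and $2$; and $n\ge 18$ guarantees enough room that the asymptotically-smallest candidate in each competing family is genuinely larger, so no sporadic small-genus coincidence can beat it. I would exhibit the explicit loop $\gamma_2$ achieving the stated polynomial, compute its characteristic polynomial directly (a routine determinant expansion of the associated companion-type matrix, using the near-periodic structure to telescope), and check it equals $X^{2g+1}-2X^{2g-1}-2X^{\ceil{4g/3}}-2X^{\floor{2g/3}+1}-2X^2+1$.

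Third comes the comparison step: I must show every other admissible loop has strictly larger Perron root than the root $\Lambda_2$ of that polynomial. I would split the competitors by combinatorial type — loops that are ``too long'' (bounded below using monotonicity of the Perron root under the relevant partial order on matrix products, exactly as in Theorem~\ref{thm:main}), loops of the right length but with insertions in different positions (handled by a finite, genus-uniform family of polynomial inequalities comparing the largest roots, reducible to sign checks on the difference of the two polynomials on $[\sqrt2,2]$), and the finitely many genuinely exceptional short loops (dispatched by hand). The uniqueness assertion follows from the same analysis, since equality in the Perron-root comparison forces the loop to coincide with $\gamma_2$ up to cyclic permutation and the symmetry of $\mathcal D_n$.

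The main obstacle I anticipate is the comparison step, specifically proving the polynomial inequality $\mathrm{root}(P_{\gamma}) > \Lambda_2$ \emph{uniformly in $n$} for the infinitely many competing families: one cannot simply factor or evaluate numerically as in Theorem~C, so one needs a robust argument — typically, writing $P_\gamma(X) - c\cdot P_2(X)$ and showing it is positive at $X=\Lambda_2$ for all large $n$, which requires careful control of the lower-order exponents and is where the congruence and size hypotheses are really consumed. Everything else is either a citation to the framework developed for Theorem~\ref{thm:main} or a mechanical determinant/inequality computation.
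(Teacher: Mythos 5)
There is a genuine gap, and it is at the very foundation of your argument. Your first step asserts that every affine pseudo-Anosov map on $S\in\mathcal C_n^{\mathrm{hyp}}$ arises from a \emph{closed} loop in the Rauzy diagram, with expansion factor the Perron root of the associated matrix product. This is false for precisely the maps the theorem is about: the classical Rauzy--Veech construction only produces pseudo-Anosov maps fixing a horizontal separatrix, and by Proposition~\ref{prop:fix:sing} (from~\cite{BL12}) any such map in a hyperelliptic component has expansion factor at least $2$, whereas the second least element of $\mathrm{Spec}(\mathcal C^{\mathrm{hyp}}_n)$ is strictly below $2$ (the whole bottom of the spectrum lies just above $\sqrt2$). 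So an enumeration of closed loops, however cleverly pruned by a ``structural lemma'', sees neither the minimum nor the runner-up; your search space simply does not contain the object to be exhibited. The paper's route is different in kind: small-dilatation maps are captured by the Symmetric Rauzy--Veech construction, i.e.\ \emph{non-closed} paths from a permutation $\pi$ to its symmetric $s(\pi)$ with weak suspension data, producing orientation-reversing (``negative'') pseudo-Anosov maps; composing with the hyperelliptic involution recovers all relevant maps (Geometric Statement). The reduction of the search space is not a length/centrality heuristic but the ZRL renormalization (Dynamical Statement), which conjugates any pure admissible path to one starting on the central loop with first step of type `b', and then Propositions~\ref{prop:reduce:to:thetank} and~\ref{prop:reduce:to:sub:path} reduce to the explicit families $\gamma_{n,k}$ and $\gamma_{n,k,l}$.

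Within the correct framework the shape of your remaining steps is roughly right but the details differ. The candidate for the second minimum is the path $\gamma_{n,K_n-1}$ (not an insertion-modified copy of the minimal loop), and the hypothesis $n\not\equiv 4\bmod 6$ is consumed to get $\gcd(n-1,K_n-1)=1$: this is what makes $V_{n,K_n-1}$ primitive (so the candidate is actually realized by a pseudo-Anosov map) and what yields, via the rome technique of Lemma~\ref{lm:a}, the exponent set $\{\floor{n/3}+1,\ceil{2n/3}\}$ in the stated polynomial --- it is not merely a non-collision condition on exponents. The comparison step is then the finite list of inequalities $\theta_{n,k}>\theta_{n,K_n-1}$ for $k\le K_n-2$ (after replacing $(n,k)$ by $(n',k')$ when $\gcd(n-1,k)>1$) and $\theta_{n,K_n,l}>\theta_{n,K_n-1}$ for all $l$, proved in Proposition~\ref{compare:roots:second} by sign analysis of polynomial differences on $(\sqrt2,\infty)$ together with matrix lower bounds such as $\theta_{n,K_n,L_n+1}>3^{1/2}$ and $\theta_{n,K_n,L_n+2}>6^{1/4}$ (Lemma~\ref{lm:comparing:matrix}); this is where $n\ge 18$ is used, with a few small cases checked directly. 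Your proposal anticipates this last difficulty correctly, but without the symmetric construction and the ZRL reduction there is no way to set it up.
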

\par
As mentioned above, one possible way to tackle Theorem~\ref{thm:main} is by the use of the Rauzy--Veech induction. It is now
well established that closed loops in the Rauzy diagram $\mathcal{D}_n$ furnishes pseudo-Anosov
maps {\em fixing a separatrix} (see Section~\ref{sec:rauzy}). Hence the ``usual'' construction is not sufficient to capture all relevant maps. 
In~\cite{BL12}, it is shown that the square of \emph{any} pseudo-Anosov maps fixes 
a separatrix, up to adding a regular point. The cost to pay is that this produces very complicated Rauzy diagrams and the computation of the precise systole seems
out of reach with this method.

We will propose a new construction in order to solve these two difficulties at
the same time. 
The proof of our Main Theorem is divided into three parts of different nature: geometric, dynamical, combinatoric.

\subsection{Geometric part}
In this paper we propose a new construction of pseudo-Anosov map, 
denoted the \emph{Symmetric Rauzy-Veech construction}. One key of this construction is the following
definition.
\begin{Definition}
A pseudo Anosov homeomorphism is positive (respectively, negative) if it fixes (respectively, reverses)  the orientation of the invariant measured foliations.
\end{Definition}
The classical Rauzy--Veech construction associates to any closed path $\gamma$ in the Rauzy diagram a non negative matrix $V(\gamma)$. If this matrix is {\em primitive} (we will also say Perron--Frobenius), \emph{i.e.} a power of $V(\gamma)$ has all its coefficients greater than zero, we obtain a positive pseudo-Anosov map $\pA(\gamma)$ with 
expansion factor $\rho(V(\gamma))$.

In Section~\ref{sec:new:construction} we will associate to any non-closed path $\gamma$ in a given Rauzy diagram $\mathcal{D}$ connecting a permutation $\pi$ to its symmetric $s(\pi)$ (see Section~\ref{sec:new:construction}), a matrix $V(\gamma)$. If the matrix is {\em primitive} we obtain a
negative pseudo-Anosov map $\pA(\gamma)$ with expansion factor $\rho(V(\gamma))$. 

The converse holds in the following sense (see Theorem~\ref{thm:construction:converse} and Proposition~\ref{all:is:SRV}).
\begin{NoNumberTheorem}[Geometric Statement]\
\begin{enumerate}
\item Any affine negative pseudo-Anosov map $\pA$ on a translation surface $S$ that fixes a point is obtained by the Symmetric Rauzy-Veech construction.
\item Any affine  pseudo-Anosov $\pA$ on a translation surface $S\in \mathcal{C}_n^{hyp}$ is obtained as above, up to replacing $\pA$ by $\tau\circ \pA$, where $\tau $ is the hyperelliptic involution. 
\end{enumerate}
\end{NoNumberTheorem}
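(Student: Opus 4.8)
The plan is to piggy-back on the classical dictionary of Veech and Rauzy--Veech between \emph{positive} pseudo-Anosov maps fixing a horizontal separatrix and closed loops with primitive matrix in a Rauzy diagram, and to absorb the orientation reversal of a negative map into the combinatorial symmetry $s$ of the diagram. Part~(2) will then be deduced from Part~(1) by composing, if necessary, with the hyperelliptic involution $\tau$, which is negative and central.

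For Part~(1), I would start from an affine negative pseudo-Anosov $\pA$ on $S$ fixing a point $P$; if $P$ is regular, declare it a marked point of order $0$. Normalise the flat structure so that the contracting foliation of $\pA$ is horizontal, so that $D\pA=-\left(\begin{smallmatrix}\lambda^{-1}&0\\0&\lambda\end{smallmatrix}\right)$ with $\lambda=\lambda(\pA)$. Being negative, $\pA$ interchanges the positive and negative horizontal separatrices issuing from $P$, so it fixes no separatrix, although $\pA^2$ fixes one. Pick a short horizontal segment $I=[P,R)$ transverse to the vertical foliation and let $T$ be the first-return interval exchange of the vertical foliation to $I$, with combinatorial datum a permutation $\pi$ in the appropriate Rauzy diagram $\mathcal D$ --- equal to $\mathcal D_n$ when $S\in\mathcal C_n^{\mathrm{hyp}}$. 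Running Rauzy--Veech induction on $T$, the crucial point is that the contracted interval $\pA(I)$, once the horizontal orientation of the surface is reversed, is canonically a renormalisation of $(I,T)$: by $\pA$-equivariance the datum reached at scale $|I|/\lambda$ must be the image of $\pi$ under the orientation-reversing involution, namely $s(\pi)$. This produces the desired path $\gamma$ from $\pi$ to $s(\pi)$; the symmetrised Rauzy--Veech cocycle along $\gamma$ is exactly the matrix $V(\gamma)$, its Perron eigenvalue is $\lambda$, and reading the construction backwards identifies $\pA$ with $\pA(\gamma)$. That $V(\gamma)$ is primitive is forced by $\pA$ being pseudo-Anosov --- equivalently by minimality and unique ergodicity of the invariant foliations --- through the standard characterisation recalled in Section~\ref{sec:rauzy}.

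For Part~(2), let $\pA$ be any affine pseudo-Anosov on $S\in\mathcal C_n^{\mathrm{hyp}}$. Its linear part is hyperbolic, hence of trace $>2$ (so $\pA$ is positive) or of trace $<-2$ (so $\pA$ is negative); since $D\tau=-\mathrm{Id}$ and $\tau$, being canonical, commutes with every affine automorphism of $S$, we have $D(\tau\pA)=-D\pA$, so exactly one of $\pA$ and $\tau\pA$ is negative --- call it $\psi$. By the Kontsevich--Zorich classification $S$ has one zero if $n=2g$, and two zeros of equal order if $n=2g+1$. In the first case $\psi$ fixes the unique zero and Part~(1) applies. In the second case $\psi$ and $\tau$ both preserve the pair of zeros; if $\psi$ fixes each of them we again apply Part~(1), and otherwise $\psi$ exchanges them, and I would pass to the quotient $S/\tau\cong\P^1$, on which $\psi$ induces a pseudo-Anosov preserving the set of $2g+2$ branch points. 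Examining its $1$-prong singularities, all of which lie over branch points, one extracts a $\psi$-fixed Weierstrass point of $S$; marking it and invoking Part~(1) finishes the proof.

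The hard part will be the bookkeeping in Part~(1): checking that the Rauzy--Veech path produced by a map that fixes \emph{no} separatrix closes up precisely on the symmetric permutation $s(\pi)$, and that the symmetrised cocycle is exactly the matrix $V(\gamma)$ of the Symmetric Rauzy--Veech construction. This is the genuinely new ingredient relative to the classical positive case. A secondary difficulty is the last case of Part~(2), where $\psi$ exchanges the two zeros and a fixed point has to be produced by hand on the quotient sphere.
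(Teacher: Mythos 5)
Your reduction of Part~(1) to ``$\pA$-equivariance of Rauzy--Veech induction'' skips precisely the step that carries the content, and the specific setup you propose does not work. With $I=[P,R)$ anchored at the fixed point on its left, the negative map sends $I$ to a segment lying on the \emph{other} side of $P$, so $\pA(I)$ is not a subinterval of $I$ and is not reached from $(I,T)$ by (right) Rauzy induction; reversing the horizontal orientation does not repair this by itself. In the paper the transversal is the horizontal part of $L\cup\pA(L)$ for a \emph{suitable curve} $L$ (a horizontal segment from $P$ followed by a vertical segment ending at a singularity, embedded, with $L$ and $\pA(L)$ disjoint in their interiors); then $P$ lies in the \emph{interior} of the base segment, $\pA(I)\subset I$ with the same left endpoint, and Proposition~\ref{base:segment:to:weak:sd} converts this into a path from $\pi$ to $s(\pi)$, in the formalism of \emph{weak} suspension data (the interval is not attached to a singularity, which is exactly why the classical suspension formalism is insufficient). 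Producing a suitable curve is the whole proof of Theorem~\ref{thm:construction:converse} (minimal height and immersed rectangle, replacement of $L$, and a change of the fixed point in the universal cover); you explicitly defer this (``the hard part will be the bookkeeping''), so the key step of Part~(1) is missing. Note also that declaring the regular fixed point a marked zero of order $0$ is contrary to the construction: the converse theorem keeps the point regular (and requires a \emph{regular} fixed point), and marking it would change the Rauzy diagram, defeating the later reduction to $\mathcal D_n$.

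Your deduction of Part~(2) also breaks. What Part~(1) actually provides (Theorem~\ref{thm:construction:converse}, consistent with Proposition~\ref{prop:construction:path}) is a converse for negative maps with a \emph{regular} fixed point, so ``$\psi$ fixes the unique zero and Part~1 applies'' is a gap, and likewise for the case where $\psi$ fixes both zeros of $\mathcal H(g-1,g-1)$. Worse, in the case where $\psi$ exchanges the zeros, the plan to extract a $\psi$-fixed Weierstrass point from the quotient sphere cannot work in general: by the second item of Proposition~\ref{all:is:SRV}, when $\pA^+$ is not given by the usual Rauzy--Veech construction (the case of interest for small dilatations) $\pA^-$ has exactly two regular fixed points and they are \emph{interchanged} by $\tau$; hence no Weierstrass point is fixed and the induced map on $\P^1$ need not fix any of the $2g+2$ simple poles. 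Moreover, a fixed point of the induced map on the sphere lifts only to a point fixed by $\psi$ \emph{or} by $\tau\circ\psi$, so even a Lefschetz argument downstairs does not directly yield a $\psi$-fixed point. The paper avoids all of this by applying the Lefschetz formula upstairs together with $\mathrm{Tr}((\tau\circ\pA)_*)=-\mathrm{Tr}(\pA_*)$ and the index bounds (index $\le 1$ at a fixed point, negative when a separatrix is fixed), which produces at least two \emph{regular} fixed points of $\pA^-$ in every case; this is the ingredient your argument is missing.
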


\subsection{Dynamical part}
From now on, we assume that the underlying connected component is hyperelliptic. 
The theorem above reduces our problem to a combinatorial problem on graphs. However the new problem is still very complicated since the complexity of paths is too large. 
To bypass this difficulty we introduce a renormalization process (ZRL for Zorich Right Left induction), analogously to the Veech--Zorich and Marmi--Moussa--Yoccoz acceleration of the Rauzy induction, see Section~\ref{sec:reduce}. 
This allows us to reduce  considerably the range of paths to analyze in the Rauzy diagram $\mathcal D_n$. 

The particular shape of $\mathcal{D}_n$ will be strongly used. We will usually refer to the permutation $\pi_n$ as the \emph{central permutation}. For any permutation $\pi\in \mathcal{D}_n$, there is a unique shortest path from $\pi_n$ to $\pi$, that can be expressed as a word $w$ with letters `$t$' and `$b$'. We will often write this permutation as $\pi_n.w$.
%We will identify the vertices of $\mathcal{D}_n$ with these words. 
The loop in $\mathcal{D}_n$ composed by the vertices $\{\pi_n.t^k\}_{k=0,\dots,n-2}$ will be referred to as the \emph{central loop}.

In the next, an \emph{admissible path} is a path in $\mathcal{D}_n$ from a permutation $\pi$ to $s(\pi)$ whose corresponding matrix $V(\gamma)$ is primitive. The path is \emph{pure} if the corresponding pseudo-Anosov map is not obtained by the usual Rauzy--Veech construction. This definition is motivated by Proposition~\ref{prop:fix:sing} and the main result of \cite{BL12}: pseudo-Anosov homeomorphism in hyperelliptic connected components with small expansion factors are \emph{not} obtained by the usual Rauzy--Veech construction.

We show (see Theorem~\ref{thm:main:reduction}):

\begin{NoNumberTheorem}[Dynamical Statement]
There exists a map (denoted by $ZRL$) defined on the set of pure admissible paths on $\mathcal{D}_n$ satisfying the following property: 
The paths $\gamma$ and $ZRL(\gamma)$ define the same pseudo-Anosov homeomorphism up to conjugacy, and there exists an iterate $m\geq 0$ such that $\gamma'=ZRL^{(m)}(\gamma)$ is an admissible path starting from a permutation $\pi$ that belongs to the central loop. In addition the first step of $\gamma'$ is of type 'b'. %and the starting point of $\gamma'$ is $t^k$ for some $k\in \{1,\dots ,\floor{\frac{n}{2}}-1\}$.
\end{NoNumberTheorem}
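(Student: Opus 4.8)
The plan is to obtain $ZRL$ by applying a Zorich-type acceleration to an \emph{elementary rotation} of paths that the Symmetric Rauzy--Veech construction of Section~\ref{sec:new:construction} makes transparent, and then to push the starting permutation of a path onto the central loop. First I would isolate the elementary move. If $\gamma$ runs from $\pi$ to $s(\pi)$ with $V(\gamma)$ primitive and $e\colon\pi\to\pi'$ is its first edge, so $\gamma=e\cdot\delta$, then $s(e)$ is an edge $s(\pi)\to s(\pi')$ and $\delta\cdot s(e)$ is again a path from $\pi'$ to $s(\pi')$; since $\pA(\gamma)$ is ``renormalize along $\gamma$, then apply $s$'', the renormalization attached to the single edge $e$ conjugates $\pA(e\cdot\delta)$ to $\pA(\delta\cdot s(e))$---this is how the construction behaves under rotating $\gamma$. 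I then define $ZRL$ by cycling a whole maximal block of constant type at once: writing $\gamma=t^{a}\cdot\delta$ (respectively $\gamma=b^{a}\cdot\delta$), with $t^{a}$ its first maximal block, $a\geq 1$, set $ZRL(\gamma):=\delta\cdot s(t^{a})$, a path from $\pi.t^{a}$ to $s(\pi.t^{a})$; the two admissible colours of the block are the `$R$' and `$L$' of the name. Being a composition of $a$ elementary rotations, $ZRL(\gamma)$ carries a pseudo-Anosov conjugate to $\pA(\gamma)$. That $ZRL$ stays inside the set of pure admissible paths is then formal: admissibility means $V(\gamma)$ is primitive, equivalently that $\pA(\gamma)$ is a genuine pseudo-Anosov, while purity means $\pA(\gamma)$ is not produced by the usual Rauzy--Veech construction, which by Proposition~\ref{prop:fix:sing} and the main result of \cite{BL12} is likewise a property of the conjugacy class; since $\pA(ZRL(\gamma))$ is conjugate to $\pA(\gamma)$, both are preserved, and the first assertion follows.

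The substance of the statement is termination, and this is the step I expect to be the main obstacle. Here I would lean on Rauzy's description of $\mathcal{D}_{n}$ (Section~\ref{sec:rauzy}, \cite{Rauzy}): the central loop $\{\pi_{n}.t^{k}\}_{k=0,\dots,n-2}$ is a single `$t$'-cycle, with ``side arms'' grafted along it, and a loop in $\mathcal{D}_{n}$ whose cocycle is primitive must meet the central loop, since a loop confined to the arms does not mix all the coordinates and so has a reducible cocycle. Attach to $\gamma$ a complexity $c(\gamma)\in\N$, say the distance in $\mathcal{D}_{n}$ from the starting permutation to the central loop (lexicographically refined if necessary), so that $c(\gamma)=0$ precisely when $\gamma$ starts on the central loop. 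The plan is then to show that $c$ is non-increasing under $ZRL$ and, using primitivity of $V(\gamma)$ to control which block an admissible path must take off a permutation lying on an arm, that $c$ cannot stay positive forever; this produces an $m\leq c(\gamma)$ with $\gamma':=ZRL^{(m)}(\gamma)$ starting on the central loop. It is this monotonicity---pinning down the first block of an admissible path that starts off the central loop, which hinges on the fine combinatorics of $\mathcal{D}_{n}$---that forms the heart of the argument.

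Finally I would arrange the first step of $\gamma'$ to be of type `$b$'. If it already is, keep that $\gamma'$; otherwise the first maximal block of $\gamma'$ is $t^{a}$ with $a\geq 1$, $\gamma'=t^{a}\cdot\delta$, and $ZRL(\gamma')=\delta\cdot s(t^{a})$ starts at $\pi'.t^{a}$, which again lies on the central loop because that loop is closed under the `$t$'-move. The first step of $ZRL(\gamma')$ is the first step of $\delta$, which is `$b$' by maximality of the block $t^{a}$, provided $\delta$ is nonempty; and $\delta$ cannot be empty, since then $\gamma'=t^{a}$ and $V(t^{a})$, being the cocycle of a path contained in the central loop, has spectral radius $1$ and so is not primitive, contradicting admissibility of $\gamma'$. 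Replacing $m$ by $m+1$ then completes the proof.
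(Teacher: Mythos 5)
Your proposal diverges from the paper at the very first step, and the divergence is fatal to the definition itself. The ``elementary rotation'' $\gamma=t^{a}\cdot\delta\mapsto \delta\cdot s(t^{a})$ is not a well-defined operation on paths of $\mathcal{D}_n$: the symmetry $s$ conjugates \emph{right} Rauzy induction into \emph{left} Rauzy induction ($\mathcal{R}_L=s\circ\mathcal{R}\circ s$), so if $e:\pi\to\pi'$ is an edge of the (right) Rauzy diagram, $s(e)$ is a left move and there is in general no right-move path of length one from $s(\pi)$ to $s(\pi')$ (for instance $s(\pi_n.t)$ has coordinates $(n-2,1)$ and is far from $\pi_n=s(\pi_n)$). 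Consequently $\delta\cdot s(t^{a})$ is not a path in $\mathcal{D}_n$, and the asserted conjugacy of $\pA(e\cdot\delta)$ with $\pA(\delta\cdot s(e))$ is not established; geometrically, cutting the base segment only on the right destroys the defining property of a base segment, namely that its left endpoint is the $\pA$-image of its right endpoint. This is exactly why the paper's ZRL is \emph{not} a rotation of $\gamma$: it is built from the identity $\pA(\zeta_\alpha)=\zeta_\beta$ for $\alpha=\pi_b^{-1}(n)$, $\beta=\pi_t^{-1}(1)$ (valid because these letters never win along a pure path), and it replaces the base segment by an accelerated combination of right induction until $\alpha$ loses followed by \emph{left} induction until $\beta$ loses (Proposition~\ref{prop:ZRL}); the new path $\gamma'$ cannot be expressed simply in terms of $\gamma$, only its starting permutation is controlled, via its coordinates (Proposition~\ref{prop:action:of:ZRL}).

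The second, equally serious, gap is termination, which you explicitly defer. No monotone complexity of the kind you propose is available: under the true ZRL the coding of the starting permutation can get longer (e.g. $(l,x,\dots,x',l')$ can become $(1,l-1,x,\dots,x',l'-1,1)$), so ``distance to the central loop'' is not non-increasing. The paper's proof is an induction $\mathcal{P}(l)$ on the first coordinate, combining the explicit combinatorial action of ZRL on coordinates with a genuinely dynamical input, Lemma~\ref{infinite:winners} (every letter wins and loses infinitely often along the ZRL orbit, proved using minimality and unique ergodicity of the vertical flow), which is what excludes infinite repetition of the steps that leave the coding essentially unchanged. Your final step also leans on the rotation definition; the paper instead observes that when the starting point is on the central loop and the first step is of type `t', ZRL fixes the starting permutation while $\lambda_b$ strictly decreases by $\lambda_a+\lambda_c$, so after finitely many iterates the first step must be of type `b'. (A minor further slip: primitivity of $V(\gamma)$ is a sufficient condition for producing a pseudo-Anosov, not an equivalent reformulation, so preservation of admissibility under ZRL needs the geometric argument that the new path is read off a new base segment for the same map, not a conjugacy-invariance argument.)
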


This results allows us to restrict the computation to paths $\gamma$ starting from the central loop. As a byproduct one can actually compute {\em all} the expansion factors less than two {\em i.e.} one can compute explicitly the finite set
$$
\{\lambda(\pA);\ \pA : S\rightarrow S,\ S\in \mathcal H_g^{\mathrm{hyp}} \textrm{ and } \lambda(\pA) < 2 \}
$$
for small values of $g$ up to $20$ (see Theorem~\ref{theoremC}). 

\subsection{Combinatoric part}
The last part of the proof deals with the estimates of the spectral radius of all the paths $\gamma$ 
starting from the central loop. We first show that this problem reduces to a problem on a finite number of paths (for a given $n\geq 4$). Then we show how to compare spectral radius $\rho(V(\gamma))$ for various $\gamma$ starting from the permutation $\pi:=\pi_n.t^{k}$ for some $k$.
At this aim, for a given $n\geq 4$, we introduce several notations.
\begin{Notation}
\label{notation:paths}
For $n\in \N$ we set $K_n=\floor{\frac{n}{2}}-1$ and $L_n=n-2-K_n$.\\

%%%%
For any $k \in \{1,\dots,K_n\}$ we define the path
$$
\begin{array}{lll}
\gamma_{n,k} :& \pi \longrightarrow s(\pi):&  b^{n-1-k}t^{n-1-2k}
\end{array}
$$
For any $k\in\{1,\dots,K_n\}$ and $l\in \{1,\dots ,2n-2-3k\}$ we define the path
$$
\gamma_{n,k,l} : \pi \longrightarrow s(\pi): \left\{
\begin{array}{lll}
b^{l}t^{n-1-k-l}b^{n-1-k-l}t^{n-1-2k} &\mathrm{if}& 1\leq l \leq n-2-k \\
b^{n-1-k}t^{l-(n-1-k)}b^{2(n-1-k)-l}t^{2n-2-3k-l} &\mathrm{if}& n-2-k < l \leq 2n-2-3k
\end{array}
\right.
$$
Loosely speaking $\gamma_{n,k}$ is the shortest path from $\pi$ to $s(\pi)$ starting with a label `b', \emph{i.e.} 
it is the concatenation of the small loop attached to $\pi$ labelled by  `b' followed by the shortest path from $\pi$ to $s(\pi)$. 
Similarly, $\gamma_{n,k,l}$ is a path joining $\pi$ to $s(\pi)$  obtained from $\gamma_{n,k}$ by adding a loop: if $l\leq n-2-k$
then it is a `t' loop based at $\pi_n.t^kb^l$, if $l\geq n-1-k$ then it is a `b' loop based at $\pi_n.t^{l-(n-1-2k)}$ (see Figure~\ref{fig:rauzy}).
\end{Notation}
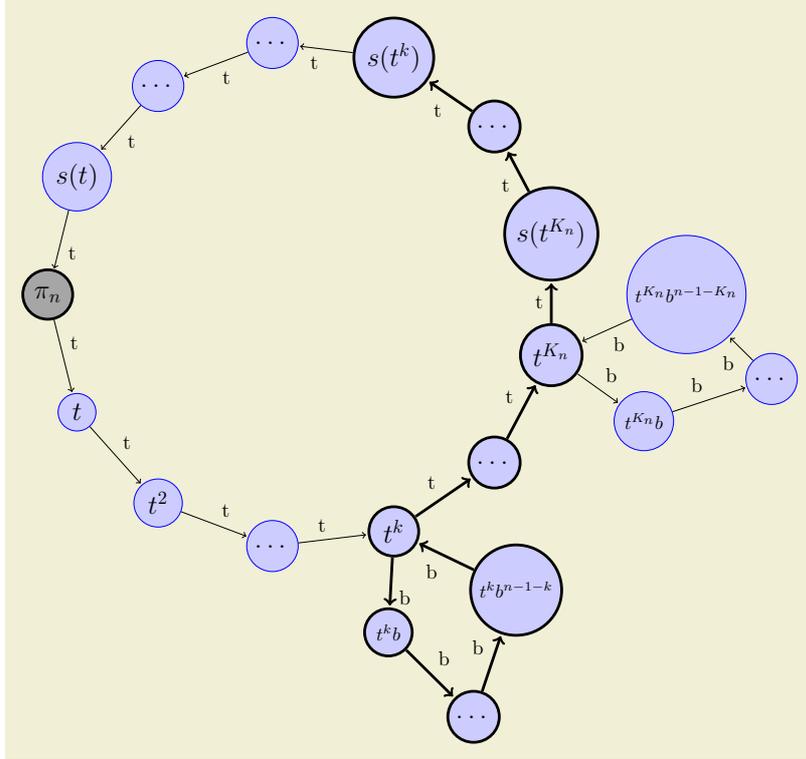
\begin{figure}[htbp]
\scalebox{0.7}{
\begin{tikzpicture}[scale=0.8,shorten >=1pt, auto, node distance=3cm, 
   edge_style/.style={draw=black,->}]

 \fill[fill=yellow!70!black!20,even   odd  rule]   (-7,-11)  rectangle (12,7);
   
% CENTRAL LOOP
\foreach [count=\i] \t in {$\pi_n$,$t$,$t^2$,$\dots$,$t^k$,$\dots$,$t^{K_n}$, $s(t^{K_n})$,$\dots$,$s(t^k)$,$\dots$,$\dots$,$s(t)$}
\ifthenelse{\i<5 \OR \i>10}
        {\node [circle,draw=blue,fill=blue!20!,font=\sffamily\Large\bfseries] (\i) at (180+\i*360/13-360/13:6cm) {\t}}
        {\node [circle,draw=black,fill=blue!20!,font=\sffamily\Large\bfseries,line width=1.5] (\i) at (180+\i*360/13-360/13:6cm) {\t}};
        
%\foreach \i/\j in {1/2,2/3,3/4,4/5,5/6,6/7,7/8,8/9,9/10,10/11,11/12,12/13,13/1}
%    \draw[edge_style]   (\i) edge node{t} (\j); 
\foreach \i/\j in {1/2,2/3,3/4,4/5,10/11,11/12,12/13,13/1}
    \draw[edge_style]   (\i) edge node{t} (\j); 

\node [circle,draw=black,fill=gray!70!,font=\sffamily\Large\bfseries,line width=1.5] at (180+1*360/13-360/13:6cm) {$\pi_n$};

%BOUCLE k
%\foreach [count=\i] \t/\x/\y in {$\scriptstyle bt^k\pi_n$/2/-8,$\dots$/4/-10,$\scriptstyle b^{?}t^k\pi_n$/5/-7}
%  \node [circle,draw=blue,fill=blue!20!,font=\sffamily\Large\bfseries]
%        (k\i) at (\x,\y) {\t};
%
%\foreach \i/\j in {5/k1,k1/k2,k2/k3,k3/5}
%    \draw[edge_style]   (\i) edge node{b} (\j); 

%BOUCLE K
\foreach [count=\i] \t/\x/\y in {$\scriptstyle t^{K_n}b$/8/-3,$\dots$/11/-2,$\scriptstyle t^{K_n}b^{n-1-K_n}$/9/0}
  \node [circle,draw=blue,fill=blue!20!,font=\sffamily\Large\bfseries]
        (K\i) at (\x,\y) {\t};

\foreach \i/\j in {7/K1,K1/K2,K2/K3,K3/7}
    \draw[edge_style]   (\i) edge node{b} (\j); 

%PATH GAMMA_{n,k}
\foreach [count=\i] \t/\x/\y in {$\scriptstyle t^kb$/2/-8,$\dots$/4/-10,$\scriptstyle t^kb^{n-1-k}$/5/-7}
  \node [circle,draw=black,fill=blue!20!,font=\sffamily\Large\bfseries,line width=1.5]
        (k\i) at (\x,\y) {\t};
\foreach \i/\j in {5/k1,k1/k2,k2/k3,k3/5}
    \draw[edge_style,line width=1.5]   (\i) edge node{b} (\j); 

%\foreach [count=\i] \t in {$\scriptstyle \pi_n$,$\scriptstyle t\pi_n$,$\scriptstyle t^2\pi_n$,$\dots$,$\scriptstyle t^k\pi_n$,$\dots$,$\scriptstyle t^{K_n}\pi_n$,
%$s(\scriptstyle t^{K_n}\pi_n)$,$\dots$,$s(\scriptstyle t^k\pi_n)$,$\dots$,$\dots$,$s(\scriptstyle t\pi_n)$}
%  \node [circle,draw=blue,fill=blue!20!,font=\sffamily\Large\bfseries]
%        (\i) at (180+\i*360/13-360/13:6cm) {\t};
        
\foreach \i/\j in {5/6,6/7,7/8,8/9,9/10}
    \draw[edge_style,line width=1.5]   (\i) edge node{t} (\j);

\end{tikzpicture}
}
\caption{
\label{fig:rauzy}
Half of the Rauzy diagram. In bold the path $\gamma_{n,k}$.
}
\end{figure}

For any path $\gamma$ in the Rauzy diagram, we will denote by $\theta(\gamma)$ the 
maximal real eigenvalue of the matrix $V(\gamma)$. We will also use the notations $\theta_{n,k}$ (respectively, $\theta_{n,k,l}$) for $\theta(\gamma_{n,k})$ (respectively, $\theta(\gamma_{n,k,l})$).
Observe that the matrix $V(\gamma)$ is not necessarily primitive, but the spectral radius is still an 
eigenvalue by the Perron--Frobenius theorem. We show (see Proposition~\ref{prop:reduce:to:thetank}, Lemma~\ref{lm:reduce:gamma:nk} and Proposition~\ref{prop:n:odd:nKl}):

\begin{NoNumberTheorem}[Combinatoric Statement]
Let $n\geq 4$ be any integer. The followings hold.
\begin{enumerate}
\item Let $\gamma$ be an admissible path starting from $\pi=\pi_n.t^{k}$, for some $k\leq n-1$, such that the first step is `b'. We assume that $\theta(\gamma)<2$. Then,  $k\leq K_n$ and  $\theta(\gamma) \geq \theta_{n,k}$.
Furthermore, if $\gamma\neq \gamma_{n,k}$ then there exists $l\in\{1,\dots,2n-2-3k\}$ such that
$$
\theta(\gamma) \geq \theta_{n,k,l}.
$$
If $V(\gamma_{n,k})$ is not primitive, we can also assume that $l\neq n-1-k$ in the previous statement.
\item For any $k\in\{1,\dots,K_n\}$, let $d=\gcd(n-1,k)$. If $n'=\frac{n-1}{d}+1$ and $k'=\frac{k}{d}$ then the matrix $V_{n',k'}$ is primitive and $\theta_{n,k} = \theta_{n',k'}$.
\item Let $n\equiv 3 \mod 4$ and $l\in \{1,\dots,L_n+3\}$. The matrix
$V_{n,K_n,l}$ is primitive if and only if $l$ is odd. Moreover if $l$ is even then 
$\theta_{n,K_n,l} = \theta_{n',K_{n'},l'}$ with $n'=(n+1)/2$, $l'=l/2$ and $K_{n'}=K_n/2$.
\end{enumerate}
\end{NoNumberTheorem}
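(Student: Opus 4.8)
The plan is to treat the three items separately, since they are of distinct nature: item (1) is the genuine combinatorial heart, while items (2) and (3) are arithmetic reductions that exploit the self-similarity of the diagram $\mathcal{D}_n$ under a "divisibility" scaling.

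For item (1), I would start from the Dynamical Statement, which already tells us that it suffices to consider admissible paths $\gamma$ starting from a permutation $\pi_n.t^k$ on the central loop whose first step is of type `b'. The first task is to bound $k$: I would analyze the matrix $V(\gamma)$ associated to the prefix $b^j$ of the path (the `b'-loop attached at $\pi_n.t^k$) and show that if $k > K_n = \floor{n/2}-1$, then $\theta(\gamma)$ is forced to be $\geq 2$, contradicting the hypothesis $\theta(\gamma)<2$; geometrically this is because too large a $k$ produces too long a `b'-loop, and the spectral radius of the Rauzy matrix grows past $2$. Next, the inequality $\theta(\gamma)\geq \theta_{n,k}$ should follow from a monotonicity principle: the path $\gamma_{n,k}$ (with word $b^{n-1-k}t^{n-1-2k}$) is, by the Notation, the \emph{shortest} path from $\pi$ to $s(\pi)$ beginning with `b', so the matrix of any other such admissible $\gamma$ dominates $V(\gamma_{n,k})$ coefficientwise up to conjugation/combinatorial manipulation of loops, and coefficientwise domination of non-negative matrices forces $\rho(V(\gamma))\geq\rho(V(\gamma_{n,k}))=\theta_{n,k}$. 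The second half of (1) — that if $\gamma\neq\gamma_{n,k}$ then $\gamma$ must in fact contain one of the distinguished extra loops, i.e. $\theta(\gamma)\geq\theta_{n,k,l}$ for some admissible $l$ — is where the real work lies: I would argue that any admissible path from $\pi$ to $s(\pi)$ that is not $\gamma_{n,k}$ itself must, after applying the $ZRL$ reduction and discarding non-primitive or spectral-radius-increasing detours, factor through $\gamma_{n,k}$ with one additional elementary loop inserted, and that every such insertion is (combinatorially equivalent to) a $\gamma_{n,k,l}$ for $l$ in the stated range; the positions $l\leq n-2-k$ correspond to `t'-loops on the `b'-branch, and $l\geq n-1-k$ to `b'-loops on the central branch, exactly as described after the Notation. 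The caveat about $l\neq n-1-k$ when $V(\gamma_{n,k})$ is not primitive is handled by observing that $l=n-1-k$ reinserts precisely the loop that degenerates $V(\gamma_{n,k})$, so it yields nothing new.

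For item (2), the key observation is that when $d=\gcd(n-1,k)>1$, the matrix $V_{n,k}$ associated to $\gamma_{n,k}$ has a block-circulant / periodic structure with period dictated by $d$: the word $b^{n-1-k}t^{n-1-2k}$ "wraps around" the central loop of length $n-1$ exactly $k/d$ times relative to a fundamental domain of size $(n-1)/d$. I would make this precise by exhibiting an explicit change of basis (a quotient by the $\Z/d$ action) identifying the non-trivial spectral part of $V_{n,k}$ with $V_{n',k'}$ where $n'=(n-1)/d+1$ and $k'=k/d$; since $\gcd(n'-1,k')=1$, primitivity of $V_{n',k'}$ follows from the standard criterion (a non-negative matrix whose associated graph is strongly connected and aperiodic is primitive), and the remaining eigenvalues of $V_{n,k}$ are roots of unity times those of $V_{n',k'}$, so the spectral radii agree: $\theta_{n,k}=\theta_{n',k'}$.

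Item (3) is a refinement of the same idea in the borderline case $n\equiv 3\bmod 4$, $k=K_n=(n-1)/2-?$; here $n-1$ is even, and I would show that $V_{n,K_n,l}$ has an obvious $\Z/2$-symmetry making it imprimitive precisely when the inserted loop respects that symmetry, which happens exactly for $l$ even — one computes that for $l$ odd the loop breaks the parity and the graph becomes aperiodic, hence $V_{n,K_n,l}$ is primitive. When $l$ is even, quotienting by the involution halves all the relevant parameters simultaneously, giving $\theta_{n,K_n,l}=\theta_{n',K_{n'},l'}$ with $n'=(n+1)/2$, $l'=l/2$, $K_{n'}=K_n/2$, by the same eigenvalue-matching argument as in (2).

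I expect the main obstacle to be the second half of item (1): controlling \emph{all} admissible non-$\gamma_{n,k}$ paths and showing each is dominated by some $\gamma_{n,k,l}$ requires a careful case analysis of where extra loops can be attached along the reduced path, and ruling out that a cleverly placed long detour could sneak below $\theta_{n,k,l}$ for every $l$ — this is precisely the kind of combinatorial bookkeeping the $ZRL$ renormalization was introduced to tame, but the endgame estimates comparing the characteristic polynomials will still be delicate. Items (2) and (3), by contrast, are clean applications of the representation theory of cyclic symmetries of non-negative matrices and should be routine once the block structure is written down.
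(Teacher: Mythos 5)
The main gap is your treatment of the bound $k\le K_n$ in item (1). You propose to force $\theta(\gamma)\ge 2$ for $k>K_n$ by a spectral estimate on the prefix $b^j$, ``because too large a $k$ produces too long a `b'-loop''. This is backwards and not the right mechanism: the `b'-loop attached at $\pi_n.t^k$ has length $n-1-k$, so it gets \emph{shorter} as $k$ grows, and a spectral estimate on a non-closed prefix does not bound $\theta(\gamma)$ from below in any case (the monotonicity principle applies to paths with the same endpoints differing by inserted closed loops, as in Proposition~\ref{prop:reduce:to:sub:path}, not to prefixes). The actual argument is structural and dynamical: if $k>(n-1)/2$ then $s(\pi)$ lies ``before'' $\pi$ on the central loop, so any admissible path from $\pi$ to $s(\pi)$ must pass through the central permutation $\pi_n$ (the borderline case $k=(n-1)/2$, where $\pi=s(\pi)$ up to relabelling, is handled separately by noting that the letter $n$ must be a winner, again forcing passage through $\pi_n$); then Proposition~\ref{prop:centralpermutation} shows the corresponding homeomorphism, composed with the hyperelliptic involution, is obtained by the usual Rauzy--Veech construction, and Proposition~\ref{prop:fix:sing} (the bound of~\cite{BL12} for maps fixing a separatrix) yields $\theta\ge 2$. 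Without this geometric input the step fails. By contrast, your argument for $\theta(\gamma)\ge\theta_{n,k}$ and for the reduction to some $\gamma_{n,k,l}$ is essentially the paper's: the cut-vertex structure of $\mathcal{D}_n$ forces $\gamma_{n,k}\le\gamma$ (and, when $V(\gamma_{n,k})$ is not primitive, $\gamma_{n,k,l}\le\gamma$ for some $l$, with $l=n-1-k$ excluded because that path just repeats the same `b'-loop and is again imprimitive), and entrywise domination of nonnegative matrices does the rest; note that ZRL plays no role at this stage.

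Your mechanism for item (2) is also not correct as stated: you claim the non-Perron eigenvalues of $V_{n,k}$ are roots of unity times those of $V_{n',k'}$, which is dimensionally impossible since $V_{n,k}$ has size $n$, $V_{n',k'}$ has size $n'=\frac{n-1}{d}+1$, and $(d-1)n'\neq n-n'$ unless $d=1$; there is no genuine block-circulant $\Z/d$-structure to quotient by. What the paper does (Lemma~\ref{lm:reduce:gamma:nk}) is an explicit computation of $V_{n,k}$ in a labelling adapted to $k$: for $d>1$ the matrix is block lower-triangular with top-left block $V_{n',k'}$ and a \emph{permutation matrix} in the complementary diagonal block, whence $\theta_{n,k}=\theta_{n',k'}$; for $d=1$ the explicit matrix is irreducible with a nonzero diagonal entry, which gives the primitivity that your appeal to the ``standard criterion'' still leaves to be verified. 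Item (3) is of the same nature (the residual $d=2$ periodicity when $n\equiv 3 \bmod 4$): the paper obtains the even/odd dichotomy and the identification $\theta_{n,K_n,l}=\theta_{n',K_{n'},l'}$ from the explicit matrices $V_{n,K_n,l}$ and the rome computation of their characteristic polynomials (Proposition~\ref{prop:n:odd:nKl}), not from an abstract $\Z/2$-quotient. So the spirit of your reductions is right, but the concrete block structure must actually be exhibited, and the key ingredient ruling out $k>K_n$ is missing.
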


\subsection{Proof of the Main Theorem}

Theorems~\ref{thm:main} and ~\ref{thm:main:second} will follow from Theorem~\ref{thm:n:even}, Theorem~\ref{thm:n:3:mod4} and Theorem~\ref{thm:n:1:mod4} below.

\begin{Theorem}
\label{thm:n:even}
The following holds:
\begin{enumerate}
\item If $n\geq 4$ is even then $L(\mathrm{Spec}(\mathcal C_n^{\mathrm hyp}))=\log(\theta_{n,K_n})$.
\item If $n\not \equiv 4 \mod 6$ and $n\geq 18$ is even then the second least element of $\mathrm{Spec}(\mathcal C^{\mathrm{hyp}}_n)$ is $\log(\theta_{n,K_n-1})$.
\end{enumerate}
Moreover the conjugacy mapping class realizing the minimum is unique.
\end{Theorem}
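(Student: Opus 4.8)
The plan is to derive Theorem~\ref{thm:n:even} by combining the three "Statement" packages (Geometric, Dynamical, Combinatoric) with the explicit eigenvalue analysis of the family $\gamma_{n,k}$. First I would invoke the Geometric and Dynamical Statements to reduce the computation of $L(\mathrm{Spec}(\mathcal C_n^{\mathrm{hyp}}))$ to a search over admissible paths $\gamma$ starting from a permutation $\pi=\pi_n.t^k$ on the central loop, whose first step is of type `b'. Since $\sqrt2<L(\mathrm{Spec}(\mathcal C_n^{\mathrm{hyp}}))<\sqrt2+2^{1-g}<2$ by \cite{BL12}, any candidate $\gamma$ realizing (or beating) the systole satisfies $\theta(\gamma)<2$, so part (1) of the Combinatoric Statement applies: $k\le K_n$ and $\theta(\gamma)\ge\theta_{n,k}$, with $\theta(\gamma)\ge\theta_{n,k,l}$ for some admissible $l$ whenever $\gamma\ne\gamma_{n,k}$.

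Next I would carry out the elementary but essential monotonicity estimates on the finite family $\{\theta_{n,k}\}_{1\le k\le K_n}$ and on the $\theta_{n,k,l}$. Writing down the characteristic polynomial of $V(\gamma_{n,k})$ from the word $b^{n-1-k}t^{n-1-2k}$ — this is where I expect the bulk of the computation — one gets (after the primitivity reduction of part (2) of the Combinatoric Statement, i.e. replacing $(n,k)$ by $(n',k')$ with $d=\gcd(n-1,k)$) a polynomial of the shape $X^{n+1}-2X^{n-1}-2X^2+1$ when $k=K_n$ and $n$ even (so that $\gcd(n-1,K_n)=\gcd(n-1,n/2-1)=1$ and no reduction occurs). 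I would then prove two comparisons: (a) $k\mapsto\theta_{n,k}$ is strictly decreasing in $k$ on $\{1,\dots,K_n\}$, so the minimum over the $\gamma_{n,k}$ is attained at $k=K_n$; and (b) every $\theta_{n,k,l}$ exceeds $\theta_{n,K_n}$, so that adding any extra loop strictly increases the dilatation above the systole candidate. Both (a) and (b) are done by the standard trick of evaluating the relevant characteristic polynomials (which are of "$\pm1$ alternating" Perron type) at the competing root and checking a sign, using that these polynomials are monotone to the right of their largest real root. This pins down $L(\mathrm{Spec}(\mathcal C_n^{\mathrm{hyp}}))=\log\theta_{n,K_n}$ and, since $\gamma_{n,K_n}$ is the unique path achieving it among all admissible paths from the central loop (no loop can be added without increasing $\theta$, and $k=K_n$ is forced), gives uniqueness of the conjugacy class — modulo checking that the two paths $\gamma_{n,k}$ and its mirror give conjugate maps, which is built into the Dynamical Statement's "same pseudo-Anosov up to conjugacy".

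For part (2), the second-least element, the same reduction applies but now I must look at the \emph{second} smallest value over all admissible paths. By (a) the next candidate from the pure family $\{\gamma_{n,k}\}$ is $\gamma_{n,K_n-1}$, with $\theta_{n,K_n-1}$; by part (2) of the Combinatoric Statement this needs $\gcd(n-1,K_n-1)=\gcd(n-1,n/2-2)$ to be controlled — here the hypothesis $n\not\equiv4\bmod 6$ enters precisely to guarantee $d=\gcd(n-1,K_n-1)\in\{1,?\}$ behaves so that $\theta_{n,K_n-1}$ is not accidentally equal to some already-listed value (this congruence condition is exactly what rules out the degenerate reductions $n'<n$ that would merge the second candidate with the first, or drop it below). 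I would then need to show that $\theta_{n,K_n-1}$ is beaten by \emph{no} path with an added loop, i.e. $\theta_{n,K_n-1}<\theta_{n,K_n,l}$ for all admissible $l$ and $\theta_{n,K_n-1}<\theta_{n,k,l}$ for $k<K_n$; this is again a finite set of polynomial-sign comparisons, and the bound $n\ge18$ is what makes the asymptotic estimates on the roots (which cluster near $\sqrt2$) clean enough to separate the finitely many exceptional small cases from the generic inequality.

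The main obstacle, as in all such least-dilatation results, is step (b) together with its analogue for part (2): controlling \emph{all} the $\theta_{n,k,l}$ uniformly and showing none of them undercuts $\theta_{n,K_n}$ (resp. $\theta_{n,K_n-1}$). The number of admissible $l$ grows linearly in $n$, and while each individual comparison is a one-line sign check, organizing them into a uniform argument — ideally by exhibiting, for each $(k,l)$, an explicit factor or an explicit evaluation point showing the characteristic polynomial of $V(\gamma_{n,k,l})$ is positive at $\theta_{n,K_n}$ — is the technical heart. I expect to handle this by first reducing via part (3) of the Combinatoric Statement (which halves $(n,K_n,l)$ when $n\equiv3\bmod4$ and $l$ even, but here $n$ is even so the relevant primitivity/reduction bookkeeping is different and must be redone), and then by a direct estimate $\theta_{n,k,l}\ge\theta_{n,k}>\theta_{n,K_n}$ for $k<K_n$, leaving only the single row $k=K_n$ to analyze loop by loop; there the explicit polynomial $X^{2g+1}-2X^{2g-1}-2X^{\lceil4g/3\rceil}-2X^{\lfloor2g/3\rfloor+1}-2X^2+1$ of Theorem B should emerge as the minimum over $l$, which is what feeds part (2).
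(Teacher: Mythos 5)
Your overall route is the paper's own: reduce via the Dynamical and Combinatoric Statements to paths $\gamma_{n,k}$ and $\gamma_{n,k,l}$ issued from the central loop with first step `b', compute the polynomials $P_{n,k}$, $P_{n,k,l}$ explicitly, and compare Perron roots by evaluating the competing polynomial at the candidate root (Lemma~\ref{compare:roots}), with the $\gcd$ reduction $(n,k)\mapsto(n',k')$ handling the non-primitive cases and strictness of all inequalities giving uniqueness. This is essentially the proof of Theorem~\ref{thm:n:even}, which relies on Lemma~\ref{lm:compare:n}, Lemma~\ref{lm:decreasing} and Proposition~\ref{compare:roots:second}.

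Two points should be corrected. First, the monotonicity you assert in (a) is only proved (and only needed) for indices coprime to $n-1$: when $d=\gcd(n-1,k)>1$ one does not compare $\theta_{n,k}$ with neighbouring $k$ at fixed $n$, but writes $\theta_{n,k}=\theta_{n',k'}>\theta_{n',K_{n'}}$ and then needs the cross-$n$ comparison $\theta_{n',K_{n'}}>\theta_{n,K_n}$ (the decreasing sequence of Lemma~\ref{lm:decreasing}); this is still a sign check, of $P_{n,K_n}-X^{n-n'}P_{n',K_{n'}}$, so your method covers it, but it is not literally ``$k\mapsto\theta_{n,k}$ decreasing''. Second, your closing expectation is wrong: the polynomial of Theorem B is $P_{n,K_n-1}$, i.e.\ the second-least dilatation is realized by the plain path $\gamma_{n,K_n-1}$ (whose matrix is primitive precisely because $n\not\equiv 4 \bmod 6$ forces $\gcd(n-1,K_n-1)=1$), and it does \emph{not} arise as the minimum over $l$ of the loop-added paths at $k=K_n$; those satisfy the strict inequality $\theta_{n,K_n,l}>\theta_{n,K_n-1}$ for all $l\in\{1,\dots,L_n+2\}$ (for instance $\theta_{n,K_n,L_n}$ has polynomial $X^{n+1}-2X^{n-1}-X^{n-3}-X^4-2X^2+1$, and the cases $l=L_n+1,L_n+2$ are handled by the lower bounds $3^{1/2}$ and $6^{1/4}$ of Lemma~\ref{lm:comparing:matrix}). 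Fortunately the inequality you listed earlier in your third paragraph is exactly what is needed, so the plan stands once this internal inconsistency is removed.
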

\par
Letting $P_{n,k}$ the characteristic polynomial of $V(\gamma_{n,k})$ {\em multiplied} by $X+1$, by definition $\theta_{n,k}$ is the maximal real root of $P_{n,k}$. By Lemma~\ref{lm:a} we have, when $n$ is even:
$$
P_{n,K_n} =X^{n+1} - 2X^{n-1} - 2X^2 + 1
$$
and when $n\not \equiv 4 \mod 6$ and $\gcd(n-1,K_n-1)=1$:
$$
P_{n,K_n-1} = X^{n+1} - 2X^{n-1} - 2X^{\ceil{2n/3}}- 2X^{\floor{n/3}+1} - 2X^2 + 1
$$
that are the desired formulas.
\par
\begin{proof}[Proof of Theorem~\ref{thm:n:even}]
We will show that $L(\mathrm{Spec}(\mathcal C_n^{\mathrm hyp}))=\log(\theta_{n,K_n})$. Observe that $K_n=n/2-1$ thus $\gcd(n-1,K_n)=1$ and the matrix $V_{n,K_n}$ is primitive by the Combinatoric Statement. This shows $L(\mathrm{Spec}(\mathcal C_n^{\mathrm hyp}))\leq \log(\theta_{n,K_n})$. A simple computation shows $\theta_{n,K_n}<2$ (see Lemma~\ref{lm:a}).

Now by the Dynamical Statement, $L(\mathrm{Spec}(\mathcal C_n^{\mathrm hyp}))=\log(\theta(\gamma))$ for some path $\gamma$ starting from $\pi=\pi_n.t^k$ with first step of type 'b'.
By the Combinatoric Statement, $\theta(\gamma)\geq \theta_{n,k}$ for some $k\in \{1,\dots,K_n\}$. 
Thus $\theta_{n,k} \leq \theta_{n,K_n}$. We need to show that $k=K_n$. Let us assume that $k<K_n$.
\begin{enumerate}
\item If $\gcd(k,n-1)=1$ then Lemma~\ref{lm:compare:n}
implies $\theta_{n,k} > \theta_{n,K_n}$ that is a contradiction.

\item If $\gcd(k,n-1)=d>1$ then by the second point of the Combinatoric Statement, $\theta_{n,k} = \theta_{n',k'}$ where $n'=\frac{n-1}{d}+1$ and $k'=\frac{k}{d}$.
Since $n'$ is even, $k'\neq K_{n'}$ and $\gcd(k',n'-1)=1$, the previous step applies and $\theta_{n',k'}  > \theta_{n',K_{n'}}$.
By Lemma~\ref{lm:decreasing} the sequence $(\theta_{2n,K_{2n}})_n$ is a decreasing
sequence, hence $\theta_{n',K_{n'}} > \theta_{n,K_n}$. Again we run into a contradiction.
\end{enumerate}
In conclusion $k=K_n$ and $L(\mathrm{Spec}(\mathcal C_n^{\mathrm hyp}))=\log(\theta_{n,K_n})$. By construction
and since all inequality above are strict, the conjugacy mapping class realizing this minimum is unique. \medskip

We now finish the proof of the theorem with the second least dilatation. The assumption $n \not \equiv 4 \mod 6$ implies $\gcd(K_n-1,n-1)=1$. Hence $V_{n,K_n-1}$ is primitive and the second least dilatation is less than $\theta_{n,K_n-1}$. As before, a simple computation shows that $\theta_{n,K_n-1}<2$ (see Lemma~\ref{lm:a}).

Conversely, the second least dilatation equals $\theta(\gamma)$ for some admissible path $\gamma$ starting from $\pi=\pi_n.t^k$, for some $k\in\{1,\dots,K_n\}$, with a 'b' as the first step. If $k=K_n$, since $\gamma\neq \gamma_{n,K_n}$, the Combinatoric statements implies that $\theta(\gamma)\geq \theta_{n,K_n,l}$ for some $l\in\{1,\dots,2n-2-3K_n\} = \{1,\dots ,L_n+2\}$. Again the Combinatoric statements shows that if $k\leq K_n-1$ then $\theta(\gamma)\geq \theta_{n,k}$. 

The theorem will follow from the following two assertions, that are proven in Proposition~\ref{compare:roots:second}.
\begin{itemize}
\item For any $k=1,\dots,K_n-2$ one has $\theta_{n,k}>\theta_{n,K_n-1}$.
\item For any $l=1,\dots,L_n+2$ one has $\theta_{n,K_n,l} > \theta_{n,K_n-1}$.
\end{itemize}
This ends the proof of Theorem~\ref{thm:n:even}.
\end{proof}

\begin{Theorem}
\label{thm:n:1:mod4}
If $n\geq 5$ and $n\equiv 1 \mod 4$ then $L(\mathcal C_n^{\mathrm hyp})=\log(\theta_{n,K_n})$.
\end{Theorem}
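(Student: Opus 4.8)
The plan is to follow the three-step argument used for the first part of Theorem~\ref{thm:n:even} — build a pseudo-Anosov realising $\log(\theta_{n,K_n})$, then combine the Dynamical and Combinatoric Statements to rule out any cheaper admissible path — but with one structural caveat. For $n$ even the reduction $\theta_{n,k}=\theta_{n',k'}$ of the Combinatoric Statement(2) always produces an \emph{even} $n'$, so that theorem feeds only on itself; for $n\equiv1\bmod 4$, however, $n'=(n-1)/d+1$ (with $d=\gcd(k,n-1)$) can be even, $\equiv1$, or $\equiv3\bmod 4$, according to the $2$-adic valuation of $d$. I would therefore prove Theorems~\ref{thm:n:even}, \ref{thm:n:3:mod4} and \ref{thm:n:1:mod4} simultaneously, by induction on $n$, invoking the already-established instances at the strictly smaller indices $n'$ coming out of the reduction; note that $d\mid k$ with $k\le K_n-1$ forces $4\le n'<n$, so the induction is well founded, and the base case $n=5$ is trivial since then $K_5=1$ and the only admissible index is $k=1=K_5$.

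First, set $n-1=4m$, so that $K_n=\floor{n/2}-1=2m-1$ is odd and $\gcd(K_n,n-1)=\gcd(2m-1,4m)=\gcd(2m-1,2)=1$. By the Combinatoric Statement(2) the matrix $V_{n,K_n}$ is then primitive, so $\pA(\gamma_{n,K_n})$ is a genuine pseudo-Anosov on $\mathcal C_n^{\mathrm hyp}$ and $L(\mathcal C_n^{\mathrm hyp})\le\log(\theta_{n,K_n})$; Lemma~\ref{lm:a} gives both $\theta_{n,K_n}<2$ and the identification $P_{n,K_n}=X^{n+1}-2X^{n-1}-2X^{(n-1)/2+1}-2X^2+1$, i.e. the polynomial of Theorem~\ref{thm:main}. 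For the matching lower bound, the Dynamical Statement writes $L(\mathcal C_n^{\mathrm hyp})=\log(\theta(\gamma))$ for an admissible path $\gamma$ starting at $\pi=\pi_n.t^{k}$ with a `b' as first step; since $\theta(\gamma)<2$, the Combinatoric Statement(1) yields $k\le K_n$ and $\theta(\gamma)\ge\theta_{n,k}$, hence $\theta_{n,k}\le\theta_{n,K_n}$. Everything now reduces to showing $k=K_n$.

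Assume $k<K_n$. If $\gcd(k,n-1)=1$, I would quote Lemma~\ref{lm:compare:n} exactly as in the even case to obtain $\theta_{n,k}>\theta_{n,K_n}$, a contradiction. If $d=\gcd(k,n-1)>1$, the Combinatoric Statement(2) gives $\theta_{n,k}=\theta_{n',k'}$ with $n'=(n-1)/d+1<n$, $k'=k/d$, and $V_{n',k'}$ primitive; then $\pA(\gamma_{n',k'})$ is a pseudo-Anosov on $\mathcal C_{n'}^{\mathrm hyp}$, so by the inductive instance of whichever of Theorems~\ref{thm:n:even}, \ref{thm:n:3:mod4}, \ref{thm:n:1:mod4} applies to $n'$ (selected by $n'\bmod 4$, all three being proved by this same induction) we get $\theta_{n,k}=\theta_{n',k'}\ge\theta_{n',K_{n'}}$. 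Finally I would compare $\theta_{n',K_{n'}}$ with $\theta_{n,K_n}$ through the monotonicity lemma (Lemma~\ref{lm:decreasing} together with the cross-parity comparisons it subsumes), concluding $\theta_{n',K_{n'}}>\theta_{n,K_n}$ and thus again contradicting $\theta_{n,k}\le\theta_{n,K_n}$. Hence $k=K_n$ and $L(\mathcal C_n^{\mathrm hyp})=\log(\theta_{n,K_n})$; uniqueness of the realising conjugacy class follows as in the even case, using in addition that $\theta_{n,K_n,l}>\theta_{n,K_n}$ for every admissible $l$.

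The hard part is precisely this last cross-parity comparison. Unlike the even case, $n\mapsto\theta_{n,K_n}$ is \emph{not} monotone over all $n$ — already $\theta_{4,K_4}\approx 1.72<\theta_{5,K_5}\approx 1.88$ — so the slogan ``$n'<n\Rightarrow\theta_{n',K_{n'}}>\theta_{n,K_n}$'' is false in general and cannot be used as a black box. What rescues the argument is that the reduction can never land on such a ``bad'' $n'$: the constraint $d\mid k$ with $1<d\le K_n-1$ confines $n'=(n-1)/d+1$ to a narrow window, and one must check by hand that, for the finitely many shapes of cross-parity comparison that actually occur there, the desired inequality holds. Organising this case analysis — and packaging it into Lemma~\ref{lm:decreasing} and its analogues — is the only genuinely new ingredient relative to the proof of Theorem~\ref{thm:n:even}; the remainder is line-for-line the same.
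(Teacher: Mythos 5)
Your skeleton is the paper's: the upper bound from primitivity of $V_{n,K_n}$ (here $\gcd(K_n,n-1)=1$ since $n-1=4m$, $K_n=2m-1$) together with Lemma~\ref{lm:a}; the lower bound from the Dynamical and Combinatoric Statements; the case $\gcd(k,n-1)=1$ from Lemma~\ref{lm:compare:n}; and for $d=\gcd(k,n-1)>1$ the reduction $\theta_{n,k}=\theta_{n',k'}$. The genuine gap is exactly the step you flag and then defer: the strict cross-parity inequalities $\theta_{n',K_{n'}}>\theta_{n,K_n}$ when $n'$ is even and $\theta_{n',K_{n'},L_{n'}}>\theta_{n,K_n}$ when $n'\equiv 3\bmod 4$. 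Neither of your two proposed mechanisms delivers them. Lemma~\ref{lm:decreasing} compares largest roots only \emph{within} a fixed residue class ($n$ even, $n\equiv1$, $n\equiv3\bmod4$ treated as three separate decreasing sequences), so it does not ``subsume'' any cross-parity comparison; and the pairs $(n,n')$ that occur form an infinite family, not a finite list in a narrow window: $d$ runs over all divisors of $n-1$ with $2\le d\le K_n-1$, so $n'=(n-1)/d+1$ ranges from about $4$ up to $(n+1)/2$. Hence ``check by hand for the finitely many shapes'' cannot close the argument. What is actually needed — and what the paper proves as Proposition~\ref{compare:roots:n:1:mod4}, cases (2) and (3) — is a uniform polynomial estimate: one shows $P_{n,K_n}(x)-x^{n-n'}P_{n',K_{n'}}(x)>0$, respectively $P_{n,K_n}(x)-x^{n-n'}P_{n',K_{n'},L_{n'}}(x)>0$, for all $x>\sqrt2$, and concludes with Lemma~\ref{compare:roots}. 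This is where the arithmetic of $n\equiv1\bmod4$ enters: if $n'$ is even then necessarily $4\mid d$, so $n-n'$ is large enough for $3x^{n-n'}$ to dominate $2x^{(n+1)/2}+2x^2$; in the case $n'\equiv3\bmod4$ one uses Proposition~\ref{compare:roots:n:3:mod4} for the step $\theta_{n',k'}>\theta_{n',K_{n'},L_{n'}}$ and degree inequalities such as $n-n'+\frac{n'-1}{2}\ge\frac{n+1}{2}$ and $n-n'\ge4$ (forcing $n\ge13$ there). None of this appears in your write-up, so the decisive inequality remains unproved — and, as your own example $\theta_{4,K_4}<\theta_{5,K_5}$ shows, it is not a formal consequence of $n'<n$.

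Two secondary points. First, the simultaneous induction through Theorems~\ref{thm:n:even}, \ref{thm:n:3:mod4}, \ref{thm:n:1:mod4} is unnecessary: after the reduction one has $\gcd(k',n'-1)=1$, so $\theta_{n',k'}>\theta_{n',K_{n'}}$ (resp.\ $>\theta_{n',K_{n'},L_{n'}}$ when $n'\equiv3\bmod4$) follows from the purely polynomial Lemma~\ref{lm:compare:n} (resp.\ Proposition~\ref{compare:roots:n:3:mod4}), with no appeal to the systole at level $n'$ and no circularity to manage. Second, if you do route through the inductive systole statement, note that for $n'\equiv3\bmod4$ it gives $\theta_{n',k'}\ge\theta_{n',K_{n'},L_{n'}}$, and the subsequent comparison with $\theta_{n,K_n}$ should be made with $\theta_{n',K_{n'},L_{n'}}$ (as the paper does), not with $\theta_{n',K_{n'}}$ as written; either way you land back on the missing uniform estimates above.
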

\par
By Lemma~\ref{lm:a}, when $n=1 \mod 4$, we have that $\theta_{n,K_n}$ is the maximal real root of 
$$
P_{n,K_n}=X^{n+1} - 2X^{n-1} -2X^{\frac{n+1}{2}}- 2X^2 + 1.
$$
\begin{proof}[Proof of Theorem~\ref{thm:n:1:mod4}]
We follow the same strategy than that of the proof of the previous theorem. 
Namely $L(\mathcal C_n^{\mathrm hyp}) \leq \log(\theta_{n,K_n})$ since 
$V_{n,K_n}$ is irreducible (by the Combinatoric statement).

Conversely $L(\mathcal C_n^{\mathrm hyp}) = \log(\theta(\gamma))$, where $\gamma$ is an admissible path starting from $\pi_n.t^k$ for some $k\in\{1,\dots,K_n\}$, and the path starts by a `b'. 
Recall that we have shown $\theta(\gamma) \leq \theta_{n,K_n}$. 
We need to show that for any $k\leq K_n-1$, $\theta_{n,k}>\theta_{n,K_n}$. Again by
the Combinatoric statement, $\theta_{n,k}=\theta_{n',k'}$, where $n'=\frac{n-1}{d}+1$, 
$k'=\frac{k}{d}$ and $d=\gcd(n-1,k)$. By Proposition~\ref{compare:roots:n:1:mod4}
$$
\theta_{n',k'} > \theta_{n,K_n}.
$$
This finishes the proof of Theorem~\ref{thm:n:1:mod4}.
\end{proof}

\begin{Theorem}
\label{thm:n:3:mod4}
If $n\geq 7$ and $n\equiv3 \mod 4$ then $L(\mathcal C_n^{\mathrm hyp})=\log(\theta_{n,K_n,L_n})$.
\end{Theorem}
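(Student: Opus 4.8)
The plan is to run the same scheme as in the proofs of Theorems~\ref{thm:n:even} and~\ref{thm:n:1:mod4}, the essential novelty being that for $n\equiv 3\mod 4$ the extremal path is no longer $\gamma_{n,K_n}$ but the path $\gamma_{n,K_n,L_n}$ carrying one additional loop. The reason is arithmetic: here $K_n=(n-3)/2$ and $L_n=(n-1)/2$, so $\gcd(n-1,K_n)=2$ and, by part~(3) of the Combinatoric Statement (Proposition~\ref{prop:n:odd:nKl}), the matrix $V_{n,K_n}$ is \emph{not} primitive, while $V_{n,K_n,l}$ is primitive exactly for $l$ odd; in particular $V_{n,K_n,L_n}$ is primitive because $L_n$ is odd. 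The conversion of $\theta_{n,K_n,L_n}$ into the root of the polynomial of Theorem~\ref{thm:main} for this residue class is then a routine computation with Lemma~\ref{lm:a}, as in the other two cases.

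\emph{Upper bound.} Since $V_{n,K_n,L_n}$ is primitive, $\gamma_{n,K_n,L_n}$ is an admissible (indeed pure, cf.\ Proposition~\ref{prop:fix:sing}) path, and the Geometric Statement produces from it an affine pseudo-Anosov map on a surface of $\mathcal C_n^{\mathrm{hyp}}$ with expansion factor $\theta_{n,K_n,L_n}$; hence $L(\mathcal C_n^{\mathrm{hyp}})\leq\log(\theta_{n,K_n,L_n})$. A direct estimate on $P_{n,K_n,L_n}$ (Lemma~\ref{lm:a}) gives $\theta_{n,K_n,L_n}<2$, so that the Dynamical and Combinatoric Statements are applicable below.

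\emph{Lower bound.} By the Dynamical Statement, $L(\mathcal C_n^{\mathrm{hyp}})=\log(\theta(\gamma))$ for some admissible path $\gamma$ starting at $\pi=\pi_n.t^k$ with first step `b', and by the Combinatoric Statement $k\in\{1,\dots,K_n\}$ with $\theta(\gamma)\geq\theta_{n,k}$. If $k<K_n$, I reduce $\theta_{n,k}$ through part~(2) of the Combinatoric Statement to $\theta_{n',k'}$ with $d=\gcd(n-1,k)$, $n'=(n-1)/d+1$, $k'=k/d$, and then a comparison result in the spirit of Lemma~\ref{lm:compare:n} and Proposition~\ref{compare:roots:n:1:mod4}, combined with the monotonicity of Lemma~\ref{lm:decreasing}, yields $\theta_{n,k}=\theta_{n',k'}>\theta_{n,K_n,L_n}$; note that here $n'$ is even or again $\equiv 3\mod 4$ according to the parity of $d$, so this comparison must be phrased uniformly over both parity classes. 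If $k=K_n$, then $V_{n,K_n}$ is not primitive, so $\gamma_{n,K_n}$ is not admissible, hence $\gamma\neq\gamma_{n,K_n}$ and part~(1) of the Combinatoric Statement gives $\theta(\gamma)\geq\theta_{n,K_n,l}$ for some $l\in\{1,\dots,L_n+3\}$ with $l\neq n-1-K_n=L_n+1$. It then remains to prove
\[
\theta_{n,K_n,l}\ \geq\ \theta_{n,K_n,L_n}\qquad\text{for all }l\in\{1,\dots,L_n+3\}\setminus\{L_n+1\},
\]
with strict inequality unless $l=L_n$. This splits on the parity of $l$: for $l$ odd the matrix $V_{n,K_n,l}$ is primitive and one compares the explicit polynomials $P_{n,K_n,l}$ directly (a Proposition~\ref{compare:roots:second}--type estimate); for $l$ even one first reduces $\theta_{n,K_n,l}=\theta_{n',K_{n'},l'}$ with $n'=(n+1)/2$ even, $l'=l/2$, and then combines the decreasing behaviour in $n$ (Lemma~\ref{lm:decreasing}) with the even-case comparison of Theorem~\ref{thm:n:even} to fall below $\theta_{n,K_n,L_n}$. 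Together these give $\theta(\gamma)\geq\theta_{n,K_n,L_n}$ in all cases, whence $L(\mathcal C_n^{\mathrm{hyp}})=\log(\theta_{n,K_n,L_n})$; and, since every inequality above is strict away from $\gamma=\gamma_{n,K_n,L_n}$, the realizing conjugacy class is unique.

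The main obstacle is the displayed inequality governing the whole one-parameter family $(\theta_{n,K_n,l})_l$, and especially the treatment of the even values of $l$, which are invisible to a naive polynomial comparison and must be routed through the reduction to a smaller even $n'$ and then matched against the $n\equiv 3\mod 4$ target; this is where the bulk of the combinatoric work is concentrated. A secondary difficulty, absent in the even case, is that the $\gcd$-reduction in the branch $k<K_n$ can output an $n'$ that is again $\equiv 3\mod 4$, so the comparison lemmas cannot simply be bootstrapped from the even case but must be established simultaneously across the residue classes of $n$.
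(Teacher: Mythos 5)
Your proposal is correct and follows essentially the same route as the paper: upper bound from primitivity of $V_{n,K_n,L_n}$ (since $L_n$ is odd), then the Dynamical and Combinatoric Statements to reduce to $\pi_n.t^k$, the gcd reduction for $k<K_n$, and for $k=K_n$ the exclusion of $l=L_n+1$ followed by the same parity split on $l$ (odd $l$ by direct polynomial/matrix comparison, even $l$ via the reduction to $n'=(n+1)/2$ and a cross-class comparison). The auxiliary inequalities you invoke ``in the spirit of'' existing lemmas are exactly the paper's Proposition~\ref{compare:roots:n:3:mod4}, Lemma~\ref{cor:l:odd}, Lemma~\ref{lm:n:even:nK:2} and Lemma~\ref{lm:comparing:matrix}, and you correctly identify the even-$l$ cross-comparison $\theta_{n',K_{n'},L_{n'}}>\theta_{n,K_n,L_n}$ as the main remaining burden, which is where the paper indeed concentrates its appendix computations.
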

\par
Proposition~\ref{prop:n:odd:nKl} gives that $\theta_{n,K_n,L_n}$ is the maximal real root of
$$
P_{n,K_n,L_n} = X^{n+1} - 2 X^{n-1} -  4 X^{(n-1)/2+2} + 4  X^{(n-1)/2} + 2 X^2  - 1
$$
that gives the desired formula.
\par
\begin{proof}[Proof of Theorem~\ref{thm:n:3:mod4}]
We follow the strategy used in the two previous proofs. Namely $L(\mathcal C_n^{\mathrm hyp}) \leq \log(\theta_{n,K_n,L_n})$ since 
$V_{n,K_n,L_n}$ is irreducible (see the Combinatoric statement where $n\equiv3 \mod 4$ and $L_n$ is odd). \medskip

Conversely $L(\mathcal C_n^{\mathrm hyp}) = \log(\theta(\gamma))$, where $\gamma$ is an admissible path starting from $\pi_n.t^k$ for some $k\in\{1,\dots,K_n\}$, and the path starts by a `b' (recall that $\theta(\gamma) \leq \theta_{n,K_n,L_n}$).

Assume $k\leq K_n-1$. Then the Combinatoric statement shows that $\theta(\gamma)\geq \theta_{n,k}$.
Letting $n'=(n-1)/d+1$ and $k'=k/d$ where $d=\gcd(n-1,k)$ we have $\theta_{n,k}=\theta_{n',k'}$.
By Proposition~\ref{compare:roots:n:3:mod4}
$$
\theta_{n',k'} > \theta_{n,K_n,L_n}.
$$
This is a contradiction with $\theta(\gamma) \leq  \theta_{n,K_n,L_n}$. Hence we necessarily have $k=K_n$.

Now, the matrix $V_{n,K_n}$ is {\em not} primitive, hence $\gamma\neq \gamma_{n,K_n}$. Thus the Combinatoric statement implies that $\theta(\gamma)\geq \theta_{n,K_n,l}$ for some $l\in\{1,\dots,2n-2-3K_n\} = \{1,\dots ,L_n+3\}$ with $l \neq n-K_n-1=L_n+1$. We will discuss two different cases depending on the
parity of $l$ (recall that $L_n$ is odd). \medskip

\noindent {\bf Case 1. $l$ is even.} By the Combinatoric statement, one has $\theta_{n,K_n,l} = \theta_{n',K_{n'},l'}$ with 
$n'=(n+1)/2$, $l'=l/2$ and $K_{n'}=K_n/2$. If $l< L_n$ then $l' < L_{n'}$ and Lemma~\ref{lm:n:even:nK:2} applies (since $n'\geq 4$ is even):
$$
\theta_{n',K_{n'},l'} > \theta_{n',K_{n'},L_{n'}}.
$$
If $l>L_n$ then $l=L_n+3$. Again the Combinatoric Statement gives $\theta_{n,K_n,L_n+3}=\theta_{n',K_{n'},L_{n'}+2}$. In all cases, Proposition~\ref{compare:roots:n:3:mod4} applies and:
$$
\theta_{n',K_{n'},L_{n'}} > \theta_{n,K_n,L_n}
$$
running into a contradiction. \medskip

\noindent {\bf Case 2. $l$ is odd.} If $l < L_n$ then by Proposition~\ref{cor:l:odd} we  have
$\theta_{n,K_n,l} > \theta_{n,K_n,L_n}$. This is again a contradiction. The case $l>L_n$, namely $l=L_n+2$, 
is ruled out by Lemma~\ref{lm:comparing:matrix}: $\theta_{n,K_n,L_n+2} > 2 > \theta_{n,K_n,L_n}$. \medskip

\noindent In conclusion $l=L_n$ and $\gamma=\gamma_{n,K_n,L_n}$. The Main Theorem is proved.
\end{proof}
%****************************************************************
%****************************************************************
%****************************************************************
%****************************************************************
\section{Rauzy--Veech induction and pseudo-Anosov homeomorphism}
\label{sec:rauzy}

In this section, we briefly recall the notions of interval exchange transformations, suspension data, Rauzy--Veech induction, and the associated construction of pseudo-Anosov homeomorphisms. We also provide a slight generalization of these standard notions.

\subsection{Interval exchange transformation}
Let $I  \subset \mathbb  R$ be an  open interval  and let us  choose a
finite partition  of $I$  into $d\geq 2$  open subintervals  $\{I_j, \
j=1,\dots,d \}$.  An interval  exchange transformation is a one-to-one
map  $T$  from  $I$  to  itself that  permutes,  by  translation,  the
subintervals $I_j$. It is easy to see that $T$ is precisely determined
by the  following data:  a permutation that
encodes how  the intervals are  exchanged, and a vector  with positive
entries that encodes the lengths of the intervals. \medskip

We  use the representation introduced first by 
Kerckhoff~\cite{Kerckhoff1985} and formalised later by Bufetov~\cite{Bufetov2006} and 
Marmi, Moussa \&Yoccoz~\cite{Marmi:Moussa:Yoccoz}.

We will  attribute a name to  each interval $I_{j}$. In  this case, we
will speak of \emph{labeled} interval  exchange maps.  One gets a pair
of  one-to-one   maps  $(\pi_t,\pi_b)$  (t  for  ``top''   and  b  for
``bottom'') from  a finite alphabet  $\mathcal{A}$ to $\{1,\ldots,d\}$
in  the following  way. In  the partition  of $I$  into  intervals, we
denote the  $k^{\textrm{th}}$ interval, when counted from  the left to
the right,  by $I_{\pi_t^{-1}(k)}$. Once the  intervals are exchanged,
the  interval  number  $k$  is  $I_{\pi_b^{-1}(k)}$.  Then  with  this
convention,  the permutation  encoding  the map  $T$  is $\pi_b  \circ
\pi_t^{-1}$. We  will denote the length  of the intervals  by a vector
$\lambda=(\lambda_\alpha)_{\alpha\in \mathcal{A}}$.

We  will call  the  pair $(\pi_t,\pi_b)$  a  \emph{labeled} permutation,  and
$\pi_b   \circ    \pi_t^{-1}$   a   permutation    (or   \emph{reduced}
permutation).
One usually  represents labeled permutations $\pi=(\pi_t,\pi_b)$ by
a table:
\begin{eqnarray*}
\pi=
\left(\begin{array}{ccccc}\pi_t^{-1}(1)&\pi_t^{-1}(2)&\ldots&\pi_t^{-1}(d)
\\ \pi_b^{-1}(1)&\pi_b^{-1}(2)&\ldots&\pi_b^{-1}(d)
\end{array}\right).
\end{eqnarray*}

%*************************************************************************
\subsection{Suspension data and weak suspension data}
\label{sec:suspension}

The  next  construction  provides  a link  between  interval  exchange
transformations  and  translation surfaces.   A  suspension datum  for
$T=(\pi,\lambda)$      is       a     vector     
$(\tau_\alpha)_{\alpha\in \mathcal{A}}$ such that
\begin{itemize}
\item  $\forall  1  \leq   k  \leq  d-1,\  \sum_{\pi_t(\alpha)\leq  k}
\tau_\alpha>0$,
\item  $\forall  1  \leq   k  \leq  d-1,\  \sum_{\pi_b(\alpha)\leq  k}
\tau_\alpha<0$.
\end{itemize}

We  will  often  use  the notation  $\zeta=(\lambda,\tau)$.  To each
suspension  datum  $\tau$,  we  can associate  a  translation  surface
$(X,\omega)=X(\pi,\zeta)$ in the following way.

Consider the broken line  $L_t$ on $\mathbb{C}=\mathbb R^2$ defined by concatenation of  the vectors $\zeta_{\pi_t^{-1}(j)}$  (in this order) for  $j=1,\dots,d$ with starting  point at  the origin.  Similarly, we consider the broken line $L_b$ defined by concatenation of the vectors $\zeta_{\pi_b^{-1}(j)}$  (in   this  order)  for   $j=1,\dots,d$  with starting point  at the origin.  If  the lines $L_t$ and  $L_b$ have no intersections other than the endpoints, we can construct a translation surface $X$ by identifying each  side $\zeta_j$ on $L_t$ with the side $\zeta_j$  on $L_b$  by  a  translation. The  resulting  surface is  a translation  surface endowed  with the  form $\D  z^2$. Note  that the lines $L_t$ and $L_b$ might  have some other intersection points.  But in this case, one can still  define a translation surface by using the \emph{zippered   rectangle  construction},   due   to  Veech~\cite{Veech1982}.

We will need to extend a little the definition of a suspension datum.
\begin{Definition} Let $T=(\pi,\lambda)$ an interval exchange map. A \emph{weak suspension data} for $T$ is an element $\tau\in \mathbb{R}^{\mathcal{A}}$, such that there exists $h\in \mathbb{R}$ such that:
\begin{itemize}
\item[i-]  $\forall  1  \leq   k  \leq  d-1,\  h+\sum_{\pi_t(\alpha)\leq  k}
\tau_\alpha>0$,
\item[ii-]  $\forall  1  \leq   k  \leq  d-1,\  h+\sum_{\pi_b(\alpha)\leq  k}
\tau_\alpha<0$.
\item[iii-] If $\pi_t^{-1}(1)=\pi_b^{-1}(d)$, then $\sum_{\pi_{t}(\alpha)\neq 1} \tau_{\alpha}<0$
\item[iv-] If $\pi_t^{-1}(d)=\pi_b^{-1}(1)$, then $\sum_{\pi_{b}(\alpha)\neq 1} \tau_{\alpha}>0$
\end{itemize}
\end{Definition}

The parameter $h$ above will be called \emph{height} of the weak suspension datum $\tau$. Note that a usual suspension datum corresponds to the case $h=0$:  in this case, the first two conditions imply the two others.

\begin{figure}[htb]
\begin{tikzpicture}[scale=0.75]
\coordinate (h) at (0,2) ;
\coordinate (e) at (0.02,0) ;
\coordinate (l1) at (1,0);
\coordinate (t1) at (0,-1);
\coordinate (z1) at ($(t1)+(l1)$); 
\coordinate (l2) at (2.29,0);
\coordinate (t2) at (0,-0.423);
\coordinate (z2) at ($(t2)+(l2)$); 
\coordinate (l3) at (1.54,0);
\coordinate (t3) at (0,3.54);
\coordinate (z3) at ($(t3)+(l3)$); 
\coordinate (l4) at (1.18,0);
\coordinate (t4) at (0,-6.27);
\coordinate (z4) at ($(t4)+(l4)$); 
\coordinate (o) at (0,0);
\coordinate (a) at (10,0);

\draw [black,fill=gray!25]  (o)--(h)--++($-1*(t4)-(h)$)--++ ($(l1)-(e)$)--++($(h)+(t4)+(t1)$)--++(e)--++($-1*(h)-(t1)$)--cycle;
\draw  [black,fill=blue!15] (l1)--++($(h)+(t1)$)--++(e)--++($-1*(t4)-(h)-(t1)-(t3)$)--++($(l2)-2*(e)$)--++($(h)+(t4)+(t1)+(t3)+(t2)$)--++(e)--++($-1*(h)-(t1)-(t2)$)--cycle;
\draw  [black,fill=red!15] ($(l1)+(l2)$)--++($(h)+(t1)+(t2)$)--++(e)--++($-1*(t4)-(h)-(t1)$)--++($(l3)-2*(e)$)--++($(h)+(t4)+(t1)+(t3)$)--++(e)--++($-1*(h)-(t1)-(t2)-(t3)$)--cycle;
\draw  [black,fill=green!15] ($(l1)+(l2)+(l3)$)--++($(t1)+(t2)+(t3)$)--++($(l4)$)--++($-1*(t1)-(t2)-(t3)$)--cycle;
%--++(e)--++($-1*(h)-(t1)-(t2)$)--cycle;

\draw [dashed] (h)--++ (z1) node[midway,above] {$z_1$} node {\tiny $\bullet$}
                     --++ (z2) node[midway,above] {$z_2$} node {\tiny $\bullet$}
                     --++ (z3) node[midway,above] {$z_3$} node {\tiny $\bullet$}
                     --++ (z4) node[midway,above] {$z_4$} node {\tiny $\bullet$}
                     --++ ($-1*(z2)$) node[midway,below] {$z_2$} node {\tiny $\bullet$}
                     --++ ($-1*(z3)$) node[midway,below] {$z_3$} node {\tiny $\bullet$}
                     --++($-1*(z1)$) node[midway,below] {$z_1$} node {\tiny $\bullet$}
                     --++($-1*(z4)$) node[midway,below] {$z_4$}  node {\tiny $\bullet$}                  
                     --cycle;
 
\draw [very thick] (o)--($(l1)+(l2)+(l3)+(l4)$)  node[midway,below] {$I_h$};

\draw  [black,fill=blue!15] ($(a)+(l1)$)--++($(h)+(t1)$)--++($(z2)$)--++($-1*(h)-(t1)-(t2)$)--cycle;
\draw  [black,fill=blue!15] ($(a)+(l4)+(l1)+(l3)$)--++($(h)+(t4)+(t1)+(t3)$)--++($(z2)$)--++($-1*(h)-(t1)-(t2)-(t3)-(t4)$)--cycle;
\draw [black,fill=blue!15] ($(a)+(h)+(z4)$)--++($-1*(h)-(t1)-(t2)-(t3)-(t4)$)--++($-0.7*(l4)$)--cycle;

\draw  [black,fill=red!15] ($(a)+(l4)+(l1)$)--++($(h)+(t4)+(t1)$)--++(z3)--++($-1*(h)-(t1)-(t4)-(t3)$)--cycle;
\draw  [black,fill=red!15] ($(a)+(l1)+(l2)$)--++($(h)+(t1)+(t2)$)--++(z3)--++($-1*(h)-(t1)-(t2)-(t3)$)--cycle;

\draw  [black,fill=green!15] ($(a)+(l1)+(l2)+(l3)$)--++($(t1)+(t2)+(t3)$)--++($(l4)$)--++($-1*(t1)-(t2)-(t3)$)--cycle;
\draw [black,fill=green!15] (a)--++(l4)--++($-1*(t1)-(t2)-(t3)$)--++($-1*(l4)$)--cycle;

\draw [black,fill=gray!25]  ($(a)+(h)$)--++($(z1)$)--++ ($-1*(t1)-(h)$)--++($-1*(l1)$)--cycle;
\draw [black,fill=gray!25]  ($(a)+(l4)$)--++($(h)+(t4)$)--++(z1)--++($-1*(h)-(t4)-(t1)$)--cycle;
\draw [black,fill=gray!25]  ($(a)+(h)+(z1)+(z2)+(z3)$)--($(a)+(z1)+(z2)+(z3)$)--++(l4)--cycle;

\draw  [white,fill=white!100] ($(a)+(h)+(z1)+(z2)+(z3)+(z4)$)--($(a)+(h)+(z1)+(z2)+(z3)$)--++($2*(l4)$)--cycle;
\draw  [white, fill=white!100] ($(a)+(h)+(z4)$)--($(a)+(h)$)--++(-1,0)--($(a)+(h)+(t4)$)--cycle;

\draw ($(a)+(h)$)--++ (z1) node[midway,above] {$z_1$} node {\tiny $\bullet$}
                     --++ (z2) node[midway,above] {$z_2$} node {\tiny $\bullet$}
                     --++ (z3) node[midway,above] {$z_3$} node {\tiny $\bullet$}
                     --++ (z4) node[midway,above] {$z_4$} node {\tiny $\bullet$}
                     --++ ($-1*(z2)$) node[midway,below] {$z_2$} node {\tiny $\bullet$}
                     --++ ($-1*(z3)$) node[midway,below] {$z_3$} node {\tiny $\bullet$}
                     --++($-1*(z1)$) node[midway,below] {$z_1$} node {\tiny $\bullet$}
                     --++($-1*(z4)$) node[midway,below] {$z_4$}  node {\tiny $\bullet$}                  
                     --cycle;
                                        
\end{tikzpicture}
\caption{Zippered rectangle construction for a weak suspension datum}
\label{ex:weak:susp}
\end{figure}
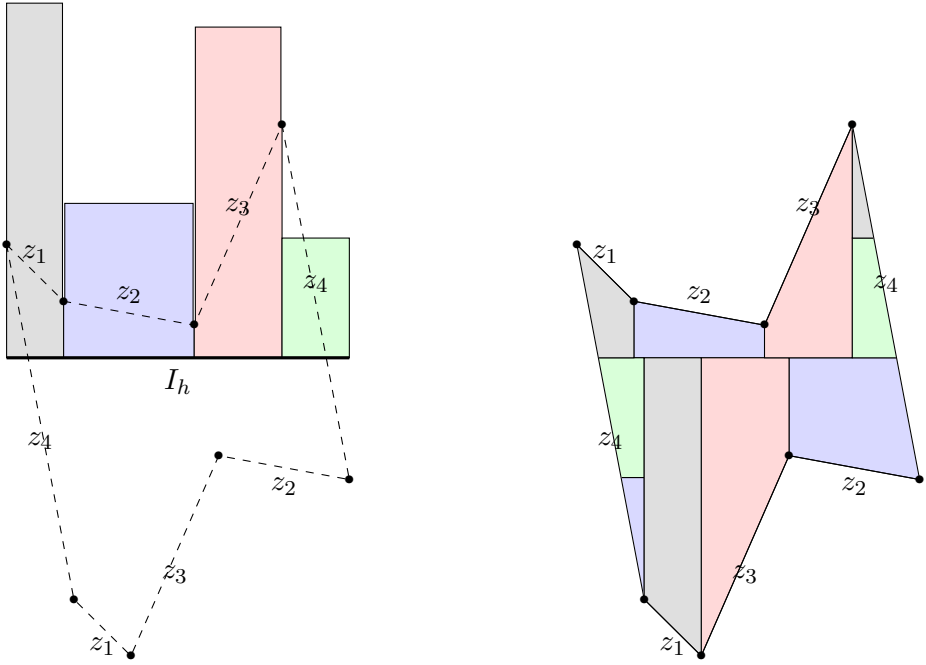

Observe that, in a very similar way as for the case of suspension data, one can associate to a pair $(\pi,\lambda,\tau,h)$ a translation surface $X=X(\pi,\lambda,\tau)$ and a horizontal interval $I_h\subset X$. One use a small variation of the zippered rectangle construction: let $I_h$ be a horizontal interval of length $\sum_\alpha \lambda_\alpha$.  For $\alpha\in \mathcal{A}$, we consider the rectangle $R_\alpha$ of width $\lambda_\alpha$ and of height 
$$
h_\alpha=\sum_{\pi_t(\beta)\leq \pi_t(\alpha)}\tau_\beta-\sum_{\pi_b(\beta)\leq \pi_b(\alpha)}\tau_\beta$$
By definition of weak suspension datum, $h_\alpha>0$ for each $\alpha$. Note that, for instance,  Condition \emph{iii-} is needed to insure that $h_a>0$ for a permutation $\pi$ of the form $\left(\begin{smallmatrix}a &** &*\\ *& **&a \end{smallmatrix}\right)$). Then, we glue the rectangles $(R_\alpha)_\alpha$ in a similar way as for the usual zippered rectangle construction. Two values $h_1,h_2$ give canonically isometric surfaces $X(\pi,\lambda,\tau)$, where the intervals $I_{h_1}$ and $I_{h_2}$ differs by a vertical translation (of length $h_2-h_1$).  In other words, there is a immersed Euclidean rectangle of height $|h_2-h_1|$ whose horizontal sides are $I_{h_1}$ and $I_{h_2}$.  Also, vertical leaves starting from the endpoints of $I_h$ satisfy the ``classical conditions'', \emph{i.e.} each leaf will hit a singularity before intersecting $I_h$ (in the positive or negative direction depending on $h$ and the suspension data). 

Conversely, let $X$ be a translation surface and $I\subset X$ be a horizontal interval with the same classical conditions as above. We assume that there are no vertical saddle connections. In a similar way to the classical case,  there exists a (unique) weak suspension datum $(\pi,\lambda,\tau,h)$ such that $(X,I)=(X(\pi,\lambda,\tau),I_h)$. The datum $(\pi,\lambda)$ is given by considering the interval exchange $T$ defined by the first return map of the vertical flow on $I$, $h$ is the time (positive or negative) for which the vertical geodesic starting from  the left end hits a singularity. The parameters $\tau_\alpha$ are obtained by considering vertical geodesics starting from the discontinuities of $T$ and the time where they hit singularities: the corresponding time $t_k$ for the $k$-th discontinuity is $h+\sum_{\pi_t(\alpha)\leq  k} \tau_\alpha$. Also, if $\pi_t^{-1}(1)=\pi_b^{-1}(d)$ then Condition $iii$- corresponds to the fact that the vertical geodesics starting from  the right end of $I$ hits a singularity before intersecting $I$ again (and similarly for Condition $iv$-).

%*************************************************************************
\subsection{Rauzy-Veech induction and other Rauzy operations}\label{Rauzy:moves}
The Rauzy-Veech induction $\mathcal R(T)$ of $T$ is defined as
the  first return  map of  $T$  to a  certain subinterval  $J$ of  $I$
(see~\cite{Rauzy,Marmi:Moussa:Yoccoz} for details).

We            recall            very            briefly            the
construction.  Following~\cite{Marmi:Moussa:Yoccoz}   we  define  the
\emph{type}   of   $T$    by   $t$   if   $\lambda_{\pi_t^{-1}(d)}   >
\lambda_{\pi_b^{-1}(d)}$   and  $b$   if   $\lambda_{\pi_t^{-1}(d)}  <
\lambda_{\pi_b^{-1}(d)}$. When $T$ is  of type $t$ (respectively, $b$)
we   will   say   that   the  label   $\pi_t^{-1}(d)$   (respectively,
$\pi_b^{-1}(d)$) is the winner and that $\pi_b^{-1}(d)$ (respectively,
$\pi_t^{-1}(d)$) is the loser.  We define a subinterval $J$ of $I$ by
$$  J=\left\{ \begin{array}{ll}  I  \backslash T(I_{\pi_b^{-1}(d)})  &
\textrm{if  $T$ is  of  type t};\\  I  \backslash I_{\pi_t^{-1}(d)}  &
\textrm{if $T$ is of type b.}
\end{array} \right.
$$ The image of $T$  by the Rauzy-Veech induction $\mathcal R$
is defined  as the  first return  map of $T$  to the  subinterval $J$.
This  is again  an interval  exchange transformation,  defined  on $d$
letters  (see  e.g.~\cite{Rauzy}).  The  data  of  $\mathcal R(T)$ are very easy to express 
in term of those of $T$.

There are two cases to distinguish depending whether $T$ is of type
$t$ or $b$; the labeled permutations of $\mathcal{R}(T)$ only depends on
$\pi$ and  on the type of  $T$. If $\varepsilon \in \{t,b\}$ is the type of $T$, 
this defines  two maps $\mathcal{R}_t$ and  $\mathcal{R}_b$ by $\mathcal{R}(T)=(\mathcal{R}_\varepsilon(\pi),
\lambda^{\prime})$. We
will often make use of the following notation: if $\varepsilon\in \{t,b\}$ we denote by 
$1-\varepsilon$ the other element of $\{t,b\}$. 
\begin{enumerate}
\item $T$ has type  $t$. Let $k\in \{1,\dots ,d-1\}$ such that $\pi_b^{-1}(k)=\pi_t^{-1}(d)$.  Then $\mathcal  R_t(\pi_t,\pi_b)=(\pi_t',\pi_b')$ where
$\pi_t=\pi_t'$ and
$$ \pi_b^{'-1}(j) = \left\{
\begin{array}{ll}
\pi_b^{-1}(j) & \textrm{if $j\leq  k$}\\ \pi_b^{-1}(d) & \textrm{if $j
= k+1$}\\ \pi_b^{-1}(j-1) & \textrm{otherwise.}
\end{array} \right.
$$

\item $T$ has type  $b$. Let $k\in \{1,\dots ,d-1\}$ such that $\pi_t^{-1}(k)=\pi_b^{-1}(d)$. Then $\mathcal  R_b(\pi_t,\pi_b)=(\pi_t',\pi_b')$ where
$\pi_b=\pi_b'$ and
$$ \pi_t^{'-1}(j) = \left\{
\begin{array}{ll}
\pi_t^{-1}(j) & \textrm{if $j\leq  k$}\\ \pi_t^{-1}(d) & \textrm{if $j
= k+1$}\\ \pi_t^{-1}(j-1) & \textrm{otherwise.}
\end{array} \right.
$$

\item Let  us denote  by $E_{\alpha\beta}$ the  $d\times d$  matrix of
which the  $\alpha,\beta$-th element  is equal to  $1$, all  others to
$0$.   If  $T$   is   of   type  $t$   then   let  $(\alpha,\beta)   =
(\pi^{-1}_t(d),\pi^{-1}_b(d))$   otherwise   let   $(\alpha,\beta)   =
(\pi^{-1}_b(d),\pi^{-1}_t(d))$. Then $V_{\alpha\beta}\lambda'=\lambda$, 
where $V_{\alpha\beta}$  is  the transvection          matrix          $I+E_{\alpha\beta}$.
\end{enumerate}

If $\tau$  is a  suspension data  over $(\pi,\lambda)$
then we define $\mathcal R(\pi,\lambda,\tau)$ by
$$        \mathcal        R(\pi,\lambda,\tau)       =        (\mathcal
R_{\varepsilon}(\pi),V^{-1}\lambda,V^{-1}\tau),
$$ where $\varepsilon$ is the type of $T=(\pi,\lambda)$ and $V$ is the
corresponding      transition     matrix.      In      other     terms
$V_{\alpha\beta}\zeta'=\zeta$ where $\zeta=(\lambda,\tau)$.

\begin{Remark}
\label{rk:isometric}
By  construction  the  two  translation  surfaces  $X(\pi,\zeta)$  and
$X(\pi',\zeta')$ are naturally isometric (as translation surfaces).
\end{Remark}

\begin{Remark}
 We can extend the Rauzy--Veech operation on the space of weak suspension data by using the same formulas.  We easily see that $V^{-1}\zeta$ is a weak suspension data for $\mathcal{R}_{\varepsilon}(\pi)$. Also, $X(\mathcal{R}(\pi,\zeta))$ and $X(\pi,\zeta)$ are also naturally isometric as translation surfaces.
\end{Remark}

Now  if we  iterate the  Rauzy  induction, we  get  a sequence
$(\alpha_k,\beta_k)$          of          winners/losers.      Denoting 
$\mathcal{R}^{(n)}(\pi,\lambda)=(\pi^{(n)},\lambda^{(n)})$,   the
transition  matrix  that  relates  $\lambda^{(n)}$ to  $\lambda$  is  the
product of the transition matrices:
\begin{eqnarray}
\label{eq:path}
\left(\prod_{k=1}^{n} V_{\alpha_k\beta_k}\right)\lambda^{(n)}=\lambda.
\end{eqnarray}

Now, we define other Rauzy moves that will be used later. Let $\pi$ be a labeled permutation. 
\begin{eqnarray*}
\pi=
\left(\begin{array}{ccccc}\pi_t^{-1}(1)&\pi_t^{-1}(2)&\ldots&\pi_t^{-1}(d)
\\ \pi_b^{-1}(1)&\pi_b^{-1}(2)&\ldots&\pi_b^{-1}(d)
\end{array}\right).
\end{eqnarray*}

We define the \emph{symmetric} of $\pi$, denoted by $s(\pi)$, the following labeled permutation.
$$s(\pi)=
\left(\begin{array}{ccccc}\pi_b^{-1}(d)&\pi_b^{-1}(d-1)&\ldots&\pi_b^{-1}(1)
\\ \pi_t^{-1}(d)&\pi_t^{-1}(d-1)&\ldots&\pi_t^{-1}(1)
\end{array}\right).$$
Observe that if $\tau$ is a weak suspension datum for $(\pi,\lambda)$, then $\tau$ is also a weak suspension datum for $(s(\pi),\lambda)$ (it is not necessarily true for usual suspension data). For simplicity, we define $s(\pi,\lambda,\tau)=(s(\pi),\lambda,\tau)$.
 Also, the translation surfaces $X(\pi,\lambda,\tau)$ and $X(s(\pi,\lambda,\tau))$ are related by the element $-I\in \textrm{SL}(2,\mathbb{R})$.

Left Rauzy induction can be defined analogously as the Rauzy induction, by ``cutting'' the interval on the left. It can also be defined by $\mathcal{R}_L=s\circ \mathcal{R} \circ s$. From the above study, we see that $\mathcal{R}_L$ preserves weak suspension data, and that $X((\pi,\lambda,\tau))$ and $X(\mathcal{R}_L(\pi,\lambda,\tau))$ are naturally isometric as translation surfaces.

\subsection{Labeled Rauzy diagrams}

For  a labeled  permutation $\pi$,  we call  the  \emph{labeled Rauzy  diagram},  denoted  by  $\mathcal  D(\pi)$,  the  graph  whose
vertices are all  labeled permutations that we can  obtained from $\pi$
by the  combinatorial Rauzy moves.  From  each vertices, there
are two edges labeled $t$ and  $b$ (the type) corresponding to the two
combinatorial    Rauzy   moves.   We  will   denote  by   $\pi
\xrightarrow{\alpha,\beta}  \pi'$   for  the  edge   corresponding  to
$\mathcal R_\varepsilon(\pi) = \pi'$ where $\varepsilon\in\{t,b\}$ and
$\alpha/\beta$  is the winner/loser.  To each  path $\gamma$  of this
diagram, there is thus a sequence  of winners/losers. We will denote by
$\widetilde{V}(\gamma)$    the   product   of    the   transition    matrices   in
Equation~(\ref{eq:path}).

Similarly, one can define the reduced Rauzy diagram $\mathcal{D}_{red}(\pi)$ by considering (reduced) permutations as vertices. There is clearly a canonical map $\mathcal{D}(\pi)\to \mathcal{D}_{red}(\pi)$.

%***************************************************************
\subsection{Coordinates on the hyperelliptic Rauzy diagrams}\label{coordinates}
The hyperelliptic Rauzy diagram $\mathcal{D}_n$ of size $2^{n-1}-1$ is the one that contain the permutation
$$\pi_n= \begin{pmatrix}
1&2&\dots &n \\ n&n-1&\dots &1
\end{pmatrix}.
$$
When $n$ is even, it corresponds to the hyperelliptic connected component $\mathcal{H}^{hyp}(n-2)$ and when $n$ is odd, it corresponds to the hyperelliptic component $\mathcal{H}^{hyp}(\frac{n-1}{2},\frac{n-1}{2})$.

The precise description of these diagrams was given by Rauzy \cite{Rauzy}. See also \cite[Section~3.7.1]{BL12}. An easy corollary is the following proposition. 
\begin{Proposition}
Let $n\geq 2$. For any $\pi \in \mathcal{D}_n$ there exists a unique shortest path joining $\pi_n$ to $\pi$. 
\end{Proposition}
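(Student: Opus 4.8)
The plan is to describe the structure of $\mathcal{D}_n$ explicitly enough to read off both existence and uniqueness of a shortest path $\pi_n \to \pi$. First I would recall from Rauzy's description (or from \cite[Section~3.7.1]{BL12}) that the hyperelliptic Rauzy diagram is obtained from a ``doubling'' procedure: $\mathcal{D}_2$ is a single vertex with two loops, and $\mathcal{D}_{n+1}$ is built from $\mathcal{D}_n$ by attaching, at each vertex, one new pendant path of $t$-edges and one new pendant path of $b$-edges, in such a way that the total number of vertices goes from $2^{n-1}-1$ to $2^n-1$. Equivalently, and more usefully for the proof, I would use the encoding of vertices of $\mathcal{D}_n$ by reduced words in the letters $t,b$: the central permutation $\pi_n$ corresponds to the empty word, and every other permutation is reached from $\pi_n$ by a unique word $w$ in $\{t,b\}$ that is \emph{reduced} in the appropriate sense (no backtracking along the tree-like part of the diagram). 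The notation $\pi_n.w$ introduced in Section~\ref{sec:strategy} is precisely this encoding.

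The key observation is that although $\mathcal{D}_n$ contains loops (the central loop $\{\pi_n.t^k\}$, the small $b$-loops, etc.), it has a tree-like ``spine'': more precisely, if one removes the central loop, the diagram falls into a disjoint union of trees hanging off the vertices $\pi_n.t^k$. So for any target $\pi$, a shortest path from $\pi_n$ first runs some distance $k$ along the central loop (either in the $t$-direction, giving $t^k$, or equivalently $b^{n-1-k}$ the other way around, but one of these is strictly shorter unless $k = (n-1)/2$ in which case a tie-break is needed — here I must be careful), and then descends uniquely into the tree hanging at $\pi_n.t^k$. Within a tree there is exactly one path with no backtracking, hence exactly one shortest path; and any path with backtracking is strictly longer. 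The remaining subtlety is the central loop itself: I claim that for $\pi = \pi_n.t^k$ on the loop, going the $t$-way has length $k$ and the $b$-way has length $n-1-k$, and these are never equal since $n-1$ is such that... actually they \emph{are} equal when $k=(n-1)/2$, i.e.\ when $n$ is odd. So uniqueness for $n$ even on the loop is automatic, while for $n$ odd there is a single antipodal vertex where I need the finer structure of the diagram — Rauzy's description shows the two length-$(n-1)/2$ paths actually land on \emph{different} permutations (because the $t$-loop and $b$-loop have lengths differing by the combinatorics of the labels), so there is still no genuine ambiguity. I would verify this by a direct look at how labels propagate around the central loop.

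The main obstacle I expect is making the ``tree-like spine'' claim precise without simply reproducing Rauzy's entire classification: one needs a clean invariant that is monotone along shortest paths. A natural candidate is the word length together with a potential function such as the position of a distinguished label (say the label $n$, or the winner sequence), which changes in a controlled, non-reversible way under each Rauzy move away from the central loop; showing that any excursion that returns to an already-visited vertex can be shortened is then a finite case check on the move types. Concretely, I would proceed: (1) state the doubling/tree structure of $\mathcal{D}_n$ from \cite{Rauzy}; (2) observe that every vertex $\neq \pi_n$ lies in a unique ``branch'' attached at a unique $\pi_n.t^k$; (3) show the path $\pi_n \xrightarrow{t^k} \pi_n.t^k \to \pi$ through the branch is non-backtracking, hence a geodesic; (4) show any other path either backtracks in the branch (strictly longer) or goes the long way round the central loop (strictly longer, using the length count above, with the $n$ odd antipode handled by checking the landing permutations differ); conclude uniqueness. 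This is exactly the content that the subsequent sections will exploit via the notation $\pi_n.w$, so the proof only needs to be as detailed as that downstream use requires.
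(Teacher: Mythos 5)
Your overall strategy — invoke Rauzy's explicit description of $\mathcal{D}_n$ and deduce geodesic uniqueness from its ``loops arranged along a tree'' shape — is exactly the route the paper takes (it offers no argument at all beyond citing Rauzy and \cite[Section~3.7.1]{BL12} and calling the proposition an easy corollary). But the concrete structure you assert is not the structure of $\mathcal{D}_n$, and the errors sit precisely where your argument needs precision. First, $\mathcal{D}_n$ is a \emph{directed} graph in which every vertex has exactly one outgoing $t$-edge and one outgoing $b$-edge; the $b$-edges issued from $\pi_n$ do not run backwards along the central $t$-loop but around a different cycle, the central $b$-loop $\{\pi_n.b^j\}$, which meets the $t$-loop only at $\pi_n$ (see the $n=4$ figure: $t,t^2,b,b^2$ are four distinct vertices). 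So the alleged alternative route ``$b^{n-1-k}$ the other way around'' never reaches $\pi_n.t^k$, and your whole discussion of a tie at the ``antipodal vertex'' for $n$ odd addresses a non-existent ambiguity; likewise ``backtracking'' has no meaning here, since edges cannot be traversed in reverse. Second, the components attached to the central loop are \emph{not} trees: at $\pi_n.t^k$ there hangs a directed $b$-cycle of length $n-1-k$, off whose vertices hang further $t$-cycles, and so on recursively (already for $n=4$ the vertex $tb$ carries a loop). The ``doubling by pendant paths'' picture is inaccurate for the same reason, and appealing to the encoding ``each vertex is $\pi_n.w$ for a unique reduced word $w$'' as an input would be circular, since that unique-word statement is essentially the proposition to be proved.

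The statement your argument actually needs, and which Rauzy's description does give, is that $\mathcal{D}_n$ is a directed cactus: every edge lies on exactly one directed cycle, and the cycles are attached to one another at single vertices in a tree pattern rooted at $\pi_n$ (the two central loops of length $n-1$, then at $\pi_n.t^k$ a $b$-cycle of length $n-1-k$, etc., matching the coordinates $(n_1,\dots,n_k)$ with $\sum n_i\le n-2$). With that description the proof is the clean version of what you intend: any directed path containing a repeated vertex can be strictly shortened by excising the closed subpath, so every shortest path is simple; and in such a cactus there is exactly one simple directed path from $\pi_n$ to any $\pi$, because the tree of cycles forces the sequence of cycles traversed, and inside each directed cycle the route from the entry vertex to the exit vertex is forced (one cannot go around the other way). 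Replace your ``trees hanging off the central loop'' and the antipode tie-break by this cycle-tree argument and the proof is complete and matches what the paper implicitly relies on.
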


For a permutation $\pi$ in $\mathcal{D}_n$ as above. We can write the path of the proposition as a unique sequence of $n_1>0$ Rauzy moves of type $\varepsilon\in \{t,b\}$, then $n_2>0$ Rauzy moves of type $1-\varepsilon$, etc. The sequence of non negative integers $n_1,\dots ,n_{k-1}$ defines the permutation $\pi$ once $\varepsilon$ is chosen. Observe that replacing the starting Rauzy type $\varepsilon$ by $1-\varepsilon$ changes $\pi=(\pi_t,\pi_b)$ by $(\pi_b,\pi_t)$ up to renaming. For our purpose, we wont need to distinguish these two cases. 
Observe also that we necessarily have $\sum_{i=1,\dots,k-1} n_i< n-1$. 

\begin{Definition}
Let $\pi$ be a permutation in $\mathcal{D}_n$ and let $n_1,\dots ,n_{k-1}$ be as above, the \emph{coordinates} of $\pi$ are 
$$(n_1,\dots ,n_{k-1},n-1-\sum_{i=1,\dots,k-1} n_i)$$
\end{Definition}

This definition is motivated by the following easy proposition.

\begin{Proposition}
\label{prop:coordinates}
Let $\pi$ be a permutation in $\mathcal{D}_n$. The followings hold:
\begin{itemize}
\item If $\pi$ has coordinates $(n_1,\dots ,n_k)$ then $s(\pi)$ has coordinates $(n_k,\dots ,n_1)$.
\item If $k$ is even then $\pi$ and $s(\pi)$ belong to the same  component of $\mathcal{D}_n\backslash\{\pi_n\}$.
\item If $k$ is odd then $\pi$ and $s(\pi)$ belong to a different  component of $\mathcal{D}_n\backslash\{\pi_n\}$.
\end{itemize}
\end{Proposition}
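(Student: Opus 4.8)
The plan is to argue entirely on the combinatorial level of the hyperelliptic Rauzy diagram $\mathcal{D}_n$, using the coordinate description together with the explicit formula $s = $ ``reverse and swap rows'' for the symmetric of a permutation. First I would settle the first bullet. Let $\pi$ have coordinates $(n_1,\dots,n_k)$ with a chosen starting type $\varepsilon$; by definition the unique shortest path from $\pi_n$ to $\pi$ is the word $\varepsilon^{n_1}(1-\varepsilon)^{n_2}\cdots$ of total length $n-1$ (the last block of length $n_k = n-1-\sum_{i<k} n_i$). The key observation is that $s$ is an involution of $\mathcal{D}_n$ that fixes the central permutation $\pi_n$ (indeed $s(\pi_n)=\pi_n$, since reversing the rows of $\left(\begin{smallmatrix}1&\dots&n\\ n&\dots&1\end{smallmatrix}\right)$ and swapping them gives the same table) and that reverses the direction of every edge while exchanging the roles of $t$ and $b$: more precisely one checks from the formulas for $\mathcal{R}_t,\mathcal{R}_b$ and the definition $s(\pi_t,\pi_b)$ that $\pi \xrightarrow{\ \delta\ } \pi'$ is an edge if and only if $s(\pi') \xrightarrow{\ 1-\delta\ } s(\pi)$ is an edge. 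Consequently, applying $s$ to the shortest path $\pi_n \to \pi$ and reversing it yields a path from $s(\pi_n)=\pi_n$ to $s(\pi)$ of the same length $n-1$, whose block structure is the reversal $(n_k,\dots,n_1)$ of the original one, with the types globally flipped. Since by the preceding Proposition the shortest path from $\pi_n$ to $s(\pi)$ is unique and of length at most $n-1$, and the path we produced has length exactly $n-1$ with all blocks nonempty, it must be the shortest one; hence $s(\pi)$ has coordinates $(n_k,\dots,n_1)$ (the ambiguity $\varepsilon \leftrightarrow 1-\varepsilon$ being exactly the one the text already declared irrelevant).

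For the second and third bullets I would analyze which component of $\mathcal{D}_n \setminus \{\pi_n\}$ a permutation lies in, in terms of its coordinates. Removing $\pi_n$ disconnects $\mathcal{D}_n$ into two pieces (this is visible in the Rauzy-diagram picture and follows from Rauzy's description), and a permutation $\pi \neq \pi_n$ lies in one piece or the other according to the \emph{type of the first step} $\varepsilon$ of the shortest path $\pi_n \to \pi$: call these the ``$t$-side'' and the ``$b$-side''. (Here one must note that the identification $\varepsilon \leftrightarrow 1-\varepsilon$ corresponds to the relabelling $(\pi_t,\pi_b)\mapsto(\pi_b,\pi_t)$, which swaps the two sides; so to make the statement well-posed one fixes, once and for all, a labelling convention, and then the first-step type is a genuine invariant.) From the first bullet, the shortest path to $s(\pi)$ has block lengths $(n_k,\dots,n_1)$ with the types flipped relative to those of $\pi$; so its first step has type $1-\varepsilon'$ where $\varepsilon'$ is the type of the \emph{last} step of the path to $\pi$. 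Now if the path to $\pi$ has $k$ blocks and starts with type $\varepsilon$, then its last block has type $\varepsilon$ when $k$ is odd and type $1-\varepsilon$ when $k$ is even; hence the first step of the path to $s(\pi)$ has type $1-\varepsilon$ when $k$ is odd, and type $\varepsilon$ when $k$ is even. Therefore $\pi$ and $s(\pi)$ have shortest paths beginning with the same type exactly when $k$ is even, which is precisely the assertion of the last two bullets.

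The main obstacle I expect is bookkeeping around the labelling ambiguity: the ``coordinates'' are only defined up to the choice of initial type $\varepsilon$, i.e. up to swapping the rows of $\pi$, and one has to be careful that the statement ``$\pi$ and $s(\pi)$ are on the same (resp. different) side of $\mathcal{D}_n\setminus\{\pi_n\}$'' is invariant under that ambiguity — which it is, because swapping rows of $\pi$ simultaneously swaps rows of $s(\pi)$ and swaps the two sides, so the relation ``same side / different side'' is preserved. Once this is pinned down, the proof is just the parity count above. A secondary point to verify carefully is the edge-reversal identity $\pi \xrightarrow{\delta} \pi' \iff s(\pi')\xrightarrow{1-\delta} s(\pi)$; this is a direct but slightly fiddly check from the explicit formulas for $\mathcal{R}_t$, $\mathcal{R}_b$ and the definition of $s$, and it is really the engine behind all three bullets, so I would state and prove it as a small lemma first and then deduce the Proposition as above.
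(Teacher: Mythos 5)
Your argument rests on the ``edge-reversal'' lemma that you yourself identify as the engine of the proof: that $\pi \xrightarrow{\delta} \pi'$ is an edge of $\mathcal{D}_n$ if and only if $s(\pi')\xrightarrow{1-\delta} s(\pi)$ is. This is false. Conjugating the (right) Rauzy moves by $s$ does not reverse edges of the right Rauzy diagram; it produces the \emph{left} Rauzy induction (this is exactly the paper's definition $\mathcal{R}_L=s\circ\mathcal{R}\circ s$ in Section~3.3), which is a different operation. A concrete counterexample already lives in $\mathcal{D}_4$: the edge $\pi_4.t\xrightarrow{b}\pi_4.tb$ exists, and one computes $s(\pi_4.t)=\pi_4.t^2$ and $s(\pi_4.tb)=\pi_4.bt$ (up to relabelling), but the two outgoing edges of $\pi_4.bt$ are $\pi_4.bt\xrightarrow{t}\pi_4.b$ and a $b$-self-loop, so \emph{no} edge of either type joins $s(\pi_4.tb)$ to $s(\pi_4.t)$. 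Even for the very first edge $\pi_4\xrightarrow{t}\pi_4.t$ your claim fails: with $\pi_4.t=\left(\begin{smallmatrix}1&2&3&4\\4&1&3&2\end{smallmatrix}\right)$ one gets $s(\pi_4.t)=\left(\begin{smallmatrix}2&3&1&4\\4&3&2&1\end{smallmatrix}\right)$, whose $b$-move is a self-loop, not an edge to $s(\pi_4)=\pi_4$. So the mechanism by which you transport the path $\pi_n\to\pi$ to a path $\pi_n\to s(\pi)$ does not exist.

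There is a second, independent problem: you misread the definition of the coordinates. The unique shortest path from $\pi_n$ to $\pi$ consists only of the first $k-1$ blocks and has length $\sum_{i\leq k-1}n_i<n-1$; the last coordinate $n_k$ is the complement to $n-1$ and is not traversed. Hence your step ``the path we produced has length exactly $n-1$\dots it must be the shortest one'' is invalid (a length-$(n-1)$ path is never shortest), and in fact the shortest paths to $\pi$ and to $s(\pi)$ have lengths $n-1-n_k$ and $n-1-n_1$, which generally differ, so neither is a reversal of the other. Your parity bookkeeping for the last two bullets inherits both errors; they happen to cancel (which is why your final parity statement agrees with the Proposition, e.g.\ $s(\pi_4.t)=\pi_4.t^2$ with $k=2$), but with the correct block count your reversal rule would predict the opposite, so the derivation cannot be repaired by rewording. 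What you do share with the paper is its one recorded observation: a vertex's component of $\mathcal{D}_n\setminus\{\pi_n\}$ is detected by the starting type of its shortest path from $\pi_n$ (and this is well posed up to the simultaneous row-swap, as you note). The first bullet, however, must be checked directly — e.g.\ by writing down, via Rauzy's explicit description of $\mathcal{D}_n$, the permutation with coordinates $(n_1,\dots,n_k)$ and observing that $s$ reverses the tuple, or by an induction on the shortest path — not deduced from an edge-reversal symmetry the diagram does not have.
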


\begin{proof}[Proof of Proposition~\ref{prop:coordinates}]
The proof is straightforward. To see that the last two assertions hold, we can remark 
that $\pi$ and $s(\pi)$ belong to the same connected component of $\mathcal{D}_n\backslash\{\pi_n\}$
if and only if the minimal paths (from $\pi_n$) to $\pi$ and $s(\pi)$ have the same starting Rauzy type.
\end{proof}

%***************************************************************
We end this section by recalling the construction of Veech on pseudo-Anosov homeomorphisms.

\subsection{Pseudo-Anosov homeomorphism and Rauzy-Veech induction}
We follow Veech's work~\cite{Veech1982}.
To any path $\gamma$ in the labeled Rauzy diagram, whose image in the reduced Rauzy diagram is closed,
one  can associate  a matrix  $V$ as  follows. We denote by  $(\pi_t, \pi_b)$ and $(\pi_t',\pi_b')$ the labeled  permutations corresponding to the endpoints of $\gamma$. By definition of $\gamma$, they both define the same underlying permutation. We associate to it a matrix $\widetilde{V}$ as before. Let $P$ be  the permutation matrix defined by permuting the columns  of   the  $d\times  d$  identity  matrix   according  to  the
maps $\pi_t,\pi_t'$, \emph{i.e.} $P=[p_{\alpha\beta}]_{\alpha,\beta\in \mathcal{A}^2}$, with $p_{\alpha\beta}=1$ if $\beta=\pi^{-1}_t(\pi'_t(\alpha))$ and $0$ otherwise.
The transition  matrix associated to the path $\gamma$ is then:
\begin{eqnarray}
\label{eq:matrix:unlabel}
V = \widetilde{V} \cdot P.
\end{eqnarray}
Observe that $V$ is obtained from $\widetilde{C}$ by replacing, for each $k\in \{1,\dots ,d\}$, the column $\pi_t^{-1}(k)$ by the column $\pi_t^{'-1}(k)$
or for each letter $\alpha$, the column labelled $\alpha$ by the column labelled ${\pi'}_t^{-1}\circ\pi_t(\alpha)$.

Assume now that the matrix $V$ is a primitive (\emph{i.e.} it as a power with only positive entries). Let   $\theta > 1$ be its Perron-Frobenius eigenvalue.  We choose a positive eigenvector
$\lambda$   for   $\theta$.   It    can   be   shown   that   $V$   is
symplectic~\cite{Veech1982}, thus let  us choose an eigenvector $\tau$
for the  eigenvalue $\theta^{-1}$ with  $\tau_{\pi^{-1}_{t}(d)}>0$. It
turns   out    that   $\tau$   defines   a    suspension   data   over
$T=(\pi,\lambda)$. Indeed, the set of suspension data is an open cone,
that  is  preserved by  $V^{-1}$.  Since  the  matrix $V^{-1}$  has  a
dominant eigenvalue $\theta$ (for  the eigenvector $\tau$), the vector
$\tau$ must belong to this cone. If $\zeta=(\lambda,\tau)$, one has, for some integer $k$
\begin{multline*}
\mathcal{R}^k(\pi,\zeta) = (\pi,V^{-1}\zeta) = (\pi,V^{-1}\lambda,V^{-1}
\tau)   =   (\pi,\theta^{-1}\lambda,\theta   \tau)   =  \\   =   g_{t}
(\pi,\lambda,\tau), \qquad \textrm{ where } \qquad t=\log(\theta) > 0.
\end{multline*}
Hence the  two surfaces $X(\pi,\zeta)$  and $g_{t}X(\pi,\zeta)$ differ
by    some    element    of    the   mapping    class    group    (see
Remark~\ref{rk:isometric}).    In   other   words   there   exists   a
pseudo-Anosov  homeomorphism   $\pA$ affine with  respect  to  the
translation surface  $X(\pi,\zeta)$ and such that $D \pA  = g_{t}$. The action 
of $\pA$ on the relative homology of $(X,\omega)$ is $V(\gamma)$ thus 
the expanding factor of  $\pA$ is $\theta$. 

By construction $\pA$ fixes the zero on the left of the interval $I$ and thus it fixes a horizontal separatrix
adjacent to this zero (namely, the oriented half line corresponding to the interval $I$). Conversely:

\begin{NoNumberTheorem}[Veech]
Any pseudo-Anosov homeomorphism (preserving a orientable foliation, and) fixing a horizontal separatrix is obtained
by the above construction.
\end{NoNumberTheorem}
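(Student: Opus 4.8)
The plan is to reverse--engineer, from an abstract pseudo-Anosov map $\pA$ fixing a horizontal separatrix $\sigma$, a loop $\gamma$ in the reduced Rauzy diagram that closes up, together with the eigenvectors entering Veech's construction, and then to check that the construction applied to $\gamma$ returns $\pA$. First I would normalise: $D\pA\in \mathrm{SL}(2,\R)$ is hyperbolic, and since $\pA(\sigma)=\sigma$ it preserves the horizontal direction, hence (being symplectic) also the vertical one, so $D\pA=g_t$ for some $t\neq 0$; replacing $\pA$ by $\pA^{-1}$ if necessary (which fixes the same separatrix) we may take $t=\log\theta>0$, where $\theta>1$ is the expansion factor. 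Since $\pA$ scales vertical lengths by $\theta^{-1}$ and permutes saddle connections, there is no vertical saddle connection (otherwise there would be infinitely many of bounded length). Let $p$ be the singularity at the origin of $\sigma$, which is fixed by $\pA$, and choose a horizontal interval $I\subset\sigma$ with left endpoint $p$, suitably short so that the first return map $T=(\pi,\lambda)$ of the vertical flow to $I$ satisfies the classical conditions; recording the times at which the vertical separatrices through the endpoints and the discontinuities of $T$ hit singularities produces a suspension datum $\tau$ with $X=X(\pi,\zeta)$, $\zeta=(\lambda,\tau)$, and $I=I_0$.

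\textbf{The crucial point.} Because $\pA(p)=p$ and $g_t$ expands the horizontal direction, $\pA^{-1}(I)\subsetneq I$ is a subinterval with the same left endpoint $p$ and length $\theta^{-1}|I|$; being the image of $I$ under the diagonal affine map $\pA^{-1}$, it again satisfies the classical conditions. I would then invoke the standard fact that, in the absence of vertical saddle connections, the Rauzy--Veech induction starting from $(\pi,\lambda)$ is defined for all times, the induction intervals $I_0\supset I_1\supset I_2\supset\cdots$ shrink to $\{p\}$ with each $I_m\setminus I_{m+1}$ a single interval abutting the right end of $I_m$, and \emph{every} admissible subinterval with left endpoint $p$ occurs among the $I_m$: one locates the unique $m$ with $|I_{m+1}|\le \theta^{-1}|I|\le |I_m|$ and uses the right-endpoint condition on $\pA^{-1}(I)$ (its vertical leaf meets a singularity before returning) to force its right endpoint to agree with that of $I_{m+1}$, whence $\pA^{-1}(I)=I_m$ for some $m$. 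This is the step I expect to be the main obstacle: making precise, or carefully citing (Rauzy, Veech, Marmi--Moussa--Yoccoz), that the admissible subintervals are exactly those reached by the induction; it is the only place where the no-vertical-connection hypothesis is genuinely used.

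\textbf{Conclusion.} The first return map of the vertical flow to $I_m=\pA^{-1}(I)$ is conjugate to $T$ via the orientation-preserving affine bijection $\pA^{-1}\colon I\to I_m$, $x\mapsto \theta^{-1}x$; hence $\mathcal R^{(m)}(\pi,\zeta)=(\pi,\theta^{-1}\lambda,\theta\tau)$. In particular the underlying permutation is again $\pi$, so the corresponding path $\gamma$ from $\pi$ to $\pi$ closes up in the reduced Rauzy diagram, $P=\mathrm{Id}$ and $V(\gamma)=\widetilde V(\gamma)$. Equation~(\ref{eq:path}) then gives $V(\gamma)\lambda=\theta\lambda$ and, dually, $V(\gamma)\tau=\theta^{-1}\tau$ with $\tau_{\pi_t^{-1}(d)}>0$ since $\tau$ is a genuine suspension datum, while $V(\gamma)$ is primitive, for otherwise $\pA$ would be reducible. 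Thus $(\gamma,\lambda,\tau)$ is exactly the data feeding the construction recalled above, and the pseudo-Anosov homeomorphism it produces — affine on $X(\pi,\zeta)=X$ and realising $\mathcal R^{(m)}=g_t$ — is $\pA$ itself. This would complete the proof.
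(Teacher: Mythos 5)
The paper itself offers no proof of this statement: it is quoted as Veech's theorem and cited to \cite{Veech1982}, the closest in-house analogue being the converse for the symmetric construction (Proposition~\ref{base:segment:to:weak:sd} and Theorem~\ref{thm:construction:converse}), which runs along exactly the lines you propose: produce a transversal horizontal interval adapted to $\pA$, show that its image under the map is an interval reached by the induction, and read off a closed path together with the eigenvectors. So your route is the standard one and is essentially sound. The step you flag as the main obstacle is indeed the crux, and it is a true classical fact provided the admissibility condition is read as in Section~\ref{sec:suspension}, i.e.\ relative to the subinterval itself: the vertical leaf through its right endpoint hits a singularity before returning to that subinterval. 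It can be proved along the lines you indicate: at each stage the next induction endpoint is the largest point of the current interval whose vertical leaf (upward or downward) hits a singularity before returning to that interval, so no point strictly between two consecutive induction endpoints is admissible; since $\pA^{-1}$ is affine, preserves the vertical direction and the singularities, the right endpoint of $\pA^{-1}(I)$ is admissible and therefore coincides with some induction endpoint. The paper leans on precisely this ``classical argument'' in the proof of Proposition~\ref{base:segment:to:weak:sd}, so citing Veech (or Yoccoz's lecture notes) for it is legitimate.

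Two points in your conclusion need repair, though neither is fatal. First, primitivity: ``otherwise $\pA$ would be reducible'' is not an argument, since reducibility of the matrix $V(\gamma)$ does not translate into reducibility of the homeomorphism. The standard fix is dynamical: $S$ carries a pseudo-Anosov map, so the vertical flow is minimal and in the (periodic) infinite Rauzy--Veech expansion of $(\pi,\lambda)$ every letter wins infinitely often; hence a sufficiently high power of the loop is a concatenation of complete paths and $V(\gamma)^k\gg 0$ for some $k$, which is primitivity. (Alternatively, note that the construction only requires the positive eigenvector $\lambda$ and the suspension datum $\tau$, both of which you already possess from the geometry.) Second, $P=\mathrm{Id}$ is not correct in general: the labeled endpoint $\pi^{(m)}$ agrees with $\pi$ only as a reduced permutation, so $P$ is the (possibly nontrivial) relabeling matrix; this is harmless, as the eigenvalue relations $V(\gamma)\lambda=\theta\lambda$ and $V(\gamma)\tau=\theta^{-1}\tau$ hold with the correct $P$. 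Finally, the normalization $t=\log\theta>0$ obtained by possibly replacing $\pA$ with $\pA^{-1}$ is the intended reading of the statement, so that reduction is fine.
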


Recall that the main result of~\cite{BL12} was based on the following key proposition:

\begin{Proposition}[\cite{BL12}, Propositions 4.3 \& 4.4]
\label{prop:fix:sing}
For any $n\geq 2$ and any pseudo-Anosov homeomorphism $\phi$ that is affine with respect to
$(X,\omega) \in  \mathcal{C}^{hyp}_n$, if $\phi$  fixes a horizontal separatrix attached to a zero of $\omega$, or a marked point, 
then its expanding factor is bounded from below by~$2$.
\end{Proposition}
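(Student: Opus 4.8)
\emph{The plan} is to run Veech's correspondence backwards. If $\pA$ is affine on $(X,\omega)\in\mathcal C^{hyp}_n$ and fixes a horizontal separatrix attached to a zero of $\omega$, then by the theorem of Veech recalled above $\pA$ is conjugate to the pseudo-Anosov produced by a closed loop $\gamma$ in the reduced hyperelliptic Rauzy diagram $\mathcal D_n$ whose matrix $V(\gamma)$ is primitive, and $\lambda(\pA)=\rho(V(\gamma))$. If instead $\pA$ fixes a separatrix at a marked regular point, I would record that point as a fake zero of order $0$, so that $\pA$ comes from a loop in the Rauzy diagram on $n+1$ letters attached to the corresponding permutation; that diagram fibers over $\mathcal D_n$ (by forgetting the marked point) in such a way that the matrix only gains an extra unipotent factor, so it again suffices to treat loops in $\mathcal D_n$. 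Thus the whole statement reduces to the combinatorial assertion
\[
\text{$\gamma\subset\mathcal D_n$ closed with $V(\gamma)$ primitive}\;\Longrightarrow\;\rho(V(\gamma))\ge 2 .
\]

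\emph{A domination estimate.} Write $V(\gamma)=\bigl(\prod_k(I+E_{\alpha_k\beta_k})\bigr)P$, where $(\alpha_k,\beta_k)$ is the winner/loser at step $k$ and $P$ is the relabeling permutation matrix; up to conjugating by $P$ one works with $\widetilde V(\gamma)=\prod_k(I+E_{\alpha_k\beta_k})$, a nonnegative matrix dominating $I+\sum_k E_{\alpha_k\beta_k}$. Since the spectral radius is monotone on nonnegative matrices, and a principal submatrix of a nonnegative matrix has spectral radius at most that of the whole matrix, it is enough to exhibit inside $V(\gamma)$ \emph{either} one diagonal entry $\ge 2$ (then a $1\times1$ principal minor is $[2]$) \emph{or} two symmetric off-diagonal entries $\ge 1$ (then a $2\times2$ principal minor dominates $\bigl(\begin{smallmatrix}1&1\\1&1\end{smallmatrix}\bigr)$, whose spectral radius is $2$). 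So the real content is the dichotomy: primitivity of $V(\gamma)$ forces $V(\gamma)$ to have a diagonal entry $\ge 2$ or a reciprocal pair of positive off-diagonal entries.

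\emph{Proving the dichotomy via the shape of $\mathcal D_n$.} First I would observe that a loop avoiding $\pi_n$ stays inside one of the two components of $\mathcal D_n\setminus\{\pi_n\}$, which (from Rauzy's description, recalled in Section~\ref{coordinates}) involves only a proper subset of the letters; its matrix is then reducible, hence not primitive. So, up to conjugacy, $\gamma$ passes through $\pi_n$, and I would base it there and cut it into excursions between consecutive visits to the central loop $\{\pi_n.t^k\}$. One full turn around the central loop uses the last top letter as winner at every step and produces the unipotent matrix $I+\sum_j E_{n,j}$, so primitivity forces at least one excursion into a pendant subdiagram; tracking the Rauzy--Veech moves along such an excursion — most conveniently through the coordinates of the visited permutations — one checks that leaving and re-entering the central loop forces either some letter to be winner and loser at two steps whose partial products overlap (a diagonal $2$) or two letters to be exchanged back and forth (the reciprocal pair). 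This is where hyperellipticity is essential: in a general Rauzy class there are primitive loops of dilatation below $\sqrt2$, which is exactly why the systole of $\mathcal C^{hyp}_n$ is realised by maps that do \emph{not} fix a separatrix.

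\emph{Main obstacle.} The delicate part will be this last step: converting the soft statement ``$\gamma$ must leave and re-enter the central structure'' into the explicit witness (``diagonal $2$'' or ``reciprocal pair'') uniformly in $n$, which requires an induction on the pendant structure of $\mathcal D_n$; in parallel one must control the relabeling permutation $P$ (so that conjugating by it does not destroy the witness) and transport the argument to the marked-point diagram.
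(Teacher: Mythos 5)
Note first that this paper does not prove Proposition~\ref{prop:fix:sing} at all: it is imported from \cite{BL12} (Propositions 4.3 and 4.4), so the benchmark is the proof given there. Your opening reduction via Veech's converse to closed primitive loops in $\mathcal D_n$ is the right first move, but the proposal has a genuine gap exactly where the content lies: the ``dichotomy'' (primitivity of $V(\gamma)$ forces a diagonal entry $\geq 2$, or a pair $v_{ij},v_{ji}\geq 1$ sitting on diagonal entries $\geq 1$) is never proved, and you yourself flag it as the main obstacle. As written this is a restatement of the problem rather than a proof. It is also not the mechanism of \cite{BL12}: there the bound comes from the elementary Perron-root estimate $\rho(V)>\delta(V)$, where $\delta$ is the minimal column sum (quoted in the present paper as \cite[Proposition 4.2]{BL12} in the proof of Lemma~\ref{lm:comparing:matrix}), combined with a combinatorial analysis of the hyperelliptic diagram showing that along a primitive closed loop the letters are won and lost often enough that every column sum of $V(\gamma)$ (or of a suitable power) is at least $2$; that witness is global and insensitive to the relabeling, unlike the entrywise witness you hunt for.

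Two of your intermediate steps would fail as written. First, ``up to conjugating by $P$ one works with $\widetilde V(\gamma)$'' is incorrect: $V(\gamma)=\widetilde V(\gamma)\,P$ is a product, not a conjugate, and $\rho(\widetilde V P)$ differs from $\rho(\widetilde V)$ in general; the permutation scrambles the columns, so the diagonal entries $\geq 1$ of $\widetilde V$, which your $2\times 2$ minor $\bigl(\begin{smallmatrix}1&1\\1&1\end{smallmatrix}\bigr)$ requires, are not available for $V(\gamma)$ itself. You acknowledge this issue in the last sentence but never resolve it. Second, the marked-point case cannot be dispatched by asserting that the diagram on $n+1$ letters ``fibers over $\mathcal D_n$ with an extra unipotent factor'': marking a regular point of a surface in $\mathcal C^{hyp}_n$ leads to Rauzy diagrams that are much larger and are no longer the hyperelliptic diagrams; this is precisely the complication the present paper invokes (Section~\ref{sec:strategy}) to explain why the marked-point route of \cite{BL12} could not be pushed further, and in \cite{BL12} that case is a separate proposition (4.4) with its own analysis. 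So the proposal sketches a plausible strategy but leaves the combinatorial core unproven, relies on an invalid reduction for the relabeling matrix, and does not actually treat the marked-point half of the statement.
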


%****************************************************************
%****************************************************************
%****************************************************************
%****************************************************************
\section{Obtaining all ``small'' pseudo-Anosov homeomorphisms}
\label{sec:new:construction}

The proof of Theorem~\ref{thm:main} uses a generalization of the Rauzy--Veech construction of pseudo-Anosov homeomorphisms. We explain in Appendix~\ref{first:attempt} why a  naive generalization of the Rauzy induction does not work.

%****************************************************************
\subsection{Construction of negative pseudo-Anosov maps}

The usual Rauzy--Veech construction naturally produces pseudo-Anosov maps that preserve the orientation of the stable and unstable foliation. The proposed generalization  produces maps that reverse the unstable foliation. %Then  $\pA$ fixes a point.

We consider a non closed path $\gamma$ in the labeled Rauzy diagram such that its starting point $\pi$ and its ending point $\pi'$ satisfy $s(\pi)=\pi'$ up to a relabelling. By a slight abuse of language we say that $\gamma$ is a path from $\pi$ to $s(\pi)$.

As above,  we associate to such path a matrix $V$ by multiplying the corresponding product of the transition matrices by a suitable permutation matrix that corresponds to the relabelling between $s(\pi')$ and $\pi$.
As before, $V$ is symplectic, thus let  us choose an eigenvector $\lambda$ for the eigenvalue $\theta$ and $\tau$
for the  eigenvalue $\theta^{-1}$. It turns out that $\tau$ is not necessarily a suspension datum, but it is a weak suspension datum, as we show in the next Proposition.

\begin{Proposition}
\label{prop:construction:path}
Let $\gamma$ by a path in a Rauzy diagram from $\pi$ to $s(\pi)$, and let $V:=V(\gamma)$ be the corresponding matrix. 
We assume that $V$ is primitive. If $\lambda,\tau$ are the eigenvectors as above then
\begin{itemize}
\item up to multiplying $\tau$ by $-1$, $(\lambda,\tau)$ is a weak suspension data for $\pi$.
\item the constructed surface $S(\pi,\lambda,\tau)$ admits an affine map $\phi$ whose derivative is $$ \begin{pmatrix}
-\theta &0 \\ 0& -1/\theta 
\end{pmatrix}$$
where $\theta $ is the maximal eigenvalue of $M$.
\item Furthermore, $\phi$ admits a regular fixed point (with positive index).
\end{itemize}
\end{Proposition}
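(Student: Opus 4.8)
The plan is to mimic the classical Veech argument recalled just above, carefully tracking the extra permutation matrix that encodes the relabelling $s(\pi')\cong \pi$, and to exploit the fact that $\tau$ is only a \emph{weak} suspension datum rather than a genuine one. First I would record that $V=V(\gamma)$ is symplectic (this is inherited from the classical case, since $\widetilde V(\gamma)$ is a product of transvections and $P$ is a symplectic permutation matrix), so that $\theta$ and $\theta^{-1}$ are both eigenvalues; choose a positive Perron--Frobenius eigenvector $\lambda$ for $\theta$, and an eigenvector $\tau$ for $\theta^{-1}$ normalised so that $\tau_{\pi_t^{-1}(d)}>0$.

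Next I would prove the first bullet: $(\lambda,\tau)$ is a weak suspension datum for $\pi$. The key point is that the cone of weak suspension data (for fixed $\pi$, $\lambda$) is open, nonempty and invariant under $V^{-1}$ — this follows from the Remark that $\mathcal R$ and $\mathcal R_L$ extend to weak suspension data via the same transvection formulas, combined with the fact that, reading $\gamma$ backwards, $V^{-1}$ is exactly the composition of the corresponding (left/right) Rauzy operations on suspension data followed by the relabelling, and $s$ preserves weak suspension data. Since $V^{-1}$ has dominant eigenvalue $\theta>1$ with eigenvector $\tau$, any vector in the (relatively open) positive orbit converges projectively to $\mathbb R_{>0}\tau$, forcing $\pm\tau$ into the cone; the sign is fixed by the normalisation of $\tau_{\pi_t^{-1}(d)}$. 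Here one must check conditions iii-- and iv-- in the definition of weak suspension datum as well as i-- and ii--; this is where the interesting work lies, and I expect it to be the main obstacle — one needs to verify that the zippered-rectangle / relabelling bookkeeping really lands in the weak cone rather than the strict one, using the shape of $s(\pi)$.

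Then the affine map: by construction $h_\alpha=\sum_{\pi_t(\beta)\le\pi_t(\alpha)}\tau_\beta-\sum_{\pi_b(\beta)\le\pi_b(\alpha)}\tau_\beta>0$, so $S:=S(\pi,\lambda,\tau)$ together with $I_h$ is a well-defined translation surface. Applying the Rauzy--Veech sequence coded by $\gamma$ gives $(s(\pi),V^{-1}\lambda,V^{-1}\tau)=(s(\pi),\theta^{-1}\lambda,\theta\tau)$, and then applying $s$ and the relabelling identifies this with $(\pi,\theta^{-1}\lambda,\theta\tau)$, i.e. with $g_t S$ composed with $-I$, where $t=\log\theta$. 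Since $-I$ reverses the unstable direction, the resulting affine self-map $\phi$ of $S$ has derivative $\bigl(\begin{smallmatrix}-\theta&0\\0&-1/\theta\end{smallmatrix}\bigr)$, and primitivity of $V$ makes $\phi$ pseudo-Anosov with expansion factor $\theta$; its action on relative homology is $V(\gamma)$.

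Finally, for the regular fixed point of positive index: $\phi$ fixes the midpoint (or rather the distinguished point) of $I_h$ coming from the left endpoint, which is a regular point of $S$ because $I$ was chosen with the classical conditions and $h\ne 0$ in general — the left endpoint of $I_h$ is in the interior of the surface, not at a zero of $\omega$. Near this fixed point $\phi$ acts (after the two vertical half-translations built into the weak-suspension construction) as $z\mapsto -\theta\bar{\,}$-type hyperbolic map composed with the rotation by $\pi$; since the local model reverses both separatrix directions, the local index is $+1$. I would make this precise by noting that a regular fixed point of an affine map with derivative $-g_t$ has index $+1$ (the two fixed separatrices of $g_t$ are swapped, giving a node-type picture of index $+1$), whereas a zero of order $k$ would contribute index $1-k\le 0$; combined with the Lefschetz fixed point formula or a direct count this pins down that the point is regular with index $+1$. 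The only subtlety is to make sure this fixed point is not identified with a singularity under the side gluings, which again comes back to the ``classical conditions'' on $I$ and the strictness in conditions i--, ii-- of the weak suspension datum.
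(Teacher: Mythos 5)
Your outline of the first two bullets follows the paper's route (cone of weak suspension data invariant under $V^{-1}$, then the decomposition of $\phi$ into the relabelling map of derivative $-\mathrm{Id}$ composed with the Teichm\"uller dilation), but two steps are not actually carried out, and one of them is wrong as stated. In the first bullet, the Perron--Frobenius/contraction argument only places the eigendirection of $V^{-1}$ in the \emph{closure} of the cone of weak suspension data; you yourself flag the verification of conditions iii-- and iv-- as ``the main obstacle'' without resolving it. The paper closes exactly this gap: if $\tau$ lay on the boundary of the cone it would still define a translation surface (there are no vertical saddle connections), but then the horizontal segment would contain singularities in its interior, and its projective class could not be fixed by $V^{-1}$, which acts as a contraction on the projectivized cone; hence the eigenvector lies in the open cone. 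Some argument of this kind is needed, not just the observation that the cone is invariant.

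The more serious gap is the third bullet. You assert that $\phi$ fixes ``the midpoint (or rather the distinguished point) of $I_h$ coming from the left endpoint''; this is false in general. Unlike the classical Veech construction, the interval $I_h$ is not attached on the left to a singularity, and $\phi(I_h)$ is a \emph{different} horizontal segment: it defines the same weak suspension datum but at a different height, so it is a vertical translate of the sub-interval $I'_h\subset I_h$ produced by the Rauzy--Veech moves along $\gamma$, and a priori $\phi$ fixes no point of $I_h$ at all. The paper's existence argument is: compose $\phi$ with the vertical translation (inside the immersed Euclidean rectangle between the two heights) carrying $\phi(I_h)$ onto $I'_h\subset I_h$; the composition is an affine self-map of $I_h$ with derivative $-\theta^{-1}$, hence a contraction with a fixed point $x$. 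Then $x$ and $\phi(x)$ are the endpoints of a vertical segment $J_x$ containing no singularity, so the vertical leaf through $x$ is $\phi$-invariant, and since $\phi$ acts on it affinely with derivative of modulus different from $1$ (and reversing orientation), it has a fixed point inside $J_x$, which is therefore regular. Your remark that a regular fixed point of an affine map with derivative $-g_t$ has index $+1$ is correct, but the existence of such a point is the actual content of this bullet and is missing from your proposal; the Lefschetz-formula considerations you invoke are used in the paper only later (Proposition about $\tau\circ\phi$), not here.
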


\begin{proof}[Proof of Proposition~\ref{prop:construction:path}]
Let $C(h)$ be the cone of weak suspension data of  heights $h$ over $\pi$. We have a map
$V^{-1} : \mathbb PC(h) \longrightarrow \mathbb PC(h')$. Observe that $C(h)$ is open and the union 
$C=\bigcup_p C(p)$ is an open convex cone. Hence there is an element $\tau\in C$ such that $[\tau] \in \mathbb P\overline{C}$ is fixed by $V^{-1}$. Hence there is $\theta \in \R$ such that $V^{-1}\tau = \theta\tau$. If $\tau \in \partial C$ then it still defines a nice translation surface since there is no vertical saddle connection (see~\cite[Figure 14]{BL} for a detailed argument). In this case  the corresponding horizontal segment contains some singularities and thus $[\tau]$ cannot be fixed by $V^{-1}$ that is a contraction.
Hence $V^{-1}\tau = \theta\tau \in \mathring{C}$. Furthermore, $\theta$ is the greatest eigenvalue of $V^{-1}$, hence of $V$.  We construct the surface $(X,\omega)=X(\pi,\lambda, \tau)$. If $\zeta=(\lambda,\tau)$,
 one has, for some integer $k$
$$
\mathcal{R}^k(\pi,\zeta) = (s(\pi),V^{-1}\zeta) =   (s(\pi),\theta^{-1}\lambda+i\theta   \tau)  
$$
Now, there is a natural map from $f_1$ from $S_1=X(\pi,\zeta)=X(s(\pi),\theta^{-1}\lambda+i\theta   \tau)$ to $S_2=X(s(\pi),\zeta)$ with $Df_1=\left(\begin{smallmatrix} -1 &0\\0 &-1\end{smallmatrix}\right)$ and a natural map $f_2$ from $S_2=X(s(\pi),\zeta)$ to $S_1=X(s(\pi),\theta^{-1}\lambda+i\theta   \tau)$ with  $Df_2=\left(\begin{smallmatrix} \theta^{-1} &0\\0 &\theta\end{smallmatrix}\right)$ and the composition $\pA=f_2\circ f_1$ gives a pseudo-Anosov map affine on $S_1$ with $\theta$ as expansion factor and that reverse vertical and horizontal foliations.

\begin{figure}[htb]
\begin{tikzpicture}[yscale=0.5]
\coordinate (h) at (0,2) ;
\coordinate (l1) at ($(1,0)$);
\coordinate (t1) at (0,-1);
\coordinate (z1) at ($(t1)+(l1)$); 
\coordinate (l2) at ($(2.29,0)$);
\coordinate (t2) at (0,-0.423);
\coordinate (z2) at ($(t2)+(l2)$); 
\coordinate (l3) at ($(1.54,0)$);
\coordinate (t3) at (0,3.54);
\coordinate (z3) at ($(t3)+(l3)$); 
\coordinate (l4) at ($(1.18,0)$);
\coordinate (t4) at (0,-6.27);
\coordinate (z4) at ($(t4)+(l4)$); 
\coordinate (o) at (0,0);
\coordinate (a) at (9,-18);
\coordinate (th) at ($2.3*(h)$);
\coordinate (b) at ($(9,0)+(h)-(th)$);
\coordinate (tl1) at (0.43,0);
\coordinate (tt1) at (0,-2.3);
\coordinate (tz1) at ($(tt1)+(tl1)$); 
\coordinate (tl2) at ($0.43*(l2)$);
\coordinate (tt2) at ($2.3*(t2)$);
\coordinate (tz2) at ($(tt2)+(tl2)$); 
\coordinate (tl3) at ($0.43*(l3)$);
\coordinate (tt3) at ($2.3*(t3)$);
\coordinate (tz3) at ($(tt3)+(tl3)$); 
\coordinate (tl4) at ($0.43*(l4)$);
\coordinate (tt4) at ($2.3*(t4)$);
\coordinate (tz4) at ($(tt4)+(tl4)$);

%\theta=2.3
%1/\theta=0,43

%surface de départ S
\draw (h)--++ (z1) node[midway,above] {$1$} node {\tiny $\bullet$}
                     --++ (z2) node[midway,above] {$2$} node {\tiny $\bullet$}
                     --++ (z3) node[midway,above] {$3$} node {\tiny $\bullet$}
                     --++ (z4) node[midway,right] {$4$} node {\tiny $\bullet$}
                     --++ ($-1*(z2)$) node[midway,below] {$2$} node {\tiny $\bullet$}
                     --++ ($-1*(z3)$) node[midway,below] {$3$} node {\tiny $\bullet$}
                     --++($-1*(z1)$) node[midway,below] {$1$} node {\tiny $\bullet$}
                     --++($-1*(z4)$) node[midway,left] {$4$}  node {\tiny $\bullet$}                  
                     --cycle;
 
\draw [thick] (o)--($(l1)+(l2)+(l3)+(l4)$)  node[midway,below] {$I_h$}; 
                                        
%surface S'=f_1(S) (S retournée)
\draw ($(a)-(h)$)--++ ($-1*(z1)$) node[midway,below] {$1$} node {\tiny $\bullet$}
                     --++ ($-1*(z2)$) node[midway,below] {$2$} node {\tiny $\bullet$}
                     --++ ($-1*(z3)$) node[midway,below] {$3$} node {\tiny $\bullet$}
                     --++ ($-1*(z4)$)  node[midway,left] {$4$} node {\tiny $\bullet$}
                     --++ ($(z2)$) node[midway,above] {$2$} node {\tiny $\bullet$}
                     --++ ($(z3)$) node[midway,above] {$3$} node {\tiny $\bullet$}
                     --++($(z1)$) node[midway,above] {$1$} node {\tiny $\bullet$}
                     --++($(z4)$) node[midway,right] {$4$}  node {\tiny $\bullet$}                  
                     --cycle;
                     
\draw [thick] (a)--++($-1*(l1)-(l2)-(l3)-(l4)$)  node[midway,above] {$f_1(I_h)$};                     
    
%surface (S après RV) = S' étirée
\draw ($(b)+(th)$)--++ ($(tz2)$) node[midway,above] {$2$} node {\tiny $\bullet$}
                     --++ ($(tz3)$) node[midway,left] {$3$} node {\tiny $\bullet$}
                     --++ ($(tz1)$) node[midway,right] {$1$} node {\tiny $\bullet$}
                     --++ ($(tz4)$)  node[near start,right] {$4$} node {\tiny $\bullet$}
                     --++ ($-1*(tz1)$) node[midway,left] {$1$} node {\tiny $\bullet$}
                     --++ ($-1*(tz2)$) node[midway,below] {$2$} node {\tiny $\bullet$}
                     --++($-1*(tz3)$) node[midway,right] {$3$} node {\tiny $\bullet$}
                     --++($-1*(tz4)$) node[midway,left] {$4$}  node {\tiny $\bullet$}                  
                     --cycle;
                     
\draw [thick] (b)--++($(tl1)+(tl2)+(tl3)+(tl4)$)  node[midway,below] {$f_2\circ f_1(I_h)$};         
\draw [thick] ($(b)+(th)-(h)$)--++($(tl1)+(tl2)+(tl3)+(tl4)$)  node[midway,above] {$I'_h$};         
\draw [dotted] ($(b)+(th)-(h)$)--++($(l1)+(l2)+(l3)+(l4)$)  node[midway,below] {$I_h$};                   

\draw ($(b)+(th)+0.3*(l1)+0.3*(l2)+0.3*(l3)+0.3*(l4)+0.7*(t1)+0.7*(t2)+0.7*(t3)+0.7*(t4)$) node {\tiny $\bullet$};
\draw [dashed] ($(b)+0.3*(l1)+0.3*(l2)+0.3*(l3)+0.3*(l4)$)--($(b)+(th)-(h)+0.3*(l1)+0.3*(l2)+0.3*(l3)+0.3*(l4)$) node[midway, right] {$J_x$};

\node at (7.5,0) {$ \simeq$};
\node at (7.5,-1) {\tiny{(Rauzy--Veech)}};
           
\draw [->]  (3,-6)--++(2,-6)     node[midway, left] {$f_1$}       ;
\draw [->]  (7,-12)--++(2,6)     node[midway, left] {$f_2$}       ;
                                        
\end{tikzpicture}
\caption{The Symmetric Rauzy--Veech construction}
\label{SRV:construction}
\end{figure}
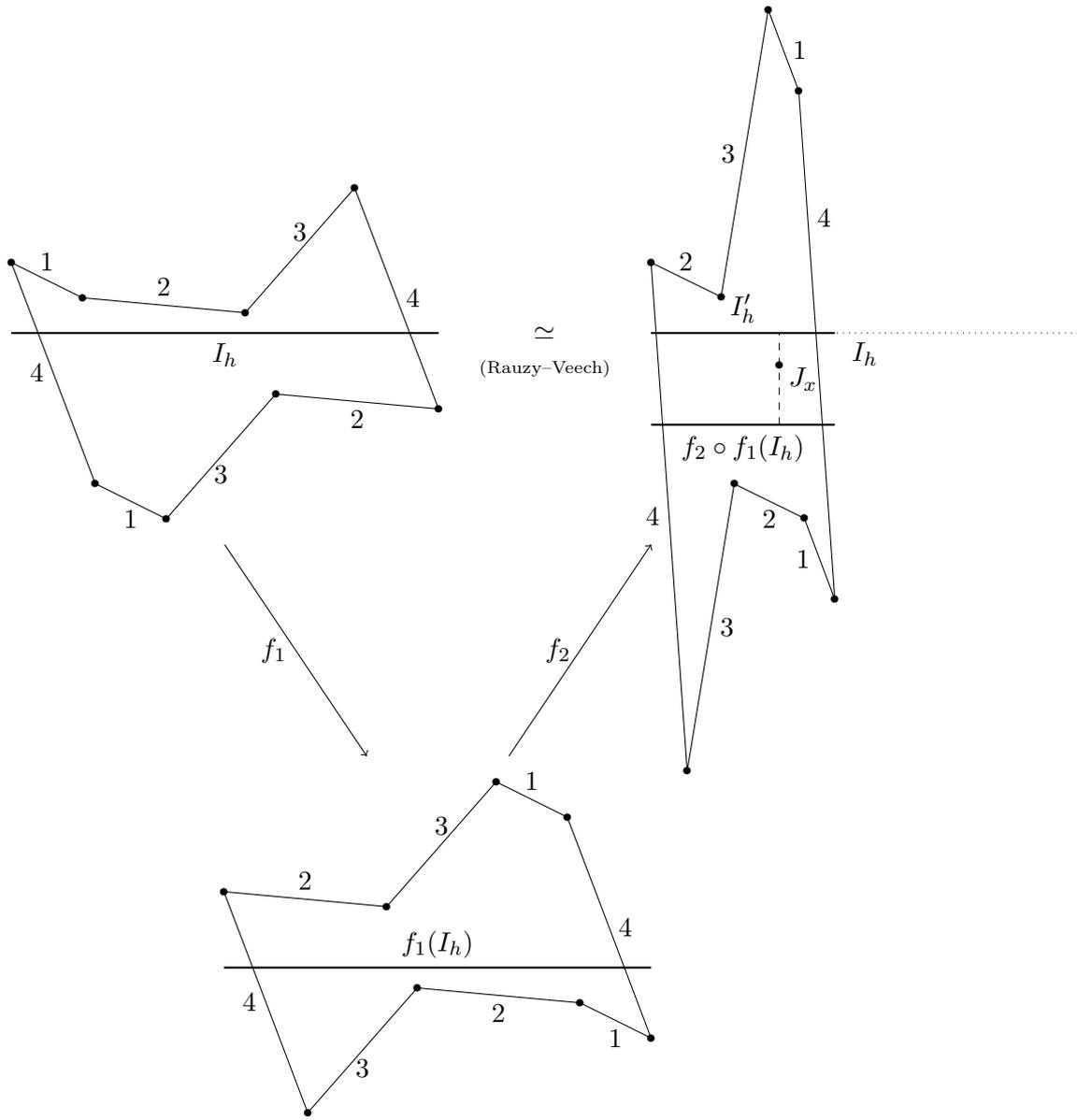

Now we prove that $\pA$ has a regular fixed point. Let $h$ be a height of $(\pi,\lambda,\tau)$, and $I_h$ the corresponding interval. Then, Rauzy--Veech induction gives a subinterval $I'_{h}$ of $I_h$, and the image of $I_h$ by $\pA$ gives a interval $I'_{h_2}$ defining the same weak suspension datum, with a different height (see Figure~\ref{SRV:construction}). Hence, there is a map from $\pA(I_h)$ to $I'_{h_1}\subset I_h$ with derivative $-\theta^{-1}$. It as a fixed point $x$, hence $x$ and $\pA(x)$ are the endpoint a vertical segment $J_x$ that do not contain a singularity. There is a fixed point of $\pA$ in $J_x$, which concludes the proof.
 \end{proof}

%****************************************************************
\subsection{Converse of the Symmetric Rauzy--Veech construction}

As for the usual Rauzy--Veech construction, we have a converse.

\begin{Definition}
Let  $\pA$ be an affine pseudo-Anosov on $S$, that that reverse the orientation of the foliations, and with a regular fixed point (with positive index). A curve $L$ is suitable for $\pA$ if
\begin{enumerate}
\item  it is made by a  horizontal segment, starting from $p$, and then followed by a  vertical segment, ending at a singular point. We do
not allow $L$ to have self-intersections. 
\item $L$ and $\pA(L)$ do not have intersections in their interior.
\end{enumerate}
Given a suitable curve, we call base segment the horizontal part of $L\cup \pA(L)$
\end{Definition}

\begin{Proposition}\label{base:segment:to:weak:sd}
A base segment defines, by considering first return map of the vertical flow, an interval exchange transformation and weak suspension datum  that can be obtained by the above construction. 
\end{Proposition}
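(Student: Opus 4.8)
The plan is to mirror the classical passage from a horizontal segment to Rauzy--Veech data, but keeping track of the extra subtleties caused by the base segment being built from two pieces ($L$ and $\pA(L)$) and by $\pA$ being negative. First I would fix notation: write $p$ for the regular fixed point of $\pA$, let $L$ be a suitable curve as in the definition, and let $J\subset S$ be the base segment, i.e. the horizontal part of $L\cup\pA(L)$. Since $p$ is the common endpoint of the horizontal parts of $L$ and $\pA(L)$, the segment $J$ is a genuine horizontal interval through $p$, and by condition (1) in the definition of a suitable curve its two endpoints are the footpoints of the vertical segments terminating at singularities; by condition (2) the interiors of $L$ and $\pA(L)$ are disjoint, so these two vertical separatrices are embedded and meet $J$ only at their endpoints. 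This is exactly the situation (``classical conditions'') described at the end of Section~\ref{sec:suspension}: a horizontal interval $I\subset X$ such that the vertical leaves from its endpoints hit singularities before returning to $I$, one going up and one going down.

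Next I would invoke the converse construction already set up in Section~\ref{sec:suspension}: given a translation surface $X$ with no vertical saddle connection and a horizontal interval $I$ satisfying those classical conditions, there is a unique weak suspension datum $(\pi,\lambda,\tau,h)$ with $(X,I)=(X(\pi,\lambda,\tau),I_h)$, where $(\pi,\lambda)$ is the interval exchange induced by the first return of the vertical flow on $I$, $h$ is the (signed) time for the vertical geodesic from the left endpoint to reach a singularity, and the $\tau_\alpha$ record the times at which the vertical geodesics through the discontinuities of $T$ hit singularities. Applying this to $I=J$ produces the desired interval exchange transformation and weak suspension datum. So the only thing left to prove is that this particular $(\pi,\lambda,\tau,h)$ lies in the image of the Symmetric Rauzy--Veech construction of Proposition~\ref{prop:construction:path}, i.e. that it arises (up to the usual Rauzy--Veech moves and relabelling) from some admissible path $\gamma$ from $\pi$ to $s(\pi)$ with primitive $V(\gamma)$.

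For that, the key observation is that $\pA$ maps $L$ to $\pA(L)$ and hence acts on the base segment $J$: the horizontal part of $\pA(L)$ is $\pA$ applied to the horizontal part of $L$, and since $D\pA=\mathrm{diag}(-\theta,-1/\theta)$ the image $\pA(J)$ is again a horizontal segment through $p=\pA(p)$, of length $\theta\cdot|J|$ measured appropriately, with the orientation reversed. I would now run the vertical Rauzy--Veech algorithm on $J$ (cutting alternately on the right and on the left as dictated by the lengths), following the standard argument that relates renormalization of the interval to the return map: after finitely many steps one reaches a subinterval $J'\subset J$ such that $(J',T|_{J'})$ is, up to the relabelling induced by $\pA$, the image under $\pA$ of $(J,T)$ — concretely, $\pA$ sends the suspension data over $J$ to the suspension data over $\pA(J)$, and $\pA(J)$ is reached from $J$ by a finite Rauzy--Veech path. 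Because $\pA$ reverses the orientation of the foliations, the permutation one lands on is the symmetric $s(\pi)$ of the starting permutation rather than $\pi$ itself (this is exactly the mechanism isolated in the discussion of $s(\pi)$ in Section~\ref{Rauzy:moves} and in Proposition~\ref{prop:construction:path}). Reading off the matrix of this path and composing with the relabelling permutation, as in Equation~(\ref{eq:matrix:unlabel}), gives the matrix $V=V(\gamma)$; it has $\theta$ as leading eigenvalue and $\lambda$, $\tau$ as the corresponding eigenvectors, and it is primitive because $\pA$ is pseudo-Anosov (its action on relative homology is $V$, and an Anosov-type argument forces a power of $V$ to be strictly positive). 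Thus $(\pi,\lambda,\tau,h)$ is produced by the Symmetric Rauzy--Veech construction applied to $\gamma$, which is the claim.

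The main obstacle I expect is the bookkeeping in the middle step: showing precisely that the finite Rauzy--Veech path obtained from $J$ closes up ``to the symmetric permutation'' and that the associated matrix is exactly the $V$ of Proposition~\ref{prop:construction:path}. One must check that the algorithm never stops for the wrong reason (no vertical saddle connection on $S$, guaranteed by the pseudo-Anosov hypothesis), that the winners/losers along the path are the ones dictated by $\pA$ contracting the vertical direction, and that the left/right alternation of the induction matches how $L$ and $\pA(L)$ sit on either side of $p$. The negativity of $\pA$ enters exactly here and is what turns the expected ``closed loop'' of the classical Veech construction into a ``path from $\pi$ to $s(\pi)$''; making that identification clean, rather than merely plausible, is the crux.
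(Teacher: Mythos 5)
Your first step (the base segment satisfies the ``classical conditions'' of Section~\ref{sec:suspension}, hence determines an interval exchange and a weak suspension datum) is exactly what the paper does, and is fine. The gap is in your middle step, and it is precisely the point you yourself flag as the unresolved ``crux''. The statement requires that the data be obtained by the Symmetric Rauzy--Veech construction, i.e.\ from a path in the ordinary (right-induction) Rauzy diagram from $\pi$ to $s(\pi)$. Your plan runs a right--left alternating induction (``cutting alternately on the right and on the left as dictated by the lengths''), which does not produce such a path; mixing right and left inductions is exactly the naive variant discussed in Appendix~\ref{first:attempt}, and it is not the construction one must recover. The missing idea is the paper's key remark: the horizontal part of $\pA(L)\cup\pA^{2}(L)$ is a subinterval $I'$ of the base segment $I$ \emph{with the same left endpoint}. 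Indeed, if $I=]a,b[$ with the fixed point $p$ in its interior, $L_x$ the piece from $p$ to $b$ and $\pA(L_x)$ the piece from $a$ to $p$, then $\pA(b)=a$ and $\pA(a)=\pA^{2}(b)\in L_x$, so $\pA(I)=]a,\pA^{2}(b)[\subset I$. Consequently $I'$ is reached from $I$ by finitely many steps of the \emph{right} Rauzy--Veech induction only (the no-interior-intersection condition on $\pA(L),\pA^{2}(L)$ guarantees the classical argument applies), and rotating the picture by $180^{\circ}$ identifies the endpoint permutation with $s(\pi)$ up to relabelling, with $\lambda^{(n)}=\lambda/\theta$ and $\tau^{(n)}=\theta\tau$. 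Without this observation you have no path in the Rauzy diagram at all, so the claim is not established.

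Two smaller points. First, your normalization is off: you take $D\pA=\mathrm{diag}(-\theta,-1/\theta)$, so $\pA(J)$ would have length $\theta|J|$ and could not be contained in $J$; in this converse direction one must use the horizontally contracting representative (as in the proof of Theorem~\ref{thm:construction:converse}, where $|\pA(L_x)|=\theta^{-1}|L_x|$), otherwise the containment driving the induction fails. Second, your appeal to primitivity of $V$ is both unjustified as stated (a pseudo-Anosov action on relative homology need not have a primitive matrix) and unnecessary: the proposition only asserts that the interval exchange and weak suspension datum arise from a Rauzy path from $\pi$ to $s(\pi)$ with $\lambda,\tau$ as the corresponding eigenvectors, which is all the paper's proof checks.
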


\begin{proof}
The fact that the base segment $I=]a,b[$ determines an interval exchange transformation $T=(\pi,\lambda)$ and a  weak suspension datum $\tau,h$ for $T$ is similar to the case for classical suspension data, except that the segment is not attached on the left to a singularity. This is left to the reader. 

We only need to check that  $T$ defines a Rauzy path from $\pi$ to $s(\pi)$ such that $\lambda,\tau$ are the corresponding eigenvectors.

The key remark is the following: the horizontal part of $\pA(L)\cup \pA^2(L)$ is a segment $I'\subset I$ that has the same left end as $I$. Observe that $\pA(L)$ and $\pA^2(L)$ do not have intersection on their interior hence, by a classical argument, the  interval exchange transformation $T'$ associated to $I'$ is obtained from $T$ by applying a finite number of time the Rauzy induction on the right to $T$. Similarly, for the weak suspension datum, we get
$(\pi',\lambda',\tau')=(\pi^{(n)},\lambda^{(n)},\tau^{(n)})$.

Rotating by $180^\circ$ the picture we have, up to relabelling,  $s(\pi^{(n)})=\pi$,  $\lambda^{(n)}=1/\theta \lambda$ and $\tau^{(n)}=\theta \tau$. This ends the proof.
\end{proof}

Now we prove the first part of the Geometric Statement.
\begin{Theorem}
\label{thm:construction:converse}
Let $\pA$ be an affine pseudo-Anosov on $S$ having a regular fixed point $P$ and that reverse horizontal and vertical foliations. Then $S$ is obtained by the above construction.
\end{Theorem}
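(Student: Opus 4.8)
The plan is to reduce Theorem~\ref{thm:construction:converse} to Proposition~\ref{base:segment:to:weak:sd} by producing a suitable curve $L$ for $\pA$, and then showing that the interval exchange and weak suspension datum attached to its base segment recover $S$ together with the affine structure. First I would use the hypothesis that $\pA$ has a regular fixed point $P$ with positive index: since $D\pA = \left(\begin{smallmatrix}-\theta & 0\\0 & -1/\theta\end{smallmatrix}\right)$, near $P$ the horizontal and vertical foliations are both locally preserved as unoriented foliations, and the positive index guarantees there is a genuine regular fixed point (not a singular one). Starting from $P$, I would flow along the horizontal separatrix in one of the two horizontal directions; generically (after a small perturbation of the direction of $I$, or by the no-saddle-connection assumption which we may arrange up to the standard density argument) this horizontal ray hits no singularity for a controlled length, and then I turn and follow a vertical segment until the first singular point is met. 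This produces a curve $L$ of the required shape. The non-self-intersection of $L$ and the condition that $L$ and $\pA(L)$ be disjoint in their interiors can be arranged by choosing the initial horizontal length small enough: because $\pA$ contracts the vertical direction by $1/\theta$ and expands the horizontal by $\theta$, for a short enough initial segment the images $L$ and $\pA(L)$ sit in disjoint strips near $P$ except at $P$ itself. This is the step I expect to be the main obstacle: verifying cleanly that a suitable curve exists, i.e. that one can simultaneously achieve (i) the horizontal-then-vertical shape ending at a singularity with no self-intersection and (ii) interior-disjointness from $\pA(L)$, all while avoiding vertical saddle connections so that Proposition~\ref{base:segment:to:weak:sd} applies. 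One should treat separately the case where the horizontal ray from $P$ returns to $P$ or to the vertical segment before reaching a singularity, handled by shortening the segment.

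Once a suitable curve $L$ is in hand, I would take the base segment $I$, the horizontal part of $L\cup\pA(L)$, which is an honest horizontal interval in $S$ satisfying the ``classical conditions'' of Section~\ref{sec:suspension} (each vertical leaf from an endpoint or a discontinuity hits a singularity before returning to $I$, because $L$ ends at a singularity and $\pA(L)$ does too). By the discussion in Section~\ref{sec:suspension} on weak suspension data, $(S, I)$ is then realized as $(X(\pi,\lambda,\tau), I_h)$ for a unique weak suspension datum $(\pi,\lambda,\tau,h)$, where $(\pi,\lambda)$ is the first-return interval exchange of the vertical flow on $I$. Proposition~\ref{base:segment:to:weak:sd} now tells us precisely that this datum arises from the Symmetric Rauzy--Veech construction applied to a path $\gamma$ from $\pi$ to $s(\pi)$ with $\lambda,\tau$ the Perron--Frobenius eigenvectors of $V(\gamma)$ for eigenvalues $\theta, \theta^{-1}$; in particular $V(\gamma)$ is primitive since $\lambda$ is a positive eigenvector and the surface is connected with no vertical saddle connections.

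Finally I would check that the pseudo-Anosov map produced by Proposition~\ref{prop:construction:path} from this path $\gamma$ is conjugate to (indeed coincides with) $\pA$. The point is that both maps are affine on the same translation surface $S=X(\pi,\lambda,\tau)$ with the same derivative $\left(\begin{smallmatrix}-\theta & 0\\0 & -1/\theta\end{smallmatrix}\right)$, and both act on $I$ in the way dictated by the Rauzy--Veech renormalization $\mathcal R^k(\pi,\zeta) = (s(\pi), V^{-1}\zeta)$ composed with the $180^\circ$ rotation coming from $s$. Since an affine homeomorphism of a translation surface is determined by its derivative together with its action on a transversal interval with the classical conditions (equivalently, an affine map isotopic to the identity with diagonal derivative of nontrivial dilation is unique), the two maps agree. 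Hence $\pA$ is obtained by the Symmetric Rauzy--Veech construction, completing the proof. I would remark that the no-vertical-saddle-connection hypothesis used along the way is harmless: it holds after replacing the horizontal direction by a nearby one, which does not change the combinatorics of the construction, exactly as in the classical Rauzy--Veech setting.
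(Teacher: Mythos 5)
Your overall reduction is the right one (produce a suitable curve, then invoke Proposition~\ref{base:segment:to:weak:sd}), but the step you yourself flag as the main obstacle is exactly where the argument breaks, and your fix for it does not work. You cannot obtain interior-disjointness of $L$ and $\pA(L)$ by shrinking the initial horizontal segment: a suitable curve is forced to end at a singularity, so its vertical part $L_y$ has a length dictated by the flat geometry (the first hitting time of a singularity), not by your choice of $|L_x|$, and shortening $L_x$ changes which vertical leaf you follow and typically changes (and does not shorten) $L_y$. Worse, $\pA$ expands the vertical direction by $\theta$, so $\pA(L_y)$ is a long vertical segment that can return and cross $\overset{\circ}{L}_x$ or $\overset{\circ}{L}_y$ no matter how small $|L_x|$ is; the curves are in no sense localized ``in disjoint strips near $P$''. (The remark about perturbing the direction is also off: the vertical/horizontal directions are the invariant foliations of $\pA$ and cannot be perturbed, though absence of vertical saddle connections is automatic for them.)

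The paper's proof shows why a genuinely different mechanism is needed: starting from an arbitrary curve of the right shape, it (1) shortens the vertical part to make the height minimal so that $L$ bounds an immersed rectangle containing no singularity, (2) when $\pA(L_x)$ meets $\overset{\circ}{L}_y$, proves that then $\pA(L_y)$ must meet $\overset{\circ}{L}_x$ and uses the intersection point closest to $P$ to build a new curve with $\pA(L'_x)\cap\overset{\circ}{L'}_y=\emptyset$, and (3) crucially, passes to the universal cover and \emph{changes the fixed point}: a lift $\widetilde{\pA}$ minimizing a vertical return distance produces a new regular fixed point $P'$ from which a suitable curve exists. This last step shows that one cannot in general expect a suitable curve based at the originally given fixed point $P$, which is another reason a purely local argument at $P$ cannot close the proof. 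Your final step (identifying the constructed map with $\pA$ once a base segment is found) is fine and is essentially what Proposition~\ref{base:segment:to:weak:sd} delivers, but as written the existence of the suitable curve is a genuine gap.
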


\begin{proof}[Proof of Theorem~\ref{thm:construction:converse}]
By Proposition~\ref{base:segment:to:weak:sd}, all we need to show is to produce a suitable curve $L$ for $\pA$.

We start from any oriented curve $L$ in $X$ made by a finite horizontal segment, starting from $P$, and then followed by a finite vertical segment, ending at a singular point. We do not allow $L$ to have self-intersections. Such a curve always exists ({\em e.g.} by using the Veech's polygonal representation of translation surfaces). We denote by $L_x$ and $L_y$ the 
(oriented) components of $L$: they bound a rectangle $R$ whose opposite corners are
$P$ and a zero $\sigma$ of $\omega$. We denote by $\{c(L)\} = L_x\cap L_y$ and by $h$ the length of $L_y$. \medskip 

Now if $\pA(L) \cap \overset{\circ}{L} = \emptyset$ we are done. 
Otherwise one of the following intersections 
is non empty (possibly the two):
$$
\pA(L_x)\cap\overset{\circ}{L}_y \neq \emptyset \qquad \textrm{or} \qquad \pA(L_y)\cap\overset{\circ}{L}_x\neq\emptyset.
$$
We will perform several operations on $L$, in order to obtain the required condition. The strategy is the following:
\begin{itemize}
\item 1st Step: arrange that $L$ bounds an immersed euclidean rectangle $i(R)$.
\item 2nd Step: arrange that $\pA(L_x)\cap\overset{\circ}{L}_y=\emptyset$.
\item 3nd Step: change the fixed point in order to get suitable curve.
\end{itemize}

\noindent {\bf 1st Step.} 
We assume that $h$ is \emph{minimal} in the following sense: for each $x\in L_x$, the unit speed vertical geodesic starting from $x$ (in the same direction as $L_y$), does not hit a singularity at a time less than $h$. 
Now,  let $R\subset \mathbb{R}^2$ be the open rectangle of width $|L_x|$ and of height $h$.There is a natural translation map $i:R\to S$ that sends the bottom side of $R$ to $L_x$. By the above hypothesis, $i(R)$ does not contain any singularity. Note that $i$ might not be an embedding but just an immersion. However, $P$ is not in $i(R)$, otherwise one easily see that the interior of $L_x$ intersects the interior of $L_y$ (see also Figure~\ref{step1:fig}).
\medskip 

Assume that $h$ is not minimal, then there is a $x_0\in L_x$ whose corresponding vertical geodesic hits a singularity for a time $h_0$ minimal. We then consider the new oriented curve $L_0$, starting from $P$ such that ${L_0}_x$ is the segment joining $P$ to $x_0$ and ${L_0}_y$ is the vertical segment of length $h_0$. Note that $L_0$ still satisfies the no self intersection hypothesis otherwise we would find an element $x_1\in L_x$ with $h_1<h_0$, contradicting the minimality hypothesis.

Note also that if $\pA(L_x)\cap\overset{\circ}{L}_y=\emptyset$, then it is still also the case for $L_0$. Indeed, the rectangle $R_0$ of width $|L_x|$ and of height $h_0$ immerses in $S$ (in a similar way as $R$). If $\pA({L_0}_x)\cap\overset{\circ}{L_0}_y\neq \emptyset$, then we find $P$ in $i(R_0)$, which is not possible for the same reason as above (see Figure~\ref{step1:fig}).

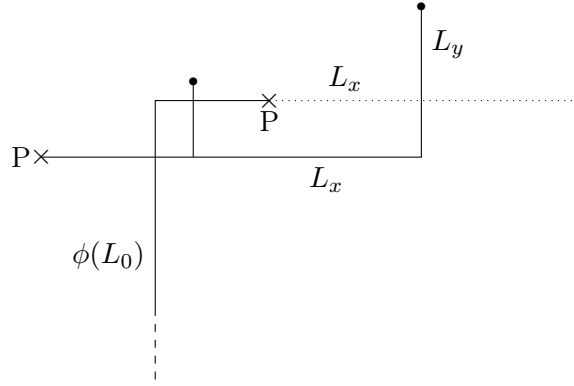
\begin{figure}[htb]
\begin{center}
\begin{tikzpicture}
\draw  (0,0) node {$\times$} node[left] {P}--(5,0) node[near end,below] {$L_x$}--(5,2) node {\tiny $\bullet$} node[near end,right] {$L_y$};
\draw  (2,1) node {\tiny $\bullet$ } --++ (0,-1);
\draw  (3,0.75) node {$\times$} node[below] {P}--(1.5,0.75)--(1.5,-2) node[near end, left] {$\pA(L_0)$};
\draw[dashed] (1.5,-2)--(1.5,-3);
\draw[dotted]  (3,0.75)--(7,0.75) node[near start,above] {$L_x$};
\end{tikzpicture}
\caption{Step 1: If $\pA(L_x)\cap {L}_y=\emptyset$, and $h$ is not minimal}
\label{step1:fig}
\end{center}
\end{figure}

\medskip
\noindent {\bf 2d Step.} Now we assume $\pA(L_x)\cap\overset{\circ}{L}_y \neq \emptyset$. We first show that $\pA(L_y)\cap\overset{\circ}{L}_x\neq\emptyset$. 
Let $Q$ be the point in the intersection $\pA(L_x)\cap\overset{\circ}{L}_y$ such that the vertical distance from $c(L)$ to $Q$ is minimal. Since 
$|\pA(L_x)| = \lambda^{-1}|L_x| < |L_x|$ one has $\pA(c(L))\in i(R)$. If 
$\pA(L_y)\cap\overset{\circ}{L}_x = \emptyset$ then the vertical segment $\pA(L_y)$ is contained in $i(R)$, in particular $\pA(\sigma)\in i(R)$: this contradicts the 1st Step since there is no singularity inside $i(R)$. \medskip

Now we replace $L$ by $L'$ as follows: We choose $Q'$ in $\pA(L_y)\cap\overset{\circ}{L}_x$ such that the horizontal distance from $P$ to $Q'$ is minimal. Then, we define  $L'$ by considering the horizontal segment, starting from $P$ and ending at $Q'$, and the vertical segment from $Q'$ and ending at $\pA(\sigma)$.
Since $\pA(L'_x)\cap L'_y \subset \pA(L_x)\cap\pA(L_y) = \{\pA(c(L))\}$, one has 
$\pA(L'_x)\cap \overset{\circ}L'_y =\emptyset$ as required. Now, up to shortening $L'$ as in the first step, we can assume that $L'_x$ and $L'_y$ bound an immerse rectangle $R'$ and we still have 
 $\pA(L'_x)\cap\overset{\circ}{L'}_y =\emptyset$.  \medskip
 
\noindent {\bf 3d Step.} Let $\widetilde{S}$ be the universal covering of $S$. Choose $\widetilde{P}$ a preimage of $P$, $\widetilde{L}$ a preimage of $L$ attached to $\widetilde{P}$. Now the rectangle $R$, as defined in the 1st Step, embeds as a rectangle $\widetilde{R}$ in $\widetilde{S}$ with $\widetilde{L}$ as bottom and right sides. For any lift $\widetilde{\pA}$ of $\pA$, such that $\widetilde{\pA}(\widetilde{L_y})$ intersects $\widetilde{R}$, $\widetilde{\pA}(\widetilde{L_y})$ intersects the interior of $\widetilde{L_x}$ (in a unique point $Q$, since we are working on the universal cover). Now, we choose a lift $\widetilde{\pA}$ of $\pA$ that minimize the length $d$ of the vertical segment joining $Q$ to the singular point that is the  end of $\widetilde{\pA}(\widetilde{L_y})$. Now, we easily see that $\widetilde{\pA}(\widetilde{R})$ intersects $\widetilde{R}$ as in Figure~\ref{step3}. As in the proof of Proposition~\ref{prop:construction:path}, we find $x\in L_x$ such that the corresponding vertical leaf is fixed by $\widetilde{\pA}$. Then, we find in $\widetilde{R}$ a fixed point $\widetilde{P'}$ for $\pA$.

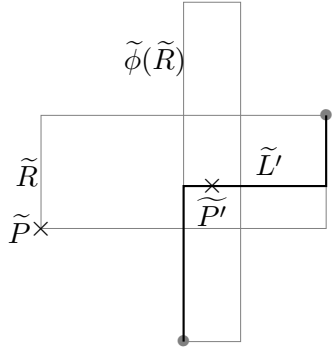
\begin{figure}[htb]
\begin{center}
\begin{tikzpicture}[scale=0.75]

\draw[very thin, gray] (0,0) rectangle (5,2) node {$\bullet$};
\draw[very thin, gray]  (2.5,-2) node {$\bullet$} rectangle (3.5,4);
\draw[thick] (3,0.75) node {$\times$} node[below] {$\widetilde{P'}$} -- node[midway,above] {$\widetilde{L'}$} (5,0.75) -- (5,2);
\draw[thick] (3,0.75) -- (2.5,0.75) -- (2.5,-2);

\draw (-0.25,1) node {$\widetilde{R}$};
\draw (2,3) node {$\widetilde{\pA}(\widetilde{R})$};
\draw (0,0) node {$\times$} node[left] {$\widetilde{P}$};
\end{tikzpicture}
\caption{Step 3: changing the fixed point}
\label{step3}
\end{center}
\end{figure}
Now we consider $\widetilde{L'}$ obtained as follows: take the horizontal segment with left end $\widetilde{P'}$ and whose right end is in $\widetilde{L_x}$, then consider the vertical segment that ends in the same singularity as $\widetilde{L}$ (see Figure~\ref{step3}). We claim that $\widetilde{L'}$ projects in $S$ into a suitable curve $L'$. 
Indeed, otherwise $L'_x\cup \pA(L'_x)$ either intersects the interior of $L'_y$  or the interior of $\pA(L'_y)$. In both cases, we find another intersection point between $L_x$ and $\pA(L'_y)$ which contradicts the minimality of $d$. Therefore, we have found a suitable curve, the theorem is proven. 
\end{proof}
\par
The next proposition implies the  second part of the Geometric Statement. 
\begin{Proposition}\label{all:is:SRV}
Let $\pA$ be an affine pseudo-Anosov map on a surface $S$ in a hyperelliptic component, and $\tau$ be the hyperelliptic involution. Then, $\tau\circ\pA$ is also an affine pseudo-Anosov map on $S$. Denote  $\{\pA,\tau\circ\pA\}=\{\pA^+,\pA^-\}$ such that $\pA^+$ preserves the orientation of the vertical and horizontal foliations. We have the following:
\begin{itemize}
\item $\pA^-$ is obtained by the symmetric Rauzy--Veech construction.
\item If $\pA^+$ is not obtained by the usual Rauzy--Veech construction, then there are exactly two regular fixed points for $\pA^-$, that are interchanged by the hyperelliptic involution.
\end{itemize}
\end{Proposition}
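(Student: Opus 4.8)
The plan is to first establish that $\tau\circ\pA$ is affine pseudo-Anosov, then sort out orientations, and finally count regular fixed points using the action of $\tau$ on them. For the first point, recall that on a hyperelliptic component the hyperelliptic involution $\tau$ is affine with derivative $-I \in \mathrm{SL}(2,\R)$ and commutes (up to isotopy, in fact exactly here since $\tau$ is the deck involution) with every affine homeomorphism. Hence $\tau\circ\pA$ is affine with derivative $(-I)\cdot D\pA$, whose eigenvalues are still $\pm\theta^{\pm 1}$ with the same expansion factor $\theta>1$; since $\tau\circ\pA$ preserves the same pair of measured foliations (only possibly swapping orientations), it is pseudo-Anosov. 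Exactly one of $D\pA$, $-D\pA$ is $\mathrm{diag}(\theta,\theta^{-1})$ and the other is $\mathrm{diag}(-\theta,-\theta^{-1})$, so the labels $\pA^+$ (orientation-preserving on both foliations) and $\pA^-$ (reversing both) are well defined, and $\{\pA^+,\pA^-\}=\{\pA,\tau\circ\pA\}$.

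For the first bullet: $\pA^-$ reverses the orientation of the horizontal and vertical foliations, so by Theorem~\ref{thm:construction:converse} it suffices to exhibit a regular fixed point of $\pA^-$ with positive index. Here I would invoke the structure of hyperelliptic surfaces: the orientation-reversing pseudo-Anosov $\pA^-$ has negative trace on $H_1$ in the relevant block, and a Lefschetz fixed point count (or a direct argument as in the proof of Proposition~\ref{prop:construction:path}: follow a vertical leaf segment contracted by $-\theta^{-1}$ to produce a fixed subsegment, hence a fixed point) yields a fixed point which, not being a singularity of the foliations when $\pA^+$ is genuinely new, is regular with positive index. Once a regular fixed point exists, Theorem~\ref{thm:construction:converse} applies verbatim and $\pA^-$ is obtained by the symmetric Rauzy--Veech construction.

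For the second bullet, the key is that $\tau$ conjugates $\pA^-$ to itself: since $\tau^2=\mathrm{id}$ and $\tau$ commutes with $\pA$, we get $\tau\circ\pA^-\circ\tau = \pA^-$ (the sign $-I$ introduced by conjugation by $\tau$ is absorbed because $\tau$ acts as $-I$ and $(-I)D\pA^-(-I)=D\pA^-$). Hence $\tau$ permutes the regular fixed points of $\pA^-$. A regular fixed point $P$ of $\pA^-$ that is also fixed by $\tau$ would be one of the $2g+2$ Weierstrass points; but at a Weierstrass point the local picture of $\pA^-$ together with $\tau$ forces $\pA^+ = \tau\circ\pA^-$ to fix a horizontal separatrix at that point, and then Proposition~\ref{prop:fix:sing} (or rather Veech's converse, which would then realize $\pA^+$ by the usual Rauzy--Veech construction) contradicts the hypothesis that $\pA^+$ is not obtained by the usual construction. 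Therefore no regular fixed point of $\pA^-$ is $\tau$-fixed, so they come in $\tau$-orbits of size two. Finally, the index/Lefschetz computation must be pinned down to show there are exactly two (rather than $2k$) such points: the orientation-reversing pseudo-Anosov contributes a fixed Lefschetz number that, on a hyperelliptic surface with $D\pA^- = \mathrm{diag}(-\theta,-\theta^{-1})$, forces precisely two regular fixed points once the contributions of the (finitely many) singular fixed points are accounted for.

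The main obstacle is the fixed-point count in the last step: showing the number of regular fixed points of $\pA^-$ is exactly two, not merely even, requires controlling the contributions of the zeros of $\omega$ and the marked point to the Lefschetz number of $\pA^-$, using that the surface is hyperelliptic and that $\pA^+$ being "new" rules out separatrix-fixing behaviour at singularities. The orientation bookkeeping in the conjugation identity $\tau\pA^-\tau=\pA^-$ (as opposed to $\tau\pA^+\tau = \pA^+$) also needs care, since one must check that the two ambient sign flips from $\tau$ genuinely cancel on the level of the affine map and not just its derivative.
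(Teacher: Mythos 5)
Your skeleton (Lefschetz count plus commutation with $\tau$) is the same as the paper's, but the two steps that carry the actual content are missing. For the first bullet you must produce a regular fixed point of $\pA^-$ \emph{unconditionally}, yet your argument only asserts that $\pA^-$ ``has negative trace on $H_1$ in the relevant block'' and then conditions on ``$\pA^+$ genuinely new'', a hypothesis not available for this bullet. The paper gets the fixed point from the Lefschetz formula $2-\mathrm{Tr}(\pA_*)=\sum_{\pA(x)=x}\mathrm{Ind}(\pA,x)$ together with the index dichotomy for pseudo-Anosov fixed points (negative index when a separatrix is fixed, index $1$ otherwise): since any regular fixed point of the positive map $\pA^+$ fixes its separatrices, hence has negative index, one bounds the Lefschetz number of $\pA^+$ above ($\leq 1$ in $\mathcal{H}(2g-2)$; $\leq 2$ or $\leq 0$ in $\mathcal{H}(g-1,g-1)$ according to whether the two zeros are fixed or swapped), then uses $\mathrm{Tr}(\pA^-_*)=-\mathrm{Tr}(\pA^+_*)$ (the hyperelliptic involution acts by $-\mathrm{Id}$ on homology) to bound the Lefschetz number of $\pA^-$ below by $3$, $2$, $4$ respectively; as every fixed point contributes at most $+1$ and at most the zeros are singular (and in $\mathcal{H}(g-1,g-1)$ one must track whether $\pA^-$ fixes or swaps them), at least two regular fixed points survive in every case. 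None of this case analysis appears in your proposal, and your fallback ``direct argument as in Proposition~\ref{prop:construction:path}'' is circular: that contraction argument uses the base segment coming from symmetric Rauzy--Veech data, which is precisely what Theorem~\ref{thm:construction:converse} is supposed to produce from the fixed point.

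For the second bullet, your $\tau$-swapping argument is exactly the paper's (a $\tau$-fixed regular fixed point of $\pA^-$ would be a regular fixed point of $\pA^+$, forcing a fixed horizontal separatrix and a usual Rauzy--Veech realization, contradiction), but you explicitly leave open the count ``exactly two'', and that is where the remaining content lies. The paper closes it by running the same Lefschetz computation in the equality case: ``$\pA^+$ not obtained by the usual construction'' means $\pA^+$ has no negative-index fixed point, so all the inequalities above become equalities; since $\pA^-$ reverses the orientation of both foliations it can fix no separatrix, so every one of its fixed points (regular or singular) has index exactly $+1$, and the Lefschetz number therefore counts fixed points exactly; removing the singular ones (fixed or interchanged according to the stratum) leaves exactly one pair of regular fixed points. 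Without this exact count, or an equivalent substitute, the second bullet is not proved.
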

\begin{proof}
We prove the first part. From the previous theorem, all we need to show is that $\pA^-$ has a regular fixed point. 
For a homeomorphism $\pA$, we denote by $\pA_*$ the linear action of $\pA$ on the homology $H_1(S,\mathbb{R})$. We recall the Lefschetz formula:
$$2-Tr(\pA_*)=\sum_{\pA(x)=x} Ind(\pA,x).$$
When $\pA$ is of type pseudo-Anosov, and $x$ a fixed point, we can show that:
\begin{itemize}
\item  $Ind(\pA,x)<0$ if there is a fixed separatrix.
\item $Ind(\pA,x)=1$ otherwise. 
\end{itemize}
Assume that $\pA=\pA^+$ and the underlying translation surface is in $\mathcal{H}(2g-2)$. Then the unique singularity $P$ is necessarily fixed and has index $\leq 1$. All the other fixed point (possibly zero) are regular point so have negative index. Hence $2-Tr(\pA_*)\leq 1$. Since $Tr(\tau\circ \pA)=-Tr(\pA)$, we conclude that $2-Tr(\pA^-_*)\geq 3$. Therefore, there must be at least 2 regular fixed points for $\pA^-$. 

Now assume that the underlying translation surface is in $\mathcal{H}(g-1,g-1)$. The two singularities $P_1,P_2$ are either fixed or interchanged by $\pA^+$.
\begin{itemize}
\item If $P_1,P_2$ are fixed. Then, as before, $2-Tr(\pA^+_*)\leq 2$ hence $2-Tr(\pA^-_*)\geq 2$. So there are a least two fixed points. But $P_1,P_2$ are interchanged by $\pA^-$, hence the two fixed points are regular.
\item If $P_1,P_2$ are interchanged by $\pA^+$, then $2-Tr(\pA^+_*)\leq 0$, hence $2-Tr(\pA^-_*)\geq 4$, there are at least four fixed points so, at least 2 regular fixed points.
\end{itemize}
Now, we see that in the above proof, the case where $\pA^+$ is not obtained by the usual Rauzy--Veech construction (\emph{i.e.} when $\pA^+$ does not have negative index fixed point) corresponds to the equality case. There is exactly one pair of regular fixed points $\{Q_1,Q_2\}$. Since $\pA_-$ and $\tau$ commute, we see that $\tau(Q_1)$ is a fixed point, hence $Q_1$ or $Q_2$. It cannot be $Q_1$ otherwise $\tau\circ \pA_-=\pA^+$ has a regular fixed point, contradicting the hypothesis. Hence, $\tau(Q_1)=Q_2$, this concludes the proof.
\end{proof}

We end this section with the following useful proposition.
\begin{Proposition}
\label{prop:centralpermutation}
If an admissible path from $\pi$ to $s(\pi)$ passes through the central permutation then $\tau\circ\pA$ is obtained by the usual Rauzy--Veech construction, where $\tau$ is the hyperelliptic involution.
\end{Proposition}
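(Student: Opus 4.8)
The plan is to exploit the structure of the symmetric Rauzy--Veech construction together with the description of the central permutation $\pi_n$, which is itself symmetric, \emph{i.e.} $s(\pi_n)=\pi_n$ (up to relabelling, since reversing both rows of $\pi_n=\left(\begin{smallmatrix}1&2&\cdots&n\\ n&n-1&\cdots&1\end{smallmatrix}\right)$ returns $\pi_n$). So if an admissible path $\gamma$ from $\pi$ to $s(\pi)$ passes through $\pi_n$, then $\gamma$ decomposes as a concatenation $\gamma = \gamma_1\cdot\gamma_2$, where $\gamma_1$ runs from $\pi$ to $\pi_n$ and $\gamma_2$ runs from $\pi_n$ to $s(\pi)$. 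The key observation is that, by Proposition~\ref{prop:coordinates} (or directly from the coordinate description of $\mathcal{D}_n$ and the definition of $s$), the path $\gamma_2$ from $\pi_n$ to $s(\pi)$ is, up to relabelling, the ``mirror'' of the path $\gamma_1$ from $\pi$ to $\pi_n$: reading $\gamma_1$ backwards and applying $s$ to every vertex yields exactly $\gamma_2$. Concretely, if $w$ is the word in $\{t,b\}$ spelling the shortest path from $\pi_n$ to $\pi$, then $\gamma_1$ reversed spells $w$ read from $\pi_n$, and $\gamma_2$ spells the same word $w$ (this is precisely the symmetry $s(\pi_n.w) = \pi_n.w$-reversed encoded in Proposition~\ref{prop:coordinates}).

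The first main step is to translate this combinatorial symmetry into a statement about the matrix $V(\gamma)$. Since $\gamma$ passes through $\pi_n$, its image in the \emph{reduced} Rauzy diagram $\mathcal{D}_{red}(\pi_n)$ is a \emph{closed} loop based at the reduced central permutation: indeed $\pi$ and $s(\pi)$ have the same underlying reduced permutation only after the relabelling that defines the symmetric construction, but once we pass through $\pi_n$, the loop $\gamma$ factors through $\pi_n$ in the reduced diagram, so it \emph{is} a genuine closed loop there (the central permutation is fixed by $s$, so no relabelling is needed at the midpoint). Hence the composed path $\gamma$, \emph{after} applying the hyperelliptic involution $\tau$, becomes a closed path in the labelled Rauzy diagram in the sense required by Veech's construction. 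More precisely: the symmetric construction produced $\pA$ with derivative $\mathrm{diag}(-\theta,-\theta^{-1})$; composing with $\tau$, which acts as $-\mathrm{Id}$ on $H_1$ and has derivative $-\mathrm{Id}$, gives $\tau\circ\pA$ with derivative $\mathrm{diag}(\theta,\theta^{-1})$, a positive pseudo-Anosov. It remains to check that $\tau\circ\pA$ fixes a horizontal separatrix, so that Veech's theorem (quoted in the excerpt) applies and exhibits $\tau\circ\pA$ via the usual Rauzy--Veech construction.

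The second main step — which I expect to be the main obstacle — is verifying that $\tau\circ\pA$ indeed fixes a horizontal separatrix, equivalently that it is obtained from a genuine closed loop in the reduced diagram. The point is that $\pA$, built from $\gamma$, fixes the left endpoint of the base interval (a regular point), while $\tau$ swaps the two regular fixed points of $\pA^-$ described in Proposition~\ref{all:is:SRV}; so $\tau\circ\pA$ need not fix a regular point, but one must show it fixes a \emph{singular} separatrix. Here the passage through the central permutation is essential: at $\pi_n$ the surface $X(\pi_n,\lambda,\tau)$ has its horizontal interval attached, in the zippered-rectangle picture, symmetrically with respect to the hyperelliptic involution (the central permutation being $s$-invariant means the involution $\tau$ acts on the combinatorial data compatibly with the midpoint of $\gamma$). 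Following the height parameter $h$ of the weak suspension datum along $\gamma_1$ and then along $\gamma_2$, one sees that the midpoint configuration at $\pi_n$ is the $\tau$-fixed one, so that the vertical leaf through the relevant endpoint lands on a singularity that is itself $\tau$-fixed; hence $\tau\circ\pA$ fixes the horizontal separatrix emanating from that singularity. Making this precise requires carefully tracking the weak suspension data and the $\tau$-action through the concatenation at $\pi_n$, using Remark~\ref{rk:isometric} and the relation $X(s(\pi,\lambda,\tau)) = -I\cdot X(\pi,\lambda,\tau)$. Once the fixed separatrix is exhibited, Veech's theorem finishes the proof.
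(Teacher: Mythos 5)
Your overall reduction is the right one: since $\tau\circ\pA$ preserves the orientation of both foliations, the entire content of the proposition is to exhibit a fixed horizontal separatrix, after which Veech's theorem applies. But the step you yourself flag as the main obstacle is exactly where the proposal breaks down, and the preparatory combinatorics does not help. The claim that $\gamma_2$ is the mirror of $\gamma_1$, and the ensuing claim that $\gamma$ becomes a genuine closed loop in the reduced diagram based at the central vertex, are unjustified and false in general: an admissible path from $\pi$ to $s(\pi)$ through $\pi_n$ can contain arbitrary extra loops, its two halves are unrelated, and the reduced images of $\pi$ and $s(\pi)$ are in general distinct vertices; moreover, even a closed reduced loop would not by itself exhibit $\tau\circ\pA$ by the usual construction, because the data $(\lambda,\tau)$ here are eigenvectors of the matrix of the symmetric construction, not of a loop matrix. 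More seriously, the key inference ``the vertical leaf through the relevant endpoint lands on a $\tau$-fixed singularity, hence $\tau\circ\pA$ fixes the horizontal separatrix emanating from it'' is a non sequitur: in $\mathcal{H}^{hyp}(2g-2)$ the unique zero is fixed by $\tau$ and by every affine homeomorphism, yet this does not force a fixed separatrix --- pseudo-Anosov maps with small expansion factor fix the zero but no separatrix (Proposition~\ref{prop:fix:sing} and \cite{BL12}), and producing a fixed separatrix is precisely what has to be proved.

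What is missing is a dynamical argument producing an invariant vertical leaf, and this is how the paper proceeds. Write $\psi=\tau\circ\pA$ and let $I=I_h$ be the base interval of the weak suspension datum. The hypothesis that $\gamma$ passes through $\pi_n$ is used to show that the intermediate interval $I''$ corresponding to the central permutation has the property that $I''$ and $\tau(I'')$ are the two horizontal sides of an immersed Euclidean rectangle (this is where the $s$-symmetry of $\pi_n$ actually enters). Since $\pA(I)\subset I''\subset I$, one obtains an immersed rectangle with $\psi(I)=\tau(\pA(I))$ as one horizontal side and a subinterval of $I$ as the other; following the vertical leaves across it gives an isometry $f:\psi(I)\to I$, and $f\circ\psi:I\to I$ is a contraction (derivative $\theta^{-1}$), hence has a fixed point. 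That fixed point yields a $\psi$-invariant vertical leaf, then a fixed point of $\psi$ on that leaf (singular or regular), hence a fixed vertical and therefore horizontal separatrix, and only then does Veech's theorem conclude. Your plan never produces this invariant leaf, so as written the decisive step of the proof is absent.
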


\begin{proof}[Proof of Proposition~\ref{prop:centralpermutation}]
The map  $\psi=\tau\circ \pA$ preserves the orientation of the vertical and horizontal leafs. Hence, all we need to show is that there is a fixed separatrix. 

Let $(\pi,\lambda,\tau)$ be the weak suspension data associated to the admissible path. Let $h$ be a height, and $I=I_h\subset X(\pi,\lambda,\tau)$. 

We claim that there is an immersed Euclidean rectangle with $\psi(I)$ as one horizontal side, and the other one is a subinterval of $I$. Assuming the claim, 
 there is an isometry $f$ from $\psi(I)$ to $I$ obtained by following a vertical leaf. The map $f\circ \psi$ is therefore a contracting map from $I$ to itself (its derivative is $\theta^{-1}$), hence has a fixed point. It means that there is an element $x$ in $I$ whose image by $\psi$ is in the vertical leaf $l$ passing through $x$. Thus, this vertical leaf $l$ is preserved by $\psi$. Since $\psi$, restricted to $l$ as derivative $\theta\neq 1$, there is a fixed point of $\psi$ on $l$. This fixed point is either a conical singularity or a regular point. In any case, $\psi$ fixes a vertical separatrix. Hence $\psi$ fixes also a horizontal separatrix. It is therefore obtained by the usual Rauzy--Veech construction.

Now we prove the claim. By construction of $\pA$, there is base segment $I$ such that $I'=\pA(I) \subset I$. This subinterval $I'$ is obtained from $I$ after doing Rauzy--Veech induction until the end of the defining path in the Rauzy diagram. By hypothesis, there is a step of the form $(\pi_n,\lambda'',\tau'')$, where $\pi_n$ is the central permutation, and that corresponds to a interval $I''$, with $I'\subset I'' \subset I$. Now, we easily see that $\tau(I'')$ and $I''$ are the horizontal sides of a immersed rectangle. We now conclude as in the proof of Proposition~\ref{prop:construction:path} that there is a fixed separatrix.
\end{proof}

%****************************************************************
%****************************************************************
%****************************************************************
%****************************************************************
\section{Renormalization and changing the base permutation}
\label{sec:reduce}

The aim of this section is to reduce our analysis to the set of paths of $\mathcal D_n$ starting from the central loop. 
First, recall that, from \cite{BL12}, pseudo-Anosov homeomorphisms in hyperelliptic connected components obtained by the Rauzy--Veech construction have expansion factors at least 2, and are not the minimal ones. Recall that an admissible path is pure if the corresponding map is not obtained by the usual Rauzy--Veech construction.

\begin{Theorem}
\label{thm:main:reduction}
Let $\gamma$ be a pure admissible path in $\mathcal D_n$ joining a permutation $\pi$ to its symmetric $s(\pi)$, and let $\pA$ be the corresponding pseudo-Anosov map. Then, $\pA$ can  also be obtained by a path $\gamma'$ starting 
from the central loop. In addition the first step of $\gamma'$ is on a secondary loop ({\em i.e.} of type 'b').
\end{Theorem}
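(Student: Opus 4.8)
The plan is to run a renormalization process on the path $\gamma$ — the map $ZRL$ announced in the Dynamical Statement — which replaces $\gamma$ by an admissible path inducing the same pseudo-Anosov map up to conjugacy, while strictly simplifying some complexity invariant, until the starting permutation lands on the central loop. First I would set up the right notion of complexity: for a permutation $\pi\in\mathcal D_n$ with coordinates $(n_1,\dots,n_k)$ in the sense of Section~\ref{coordinates}, the number $k$ of ``syllables'' measures the distance of $\pi$ from the central loop (the central loop being exactly the permutations with $k\le 2$, i.e. those of the form $\pi_n.t^j$). So the goal is to decrease $k$. The two moves available are: (i) cyclically rotating the base point of $\gamma$ along itself — since $\gamma$ goes from $\pi$ to $s(\pi)$ this is subtle, but one can use left Rauzy induction $\mathcal R_L=s\circ\mathcal R\circ s$ together with $s$ to slide the base while keeping the ``$\pi$ to $s(\pi)$'' shape, exactly as in the Veech–Zorich / MMY accelerations; and (ii) the symmetry $s$ and the fact, from Remark~\ref{rk:isometric} and the surrounding discussion, that Rauzy–Veech and left Rauzy–Veech moves do not change the underlying translation surface nor the affine map, only the marking/base segment.

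The key steps, in order: \textbf{(1)} Show that if the first step of $\gamma$ is of type $t$ (say), then one may conjugate $\pA$ by the corresponding elementary move and ``absorb'' that first letter, replacing $(\pi,\gamma)$ by $(\pi',\gamma')$ where $\pi'$ is one step closer — combinatorially this is: a path from $\pi$ to $s(\pi)$ whose first edge is $t$ and (by genericity/purity) whose last edge into $s(\pi)$ is also of type $t$ can be rewritten, after conjugation, as a path from $\mathcal R_t(\pi)$ to $s(\mathcal R_t(\pi))$; iterating strips the first syllable of $\pi$ down. \textbf{(2)} Check that admissibility (primitivity of $V(\gamma)$) is preserved under this operation — this follows because conjugating the matrix and multiplying by fixed transvection/permutation matrices does not destroy primitivity, the spectral radius being unchanged. \textbf{(3)} Check that purity is preserved: by Proposition~\ref{prop:centralpermutation} and the main result of \cite{BL12}, a pure path cannot pass through $\pi_n$, and the rewriting does not create such a passage; more precisely purity is a property of the conjugacy class of $\pA$, which is an invariant of the process. \textbf{(4)} Iterate until the base permutation has $k\le 2$, i.e. is $\pi_n.t^j$ on the central loop; then \textbf{(5)} arrange the first step to be of type $b$: if after reaching the central loop the first step of $\gamma'$ were of type $t$, then $\gamma'$ would begin by running along the central loop, and one more rotation move pushes the base forward along the central loop until the path must leave it, which happens precisely by a $b$-edge (a ``secondary loop''). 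Here one uses the explicit shape of $\mathcal D_n$ recalled in Section~\ref{coordinates}: from a vertex $\pi_n.t^j$ the only way to leave the central loop is along a $b$-labelled edge.

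The main obstacle I expect is \textbf{Step (1)} — making the ``strip one syllable'' move precise while staying in the class of paths from a permutation to its symmetric. Unlike the closed-loop case of classical Rauzy–Veech, here the endpoints are $\pi$ and $s(\pi)$, so a naive cyclic rotation does not return a path of the same type; one has to interleave the rotation with the involution $s$ and keep careful track of the permutation matrix $P$ in $V=\widetilde V\cdot P$ (Equation~\eqref{eq:matrix:unlabel}) that implements the relabelling between $s(\pi')$ and $\pi$. Concretely, one must verify that if $\gamma:\pi\to s(\pi)$ has first letter $\varepsilon$ then there is a naturally associated $\gamma^\flat:\mathcal R_\varepsilon(\pi)\to s(\mathcal R_\varepsilon(\pi))$ with $V(\gamma^\flat)$ conjugate to $V(\gamma)$, and that the corresponding pseudo-Anosov maps coincide up to conjugacy — this is exactly where the geometric picture (the base segment $I$ and its images, Figure~\ref{SRV:construction}) is needed, rather than pure combinatorics. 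Once that single rewriting step is established cleanly, termination is immediate from the strict decrease of the syllable count $k$, and Steps (2)–(5) are routine bookkeeping using the explicit combinatorics of $\mathcal D_n$.
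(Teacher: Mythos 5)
The overall skeleton you propose (renormalize the path, track the starting permutation through the coordinates of Section~\ref{coordinates}, land on the central loop, then force the first step to be of type 'b') is the paper's, but the two load-bearing points are missing or incorrect. The central gap is your Step (1), which you yourself flag as the main obstacle but do not resolve — and it cannot be resolved in the form you propose. A path from $\pi$ to $s(\pi)$ cannot be ``cyclically rotated'' inside the right Rauzy diagram: since $s$ conjugates right induction into left induction ($\mathcal R_L=s\circ\mathcal R\circ s$), the permutation $s(\mathcal R_t(\pi))$ is reached from $s(\pi)$ by a \emph{left} move, so stripping the first letter does not yield a path from $\mathcal R_t(\pi)$ to $s(\mathcal R_t(\pi))$ with conjugate matrix. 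Geometrically, the base segments admissible for the symmetric construction are exactly the horizontal parts of $L\cup\pA(L)$ for a suitable curve $L$ (Proposition~\ref{base:segment:to:weak:sd}), so their left and right halves are coupled through $\pA$; cutting only on the right destroys this symmetry. The paper's elementary move (ZRL, Proposition~\ref{prop:ZRL}) is precisely a \emph{coupled} right--left move — right induction until the specific letter $\alpha=\pi_b^{-1}(n)$ loses, then left induction until $\beta=\pi_t^{-1}(1)$ loses — and its validity rests on the geometric identity $\pA(\zeta_\alpha)=\zeta_\beta$, i.e.\ on producing a new suitable curve, not on a combinatorial conjugation of $V(\gamma)$.

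Your termination argument also fails. Even for the correct move, the syllable count $k$ is not monotone: Proposition~\ref{prop:action:of:ZRL} shows that ZRL can split blocks and \emph{increase} $k$ (e.g.\ $(n_{k-1},n_k)\mapsto(n_{k-1},\bar l,1,n_k-1-\bar l)$), so ``termination is immediate from the strict decrease of $k$'' is false. The paper instead proves, by induction on the first coordinate via the property $\mathcal P(l)$, that the extreme blocks are equal and eventually merge with their neighbours, and it must invoke Lemma~\ref{infinite:winners} (every letter wins infinitely often, proved dynamically using minimality and unique ergodicity of the vertical flow) to rule out infinite recurrence of the neutral cases before the coding drops to two blocks. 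Likewise the final adjustment is not a rotation along the central loop: once the starting point lies on the central loop, the paper shows that if the first step is 't' then ZRL fixes $\pi$ while strictly decreasing $\lambda_b$, which cannot continue indefinitely, forcing a first step of type 'b'. Without a correct elementary move and a genuine termination argument, the proposal does not prove the theorem.
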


The construction of $\gamma'$ in terms of $\gamma$ is not obvious. To prove the
theorem, we appeal to the dynamics of the induction.

%****************************************************************
\subsection{Renormalization: The ZRL  acceleration}

In this section we will consider only pure admissible paths from $\pi$ to $s(\pi)$. If we denote by $(n_1,\dots ,n_k)$ the coordinates of $\pi$ then $k$ is even (otherwise Proposition~\ref{prop:coordinates} implies that $\gamma$ passes through the central permutation and Proposition~\ref{prop:centralpermutation} is a contradiction with the path being pure).

\begin{Convention}\label{conv}
We will always assume that the first Rauzy move is '$t$', \emph{i.e.} $\pi$ is obtained by applying the sequence $t^{n_1}b^{n_2}\dots t^{n_{k-1}}$ to $\pi_n$.
\end{Convention}
\par
\begin{Proposition}
\label{prop:ZRL}
Let $\alpha=\pi_b^{-1}(n)$ and $\beta=\pi_t^{-1}(1)$. Then $\phi(\zeta_\alpha)=\zeta_\beta$.

In particular there exists a canonical  base segment $I'$ obtained from the previous one as follows: 
We apply a right induction until $\alpha$ is is loser (\emph{i.e.}  Rauzy path of type $b^l t$, for some $l\geq 0$) followed by a left induction until $\beta$ is loser (\emph{i.e.}  left Rauzy path of type $\bar{t}^m \bar{b}$, for some $m\geq 0$). This segment $I'$ defines a new Rauzy path $\gamma'$ starting point $\pi'$ of $\gamma'$ satisfying Convention~\ref{conv} (up to permuting top and bottom).
\end{Proposition}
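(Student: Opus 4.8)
The plan is to first establish the identity $\phi(\zeta_\alpha)=\zeta_\beta$, and then deduce that the described sequence of right and left inductions produces a well-defined base segment whose associated Rauzy path starts at a permutation satisfying Convention~\ref{conv}. For the first claim, recall that $\phi=\pA$ is built from the Symmetric Rauzy--Veech construction: by Proposition~\ref{prop:construction:path} it is affine with derivative $\left(\begin{smallmatrix}-\theta&0\\0&-1/\theta\end{smallmatrix}\right)$ on $X(\pi,\lambda,\tau)$, and the action of $\phi$ on relative homology is the matrix $V=V(\gamma)$. The vectors $\zeta_\gamma=(\lambda_\gamma,\tau_\gamma)$ are (complexified) coordinates of the saddle connections appearing in the polygonal/zippered-rectangle representation, i.e. they span a basis of $H_1(X,\Sigma;\mathbb{Z})$. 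I would trace through the definition of $V=\widetilde V\cdot P$ from Equation~\eqref{eq:matrix:unlabel}: the permutation matrix $P$ records the relabelling between $s(\pi')$ and $\pi$, under which the last bottom label $\pi_b^{-1}(n)$ of $\pi$ is matched (after the $180^\circ$ rotation implicit in $s$) with the first top label $\pi_t^{-1}(1)$. Concretely, the column of $V$ labelled $\alpha=\pi_b^{-1}(n)$ is exactly the $\beta$-th standard basis vector, because $\alpha$ is never a winner along $\gamma$ (it sits at the far right of the bottom row, so only `$t$'-moves could touch it, but the relevant winner in a `$t$'-move is $\pi_t^{-1}(d)$, and the Rauzy diagram $\mathcal D_n$ forces $\alpha$ to remain passive until the final relabelling). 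Hence $V(e_\alpha)=e_\beta$, which translated back to the surface reads $\phi_*[\zeta_\alpha]=[\zeta_\beta]$; since $\phi$ is affine and these are genuine saddle connections, this lifts to the geometric equality $\phi(\zeta_\alpha)=\zeta_\beta$.

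Next I would use this identity to construct $I'$. Starting from the base segment $I=I_h$ of $(\pi,\lambda,\tau)$, the separatrix $\zeta_\alpha$ is the leftmost outgoing datum on the bottom, and $\zeta_\beta$ the leftmost on the top; applying right Rauzy induction of type $b^\ell t$ (for the least $\ell\ge 0$ making $\alpha$ a loser) cuts $I$ on the right, while the subsequent left Rauzy induction of type $\bar t^{\,m}\bar b$ cuts on the left, and by the discussion in Section~\ref{Rauzy:moves} ($\mathcal R_L=s\circ\mathcal R\circ s$) both operations preserve weak suspension data and give naturally isometric surfaces. The composite operation is well-defined because the Rauzy diagram $\mathcal D_n$ has the special shape recalled in Section~\ref{coordinates}: from any vertex there is a unique path of prescribed type, and the length of the initial $b$-run and the terminal $\bar t$-run are finite (bounded by $n$), as $\alpha$ and $\beta$ eventually become losers after finitely many steps — this is where I expect the bulk of the bookkeeping, checking that these inductions do not overshoot and that the resulting datum is still a weak suspension datum (Condition iii--iv are the delicate ones, to be verified using $\phi(\zeta_\alpha)=\zeta_\beta$ and the minimality of $\ell,m$).

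Finally, I would identify the new Rauzy path $\gamma'$: the base segment $I'$ re-reads, via first return of the vertical flow, as an interval exchange $(\pi',\lambda',\tau')$, and by the converse direction (Proposition~\ref{base:segment:to:weak:sd}) this is obtained from $(\pi,\lambda,\tau)$ by the above inductions, so $\gamma'$ is a Rauzy path from $\pi'$ to $s(\pi')$ defining the \emph{same} pseudo-Anosov $\pA$ up to conjugacy. That $\pi'$ satisfies Convention~\ref{conv} up to permuting top and bottom follows from Proposition~\ref{prop:coordinates}: the left induction of type $\bar t^m\bar b$ ends with a $\bar b$-move, which flips the initial Rauzy type relative to $\pi$, and since $\pi$ had an even number $k$ of coordinate blocks, the parity argument shows $\pi'$ can be written starting with a `$t$' after exchanging top and bottom rows. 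The main obstacle I anticipate is not the homological identity but the careful verification that the prescribed right-then-left induction terminates and lands on a legitimate weak suspension datum — i.e. that $\ell$ and $m$ are finite and that Conditions iii--iv of the weak suspension datum definition survive — which requires using the precise combinatorics of $\mathcal D_n$ and the identity $\phi(\zeta_\alpha)=\zeta_\beta$ together.
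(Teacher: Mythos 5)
Your key step is flawed. You justify $V(e_\alpha)=e_\beta$ by asserting that $\alpha=\pi_b^{-1}(n)$ ``is never a winner along $\gamma$'' because it ``sits at the far right of the bottom row, so only `$t$'-moves could touch it.'' This has the roles of the two moves backwards: the letter in the last position of the bottom row is precisely the \emph{winner} of every `$b$'-move (and the \emph{loser} of a `$t$'-move). In fact $\alpha$ is the winner of all the initial `$b$'-steps of the pure admissible paths under consideration (e.g.\ along $\gamma_{n,k}=b^{n-1-k}t^{n-1-2k}$ starting at $\pi_n.t^k$, the first $n-1-k$ winners are all $\alpha$), and the very statement you are proving has $\alpha$ winning throughout the $b^l$ part before finally losing at the `$t$'-step. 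So the passivity argument cannot be about $\alpha$. The letter that is genuinely inert is $\beta=\pi_t^{-1}(1)$: the first top label is invariant under both Rauzy moves, and it can only win (or lose) if it also occupies the last bottom position, which on the component of $\mathcal{D}_n\setminus\{\pi_n\}$ fixed by Convention~\ref{conv} never happens because the pure path avoids the central permutation. This is exactly where purity enters, and it is the ingredient the paper uses: $\zeta_\beta$ is unchanged along the whole induction, so the first top side of every intermediate polygon is the \emph{same} segment of the surface, and since the relabelling between the endpoint and $s(\pi)$ matches the first-top letters ($\alpha$ for $s(\pi)$ with $\beta$ for $\pi$), the definition of $\pA=f_2\circ f_1$ gives $\pA(\zeta_\alpha)=\zeta_\beta$ directly and geometrically. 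Your homological detour also has an unjustified final step: even granting $\pA_*[\zeta_\alpha]=\pm[\zeta_\beta]$ in relative homology, this does not by itself yield the set-theoretic equality $\pA(\zeta_\alpha)=\zeta_\beta$ (distinct saddle connections can be homologous, and there are orientation/sign issues since $\pA$ reverses the foliations), so the ``lift'' you invoke is a genuine gap, not a formality.

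For the second half you only sketch a plan and explicitly defer ``the bulk of the bookkeeping'' (finiteness of $\ell,m$, conditions iii--iv). The paper does not verify these conditions by hand: it uses the identity $\pA(\zeta_\alpha)=\zeta_\beta$ to exhibit a new suitable curve $L'$ (horizontal segment from the fixed point followed by the vertical segment ending at the singular endpoint of $\zeta_\beta$, as in Figure~\ref{fig:new:base:segment}), and then Proposition~\ref{base:segment:to:weak:sd} automatically produces the new base segment, weak suspension datum and path $\gamma'$; one simply reads off that this base segment is obtained from the old one by right induction until $\alpha$ loses and left induction until $\beta$ loses. Finally, the Convention is arranged not by a parity argument but by swapping the two lines, i.e.\ conjugating $\pA$ by the orientation-reversing homeomorphism. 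As written, your proposal does not establish the proposition: the central claim rests on a false statement about $\alpha$, and the remaining steps are not carried out.
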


\begin{figure}[htb]
\begin{tikzpicture}[yscale=0.7, xscale=1.5]
\coordinate (h) at (0,2) ;
\coordinate (l1) at ($(1,0)$);
\coordinate (t1) at (0,-1);
\coordinate (z1) at ($(t1)+(l1)$); 
\coordinate (l2) at ($(2.29,0)$);
\coordinate (t2) at (0,-0.423);
\coordinate (z2) at ($(t2)+(l2)$); 
\coordinate (l3) at ($(1.54,0)$);
\coordinate (t3) at (0,3.54);
\coordinate (z3) at ($(t3)+(l3)$); 
\coordinate (l4) at ($(1.18,0)$);
\coordinate (t4) at (0,-6.27);
\coordinate (z4) at ($(t4)+(l4)$); 
\coordinate (o) at (0,0);
%\coordinate (a) at (9,-18);
%\coordinate (th) at ($2.3*(h)$);
%\coordinate (b) at ($(9,0)+(h)-(th)$);
%\coordinate (tl1) at (0.43,0);
%\coordinate (tt1) at (0,-2.3);
%\coordinate (tz1) at ($(tt1)+(tl1)$); 
%\coordinate (tl2) at ($0.43*(l2)$);
%\coordinate (tt2) at ($2.3*(t2)$);
%\coordinate (tz2) at ($(tt2)+(tl2)$); 
%\coordinate (tl3) at ($0.43*(l3)$);
%\coordinate (tt3) at ($2.3*(t3)$);
%\coordinate (tz3) at ($(tt3)+(tl3)$); 
%\coordinate (tl4) at ($0.43*(l4)$);
%\coordinate (tt4) at ($2.3*(t4)$);
%\coordinate (tz4) at ($(tt4)+(tl4)$); 

%\theta=2.3
%1/\theta=0,43

%surface de départ S
\draw (h)--++ (z1) node[midway,above right] {$\zeta_1=\pA(\zeta_2)$} node {\tiny $\bullet$}
                     --++ (z2) node[midway,above] {$\zeta_2$} node {\tiny $\bullet$}
                     --++ (z3) node[midway,above] {$\zeta_3$} node {\tiny $\bullet$}
                     --++ (z4) node[midway,right] {$\zeta_4$} node {\tiny $\bullet$}
                     --++ ($-1*(z2)$) node[midway,below] {$\zeta_2$} node {\tiny $\bullet$}
                     --++ ($-1*(z3)$) node[midway,below] {$\zeta_3$} node {\tiny $\bullet$}
                     --++($-1*(z1)$) node[midway,below] {$\zeta_1$} node {\tiny $\bullet$}
                     --++($-1*(z4)$) node[midway,left] {$\zeta_4$}  node {\tiny $\bullet$}                  
                     --cycle;
 
%\draw [thick] (o)--($(l1)+(l2)+(l3)+(l4)$)  node[midway,below] {$I_h$}; 
                        
\draw[dashed] ($(h)+0.3*(l1)+0.3*(l2)+0.3*(l3)+0.3*(l4)+0.7*(t1)+0.7*(t2)+0.7*(t3)+0.7*(t4)$) node {\tiny $\bullet$} -- ($(h)+0.7*(t1)+0.7*(t2)+0.7*(t3)+0.7*(t4)+(l1)+(l3)+(l4)$) --
($(h)+(z1)+(z3)+(z4)$) node[near start,right] {$L'$} ;

\draw[dashed] ($(h)+0.3*(l1)+0.3*(l2)+0.3*(l3)+0.3*(l4)+0.7*(t1)+0.7*(t2)+0.7*(t3)+0.7*(t4)$) node {\tiny $\bullet$}  -- ($(h)+0.7*(t1)+0.7*(t2)+0.7*(t3)+0.7*(t4)+(l1)$) -- ($(h)+(z1)$) node[midway, right] {$\pA(L')$};                    
\end{tikzpicture}
\caption{Finding a new suitable curve: $\pA(\zeta_2)=\zeta_1$.}
\label{fig:new:base:segment}
\end{figure}
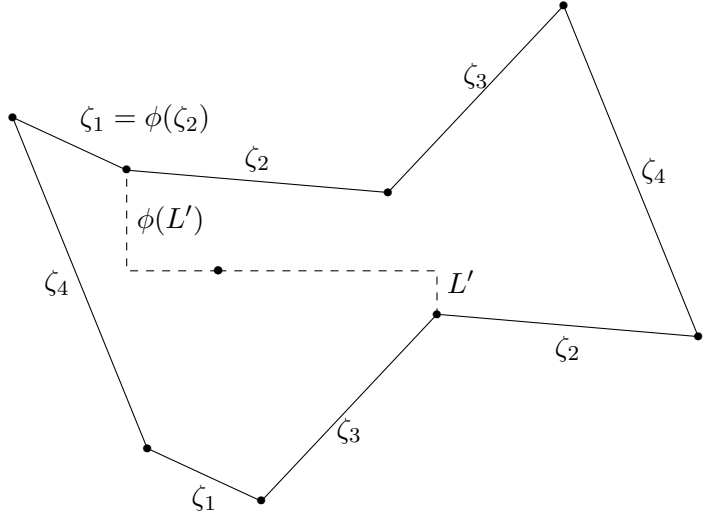
\begin{Definition}
The map $ZRL$ from the space of pure admissible paths satisfying Convention~\ref{conv} to itself is defined by $ZRL(\gamma)=\gamma'$, where $\gamma,\gamma'$ are as in Proposition~\ref{prop:ZRL}.
\end{Definition}
\par
\begin{proof}[Proof of Proposition~\ref{prop:ZRL}]
By hypothesis, $\pi$ is in the connected component of $\mathcal{D}_n\backslash\{\pi_n\}$ where $\beta=\pi_t^{-1}(1)$ is never winner.  Since $\gamma$ does not pass through the central permutation, $\alpha$ is never winner, hence the parameter $\zeta_\beta$ is unchanged during the whole Rauzy path. In particular, by definition of $\pA$ (see Figure~\ref{SRV:construction}), the segment corresponding to $\zeta_\alpha$ is sent by $\pA$ to the segment corresponding to $\zeta_\alpha$. Hence, the curve $L'$ as in Figure~\ref{fig:new:base:segment} is admissible and defines a new admissible path $\gamma'$ from a permutation $\pi'$ to $s(\pi')$. The horizontal part of $L'\cup \pA(L')$ defines a new base segment $I'$ and parameters $(\pi',\zeta')$ as described in Section~\ref{sec:suspension}. We obtain $(\pi',\zeta')$ by the sequence of right and left Rauzy move described in the statement of the proposition. If $\pi'$ does not satisfy Convention~\ref{conv}, we interchange the two lines, that is equivalent to interchanging "up" and "down" on the surface, and therefore conjugating $\pA$ with the surface orientation changing homeomorphism.
\end{proof}
\begin{Remark}
ZRL stands for Zorich acceleration of Right-Left induction. By construction the two surfaces
$S(\gamma)$ and $S(\gamma')$ are in the same Teichm\"uller orbit (perhaps up to an affine isometry with derivative map $ \left(\begin{smallmatrix} 1&0\\ 0& -1 
\end{smallmatrix}\right)$). It is difficult to express the path $\gamma'$ in a simple way in terms of the path $\gamma$. 
However the coordinates for the permutations (introduced in Section~\ref{coordinates}) allow us to express the new starting point $\pi'$ in an easier way, only in terms of the old starting point (see Proposition~\ref{prop:action:of:ZRL} below).
\end{Remark}
In view of considering iterates of the map ZRL, we will use the following lemma.

\begin{Lemma}\label{infinite:winners}
Assume that $\gamma$ is a pure admissible path. %, and that the ZRL orbit of $\gamma$ nether pass through a path starting from the central loop. 
Then the ZRL orbit of $\gamma$ is infinite and all the letters are winner and loser infinitely often.
\end{Lemma}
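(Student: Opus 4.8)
The plan is to read off two facts about a single $ZRL$ step and then prove the two assertions together. By Proposition~\ref{prop:ZRL} a $ZRL$ step does not change the translation surface $S$ nor the conjugacy class of $\pA$; it only replaces the base segment $I$ by a strictly smaller one $I'\subsetneq I$, obtained from $I$ by a right induction $b^{l}t$ followed by a left induction $\bar t^{m}\bar b$. Both sub-paths are non-empty (the first ends with a `$t$'-move, the second with a `$\bar b$'-move), so the right induction strictly shortens the base segment from the right and the left induction strictly from the left; hence, the base segments along the orbit being nested, after $p\ge 1$ steps the base segment shares \emph{no} endpoint with $I$. In particular the concatenation $\Gamma$ of all the $ZRL$ steps is a genuine infinite right--left Rauzy path issued from $T=T(\gamma)$, containing infinitely many right moves and infinitely many left moves.

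For the statement on winners and losers I would use that the vertical foliation of $S$ is an invariant foliation of the pseudo-Anosov $\pA$, hence is minimal and has no saddle connection, so $T$ satisfies the Keane condition. The classical argument for the Rauzy--Veech induction (cf.~\cite{Rauzy, Marmi:Moussa:Yoccoz}) then applies along the right moves of $\Gamma$: were a letter $\alpha$ a winner only finitely often, the right endpoint of the interval labelled $\alpha$ would eventually be frozen and the vertical leaf issued from it would be a saddle connection, which is impossible. Thus every letter is a winner infinitely often among the right moves, and, conjugating everything by $s$, also among the left moves. Since in a minimal interval exchange every subinterval is subdivided infinitely often, every letter is a loser infinitely often as well. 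As $\Gamma$ has infinitely many moves of each type, this proves the second assertion.

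For the first assertion I would argue by contradiction. Suppose $ZRL^{(p)}(\gamma)=\gamma$ with $p\ge 1$ minimal. Since a $ZRL$ step leaves $S$ unchanged, after $p$ steps we sit on the same surface $S$ with a base segment $I^{(p)}\subsetneq I$ whose interval exchange data coincides, up to the $\mathrm{GL}^{+}(2,\R)$-rescaling, with that of $I$ (because $\gamma^{(p)}=\gamma$ and, by the correspondence of Section~\ref{sec:new:construction}, the data attached to a pure admissible path is the eigenvector pair of its matrix; the period loop $\eta$ has transition matrix $W=\widetilde V(\eta)$, a non-empty product of transvections, which forces $\lambda(\gamma)$ to be its Perron eigenvector with $\rho(W)>1$). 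Thus $I$ and $I^{(p)}$ are two marked presentations of the \emph{same} translation surface, so there is an affine automorphism $\psi$ of $S$ with contracting diagonal derivative carrying $I$ onto $I^{(p)}$ — a non-trivial power of an affine pseudo-Anosov, possibly post-composed with the hyperelliptic involution (which commutes with $\pA^{-}$ by Proposition~\ref{all:is:SRV}). Using that $ZRL$ is natural with respect to automorphisms of $S$, iterating gives $I^{(kp)}=\psi^{k}(I)$ for all $k$; comparing this with the endpoint behaviour recorded in the first paragraph — $I^{(kp)}$ shares no endpoint with $I$ — against the way $\psi$, belonging to the diagonal part of the Veech group of $S$, relates $I$ to $\psi^{k}(I)$ in the zippered-rectangle picture, yields a contradiction. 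Hence the orbit is infinite.

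The step I expect to be the real obstacle is precisely this last one: turning "$I^{(p)}$ carries the same data as $I$ on the same surface" into a usable rigidity statement about how $\psi^{k}(I)$ is nested inside $I$, pinning down which affine automorphisms $\psi$ can occur, and extracting the endpoint contradiction. Everything else reduces to the standard no-connection argument for the Rauzy--Veech induction together with the bookkeeping of the first paragraph.
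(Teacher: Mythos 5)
The core gap is in your second paragraph. The assertion that ``the classical argument for the Rauzy--Veech induction then applies along the right moves of $\Gamma$'' is precisely the point that needs proof, and it does not transfer: the classical statement (no connection implies every letter wins infinitely often) concerns iterating the \emph{one-sided} right induction on a fixed interval exchange, whereas here the moves are an alternation of right and left inductions dictated by the ZRL acceleration, and the combinatorial bookkeeping behind the classical proof breaks down in this two-sided setting. Your shortcut -- ``the right endpoint of the interval labelled $\alpha$ would eventually be frozen and the vertical leaf issued from it would be a saddle connection'' -- is not the classical argument and is incorrect as stated: if $\alpha$ never wins, what is frozen is its length $\lambda_\alpha$, not an endpoint, and the left moves keep displacing the subintervals in any case. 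The paper's proof of this half is genuinely different: it reduces the claim to showing that the lengths $|I_n|$ of the nested base segments tend to $0$, and excludes the alternative geometrically, using that the limit endpoints $a,b$ satisfy $\pA(a)=b$ (a relation specific to the ZRL construction) together with minimality of the vertical foliation; the delicate case, which your sketch never confronts, is when the vertical leaf through the limit endpoint hits a singularity (a vertical separatrix), and it is ruled out by showing that the right parts of the ZRL moves would then eventually all be of type $b$ and the left parts of type $\bar t$, which cannot persist. Your claim that minimality makes every letter a loser infinitely often is likewise unjustified.

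Concerning the first assertion, in the paper ``the ZRL orbit of $\gamma$ is infinite'' only means that ZRL can be iterated indefinitely, and this is immediate from Proposition~\ref{prop:ZRL}: the image of a pure admissible path is again a pure admissible path. Your third paragraph instead tries to exclude periodicity $ZRL^{(p)}(\gamma)=\gamma$, which is neither what is claimed nor what your argument achieves: by self-similarity of the pseudo-Anosov configuration one should in fact expect the sequence of paths to recur (just as the Rauzy path of a Veech fixed point retraces the same loop forever), and the affine contraction $\psi$ you produce simply nests $\psi^{k}(I)$ strictly inside $I$ without sharing endpoints, so there is no conflict with the endpoint bookkeeping of your first paragraph -- you yourself flag this step as the obstacle. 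As written, then, neither half is established: the first needs only the remark that ZRL preserves pure admissibility, and the second needs the $|I_n|\to 0$ argument (or a correct substitute), not the one-sided classical one.
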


\begin{proof}
By Proposition~\ref{prop:ZRL}, the new path $\gamma'$ is a pure admissible path. Hence, the ZRL orbit is infinite. In this proof, we do not interchange top and bottom line in order to follow Convention~\ref{conv}, but consider a sequence of base segment $(I_n)_{n\in \mathbb{N}}$ on the same underlying surface $S$, with $I_{n+1}\subset I_n$. Note that the vertical flow on $S$ is minimal (and uniquely ergodic) since $S$ carries an affine pseudo-Anosov homeomorphism. We identify those segments as subsets of the real line, $I_n=]b_n,a_n[$, with $(b_n)$ increasing and $(a_n)$ decreasing. As for the usual Rauzy induction, all letter are winner infinitely often if and only if $|I_n|=a_n-b_n$ tends to zero. Let us assume that it is not the case. Then $a_n\to a$, $b_n\to b$ with $\pA(a)=b$. We consider the sequence $(\lambda_n,\tau_n)$ of corresponding suspension data. Define $t_n$ to be the time when the vertical unit speed geodesic starting from $a_n$ reach a singularity, note that $t_n$ can be positive or negative, observe that $|t_n|\to \infty $ otherwise the set of singularities in the surface $S$ would not be discrete. 
Without loss of generalities, we can assume that there is a subsequence $(n_k)$ such that $t_{n_k}>0$. There are two cases.
\begin{enumerate}
\item If the geodesic  in the positive direction starting from $a$, denoted as $g_a$, is infinite, then it follows the (finite) one starting from $a_{n_k}$ for a longer and longer time. By density there is a time $t$ such that $g_b(t)$ intersects $]b,a[$, hence for $k$ large enough, the geodesics starting from $b_{n_k}$ intersects $]b,a[$, hence $I_{n,k}$ before reaching a singularity. Contradiction.
\item If the geodesic $g_a$ is finite in the positive direction, then it is necessarily infinite in the negative direction. If there is a subsequence $t_{m_k}<0$, the same argument as above gives a contradiction. Hence, for $n$ large enough $t_n>0$. The same argument also works for an non accelerated Right-Left induction. Hence for $n$ large enough, the `right' part of the ZRL move is necessarily $b$. Similarly, the `left' part of the ZRL move is necessarily $\bar{t}$. But such move cannot hold for an infinite number of step. Contradiction.
\end{enumerate}
The lemma is proved.
\end{proof}

%****************************************************************
\subsection{Action of ZLR on the starting point}

As promised we now explain how ZRL acts on the starting point, in terms of the coding introduced in Section~\ref{coordinates}.

\begin{Proposition}
\label{prop:action:of:ZRL}
Let $\gamma$ be an admissible path, starting from a permutation $\pi$. 
Let  $(n_1,\dots ,n_k)$, $k\geq 4$ be the coding of $\pi$. The coding of the starting point of $ZRL(\gamma)$ is obtained by the following rules.
\begin{itemize}
\item we replace $(n_{k-1},n_k)$ by $\left\{\begin{array}{ll} (n_{k-1}+1,n_k-1),&\ \mathrm{or} \\  (n_{k-1},n_k-1,1),&\ \mathrm{or} \\ (n_{k-1},\bar{l},1,n_k-1-\bar{l}).\end{array}\right.$
%$(n_{k-1}+1,n_k-1)$ or $(n_{k-1},n_k-1,1)$ or $(n_{k-1},l,1,n_k-l-1)$.
\item we replace $(n_1,n_2)$ by $\left\{\begin{array}{ll} (n_1-1,n_2+1),&\ \mathrm{or} \\  (1,n_1-1,n_2),&\ \mathrm{or} \\ (n_1-1-\bar{m},1,\bar{m},n_2).\end{array}\right.$
%$(n_1-1,n_2+1)$ or $(1,n_1-1,n_2)$ or $(n_1-m-1,1,m,n_2)$.
\end{itemize}
where $l,m$ are positive integers with $l<n_k-1$, $m<n_1-1$,
and $\bar{l}$, respectively $\bar{m}$, is the remainder of the Euclidean division of $l$, respectively $m$, by $n_k$, respectively $n_1$.
\end{Proposition}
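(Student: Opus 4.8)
The plan is to trace through the definition of $ZRL$ given in Proposition~\ref{prop:ZRL} very concretely, translating each elementary Rauzy move into its effect on the coordinates $(n_1,\dots,n_k)$ introduced in Section~\ref{coordinates}. By definition $ZRL(\gamma)$ is obtained from $\gamma$ by first performing a right induction of type $b^l t$ (iterating until the letter $\alpha=\pi_b^{-1}(n)$ becomes the loser) and then a left induction of type $\bar t^m\bar b$ (until $\beta=\pi_t^{-1}(1)$ becomes the loser). Since a left move is conjugate to a right move by the symmetry $s$, which by Proposition~\ref{prop:coordinates} reverses coordinates, it suffices to analyze the right part: its effect on $(n_1,\dots,n_k)$ will be mirrored by the left part acting on the reversed tuple, which explains the symmetric form of the two bullet lists (the first bullet records the effect of the right induction on the tail $(n_{k-1},n_k)$, the second the effect of the left induction on the head $(n_1,n_2)$).

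First I would set up the bookkeeping for the right induction. Starting from $\pi$ with coordinates $(n_1,\dots,n_k)$ we apply Rauzy moves of type $b$. The key point — which follows from Rauzy's explicit description of $\mathcal D_n$ and from the fact that $\gamma$ is pure, so does not meet $\pi_n$ and $k$ is even — is that each $b$-move decreases $n_k$ by one and we stay inside the same secondary structure; this continues as long as the winner stays on the appropriate side. The number of $b$-moves we perform before $\alpha$ becomes the loser is exactly some $l$, $0\le l$, and then one $t$-move is applied. There are three combinatorial possibilities for where the path sits after these $l+1$ moves, according to how $l$ compares to $n_k$ and whether the $t$-move opens a new block: either $l=0$, which merges with the previous block and yields $(n_{k-1}+1,n_k-1)$; or $0<l<n_k$, which simply creates a new short block and yields $(n_{k-1},n_k-1,1)$ after the $t$-move, and here the intermediate $b^l$ produced a block of length $l$; or $l\ge n_k$, in which case $l$ wraps around modulo $n_k$ and the relevant new block has length $\bar l$ (the remainder), giving $(n_{k-1},\bar l,1,n_k-1-\bar l)$. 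I would verify each of these three cases by directly computing the shortest path from $\pi_n$ to the new permutation using the formulas for $\mathcal R_t,\mathcal R_b$ in Section~\ref{Rauzy:moves}, checking that the displayed coordinate tuple is indeed the coding of the endpoint.

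Then I would obtain the second bullet for free: the left induction $\bar t^m\bar b$ is $s\circ(\text{right induction }b^m t)\circ s$, and $s$ reverses the coordinate tuple by Proposition~\ref{prop:coordinates}, so the three outcomes for $(n_{k-1},n_k)$ transcribe verbatim into the three outcomes for $(n_1,n_2)$ after reversing: $(n_1-1,n_2+1)$, $(1,n_1-1,n_2)$, and $(n_1-1-\bar m,1,\bar m,n_2)$, with $\bar m$ the remainder of $m$ modulo $n_1$. Finally I would note that the hypothesis $k\ge 4$ guarantees that the head and tail blocks are disjoint so the two operations do not interfere, and that Convention~\ref{conv} (possibly after interchanging top and bottom) is preserved, as already observed in the proof of Proposition~\ref{prop:ZRL}.

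The main obstacle I anticipate is the careful case analysis of the right induction: one must pin down precisely when $\alpha=\pi_b^{-1}(n)$ becomes the loser relative to the block structure, show that no "accidental" merging with blocks other than the last one occurs (this is where purity and $k\ge 4$ are used), and handle the wrap-around $l\ge n_k$ correctly — that is, confirm that after $b^{n_k}$ the configuration genuinely returns to a comparable state shifted by one block, so that only the residue $\bar l$ matters. Everything else is a mechanical, if slightly tedious, translation between Rauzy moves and codings.
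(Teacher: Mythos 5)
Your overall route is the same as the paper's: analyse the effect of the right part $b^{l}t$ of ZRL on the coding, and deduce the left part $\bar t^{m}\bar b$ by conjugating with $s$, which reverses coordinates (your transcription of the three outcomes for the head, and the remark that $k\geq 4$ keeps head and tail disjoint, are fine). However, the heart of the proposition is the trichotomy for $b^{l}t$, and your assignment of outcomes to cases is wrong. In the hyperelliptic diagram the $b$-loop based at $\pi$ has length exactly $n_k$ (the residual coordinate), so $b^{n_k}$ returns \emph{exactly} to $\pi$ --- not, as you write, to ``a comparable state shifted by one block''. Hence only $\bar l=l \bmod n_k$ matters, and the correct split is on $\bar l$: if $\bar l=0$ (which includes $l=n_k,2n_k,\dots$, not only $l=0$), after $b^{l}$ you are back at $\pi$ and the final $t$ extends the last $t$-block, giving $(n_{k-1}+1,n_k-1)$; if $\bar l=n_k-1$, after $b^{l}$ the coordinates are $(\dots,n_{k-1},n_k-1,1)$ and, the residual being $1$, the $t$-move is a self-loop, giving $(n_{k-1},n_k-1,1)$; if $0<\bar l<n_k-1$, you get $(n_{k-1},\bar l,1,n_k-1-\bar l)$. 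Your second case (``$0<l<n_k$ yields $(n_{k-1},n_k-1,1)$'') is false for every $l$ with $0<l<n_k-1$ --- for instance $l=1$, $n_k\geq 3$ gives $(n_{k-1},1,1,n_k-2)$ --- and is even internally inconsistent with your own remark that the intermediate $b^{l}$ creates a block of length $l$; symmetrically, your first and third cases are mismatched, since $l\geq n_k$ with $\bar l\in\{0,n_k-1\}$ does not produce the four-entry outcome.

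So the gap is not the strategy but the combinatorial fact driving it: the exact closing of the $b$-loop after $n_k$ steps, which is what reduces everything to the residue $\bar l$ and produces the three outcomes as listed (this is exactly how the paper argues). Once you replace your trichotomy ($l=0$, $0<l<n_k$, $l\geq n_k$) by the trichotomy $\bar l=0$, $\bar l=n_k-1$, $0<\bar l<n_k-1$, the verification you propose via the explicit formulas for $\mathcal R_t,\mathcal R_b$, followed by the $s$-conjugation argument for $\bar t^{m}\bar b$ (with modulus $n_1$ for $\bar m$), does go through and coincides with the paper's proof.
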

\begin{proof}[Proof of Proposition~\ref{prop:action:of:ZRL}]
By convention, the starting permutation $\pi$ is of the form $t^{n_1}b^{n_2}\dots t^{n_{k-1}}$. The map ZRL acts on $\pi$ by a sequence of Rauzy moves of the form $b^lt \bar{t}^m \bar{b}$ and then followed (perhaps) by a permutations of the lines (which does not change the coding).

The Rauzy moves  $b^l t$ acts on the coding as:
\begin{enumerate}
\item $(n_1,\dots ,n_k)\mapsto (n_1,\dots ,n_{k-1}+1,n_{k}-1)$ if $l=0 \mod(n_k)$.
\item $(n_1,\dots ,n_k)\mapsto (n_1,\dots ,n_{k-1},n_k-1,1)$ if $l=-1 \mod (n_k)$.
\item $(n_1,\dots ,n_k)\mapsto (n_1,\dots ,n_{k-1},\bar{l},1,n_k-1-\bar{l})$ otherwise.
\end{enumerate}
(where $\bar{l}$ is the remainder of the Euclidean division of $l$ by $n_k$).\\
This gives the first part of the proposition.

The remaining part is obtained similarly: we must act on the left by the moves 
$\bar{t}^m \bar{b}$, which is equivalent to the moves $s.b^m. t.s$. This proves the proposition.
\end{proof}

%*************************************************************
\subsection{Proof Theorem~\ref{thm:main:reduction}}
We are now ready to prove the theorem announced at the beginning of this section.
\begin{proof}[Proof Theorem~\ref{thm:main:reduction}]
Let $\gamma$ be an admissible path, with corresponding expansion factor $\lambda<2$. We need to show that there exists an iterate of ZRL which  starts from a permutation $\pi$ starting from the central loop, \emph{i.e.} satisfying $\pi_t^{-1}(n)=\pi_b^{-1}(1)$.

We first observe the following: let $(l,\dots ,l')$ be a coordinate of $\pi$, then $l\leq l'$. Indeed, if $l>l'$, then a path joining $\pi$ to $s(\pi)$ must pass through the central permutation.

Now, let $(n_1,,n_2,\dots ,n_{k-1},n_k)=(l,x,\dots ,x',l')$ be the coding of a permutation, with $k\geq 4$ even. We prove by induction the following property $\mathcal{P}(l)$:
\begin{enumerate}
\item $l=l'$
\item After applying a finite sequence of ZRL, we reach the permutation coded by $(l+x,\dots ,x'+l')$, and during this sequence, the  letters ``between the blocks corresponding to $x$ and $x'$'', \emph{i.e.}  $\pi_\varepsilon^{-1}(j)$ for $j\in \{l+x+2,\dots ,n-1-l'-x'\}$ are constant along the ZRL-orbit and non-winner. Note that if $k\geq 4$, this set of letters is nonempty.
\end{enumerate}

Initialisation corresponds to the case $(1,x,\dots ,x',l')$. Assume that $l'>1$, then after one step of $ZRL$, the left part must be $(1+x,\dots )$, but in this case, in order to preserve the parity of the numbers of blocks, we must have:
$$(x+1,\dots ,x', l'-1,1)$$ 
with $x+1>1$, which contradicts the initial observation. So, we have $l'=1$, and ZRL maps $(1,x,\dots ,x',1)$ to $(x+1,\dots ,x'+1)$, and we see directly that the second condition is fulfilled.

Now, let $1< l \leq l'$ and let the initial permutation be coded by $(l,x,\dots ,x',l')$. Assume $\mathcal{P}(l'')$ for any $l''<l$. By Proposition~\ref{prop:action:of:ZRL} and observing again that the parity of the number of blocks is constant, after one step of ZRL the coding is one of the following:
\begin{enumerate}
\item $(1,l-1,x,\dots ,x',l'-1,1)$.
\item $(l-1,x+1,\dots ,x'+1,l'-1)$.
\item $(l-1,x+1,\dots ,x',l_2',1,l_1')$, for some $l_1',l_2'$ satisfying $l_1'+l_2'+1=l'$.
\item $(l_1,1,l_2,x,\dots ,x'+1,l'-1)$, for some $l_1,l_2$ satisfying $l_1+l_2+1=l$.
\item $(l_1,1,l_2,x,\dots ,x',l_2',1,l_1')$, for $l_1,l_2,l_1',l_2'$ as above.
\end{enumerate}
Now, we study these different cases:
\begin{enumerate}
\item The following step is necessarily $(l,x,\dots ,x',l')$.
\item By the induction hypothesis, $l-1=l'-1$, and after some steps, we obtain $(l+x,\dots ,l'+x')$.
\item By induction hypothesis, we have $l-1=l_1'$, and after some steps of ZRL we have $(l+x,\dots ,x',l_2',l)$, which again contradicts the first observation.
\item By induction hypothesis, we have $l_1=l'-1$. But, $l_1=l-1-l_2'<l-1\leq l'-1$. Contradiction.
\item By induction hypothesis, we have $l_1'=l_1$ and after some steps, we have $(l_1+1,l_2,x,\dots ,x',l_2',l_1+1)$, and again after some steps, we have $(l,x,\dots ,x',l')$. During these two sequences of ZRL, all the letters between the blocks corresponding to $l_2$ and $l_2'$ are unchanged and non-winner, and this set is nonempty.
\end{enumerate}
Note that it is impossible to repeat infinitely the steps $(1)$ or $(5)$ by Lemma~\ref{infinite:winners}, hence we will eventually get Step $(2)$, proving $\mathcal{P}(l)$.

Hence, after a finite number of ZRL steps, we obtain a permutation with a coding with two blocks (k=2), which corresponds to a starting point in the central loop. This proves the first part of Theorem~\ref{thm:main:reduction}. \medskip

We now turn into the proof of the second part: the first step of the corresponding admissible Rauzy path leaves the central loop, \emph{i.e.} is of type $b$.
In this case, ZRL acts on the starting point $\pi$ by $t\bar{b}$. Write $\pi$ as:
$$
\begin{pmatrix}
a&***&b \\ b&***&c 
\end{pmatrix}
$$
We easily see that $t\bar{b}$ preserves $\pi$, and $\lambda_b$ becomes $\lambda_b-\lambda_a-\lambda_c$. Iterating ZRL, after a finite number of steps, $b$ is not winner any more. Theorem~\ref{thm:main:reduction} is proved.
\end{proof}
%****************************************************************
%****************************************************************
%****************************************************************
%****************************************************************
%\section{Control of the paths in the Rauzy diagram: the central loop}
\section{Reducing to a finite number of paths}
\label{sec:special}

In view of Section~\ref{sec:new:construction}, for a given $n$, one needs to control the spectral radii 
of matrices $V(\gamma)$ for all paths $\gamma$ in $\mathcal D_n$. Section~\ref{sec:reduce} shows that it is enough to consider paths $\gamma$ starting from the central loop of $\mathcal D_n$. However this still produces an infinite number of paths to control. The proposition below shows how one can restrict
our problem to a \emph{finite} number of paths.

\begin{Proposition}\label{prop:reduce:to:thetank}
Let $\gamma$ be an admissible path starting from a permutation $\pi=\pi_n.t^k$ in the central loop. 
We assume that the first step goes in the secondary loop. 
\begin{enumerate}
\item If $k> K_n$, then $\theta(\gamma)>2$.
\item If $k\leq K_n$ then $\theta(\gamma) \geq \theta_{n,k}$. 
\item If $V(\gamma_{n,k})$ is not primitive, then either $\theta(\gamma)>2$ or there exists $l\in\{1,\dots,2n-2-3k\}$, $l\neq n-1-k$, such that 
$\theta(\gamma)\geq \theta_{n,k,l}$.
\end{enumerate}
%When $k=K_n$ the last case reads: if $V(\gamma_{n,K_n})$ is not primitive, then there exists 
%$l\in\{1,\dots,2n-2-3k\}$, $l\neq L_n+1$, such that $\theta(\gamma)\geq \theta_{n,K_n,l}$.
\end{Proposition}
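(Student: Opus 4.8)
The plan is to analyze an admissible path $\gamma$ starting from $\pi = \pi_n.t^k$ whose first step enters the secondary loop (type `b'), and to compare its transition matrix $V(\gamma)$ with that of the model paths $\gamma_{n,k}$ and $\gamma_{n,k,l}$ from Notation~\ref{notation:paths}. The first task is to understand the shape of any such $\gamma$ in the Rauzy diagram $\mathcal{D}_n$: since $\mathcal{D}_n$ has the very rigid structure described in Section~\ref{coordinates} (a central loop with secondary `b'-loops hanging off each vertex $\pi_n.t^j$, and symmetrically), a path from $\pi$ to $s(\pi)$ that starts with a `b' must first traverse (at least part of) the secondary loop attached to $\pi_n.t^k$, then re-enter the central loop, possibly make further excursions, and finally reach $s(\pi) = \pi_n.t^{n-2-k}$ (up to relabelling). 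I would set up coordinates so that the nonnegative matrix $V(\gamma)$ dominates, entrywise, the matrix one gets from the "minimal" sub-path $\gamma_{n,k}$: deleting an excursion from a Rauzy path corresponds to removing some transvection factors $I + E_{\alpha\beta}$, which only decreases entries of the product. This is the monotonicity principle that drives everything.

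For part (1), the plan is to show directly that when $k > K_n = \lfloor n/2\rfloor - 1$, even the shortest admissible path $\gamma_{n,k}$ — or rather the obstruction to admissibility — forces $\theta(\gamma) > 2$. Concretely, $\gamma_{n,k}: \pi \to s(\pi)$ has label $b^{n-1-k}t^{n-1-2k}$, which only makes sense (as a path to $s(\pi)$ inside the diagram) when $n - 1 - 2k \geq 0$, i.e. $k \le (n-1)/2$; for $k > K_n$ one is in the regime $k \ge \lceil (n-1)/2\rceil$, and I would argue that any admissible path must then "overshoot" and pass through the central permutation $\pi_n$ — but then Proposition~\ref{prop:centralpermutation} says the map is not pure, contradicting that we are in the pure/admissible setting with $\theta < 2$ (recall Proposition~\ref{prop:fix:sing}). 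Alternatively, if it does not pass through $\pi_n$, a counting argument on the length of the path forces the Perron root above $2$; I would make this precise by exhibiting an explicit sub-path whose matrix already has spectral radius $\ge 2$ (e.g. comparing with a matrix conjugate to one containing a block like $\left(\begin{smallmatrix}1&1\\1&1\end{smallmatrix}\right)$ iterated enough times).

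For part (2): assuming $k \le K_n$, I want $\theta(\gamma) \ge \theta_{n,k}$. The idea is that $\gamma_{n,k}$ is the minimal path from $\pi$ to $s(\pi)$ beginning with `b', so any admissible $\gamma$ of this type factors, after reordering of the Rauzy moves that does not affect the diagram combinatorics, through the moves comprising $\gamma_{n,k}$ plus extra transvections. Since all transvection matrices $I + E_{\alpha\beta}$ are entrywise $\ge I$ and the relevant permutation matrix is the same, $V(\gamma) \ge V(\gamma_{n,k})$ entrywise (after the identification), and by Perron–Frobenius monotonicity of the spectral radius for nonnegative matrices, $\rho(V(\gamma)) \ge \rho(V(\gamma_{n,k})) = \theta_{n,k}$. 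The care needed here is that $V(\gamma_{n,k})$ may fail to be primitive, so $\theta_{n,k}$ is defined as its largest real eigenvalue via Perron–Frobenius for reducible nonnegative matrices; the monotonicity $\rho(A) \le \rho(B)$ for $0 \le A \le B$ still holds, which is all I use.

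For part (3): if $V(\gamma_{n,k})$ is not primitive, then $\gamma$ itself cannot equal $\gamma_{n,k}$ (an admissible path has primitive matrix by definition), so $\gamma$ properly contains $\gamma_{n,k}$. The extra part of $\gamma$ is at least one full extra loop — either a `t'-loop based at some $\pi_n.t^k b^l$ with $1 \le l \le n-2-k$, or a `b'-loop based at some $\pi_n.t^{l-(n-1-2k)}$ with $n-1-k \le l \le 2n-2-3k$ — which is exactly the definition of $\gamma_{n,k,l}$. Hence $V(\gamma) \ge V(\gamma_{n,k,l})$ entrywise for the corresponding $l$, giving $\theta(\gamma) \ge \theta_{n,k,l}$; and one can exclude $l = n-1-k$ because that particular loop is the one whose presence would still leave the matrix imprimitive (it is "aligned" with the non-primitivity direction of $V(\gamma_{n,k})$), so a genuinely primitive $\gamma$ must contain some other loop, or else it must be long enough that $\theta(\gamma) > 2$ directly. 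The main obstacle I anticipate is precisely this last bookkeeping: rigorously tracking \emph{which} loops a minimal admissible (and primitive) path can avoid, and showing that avoiding all of the $\gamma_{n,k,l}$ with $l \ne n-1-k$ forces either non-primitivity or $\theta > 2$ — this requires a careful analysis of the cokernel/support structure of the imprimitive matrix $V(\gamma_{n,k})$ and how appending a single secondary loop changes it, which is where the hyperelliptic-specific combinatorics of $\mathcal{D}_n$ really enters.
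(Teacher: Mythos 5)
Your overall strategy is the paper's: the cactus structure of $\mathcal D_n$ forces any such $\gamma$ to contain $\gamma_{n,k}$ (resp.\ some $\gamma_{n,k,l}$) as an ordered sub-path, so $V(\gamma_{n,k})\le V(\gamma)$ entrywise and the spectral radius is monotone (this is exactly Proposition~\ref{prop:reduce:to:sub:path} in the paper), while passing through the central permutation forces $\theta\ge 2$ via Propositions~\ref{prop:centralpermutation} and~\ref{prop:fix:sing}. Part (2) of your argument is essentially correct; just drop the phrase ``after reordering of the Rauzy moves'' --- nothing is (or can be) reordered: one only inserts nonnegative factors of the form $I+N'$ between the factors of $V(\gamma_{n,k})$, and the final permutation matrix is the same because the labeled and reduced hyperelliptic diagrams coincide. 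Also, in part (1) the conclusion should be drawn directly ($\theta\ge 2$ from the two cited propositions), not as a contradiction with a purity hypothesis the statement does not make.

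Two genuine gaps remain. First, in part (1) your ``overshoot'' argument only covers $k>(n-1)/2$. The case $k=(n-1)/2$ (odd $n$) is included in $k>K_n$, and there $s(\pi)=\pi$ up to relabelling, so a path such as the bare secondary loop $b^{\,n-1-k}$ joins $\pi$ to $s(\pi)$ without going anywhere near $\pi_n$; no length count excludes it, and your fallback (``exhibit a sub-path with a block $\left(\begin{smallmatrix}1&1\\1&1\end{smallmatrix}\right)$'') is not an argument. The paper's point is that the relabelling fixes the letter $n$, so primitivity of $V(\gamma)$ forces $n$ to be a winner at least once, which only happens on the step $\pi\to t.\pi$; hence the path does pass through the central permutation after all. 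Second, in part (3) you explicitly leave open the exclusion of $l=n-1-k$, which is the only nontrivial point there. The needed observation is short: $\gamma_{n,k,n-1-k}$ is $\gamma_{n,k}$ with the same `b'-loop traversed twice, so its matrix is imprimitive for the same reason $V(\gamma_{n,k})$ is; consequently a primitive (admissible) $\gamma$ cannot be built by repeating only that loop and must contain some $\gamma_{n,k,l}$ with $l\neq n-1-k$ --- unless its extra excursion runs around the central loop through $\pi_n$, which is precisely the ``$\theta(\gamma)>2$'' branch of the statement, a dichotomy your write-up also does not address. No cokernel/support analysis beyond this is needed.
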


For two paths $\gamma,\gamma'$  we will write $\gamma' \leq \gamma$ if the path $\gamma'$
is a subset of the graph $\gamma$ (viewed as a ordered collection of edges).
For a real matrix $A\in M_n(\R)$, we will write $A\geq 0$ (respectively, $A\gg0$)
to mean that $A_{ij}\geq 0$ (respectively, $A_{ij}> 0$) for all indices $1\leq i,j\leq n$, 
and similarly for vectors $v\in \R^n$. The notation $A\geq B$ means $A-B\geq 0$.
Before proving Proposition~\ref{prop:reduce:to:thetank}, we will use the following:
\begin{Proposition}\label{prop:reduce:to:sub:path}
Let $\gamma' \leq \gamma$ i.e. the path $\gamma$ is obtained from the path $\gamma'$ by adding (possibly zero) closed loops. Then $V(\gamma') \leq V(\gamma)$ and
$\theta(\gamma') \leq \theta(\gamma)$. Moreover if $V(\gamma)$ is primitive and $\gamma\neq \gamma'$ then 
$\theta(\gamma') < \theta(\gamma)$.
\end{Proposition}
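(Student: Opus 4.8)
The plan is to exploit that every transition matrix $V_{\alpha\beta}=I+E_{\alpha\beta}$ satisfies $V_{\alpha\beta}\geq I\geq 0$, so that a product of such matrices can only grow when extra factors are inserted, and then to invoke the (strict) monotonicity of the Perron--Frobenius eigenvalue for nonnegative matrices. First I would reduce to inserting a single loop: since $\gamma$ is obtained from $\gamma'$ by adjoining finitely many closed loops, I adjoin them one at a time and induct on their number, so it suffices to treat $\gamma=\alpha\,c\,\beta$ and $\gamma'=\alpha\,\beta$ with $c$ a closed loop based at the common vertex of $\alpha$ and $\beta$. Because $\gamma$ and $\gamma'$ have the same endpoints they carry the \emph{same} relabelling matrix $P$, so $V(\gamma)=\widetilde V(\gamma)P$, $V(\gamma')=\widetilde V(\gamma')P$ with $\widetilde V(\gamma)=\widetilde V(\alpha)\widetilde V(c)\widetilde V(\beta)$ and $\widetilde V(\gamma')=\widetilde V(\alpha)\widetilde V(\beta)$. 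All these factors are products of matrices $\geq I$, hence nonnegative, and in particular $\widetilde V(c)\geq I$; therefore
$$
V(\gamma)-V(\gamma')=\widetilde V(\alpha)\bigl(\widetilde V(c)-I\bigr)\widetilde V(\beta)\,P\ \geq\ 0 .
$$
Since for a nonnegative matrix the largest real eigenvalue equals the spectral radius (Perron--Frobenius), $\theta(\gamma)=\rho(V(\gamma))$ and $\theta(\gamma')=\rho(V(\gamma'))$, and the elementary monotonicity $0\leq A\leq B\Rightarrow\rho(A)\leq\rho(B)$ (take $k$-th roots in $A^k\leq B^k$ with a monotone submultiplicative norm) gives $\theta(\gamma')\leq\theta(\gamma)$.

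For the strict statement assume $V(\gamma)$ primitive and $\gamma\neq\gamma'$. At least one adjoined loop is nonempty; dropping only that loop gives a path $\gamma''$ with $\gamma'\leq\gamma''\leq\gamma$, where $\gamma''$ differs from $\gamma$ by a single nonempty loop $c$, and by the first part $\theta(\gamma')\leq\theta(\gamma'')$, so it is enough to prove $\theta(\gamma'')<\theta(\gamma)$. Writing $c=e_1c'$ with $e_1$ its first edge and $(\alpha_1,\beta_1)$ the corresponding winner/loser pair (so $\alpha_1\neq\beta_1$), the $(\alpha_1,\beta_1)$-entry of $\widetilde V(c)=V_{\alpha_1\beta_1}\widetilde V(c')$ equals $\widetilde V(c')_{\alpha_1\beta_1}+\widetilde V(c')_{\beta_1\beta_1}\geq 1>0$, hence $\widetilde V(c)\neq I$; since the transvections are invertible, $\widetilde V(\alpha)$ and $\widetilde V(\beta)$ are invertible, so $\widetilde V(\gamma)\neq\widetilde V(\gamma'')$ and, $P$ being invertible, $V(\gamma)\neq V(\gamma'')$. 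Thus $V(\gamma)\geq V(\gamma'')\geq 0$ with $V(\gamma)\neq V(\gamma'')$ and $V(\gamma)$ irreducible, and the strict Perron--Frobenius comparison applies: taking a left Perron eigenvector $w\gg 0$ of $V(\gamma)$ and a nonnegative eigenvector $u$ of $V(\gamma'')$ for $\rho(V(\gamma''))$, one gets $\rho(V(\gamma''))\,w^{\top}u=w^{\top}V(\gamma'')u\leq w^{\top}V(\gamma)u=\rho(V(\gamma))\,w^{\top}u$ with $w^{\top}u>0$; were this an equality, then $\bigl(V(\gamma)-V(\gamma'')\bigr)u=0$ and $u$ would be a Perron eigenvector of the irreducible matrix $V(\gamma)$, hence $u\gg 0$, forcing $V(\gamma)=V(\gamma'')$ — a contradiction. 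Therefore $\theta(\gamma'')<\theta(\gamma)$ and so $\theta(\gamma')<\theta(\gamma)$.

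I do not expect a genuine obstacle: the argument is essentially bookkeeping together with two standard facts about nonnegative matrices. The two places that need care are the observation that $\gamma$ and $\gamma'$ share the relabelling matrix $P$ (so the comparison passes from $\widetilde V$ to $V$) and, in the strict case, the reduction to a single nonempty loop combined with the invertibility of the ``collar'' factors $\widetilde V(\alpha),\widetilde V(\beta)$, which is what upgrades $\widetilde V(c)\neq I$ to $V(\gamma)\neq V(\gamma'')$; after that, the primitivity hypothesis furnishes exactly the irreducibility required for the strict monotonicity of the Perron root.
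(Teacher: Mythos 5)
Your proof is correct, and its first half is essentially the paper's: you write $V(\gamma)$ as the elementary factors of $V(\gamma')$ interleaved with nonnegative loop blocks, observe that $\gamma$ and $\gamma'$ carry the same relabelling matrix $P$ (the paper justifies this by noting that for these diagrams the labelled and reduced Rauzy diagrams coincide, so the labelled endpoints agree), and conclude $V(\gamma')\leq V(\gamma)$. Where you genuinely diverge is in the two spectral comparisons. For $\theta(\gamma')\leq\theta(\gamma)$ the paper pairs a nonnegative right eigenvector of $V(\gamma')$ (extracted from a block-triangular form) with a positive left eigenvector of $V(\gamma)$, whereas you invoke the general monotonicity $0\leq A\leq B\Rightarrow\rho(A)\leq\rho(B)$ via norms of powers; your route needs no eigenvector of $V(\gamma)$ at all, so it is insensitive to whether $V(\gamma)$ is primitive at this stage. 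For the strict inequality the paper argues directly that $V(\gamma)\neq V(\gamma')$ together with primitivity yields $V(\gamma)^k\geq\alpha V(\gamma')^k$ for some $k$ and some $\alpha>1$; you instead reduce to removing a single nonempty loop, check explicitly that $V(\gamma'')\neq V(\gamma)$ (via $\widetilde V(c)\neq \mathrm{Id}$ and invertibility of the transvections --- a point the paper takes for granted), and then run the standard strict Perron--Frobenius comparison with a positive left eigenvector of the irreducible matrix $V(\gamma)$, the equality case forcing $V(\gamma)=V(\gamma'')$. Both arguments rest on the same two standard facts about nonnegative matrices; yours is somewhat more careful at exactly the two points you flagged (the shared $P$ and the inequality $V(\gamma)\neq V(\gamma'')$), while the paper's power-comparison argument is shorter but leaves these verifications, and the existence of the positive eigenvectors it uses, implicit.
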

\par
\begin{proof}[Proof of Proposition~\ref{prop:reduce:to:sub:path}]
If $V(\gamma')=V_1\cdot V_2\cdot  \dots \cdot V_l \cdot P'$ is the matrix associated to the path 
$\gamma'$, where $V_i$ are the elementary Rauzy--Veech matrices and $P'$ is a permutation matrix, 
then the matrix associated to $\gamma$ has the form 
$$
V(\gamma)=V_1\cdot N_1 \cdot V_2\cdot N_2 \cdot  \dots \cdot V_l\cdot N_l \cdot P,
$$
where:
\begin{itemize}
\item $N_1,\dots ,N_l$ are products of (possibly empty) elementary Rauzy--Veech matrices, hence 
of the type $I+N_i'$, where $I$ is the identity matrix and $N_i'$ is a matrix with nonnegative coefficients.
\item $P$ is the permutation matrix corresponding to the end point of $\gamma$.
\end{itemize}
Since the labeled Rauzy diagram and the reduced Rauzy diagram coincide,  the endpoints of 
$\gamma$ and $\gamma'$ also coincide in the labeled Rauzy diagram, hence $P=P'$. 
From these facts we deduce that $V(\gamma') \leq V(\gamma)$. Let us show that $\theta(\gamma')\leq \theta(\gamma)$.

Recall that $V(\gamma')$ is not necessarily primitive. 
However there is a permutation matrix $P_\sigma$ such that $P_\sigma V(\gamma') P_{\sigma^{-1}} = \begin{pmatrix}
A_1& 0 & \dots  &0 \\
* &A_2 &0& 0 \\
\vdots &  & \ddots & \vdots\\
* & \dots &*&A_s
\end{pmatrix},
$
where the matrices $A_i$ are primitive matrices. Up to a change of basis one can assume that the spectral 
radius of $A_s$ is achieved by $\theta(\gamma')$. Thus there is a non-negative vector $w$ such that
$A_sw=\theta(\gamma')w$. Now $v': = P_{\sigma^{-1}} \begin{pmatrix} 0 \dots 0\ w \end{pmatrix}^T$ 
is a non-negative right eigenvector of $V(\gamma')$ for the eigenvalue $\theta(\gamma')$.

Let $v$ be a positive left eigenvector of $V(\gamma)$: 
$vV(\gamma)=\theta(\gamma) v$ and $v>0$.
From $V(\gamma') \leq V(\gamma)$ one has
$$
vV(\gamma)v' \geq vV(\gamma')v'.
$$
Hence $\theta(\gamma) vv' \geq \theta(\gamma')vv'$ and since $vv'> 0$ we draw 
$\theta(\gamma) \geq \theta(\gamma')$ as desired.

We now prove that last claim: assume $V(\gamma')\leq V(\gamma)$ and $V(\gamma)\neq V(\gamma')$.
Then there exists $k\in\Z_{>0}$ such that $V(\gamma)^k \gg V(\gamma')^k$. In particular we can find $\alpha>1$ such that $V(\gamma)^k \geq \alpha V(\gamma')^k$. Thus $\rho(\gamma)^k\geq \alpha \rho(\gamma')^k$ proving the proposition.
\end{proof}

\begin{proof}[Proof of Proposition~\ref{prop:reduce:to:thetank}]
We prove the first assertion. Let $k$ be any integer satisfying $k>(n-1)/2$. Then $n-1-k<k$, hence $s(\pi)=t^{n-1-k}\pi_n$ is ``before'' $\pi$ in the central loop. Since $\gamma$ is admissible, $\gamma$ is passes thought the central permutation.
Now if $k=(n-1)/2$ then, up to relabeling, $\pi=s(\pi)$.
By using the alphabet $\mathcal{A}=\{1,\dots ,n\}$, we have:
$$
\pi=
\begin{pmatrix}
1&2&\dots &n\\
n&\frac{n-1}{2}&\dots &\frac{n+1}{2}
\end{pmatrix}.
$$
Hence the relabeling does not changes the letter $n$. In particular it must be winner at least once. 
This imply that $\gamma$ contains the step $\pi\to t.\pi$, hence passes through the central permutation. In both
situation $\theta>2$. \medskip

We now turn to the proof of the second part of the proposition. The Rauzy diagram that we consider have the particular property that removing any vertex disconnects it (except the the particular ones that satisfies $t.\pi=\pi$ or $b.\pi=\pi$). This implies that the path $\gamma$ is obtained from the path $\gamma_{n,k}$ by adding (possibly zero) closed loops, namely $\gamma_{n,k} \leq \gamma$. Hence we are in a position to apply Proposition~\ref{prop:reduce:to:sub:path}: this finishes the proof of this part.

The proof of the last statement is similar, once we remark that if $V_{n,k}$ is not primitive, then the path $\gamma$ is obtained from $\gamma_{n,k}$ by adding at least one closed loop. Assuming that $\theta(\gamma)<2$ this gives by definition a path of the form $\gamma_{n,k,l}$ for some $l\in\{1,\dots,2n-2-3k\}$. Thus $\gamma_{n,k,l}\leq \gamma$ and Proposition~\ref{prop:reduce:to:sub:path} again applies. If $l=n-1-k$ then $\gamma_{n,k,l}$ 
is obtained from $\gamma_{n,k}$ by adding twice the same loop. Hence $V_{n,k,l}$ is {\em not} primitive and
$\gamma\neq \gamma_{n,k,l}$. The same argument above applies and $\gamma$ must contain 
$\gamma_{n,k,l}$ for some $l\neq n-1-k$. We conclude with Proposition~\ref{prop:reduce:to:sub:path}.
\end{proof}

%****************************************************************
%****************************************************************
%****************************************************************
%****************************************************************
%****************************************************************
\appendix

\section{Matrix computations}
\label{appendix:matrix}

The aim of this section is the computation of the different Rauzy--Veech matrices in $M_n(\Z)$ and especially their characteristic polynomials. The rome technique (see below) reduces these computations to matrices in $M_2(\Z[X])$ and $M_3(\Z[X])$ making them possible. \medskip

In the sequel we will denote by $P_{n,k}$ (respectively, $P_{n,k,l}$) the characteristic polynomial of the matrices $V(\gamma_{n,k})$ (respectively, $V(\gamma_{n,k,l})$) \emph{multiplied} by $(X+1)$. Their  maximal real root are $\theta_{n,k}$ and $\theta_{n,k,l}$, respectively.

%****************************************************************
\subsection{The rome technique (after~\cite{BGMY})}
\label{sec:rome}
To compute the characteristic polynomial of matrices we will use the rome method, developed in~\cite{BGMY}. 
To this end it is helpful to represent a matrix $V$ into the form of a combinatorial graph which amounts to draw all
paths of length $1$ associated to $V$.

Given a $n\times n$ matrix $V = (v_{ij})$, a path $\eta=(\eta_i)_{i=0}^{l}$
of width $w(\eta)$ and length $l$ is a sequence of elements of $\{1,2,\dots,n\}$ such that $w(\eta)=\prod_{j=1}^{l} v_{\eta_{j-1}\eta_j} \neq 0$.
If $\eta_{l} = \eta_0$ we say that $\eta$ is a loop.

A subset $R \subset \{1,2,\dots,k\}$ is called a ``rome'' if there is no loop outside $R$. 
Given $r_i,r_j\in R$, a path from $r_i$ to $r_j$ is a ``first return path'' is it does not
intersect $R$, except at its starting and ending points. This allows us to define an $r\times r$
matrix-valued real function $V_R(X)$, where $r$ is the size of $R$, by setting $V_R(X) = (a_{ij}(X))$, where $a_{ij}(X) = \sum_\eta w(\eta)\cdot
X^{-l(\eta)}$, where the summation is over all first return paths beginning at $r_i$ and ending at $r_j$.
\begin{NoNumberTheorem}[Theorem~1.7 of \cite{BGMY}]
If $R$ is a rome of cardinality $r$ of a $n\times n$ matrix $V$ then
the characteristic polynomial $\chi_V(X)$ of $V$ is equal to
$$
 (-1)^{n-r} X^n \det(V_R(X) - \mathrm{Id}_r).
$$
\end{NoNumberTheorem}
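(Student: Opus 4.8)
The plan is to prove this by the standard Schur‑complement (``transfer matrix'') argument, reading the entries of powers of $V$ as sums of weights of walks in the associated graph. First I would reorder the index set so that the elements of the rome $R=\{r_1,\dots,r_r\}$ come first, and write $V$ in block form
$$
V=\begin{pmatrix} A & B \\ C & D\end{pmatrix},\qquad A=(v_{r_ir_j})_{i,j\leq r},
$$
where $A$ is the $r\times r$ block supported on $R$, $D$ is the $(n-r)\times(n-r)$ block supported on the complement $R^c$, and $B,C$ are the off‑diagonal blocks. The one structural input is that $D$ is \emph{nilpotent}: since there is no loop contained in $R^c$, the subgraph induced on $R^c$ is acyclic, so by the pigeonhole principle any walk in $R^c$ of length $\geq n-r$ repeats a vertex; hence $D^{\,n-r}=0$. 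Consequently $X\,\mathrm{Id}_{n-r}-D$ is invertible over $\mathbb{Q}(X)$ with $\det(X\,\mathrm{Id}_{n-r}-D)=X^{\,n-r}$, and $(X\,\mathrm{Id}_{n-r}-D)^{-1}=\sum_{m\geq 0}X^{-m-1}D^{m}$, a finite sum.

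Next I would apply the block determinant (Schur complement) identity:
$$
\det(X\,\mathrm{Id}_n-V)=\det(X\,\mathrm{Id}_{n-r}-D)\cdot\det\!\Big(X\,\mathrm{Id}_r-A-B\,(X\,\mathrm{Id}_{n-r}-D)^{-1}C\Big)=X^{\,n-r}\det\!\Big(X\,\mathrm{Id}_r-A-B\,(X\,\mathrm{Id}_{n-r}-D)^{-1}C\Big).
$$
It then remains to identify the inner matrix. Expanding the resolvent, $B(X\,\mathrm{Id}_{n-r}-D)^{-1}C=\sum_{m\geq0}X^{-m-1}BD^{m}C$, and by the combinatorial meaning of matrix products the $(i,j)$ entry of $BD^{m}C$ is the sum of $w(\eta)$ over all walks $\eta$ from $r_i$ to $r_j$ whose $m+1$ intermediate vertices all lie in $R^c$ — i.e.\ over first return paths from $r_i$ to $r_j$ of length $m+2$. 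Adding the length‑$1$ first return paths, which are exactly the edges recorded by $A$, one obtains
$$
X^{-1}\big(A+B(X\,\mathrm{Id}_{n-r}-D)^{-1}C\big)_{ij}=\sum_{\eta\ \mathrm{first\ return}\ r_i\to r_j} w(\eta)\,X^{-l(\eta)}=\big(V_R(X)\big)_{ij},
$$
so that $A+B(X\,\mathrm{Id}_{n-r}-D)^{-1}C=X\,V_R(X)$. Therefore
$$
\det(X\,\mathrm{Id}_n-V)=X^{\,n-r}\det\!\big(X\,\mathrm{Id}_r-X\,V_R(X)\big)=X^{\,n-r}\cdot X^{\,r}\det\!\big(\mathrm{Id}_r-V_R(X)\big)=X^{\,n}\det\!\big(\mathrm{Id}_r-V_R(X)\big).
$$
Finally, using $\det(\mathrm{Id}_r-V_R(X))=(-1)^r\det(V_R(X)-\mathrm{Id}_r)$ and the convention of \cite{BGMY} that $\chi_V(X)=\det(V-X\,\mathrm{Id}_n)=(-1)^n\det(X\,\mathrm{Id}_n-V)$, this gives $\chi_V(X)=(-1)^{n+r}X^{\,n}\det(V_R(X)-\mathrm{Id}_r)=(-1)^{n-r}X^{\,n}\det(V_R(X)-\mathrm{Id}_r)$, which is the stated formula.

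The part that needs care — rather than cleverness — is the bookkeeping in the middle step: matching the combinatorial definition of a first return path with the resolvent expansion, and in particular getting the power of $X$ exactly right (a first return path of length $l$ contributes to $BD^{\,l-2}C$ with an apparent exponent $-(l-1)$, and it is the extra factor $X^{-1}$ pulled out front, together with the separate length‑$1$ contribution coming from $A$, that produces the clean $X^{-l}$). The nilpotency of $D$ and the Schur complement identity are routine, and the sign/normalization at the end depends only on which convention one fixes for ``characteristic polynomial'', so it should be stated explicitly.
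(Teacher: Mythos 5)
Your argument is correct, and every step that needs checking checks out: $D$ is indeed nilpotent because a walk staying in $R^c$ of length $n-r$ would repeat a vertex and hence produce a loop outside the rome; the Schur complement identity is legitimate over $\mathbb{Q}(X)$ since $\det(X\,\mathrm{Id}_{n-r}-D)=X^{n-r}\neq 0$; and the bookkeeping identifying $X^{-1}\bigl(A+B(X\,\mathrm{Id}_{n-r}-D)^{-1}C\bigr)$ with $V_R(X)$ is exactly right, including the exponent count (length-$1$ returns from $A$, length-$(m+2)$ returns from $BD^mC$ with the prefactor $X^{-m-1}$ and the extra $X^{-1}$). The final sign matches the convention $\chi_V(X)=\det(V-X\,\mathrm{Id}_n)$, and you correctly flag that this is a convention choice; a quick sanity check (e.g.\ $V=\left(\begin{smallmatrix}0&1\\1&0\end{smallmatrix}\right)$, $R=\{1\}$) confirms the stated normalization. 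Note that the paper itself offers no proof of this statement: it is quoted verbatim from Block--Guckenheimer--Misiurewicz--Young, so there is no in-paper argument to compare with. Your resolvent/Schur-complement route is the standard ``transfer matrix'' proof and differs from the original one in \cite{BGMY}, which proceeds more combinatorially (essentially by eliminating the non-rome vertices one at a time); your version has the advantage of being a short, purely linear-algebraic derivation in which the only graph-theoretic input is the nilpotency of the block $D$.
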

\begin{Remark}
The matrices $V=V_{n,k}$ or $V=V_{n,k,l}$ can be seen as the action of homeomorphisms on absolute homology if $n$ is even and relative homology otherwise. Thus their characteristic polynomials are reciprocal polynomials (resp. anti-reciprocal polynomials) {\em i.e.} 
$\chi_V(X)=X^n\chi_V(X^{-1})$ (resp. $\chi_V(X)=-X^n\chi_V(X^{-1})$). Thus
$$
\chi_{V} = (-1)^{r}\det(V_R(X^{-1}) - \mathrm{Id}_r)
$$
\end{Remark}

%****************************************************************
\subsection{The paths $\gamma_{n,k}$}

We briefly recall Notation~\ref{notation:paths} (see also Figure~\ref{fig:rauzy}).\\
For any $k=1,\dots,K_n$ and any $l=1,\dots, n-2-k$ we define the path
$$
\begin{array}{lll}
\gamma_{n,k} :& \pi:=\pi_n.t^{k} \longrightarrow s(\pi):&  b^{n-1-k}t^{n-1-2k}
\end{array}
$$

\begin{Lemma}
\label{lm:reduce:gamma:nk}
Let $n\geq 4$ and $1\leq k \leq K_n$. Set $d=\gcd(n-1,k)$. We denote by $n'=\frac{n-1}{d}+1$ and $k'=\frac{k}{d}$. 
Then the matrix $V_{n',k'}$ is primitive and $\theta_{n,k} = \theta_{n',k'}$.
\end{Lemma}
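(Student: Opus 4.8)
The plan is to establish the two assertions---that $V_{n',k'}$ is primitive and that $\theta_{n,k}=\theta_{n',k'}$---either from an explicit computation of the characteristic polynomial $P_{n,k}$ via the rome technique, or from a $\mathbb{Z}/d$-symmetry of the data attached to $\gamma_{n,k}$. First I would check that the statement is well posed: since $d=\gcd(n-1,k)\le k\le K_n<(n-1)/2$, we get $(n-1)/d>2$, hence $n'\ge 4$ and $\gamma_{n',k'}$ is a genuine path in $\mathcal D_{n'}$; moreover $\gcd(n'-1,k')=\gcd(n-1,k)/d=1$, so the primitivity claim is of the expected ``coprime'' type.

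For primitivity of $V_{n',k'}$ I would argue directly on the combinatorial graph of $V(\gamma_{n',k'})$ (Section~\ref{sec:rome}). Writing out $\pi_{n'}.t^{k'}$ and following the edges of $\gamma_{n',k'}=b^{n'-1-k'}t^{n'-1-2k'}$, one sees that every interval label occurs as a winner along $\gamma_{n',k'}$, precisely because the residues $0,k',2k',\dots \pmod{n'-1}$ exhaust $\mathbb{Z}/(n'-1)$ (as $\gcd(n'-1,k')=1$); this gives strong connectedness of the graph. Aperiodicity is then obtained by exhibiting, from the same explicit description of the permutations, either a loop of length $1$ or two loops of coprime length, so that the gcd of all cycle lengths is $1$. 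Strong connectedness together with aperiodicity yields that a power of $V_{n',k'}$ is strictly positive, i.e.\ $V_{n',k'}$ is primitive.

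For the equality $\theta_{n,k}=\theta_{n',k'}$, the computational route is to evaluate $P_{n,k}=(X+1)\,\chi_{V(\gamma_{n,k})}$ by the rome theorem: the graph of $V(\gamma_{n,k})$ has a rome of size $2$ or $3$ independent of $n$, so $\chi_{V(\gamma_{n,k})}$ is a small determinant over $\mathbb{Z}[X,X^{-1}]$ whose entries are geometric sums with exponents linear in $n$ and $k$; carrying out the determinant should produce a closed form for $P_{n,k}$ in which, after cancelling a factor of the shape $\dfrac{X^{n-1}-1}{X^{n'-1}-1}$, the data enters only through $n'-1$ and $k'$, so that $P_{n',k'}$ and $P_{n,k}$ have the same largest real root. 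Alternatively---and I find this cleaner conceptually---one observes that when $d>1$ the interval exchange (equivalently, the translation surface with its affine pseudo-Anosov) attached to $\gamma_{n,k}$ is a cyclic $d$-fold cover, equivariant for the Rauzy--Veech dynamics, of the one attached to $\gamma_{n',k'}$; the deck group $\mathbb{Z}/d$ forces $V(\gamma_{n,k})$ to be reducible (which is exactly why no primitivity is claimed for it), while the Perron eigenvector $w\gg 0$ of $V_{n',k'}$ pulls back to a nonnegative eigenvector of $V(\gamma_{n,k})$ for the \emph{same} eigenvalue, giving $\theta_{n,k}\ge\theta_{n',k'}$; the reverse inequality comes from the fact that the dilatation is unchanged under the covering (equivalently, a Proposition~\ref{prop:reduce:to:sub:path}-type monotonicity carried out inside the quotient). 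Either way one concludes $\theta_{n,k}=\theta_{n',k'}$, this value being attained since it is the Perron root of the primitive matrix $V_{n',k'}$.

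The main obstacle is uniform in $n$: writing down $V(\gamma_{n,k})$ and the permutations along $\gamma_{n,k}$ explicitly enough that the rome computation goes through and the $\gcd$-cancellation becomes visible, or---on the geometric side---setting up the $d$-fold covering together with the precise identification of indices and intervals between the two interval exchanges. Once that bookkeeping is done, both the determinant evaluation and the strong-connectedness/aperiodicity checks are routine.
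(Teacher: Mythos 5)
Your overall plan (explicit graph analysis for primitivity, plus a structural reduction for the equality) points in the right direction, but the two key steps are not carried out, and one supporting claim is false. For primitivity: along $\gamma_{n',k'}=b^{n'-1-k'}t^{n'-1-2k'}$ only \emph{two} letters are ever winners (the bottom-right letter of the starting permutation during all the $b$-moves, and the top-right letter during all the $t$-moves), so your assertion that every interval label occurs as a winner ``because the residues $0,k',2k',\dots \pmod{n'-1}$ exhaust $\mathbb{Z}/(n'-1)$'' is simply incorrect, and strong connectedness cannot be deduced that way. In the paper the coprimality enters elsewhere: one relabels $\pi_n$ by $\alpha_i$ with $\alpha_i k\equiv i-1 \bmod (n-1)$, which makes the closing relabelling permutation act as a cyclic shift of the first $n-1$ columns; only with this explicit form of $V_{n',k'}$ does one see irreducibility (a long cycle through all vertices) together with a nonzero diagonal entry, hence primitivity. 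You never produce this (or any) explicit description of the matrix, so the irreducibility and aperiodicity checks you call routine are exactly the missing content of the lemma.

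For the equality $\theta_{n,k}=\theta_{n',k'}$, neither of your routes is a proof. The rome computation in the imprimitive case is only conjectured (``should produce a closed form''), and the guessed cancellation factor $\frac{X^{n-1}-1}{X^{n'-1}-1}$ is not what occurs: with the labelling above, $V_{n,k}$ becomes block upper-triangular with top-left block $V_{n',k'}$ and bottom-right block a \emph{permutation matrix}, so the complementary factor is a product of terms $X^{c}-1$ (for $d=2$ your guess $X^{(n-1)/2}+1$ does not even vanish at $X=1$, while every permutation matrix has eigenvalue $1$). The covering route is likewise only asserted: the claimed equivariant $d$-fold cyclic cover is precisely the structural fact to be proved, and it is delicate because when $V_{n,k}$ is imprimitive the symmetric Rauzy--Veech construction (Proposition~\ref{prop:construction:path}) does not attach a pseudo-Anosov or a surface to $\gamma_{n,k}$, so ``the dilatation is unchanged under the covering'' has no direct meaning; moreover the inequality you would need, $\rho(V_{n,k})\le\theta_{n',k'}$, is exactly what the block-triangular form (or an equivalent explicit computation) must supply, and Proposition~\ref{prop:reduce:to:sub:path} gives monotonicity in the wrong direction for that purpose. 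In short, the heart of the paper's proof is the $k$-dependent labelling and the resulting explicit matrix structure, and that is absent from your argument.
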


\begin{proof}[Proof of Lemma~\ref{lm:reduce:gamma:nk}]
We first assume that $k$ and $n-1$ are relatively prime. 
We compute the matrix $V_{n,k}$ associated to the path $\gamma_{n,k}$. For the sequel, 
in order to be able to compare the top eigenvalues of the matrices $V_{n,k}$ we will compute them with a labelling depending on $k$, with alphabet $\mathcal{A}=\{1,\dots ,n\}$. To do this, we start from the central permutation, with the following  labelling
$$\pi_n= \begin{pmatrix}
\alpha_1 & \alpha_2& \dots &\alpha_{n-1}& n \\ n & \alpha_{n-1} & \dots  & \alpha_2& \alpha_1
 \end{pmatrix}$$
where  we have for each $i\leq n-1$ $\alpha_i k=i-1 \mod (n-1)$ and $\alpha_i\in \{1,\dots ,n-1\}$. This is well defined since $k$ and $n-1$ are relatively prime. In particular $1=\alpha_{k+1}$, $2=\alpha_{2k+1}$, $\alpha_1=n-1$, and more generally, $\alpha_{i+k}=\alpha_i+1 \mod n-1$.
The starting point of $\gamma_{k,n}$ is 
$$ \begin{pmatrix}
\alpha_1 & \alpha_2 & \dots &\dots &\alpha_{n-1}&n \\ n &\dots &\alpha_1&\alpha_{n-1}& \dots &\alpha_{k+1}
 \end{pmatrix}$$
 The path $\gamma_{n,k}$ consists of the Rauzy moves $b^{n-1-k} t^{n-1-2k}$, hence we have the following sequence of winners/losers:
 \begin{itemize}
\item $1=\alpha_{k+1}$ is successively winner against $\alpha_{k+2},\dots ,\alpha_{n-1},n$
\item then, $n$ is successively winner against $\alpha_{k+1},\dots ,\alpha_{n-1-k}$. Note that $k+1<n-1-k$.
\end{itemize}
also, the first line of the labeled permutation $s(\pi')$, where $\pi'$ is the endpoint of $\gamma_{n,k}$ is
$$(\alpha_1',\dots ,\alpha_{n-1}',n)=(\alpha_{n-k},\alpha_{n-k+1},\dots ,\alpha_{n-1},\alpha_0,\dots ,\alpha_{n-1-k},n)$$
\emph{i.e.} $\alpha'_i=\alpha_{i-k}$ or $\alpha'_{i}=\alpha'_{i+n-1-k}$ depending which of $i-k$ or $i+n-1-k$ is in $\{1,\dots ,n-1\}$. In any case, we obtain, $\alpha'_i=\alpha_{i}-1$. Hence the matrix $V_{n,k}$ is obtained from the product of elementary Rauzy--Veech matrices by translating cyclically the first $n-1$ columns by 1 on the right. Finally, we have
$$
V_{n,k}=
\begin{pmatrix}
a_{n-1}&2& a_2  &\dots & \dots &a_{n-2}&1 \\
0 &0 &1&0& \dots  &\dots & 0 \\
\vdots &  & \ddots & \ddots & \ddots && \vdots \\
\vdots & &  &\ddots & \ddots & \ddots& \vdots\\
0 & \dots & \dots &\dots &0  &1& 0\\
1 &0 &&\dots && 0& 0 \\
b_{n-1}&1&b_2&\dots &\dots &b_{n-2}&1
\end{pmatrix}
$$
Where for $i\in \{2,\dots ,n-2\}$, we have:
$$a_i=
\left\{ \begin{array}{ccc}
2 & \textrm{ if }& i\in \{\alpha_{k+2},\dots ,\alpha_{n-1-k}\}\\
1 & \textrm{ if }& i\in \{\alpha_{n-k},\dots ,\alpha_{n-1}\}\\
0 & \textrm{ if }& i\in \{\alpha_{1} \dots \alpha_{k}\}
\end{array} \right.
$$
and $b_i=1$ if and only if $a_i=2$, and $b_i=0$ otherwise.
Note that $a_{n-1}=0$, hence $b_{n-1}=0$. The matrix $V_{n,k}$ is clearly irreducible
(see Figure~\ref{fig:graph:vnk}) and thus primitive since there is a non zero diagonal element. \medskip

Now if $d=\gcd(n-1,k)>2$, we define $\alpha$ in the following way:
\begin{itemize}
\item $\alpha_{1}=n-1$, and $\alpha_{[1+ik]}=i$ for $i\leq \frac{n-1}{d}$, where $[1+ik]$ is the representative modulo $n-1$ of $1+ik$, which is in $\{1,\dots ,n-1\}$.
\item The other $\alpha_i$ are chosen in any way.
\end{itemize}
The matrix $V_{n,k}$, with this labelling, is 
$$
\left(\begin{array}{c|c}
V_{\frac{n-1}{d}+1,\frac{k}{d}} & *\\
\hline \\
0 & *
\end{array} \right)
$$
where the bottom right corner is a permutation matrix.
This ends the proof of the lemma.
\end{proof}

\begin{Lemma}
\label{lm:a}
Let $n\geq 4$ and $1\leq k \leq K_n$. If $\gcd(n-1,k)=1$ then 
$$
P_{n,k}=X^{n+1}-2X^{n-1}-2 \sum_{j\in J_{n,k}} X^j -2X^2+1
$$
where $J_{n,k}=\left\{3,\dots ,n-2\right\}\backslash \left\{\ceil{\frac{i(n-1)}{k}} +\varepsilon\ |\ \ i=1,\dots ,k-1 \ \mathrm{ and }\ \varepsilon= 0,1\right\}$. In particular:
\begin{enumerate}
\item For even $n$
$$
P_{n,K_n} =X^{n+1} - 2X^{n-1} - 2X^2 + 1.
$$
and if $n\not \equiv 4 \mod 6$, $\gcd(n-1,K_n-1)=1$ and
$$
P_{n,K_n-1} = X^{n+1} - 2X^{n-1} - 2X^{\ceil{2n/3}}- 2X^{\floor{n/3}+1} - 2X^2 + 1.
$$
\item For $n=1 \mod 4$, we have $\gcd(n-1,K_n)=1$ and 
$$P_{n,K_n}=X^{n+1} - 2X^{n-1} -2X^{\frac{n+1}{2}}- 2X^2 + 1$$
\item For $n=3 \mod 4$, we have $\gcd(n-1,K_n-1)=1$ and 
$$P_{n,K_n}=X^{n+1}-2X^{n-1}-2X^{n-2\floor{\frac{n-5}{8}}-2}-2X^{\frac{n+1}{2}}-2X^{2\floor{\frac{n-5}{8}}+3}-2X^2+1$$
\item More generally, for $n$ even, and $k\leq K_n-1$, the highest nonzero monomial (except $X^{n+1}$ and $-2X^{n-2}$) has degree at least $\ceil{\frac{2n}{3}}$.
\end{enumerate}

 \end{Lemma}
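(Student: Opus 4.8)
The plan is to compute the characteristic polynomial of $V_{n,k}$ by the rome technique of Section~\ref{sec:rome} and then to read off which monomials of $P_{n,k}$ survive; the special cases will then follow by specialising $k$ together with some elementary number theory. First I would dispose of the non-coprime case: if $d=\gcd(n-1,k)>1$, then by Lemma~\ref{lm:reduce:gamma:nk} the matrix $V_{n,k}$ is block triangular with diagonal blocks $V_{n',k'}$ (primitive, with $\gcd(n'-1,k')=1$) and a permutation matrix $Q$, so $\chi_{V_{n,k}}=\chi_{V_{n',k'}}\cdot\chi_Q$ where $\chi_Q$ is a product of polynomials $X^c-1$; since these have all their roots on the unit circle they do not change $\theta_{n,k}$, and $P_{n,k}$ is recovered from $P_{n',k'}$ by multiplication by $\chi_Q$. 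Hence it suffices to treat the case $\gcd(n-1,k)=1$, where Lemma~\ref{lm:reduce:gamma:nk} provides the explicit primitive matrix $V_{n,k}$ together with its $\alpha$--labelling, and where $P_{n,k}$ is a monic normalisation of $(X+1)\chi_{V_{n,k}}$.

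The rome step goes as follows. Reading $V_{n,k}$ as a weighted digraph on $\{1,\dots,n\}$, the rows $2,\dots,n-2$ each contain a single entry $1$ in the next column, producing the directed path $2\to 3\to\cdots\to n-1\to 1$; hence every loop meets $\{1,n\}$, which is therefore a rome. The first--return paths are then transparent: the ``staircases'' $v\to j\to j+1\to\cdots\to n-1\to 1$ for $v\in\{1,n\}$, the edge $1\to n$, and the self--loop at $n$. Reading off weights and lengths, with the conventions $a_1:=2$ and $b_1:=1$, this yields the $2\times2$ matrix
$$
V_{\{1,n\}}(X)=\begin{pmatrix}\displaystyle\sum_{m=1}^{n-2}a_mX^{m-n}&X^{-1}\\\displaystyle\sum_{m=1}^{n-2}b_mX^{m-n}&X^{-1}\end{pmatrix}.
$$
Applying the theorem of~\cite{BGMY}, multiplying by $(X+1)$ and using $b_m=1$ iff $a_m=2$ (so $a_m-b_m$ equals $1$ if $a_m\ge1$ and $0$ otherwise), the cross terms telescope and, writing $A=\{m:a_m=2\}$, $B=\{m:a_m=1\}$, $D=A\cup B$, one obtains
$$
P_{n,k}=X^{n+1}-X^{n-1}+\sum_{m\in B}\bigl(X^{m-1}-X^{m+1}\bigr)+\sum_{m\in A}\bigl(X^{m-1}-X^m-2X^{m+1}\bigr);
$$
in particular, for $3\le j\le n-2$ the coefficient $[X^j]P_{n,k}$ of $X^j$ equals $\mathbf 1_{\{j+1\in D\}}-\mathbf 1_{\{j-1\in B\}}-\mathbf 1_{\{j\in A\}}-2\cdot\mathbf 1_{\{j-1\in A\}}$.

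The combinatorial heart is to understand $D$. With the labelling $D=\{\alpha_{k+1},\dots,\alpha_{n-1}\}$, its complement in $\{1,\dots,n-1\}$ is $C=\{\alpha_1,\dots,\alpha_k\}$, and $\alpha_{i+1}\equiv k^{-1}i\pmod{n-1}$, $\alpha_{j+k}\equiv\alpha_j+1$. I would establish two structural facts, both valid \emph{precisely because} $k\le K_n$: (i) $C$ contains no two consecutive integers (otherwise one is forced into $2k\ge n$); (ii) $B=\{m\in D:m+1\notin D\}$ (if $\alpha_j\in B$ then $\alpha_j+1=\alpha_{j'}$ with $j'\in\{1,\dots,k\}$, and if $\alpha_j\in A$ then $\alpha_j+1=\alpha_{j'}$ with $j'\in\{2k+1,\dots,n-1\}$). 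Substituting (ii) into the coefficient formula and using (i) to rule out the possibility $j-1,j\notin D$, a short case check over the memberships of $j-1,j,j+1$ in $D$ shows that $[X^j]P_{n,k}\in\{0,-2\}$, the value being $-2$ exactly when both $j-1$ and $j$ lie in $D$. Thus the exponents with vanishing coefficient are $\{\alpha_2,\dots,\alpha_k\}\cup\{\alpha_2+1,\dots,\alpha_k+1\}$ (all contained in $\{3,\dots,n-2\}$), and the last ingredient is the elementary identity $\{\alpha_2,\dots,\alpha_k\}=\{\ceil{i(n-1)/k}:1\le i\le k-1\}$, which one proves from $\alpha_{i+1}\equiv k^{-1}i\pmod{n-1}$ by carrying out the Euclidean division $i(n-1)=kq_i+r_i$ with $0<r_i<k$ and reindexing. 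This is exactly the set subtracted in the definition of $J_{n,k}$, which proves the main formula.

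For the special cases one specialises. With $n$ even and $k=K_n=n/2-1$: $\gcd(n-1,K_n)=1$, $(n-1)/K_n=2+2/(n-2)$, so $\ceil{i(n-1)/K_n}=2i+1$ for $1\le i\le K_n-1$; the subtracted set fills $\{3,\dots,n-2\}$ and $J_{n,K_n}=\emptyset$, which is~(1). Running the same computation for $k=K_n-1$ (where $n\not\equiv4\bmod6$ forces $\gcd(n-1,K_n-1)=1$), for $n\equiv1\bmod4$ with $k=K_n$, and for $n\equiv3\bmod4$ with $k=K_n-1$ (again coprime), the subtracted set now leaves gaps precisely where the small correction $\ceil{i(n-1)/k}-2i$ increases; locating those jumps produces the two / one / three surviving exponents of the second formula in~(1), of~(2), and of~(3). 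Finally~(4) is an estimate: for $k\le K_n-1$ the largest subtracted exponent is $n-\floor{(n-1)/k}\le n-2$, and the subtracted exponents exceeding $\ceil{2n/3}-1$ come from at most about $k/3<n/3$ indices $i$, hence cannot fill the entire top interval, so some exponent $\ge\ceil{2n/3}$ survives. The step I expect to be the main obstacle is the combinatorial heart — proving the two structural facts~(i),~(ii) and the Euclidean--division identity turning $\{\alpha_2,\dots,\alpha_k\}$ into $\{\ceil{i(n-1)/k}\}$; the jump--location bookkeeping needed for the secondary formulae is the other delicate point.
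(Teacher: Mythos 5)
Your treatment of the main formula is essentially the paper's own proof: the same rome $R=\{1,n\}$ applied to the explicit matrix of Lemma~\ref{lm:reduce:gamma:nk}, the same first--return paths, and the same translation of the $\alpha$--labelling into the set $\{\ceil{i(n-1)/k}\,:\,1\le i\le k-1\}$ via Euclidean division. Your bookkeeping (the sets $A,B,D$, the facts that $C=\{\alpha_1,\dots,\alpha_k\}$ has no two consecutive elements and that $B=\{m\in D: m+1\notin D\}$, and the coefficient-by-coefficient case check) is a correct, slightly more pedestrian substitute for the paper's decomposition $V_{n,k}=A_n-B_{n,k}$ and the correction polynomials $T_{n,k},Q_{n,k}$; I verified your rome matrix, your expression for $P_{n,k}$, and the case analysis, which indeed give coefficient $-2$ exactly when $j-1,j\in D$. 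Two small things you should still write down: the boundary coefficients (that $n-2\in B$ and $2\in D$, so the coefficients of $X^{n-1}$ and $X^2$ are $-2$ and those of $X^{n}$ and $X$ vanish), and, for the second formula of (1) and for (2)--(3), the actual jump bookkeeping; your plan for the latter (locating where $\ceil{i(n-1)/k}-2i$ increments) is exactly the paper's computation $j_s=2i_s+s$ with $i_s=\floor{sk/(n-1-2k)}+1$, so it goes through. The opening reduction for $\gcd(n-1,k)>1$ is correct but not needed, since the lemma assumes coprimality and all four special cases are coprime.

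The genuine gap is item (4). The estimate you sketch does not close: each relevant index $i$ contributes \emph{two} removed exponents, the top interval $T=\{\ceil{2n/3},\dots,n-2\}$ has only $n-1-\ceil{2n/3}\approx n/3-1$ elements, and for $k=K_n-1$ the removed exponents occupy \emph{all but one} element of $T$ (for instance $n=18$, $k=7$: the removed set meets $T=\{12,\dots,16\}$ in $\{13,14,15,16\}$ and $J_{n,k}=\{7,12\}$). So the inequality you need is tight within a single unit, and a count with ``about'' slop (your $2\cdot k/3$ versus $|T|$, where $2k/3$ can be as large as $(n-4)/3$ while $|T|$ can be $(n-3)/3$) cannot establish it; the crude upper bound on the number of removed exponents in $T$ actually exceeds $|T|$ once floors and ceilings are accounted for. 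The statement is true, but the clean argument is different: bound the \emph{smallest} surviving exponent, $j_1=2\floor{k/(n-1-2k)}+3\le\floor{n/3}+1$ for $k\le K_n-1$ (monotonicity in $k$), and then reflect it using the symmetry $j\in J_{n,k}\iff n+1-j\in J_{n,k}$, which in your framework follows in one line from $\ceil{i(n-1)/k}+\ceil{(k-i)(n-1)/k}=n$ (or from $P_{n,k}$ being reciprocal). Also note that the first clause of your argument for (4), that the largest removed exponent is $n-\floor{(n-1)/k}\le n-2$, gives no information toward the conclusion. With the reflection argument substituted for the counting, your proposal matches the paper's proof.
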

\begin{proof}[Proof of Lemma~\ref{lm:a}]
We will use the rome method as explained in Section~\ref{sec:rome}. We use the notation
of the proof of Lemma~\ref{lm:reduce:gamma:nk}. We will write $V_{n,k}$ as $V_{n,k}=A_{n}-B_{n,k}$ where
$$
A_{n}=
\left(\begin{smallmatrix}
0&2& 2  &\dots & \dots 2&1&1 \\
\vdots &\ddots &1&0& \dots  &\dots & 0 \\
\vdots &  & \ddots & \ddots & \ddots && \vdots \\
\vdots & &  &\ddots & \ddots & \ddots& \vdots\\
0 & \dots & \dots &\dots &0  &1& 0\\
1 &0 &&\dots && & 0 \\
0&1&1&\dots &\dots 1&0&1
\end{smallmatrix}\right),
$$
and the only non zero entries of $B_{n,k}$ are as follows:
$$
\textrm{For any} i=1,\dots,k-1, \textrm{ set } l:=\floor{\frac{i(n-1)}{k}}+1. \textrm{ Then }
\left\{\begin{array}{l}
b_{1,l} = 1 \\
b_{1,l+1} = 2 \\
b_{n,l} = b_{n,l+1} = 1
\end{array}\right.
$$
Observe that $1\leq k\leq n/2-1$, hence for $i\in \{1,\dots ,k-1\}$ 
$$ 2 < 2i\frac{(n-1)}{n-2}\leq \frac{i(n-1)}{k}\leq  \frac{(k-1)(n-1)}{k}=n-1-\frac{n-1}{k}<n-3.$$
 In particular, all integers of the form $\ceil{i(n-1)/k}+\varepsilon$ for $i\in \{1,\dots ,k-1\}$ and  $\varepsilon\in \{0,1\}$ are mutually disjoint and in $\{3,\dots ,n-2\}$.
%\begin{figure}[ht]
%\centering
%\includegraphics[scale=0.7]{graphvnk.pdf}
%\end{figure}
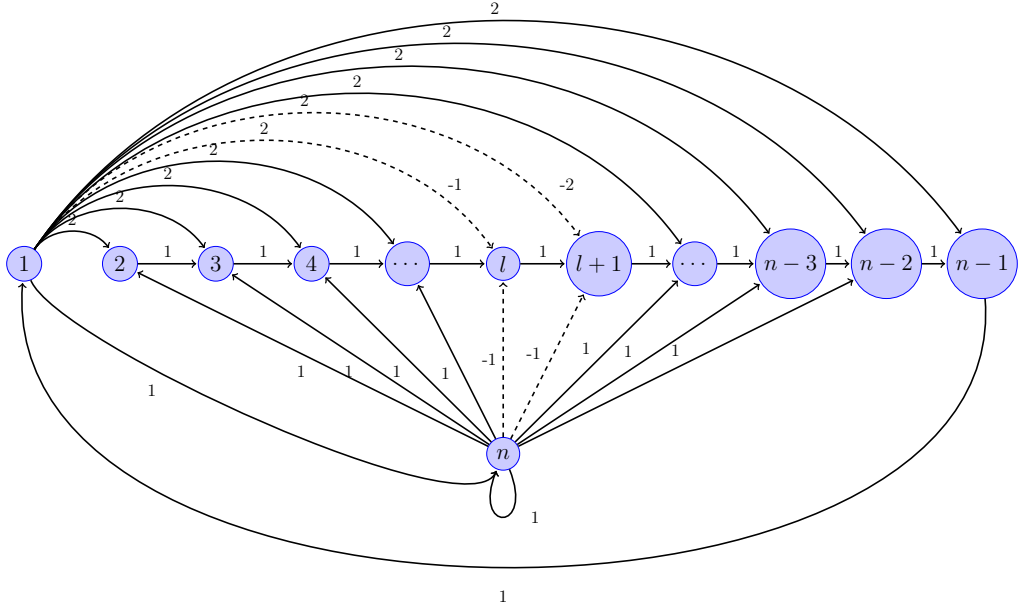
\begin{figure}[htbp]
\scalebox{0.6}{
\begin{tikzpicture}[scale=0.7,shorten >=1pt, auto, node distance=3cm, 
   edge_style/.style={line width=1pt,draw=black,->}]
   %\fill[fill=yellow!80!black!20,even   odd  rule]   (-13,-9)  rectangle (5,9);
   \foreach [count=\i] \x/\y/\t in {0/0/$1$,3/0/$2$,6/0/$3$,9/0/$4$,12/0/$\dots$,15/0/$l$,18/0/$l+1$,21/0/$\dots$,24/0/$n-3$,27/0/$n-2$,30/0/$n-1$,	15/-6/$n$}
     \node [circle,draw=blue,fill=blue!20!,font=\sffamily\Large\bfseries]
        (v\i) at (\x,\y) {\t};

   \foreach \i/\j/\t in {2/3/1,3/4/1,4/5/1,5/6/1,6/7/1,7/8/1,8/9/1,9/10/1,10/11/1,12/2/1,
   12/3/1,12/4/1,12/5/1,12/8/1,12/9/1,12/10/1}
    \draw [edge_style]  (v\i) edge node {\t} (v\j); 
    
   \foreach \i/\j/\t in {1/12/1} \draw [line width=1pt,draw=black,->]  (v\i) ..  controls  +(1,-2.5) and +(-1,-2.5) .. (v\j);     \draw  (4,-4) node  {1};
   
 \foreach \i/\j/\t in {12/6/-1,12/7/-1}     \draw [line width=1pt,dashed,draw=black,->]  (v\i) edge node {\t} (v\j); 
 
    \foreach \i/\j/\t in {12/12/t} \draw [edge_style]   (v\i) ..  controls  +(1,-2.5) and +(-1,-2.5) .. (v\j); 
    \draw  (16,-8) node  {1};

    \foreach \i/\j/\t in {11/1/t} \draw [edge_style]  (v\i) ..  controls  +(1,-12.5) and +(-1,-12.5) .. (v\j); 
    \draw  (15,-10.5) node  {1};

    \foreach \i/\j in {1/2,1/3,1/4,1/5,1/8,1/9,1/10,1/11}   \draw[edge_style] (v\i) to[bend left=55] node {2}(v\j);

%    \foreach \i/\j in {1/2,1/3,1/4,1/5,1/8,1/9,1/10,1/11} \draw [edge_style]  (v\i) ..  controls  +(2,5.5) and +(0.5,5.5) .. (v\j);     \draw  (4,5) node  {2};  \draw  (7,5) node  {2}; \draw  (10,5) node  {2};
%\draw  (22,5) node  {2};    \draw  (25,5) node  {1};

    \foreach \i/\j in {1/6,1/7} \draw [dashed,edge_style]  (v\i) to[bend left=55] node {2}(v\j);
        %(v\i) ..  controls  +(2,5.5) and +(0.5,5.5) .. (v\j);     
    \draw  (13.5,2.5) node  {-1};  \draw  (17,2.5) node  {-2};
\end{tikzpicture}
}
\caption{
The graph associated to $V_{n,k}$. In dashed line we have represented arrow that need to be removed from the graph in blue.
Multiplicity are also indicated. To be more more precise, there is one arrow
from vertex labelled 1 to the vertex labelled l and no arrow from vertex labelled 1 to the vertex labelled l+1.
In the graph $l=\floor{\frac{i(n-1)}{k}}+1$ for any $i=1,\dots,k-1$.
Obviously the graph associated to $A_n$ is drawn in blue colour}
\label{fig:graph:vnk}
\end{figure}
\par
Clearly the set $R=\{1,n\}$ is a rome for $A_n$. Thus it is also a rome for $V_{n,k}$ (since 
we pass from $A_n$ to $V_{n,k}$ by removing some paths). The $2\times 2$ matrix
$(A_n)_R$ is easily obtained as
$$
(A_n)_R = \left( \begin{smallmatrix} X^2+2S_n  & X \\
S_n & X
\end{smallmatrix}\right) \textrm{ where } S_n=\sum_{i=3}^{n-1}X^i.
$$
To obtain the matrix $(V_{n,k})_R$ one has to subtract the polynomial corresponding
to the paths passing through arrows in dashed line (passing through vertices $l$ and $l+1$ where $l=\floor{\frac{i(n-1)}{k}}+1$ for some $i=1,\dots,k-1$).
At this aim we define
$$
\left\{\begin{array}{l}
T_{n,k} = \sum_{i=1}^{k-1}X^{n-1-(\floor{\frac{i(n-1)}{k}}+1)} \\
Q_{n,k} = \sum_{i=1}^{k-1}X^{n-1-\floor{\frac{i(n-1)}{k}}} =  \sum_{i=1}^{k-1} X^{\ceil{i(n-1)/k}}.
\end{array}\right.
$$
The polynomial $T_{n,k}$ (respectively, $2Q_{n,k}$) takes into account all simple paths from the vertex $1$ to the vertex $1$
passing through an arrows in dashed line connecting $1$ to $l$ (respectively, to $l+1$). Hence
$$
(V_{n,k})_R = \left( \begin{smallmatrix} X^2+2S_{n} -T_{n,k}-2Q_{n,k} & X \\
S_{n} - T_{n,k} - Q_{n,k} & X
\end{smallmatrix}\right)
$$
Note that $T_{n,k}=XQ_{n,k}$.
By using~\cite{BGMY}, a straightforward computation gives:
$$
\chi_{V_{n,k}} = X^3-X^2-X+1+S_{n}\cdot (X-2) + 2Q_{n,k} 
$$
Hence, 
$$P_{n,k}=X^{n+1}-2\sum_{i=2}^{n-1} X^i +1+2\sum_{i=1}^{k-1}\left( X^{\ceil{i(n-1)/k}}+ X^{\ceil{i(n-1)/k}+1}\right)$$
which implies the first statement of the lemma.

We give a little more information on the set $J_{n,k}$. We write $J_{n,k}=\{j_1,\dots ,j_r\}$ with $j_1<\dots < j_r$ (possibly, $r=0$). We have $\ceil{\frac{i(n-1)}{k}}=2i+\ceil{\frac{i(n-1-2k)}{k}}$. In particular, we see that $j_s=2i_s+s$, where $i_s\geq 0$ is the smallest integer such that $\frac{i_s(n-1-2k)}{k}>s$, \emph{i.e} $i_s=\floor{\frac{sk}{n-1-2k}}+1$. Note also that $P_{n,k}$ must be reciprocal, hence $j\in J_{n,k}$ if and only if $n+1-j\in J_{n,k}$.

Now we compute particular cases.
\begin{enumerate}
\item If $n$ is even, $K_n=n/2-1$. We have  $n-1-2K_n=1$, $2i_1+1=n-1>n-2$, hence $J_{n,K_n}=\emptyset$. For $k=K_n-1=n/2-2$ and $n$ even, $n\neq 4 \mod 6$ we have
\begin{eqnarray*}
2i_1+1&=&2\floor{(n-4)/6}+3=\floor{n/3}+1=j_1 \\
 2i_2+2&=&2\floor{(n-4)/3}+4=\ceil{2n/3}=j_2=n+1-j_1\\
\end{eqnarray*}
Hence  $J_{n,K_n-1}=\{\floor{n/3}+1,\ceil{2n/3}\}$.
\item If $n=1 \mod 4$, $K_n=(n-1)/2-1=\frac{n-3}{2}$, we have $n-1-2K_n=2$, hence:
\begin{eqnarray*}
2i_1+1&=&2\floor{(n-3)/4}+3=\frac{n+1}{2}=j_1 \\
\end{eqnarray*}
So, $J_{n,K_n}=\{\frac{n+1}{2}\}$.

\item If $n=3 \mod 4$, $k=K_n-1=\frac{n-5}{2}$, $n-1-2k=4$, hence
\begin{eqnarray*}
2i_1+1&=&2\floor{(n-5)/8}+3=j_1 \\
 2i_2+2&=&2\floor{(n-5)/4}+4=\frac{n+1}{2}=j_2\\
 2i_3+3&=&2\floor{3(n-5)/8}+5=n+1-j_1
\end{eqnarray*}
So, $J_{n,K_n}=\{2\floor{(n-5)/8}+3, \frac{n+1}{2},n-2\floor{(n-5)/8}-2\}$
\item More generally, for $n$ even and $k\leq K_n-1$ we evaluate $j_1$.
$$2i_1+1=2\floor{\frac{n-1-2k}{k}}+3\leq 2\floor{\frac{n-1-2(K_n-1)}{K_n-1}}+3=\floor{n/3}+1$$
since $k\mapsto \frac{n-1-2k}{k}$ is increasing, and we conclude by using that the polynomial is reciprocal.
\end{enumerate}
This finishes the proof.
\end{proof}

%****************************************************************
\subsection{The paths $\gamma_{n,K_n,l}$}
We recall Notation~\ref{notation:paths} (see also Figure~\ref{fig:rauzy}) when $k=K_n$.
For any $l\in \{1,\dots ,2n-2-3K_n\}$ we define the path
$$
\gamma_{n,k,l} : \pi \longrightarrow s(\pi): \left\{
\begin{array}{lll}
b^{l}t^{L_n+1-l}b^{L_n+1-l}t^{n-1-2K_n} &\mathrm{if}& 1\leq l \leq L_n \\
b^{L_n+1}t^{l-(L_n+1)}b^{2(L_n+1)-l}t^{2n-2-3K_n-l} &\mathrm{if}& L_n < l \leq 2n-2-3K_n
\end{array}
\right.
$$
Note that when $n$ is even $2n-2-3K_n=L_n+2$ otherwise $2n-2-3K_n=L_n+3$.
%\commentcorentin{Je pense que la notation avec $m$ est redondante, vu que m=L-1\dots }
\begin{Proposition}
\label{prop:n:even:nKl}
Let $n\geq 4$ be an even integer.
\begin{enumerate}
\item If $l\in \{1,\dots,L_n\}$ then
$$
P_{n,K_n,l} = P_{n,K_n} - \frac{X^{n-2l + 2}+X^{n-2l + 4}+2X^{n-1} -X^{2l + 1}-X^{2l-1}- 2X^{4}}{(X + 1)(X - 1)}
$$
\item For $l=L_n$ one has
$$
P_{n,K_n,L_n} = X^{n+1} - 2X^{n-1} - X^{n-3} - X^4 - 2X^2 + 1
$$
%\item \commenterwan{On enleve}For $l=L_n+1$ one has
%$$
%P_{n,K_n,L_n+1} = X^{n+1} - 3X^{n-1} - 3X^2 + 1
%$$
\item For $l=L_n+2$ $V_{n,K_n,L_n+2} = V_{n,K_n} + B_n$ where $B_n=(b_{i,j})$ and 
$$
\left\{\begin{array}{l}
b_{1,2j} = b_{n-2,2j} = 1, \textrm{ for } 1 \leq  j \leq K_n-1, \\
b_{1,n} = b_{n-2,n} = 1,\\
b_{i,j} = 0 \textrm{ otherwise.}
\end{array}\right.
$$
\end{enumerate}
\end{Proposition}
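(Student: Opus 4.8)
The plan is to reduce all three statements to the rome technique of Section~\ref{sec:rome}, exactly as in the proof of Lemma~\ref{lm:a}. Since $n$ is even we have $K_n=n/2-1$ and $\gcd(n-1,K_n)=1$, so the $K_n$-adapted labelling introduced in the proof of Lemma~\ref{lm:reduce:gamma:nk} is available; all matrices below will be written in that labelling, which is what makes it legitimate to subtract them from $V_{n,K_n}$ coefficient by coefficient. The structural fact to exploit is that $\gamma_{n,K_n,l}$ is $\gamma_{n,K_n}$ with a single secondary loop inserted: a `$t$'-loop based at $\pi_n.t^{K_n}b^{l}$ when $1\le l\le L_n$, and, for $l=L_n+2$, one extra full traversal of the `$b$'-loop sitting at the endpoint of $\gamma_{n,K_n}$ (indeed $\gamma_{n,K_n,L_n+2}=b^{L_n+1}t\,b^{L_n}$ whereas $\gamma_{n,K_n}=b^{L_n+1}t$, and that endpoint carries a `$b$'-loop of length $L_n=K_n$). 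By the computation in the proof of Proposition~\ref{prop:reduce:to:sub:path}, inserting such a loop replaces a factor $B$ in $V(\gamma_{n,K_n})=ABP$ by $NB$ with $N=\mathrm{Id}+N'$, $N'\ge 0$, which yields the additive decomposition
$$V_{n,K_n,l}=V_{n,K_n}+C_l,\qquad C_l:=A\,N'\,BP\ \ge\ 0 .$$
So the first step is to compute $C_l$; since $N'$ comes from a single secondary loop its transvection product is explicit, and $C_l$ turns out to be supported on only a couple of rows.

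For part~(3) this is the whole argument. With $l=L_n+2$, following the Rauzy moves of $b^{L_n+1}t\,b^{L_n}$ one step at a time — using the winner/loser description recalled in the proof of Lemma~\ref{lm:reduce:gamma:nk} — and composing with the column permutation attached to the labelled endpoint $s(\pi')$, which (the labelled and reduced diagrams coinciding) is the same as for $\gamma_{n,K_n}$, one reads off that $C_{L_n+2}$ is exactly the matrix $B_n$ with entries $b_{1,2j}=b_{n-2,2j}=1$ for $1\le j\le K_n-1$ and $b_{1,n}=b_{n-2,n}=1$. This is a finite, mechanical bookkeeping of transvections; no estimate enters.

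For parts~(1) and~(2) I would feed $V_{n,K_n,l}=V_{n,K_n}+C_l$ into the rome computation, with $R=\{1,n\}$ (enlarging $R$ to include the vertices met by the inserted `$t$'-loop should $C_l$ create a short cycle off $\{1,n\}$). Then $(V_{n,K_n,l})_R(X)$ differs from $(V_{n,K_n})_R(X)$, which is computed in the proof of Lemma~\ref{lm:a}, only by the generating functions of the finitely many new first-return paths created by $C_l$. Those paths run around the inserted secondary loop, so their lengths form an arithmetic progression of common difference $2$; each such geometric sum therefore collapses against $X^2-1=(X+1)(X-1)$, which is precisely the denominator in the statement. Expanding $\chi_{V_{n,K_n,l}}=(-1)^{\,n-|R|}X^{\,n}\det\!\big((V_{n,K_n,l})_R(X)-\mathrm{Id}\big)$, inserting the value of $\chi_{V_{n,K_n}}$ from Lemma~\ref{lm:a} and multiplying by $X+1$, one obtains, after collecting terms, the claimed rational expression for $P_{n,K_n,l}$. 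Part~(2) is then the specialisation $l=L_n$ of part~(1): since $n=2K_n+2$ one has $n-2l=2$, so the numerator becomes $X^{6}-X^{4}+2X^{n-1}-X^{n-1}-X^{n-3}-2X^{4}=X^{6}-X^{4}+X^{n-1}-X^{n-3}=(X^2-1)(X^{4}+X^{n-3})$, whence $P_{n,K_n,L_n}=P_{n,K_n}-X^{4}-X^{n-3}=X^{n+1}-2X^{n-1}-X^{n-3}-X^{4}-2X^{2}+1$.

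I expect the main obstacle to be the path-counting in part~(1): identifying all the new first-return paths through the rome produced by the inserted `$t$'-loop, tracking how their weights and lengths depend on $l$, and verifying that the resulting sums telescope to the exact numerator displayed in the statement; handling the boundary value $l=L_n$ correctly — and remembering that the genuinely degenerate intermediate value $l=n-1-K_n$ is deliberately excluded, since there the loop is traversed twice and $V_{n,K_n,l}$ fails to be primitive — is part of the same care. By contrast part~(3) is a finite matrix computation and part~(2) is a one-line algebraic simplification once part~(1) is in place.
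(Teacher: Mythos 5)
Your proposal follows essentially the same route as the paper: write $V_{n,K_n,l}=V_{n,K_n}+C_l$ with $C_l$ the nonnegative matrix coming from the inserted secondary loop, apply the rome method in the $K_n$-adapted labelling with the rome enlarged by the loop vertex (the paper takes $R=\{1,2l,n\}$), obtain part (2) by specialising $l=L_n$ using $2L_n=n-2$, and get part (3) by direct bookkeeping of the extra transvections. This matches the paper's proof in both structure and detail (only a harmless sign typo appears in your intermediate simplification for part (2); the final factorisation $(X^2-1)(X^4+X^{n-3})$ is correct).
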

\begin{proof}
Again we appeal to the rome method in oder to compute the characteristic polynomial.
Since $n$ is even, the matrix $V_{n,K_n}$ is primitive. 
Thus Lemma~\ref{lm:reduce:gamma:nk} applies and we have a nice expression for 
the matrix $V_{n,K_n}$ (see the proof of Lemma~\ref{lm:reduce:gamma:nk}).
%$$
%V_{n,K_n}=
%\left(\begin{smallmatrix}
%0&2& 1 \ 0 &\dots & 1\ 0&1&1 \\
%\vdots &\ddots &1&0& \dots  &\dots & 0 \\
%\vdots &  & \ddots & \ddots & \ddots && \vdots \\
%\vdots & &  &\ddots & \ddots & \ddots& \vdots\\
%0 & \dots & \dots &\dots &0  &1& 0\\
%1 &0 &&\dots && & 0 \\
%0&1&0&\dots &\dots 0&0&1
%\end{smallmatrix}\right).
%$$
Let $l\in \{1,\dots,L_n\}$. 
By construction, $\gamma_{n,K_n,l}$ is a path obtained from $\gamma_{n,K_n}$ by
adding a closed loop (of type 't' and of length $n-1-K_n-l$) at $b^lt^{K_n}\pi_n$. Hence
the matrices $V_{n,K_n}$ and $V_{n,K_n,l}$ differ by a non negative matrix, namely
$V_{n,K_n,l} - V_{n,K_n} = C_{n,l}=(c_{i,j})$ where
$$
\left\{\begin{array}{l}
c_{2l,2} = c_{2l,2l+1} = 1, \\
c_{2l,2j+3} = 2 \textrm{ for } l \leq  j \leq K_n-1, \\
c_{i,j} = 0 \textrm{ otherwise.} \\
\end{array}\right.
$$
On the way, the same argument applied to the matrix $V_{n,K_n,L_n+2}$ applies, proving
the last statement.

We are now in a position to compute $P_{n,K_n,l}$. The graph associated to the matrix $V_{n,K_n}$ is
already presented in the proof of Lemma~\ref{lm:a}. For readability purposes, we reproduce this
graph below, where we have added the edges corresponding to the graph associated to $C_{n,l}$.

Clearly the set $R=\{1,2l,n\}$ is a rome for $V_{n,K_n,l}$, and thus for $V_{n,K_n}$ (since 
we pass from $V_{n,K_n,l}$ to $V_{n,K_n}$ by removing some paths). The $3\times 3$ matrix
$(V_{n,K_n})_R$ is easily obtained as
$$
(V_{n,K_n})_R = \left( \begin{matrix}
    \sum \limits_{\underset{i \ \mathrm{even}}{i=2}}^{n-2l}X^i  & 2X^{2l-1} +  \sum \limits_{\underset{i \ \mathrm{even}}{i=2}}^{2l-2}X^i & X\\
 \sum \limits_{\underset{i \ \mathrm{even}}{i=2}}^{n-2l}X^i & X^{2l-1} & 0 \\
0 & X^{2l-1} & X
\end{matrix}\right)
%
% \left( \begin{matrix}
%     X^2\cdot \frac{1-X^{n-2l}}{1-X^2}  & 2X^{2l-1} +  X^2\cdot \frac{1-X^{2l-2}}{1-X^2} & X\\
% X^2\cdot \frac{1-X^{n-2l}}{1-X^2}  & 2X^{2l-1}  & 0 \\
%0 & X^{2l-1} & X
%\end{matrix}\right)
$$
where $\sum \limits_{\underset{i \ \mathrm{even}}{i=2}}^{2l-2}X^i=0$ if $l=1$.
%\commenterwan{$\sum \limits_{\underset{i \ \mathrm{even}}{i=a}}^{b}X^i = X^a (1-X^{b-a+2})/(1-X^2)$ if $a,b$ are even.}

Adding the matrix $C_{n,l}$ consists of adding two arrows form the vertex labelled by $2l$ 
to the vertices $2,2l+1$ (with multiplicity $1$) and $K_n-l$ arrows to the vertices $2l+3,\dots,n-3,n-1$
(with multiplicity $2$). In this situation $R$ is still a rome. To compute the matrix $(V_{n,K_n,l})_R$ we need to consider all paths passing through a dashed edge in the graph in Figure~\ref{fig:graph:vnkl}.
Thus $(V_{n,K_n,l})_R(X) = (V_{n,K_n})_R(X) + C(X)$ where
$$
C(X) =
\left(\begin{matrix}
0&0&0 \\
2 \sum \limits_{\underset{i \ \mathrm{even}}{i=2}}^{n-2l-2}X^i + X^{n-2l} & X^{2l-1} & 0 \\
0  & 0 & 0
\end{matrix} \right)
$$
Hence by~\cite{BGMY} we draw $\chi_{V_{n,K_n,l}} = -\det((V_{n,K_n})_R(X) + C(X) - \mathrm{Id}_3)$. By multi-linearity, one has:
$$
P_{n,K_n,l} = P_{n,K_n} - (1+X)\cdot \det(W_{n,l})
$$ 
where
$$
W_{n,l} = \left( \begin{matrix}
    \sum \limits_{\underset{i \ \mathrm{even}}{i=2}}^{n-2l}X^i -1 & 2X^{2l-1} +  \sum \limits_{\underset{i \ \mathrm{even}}{i=2}}^{2l-2}X^i & X\\
2 \sum \limits_{\underset{i \ \mathrm{even}}{i=2}}^{n-2l-2}X^i + X^{n-2l} & X^{2l-1} & 0 \\
0 & X^{2l-1} & X-1
\end{matrix}\right)
%
%\det(W_{n,l})= \left| \begin{matrix}
%     X^2\cdot \frac{1-X^{n-2l}}{1-X^2} -1 & 2X^{2l-1} +  X^2\cdot \frac{1-X^{2l-2}}{1-X^2} & X\\
%2 X^2\cdot \frac{1-X^{n-2l-2}}{1-X^2} + X^{n-2l} & X^{2l-1} & 0 \\
%0 & X^{2l-1} & X -1
%\end{matrix}\right| \\
%
%= (1-X^2)^2\cdot \left| \begin{matrix}
%     X^2(1-X^{n-2l}) -(1-X^2) & (1-X^2)(2X^{2l-1}) +  X^2(1-X^{2l-2}) & X(1-X^2)\\
%2 X^2(1-X^{n-2l-2}) + (1-X^2)X^{n-2l} & (1-X^2)X^{2l-1} & 0 \\
%0 & X^{2l-1} & X -1
%\end{matrix}\right|
$$
A direct computation gives
$$
\det(W_{n,l}) = \frac{X^{n-2l + 2}+X^{n-2l + 4}+2X^{n-1} -X^{2l + 1}-X^{2l-1}- 2X^{4}}{(X + 1)^2(X - 1)}
$$
that is the desired result.

The second assertion comes for free since $2L_n=n-2$ and by Lemma~\ref{lm:a} one has $P_{n,K_n}=X^{n+1} - 2X^{n-1} - 2X^2 + 1$. Proposition~\ref{prop:n:even:nKl} is proved.
\end{proof}

\begin{Proposition}
\label{prop:n:odd:nKl}
Let $n\geq 4$ be an integer with $n\equiv 3 \mod 4$. Fix $l\in \{1,\dots,L_n+3\}$:
\begin{enumerate}
\item If $l$ is even then $V_{n,K_n,l}$ is reducible and $\theta_{n,K_n,l} = \theta_{n',K_{n'},l'}$ 
with $n'=(n+1)/2$, $l'=l/2$ and $K_{n'}=K_n/2$.
\item If $l\leq L_n$ is odd then $V_{n,K_n,l}$ is primitive and
$$
P_{n,K_n,l} = \frac{S_n(X)+2\left( X^{\frac{n+7}{2}-l} -X^l + X^{l + \frac{n-1}{2}} -  X^{n+3-l} \right) }{(X-1) (X+1)}
$$
where $S_n=1  - 3 X^2 - 2 X^{\frac{n-1}{2}} + 8 X^{\frac{n+3}{2}}   - 2 X^{\frac{n+7}{2}}   -   3 X^{n+1}+ X^{n+3}$.
\item If $l=L_n+2$ then  $V_{n,K_n,l} =
\left(\begin{array}{c|c|c}
0_{K_n\times K_n} &  \mathrm{Id}_{K_n\times K_n} & 0_{K_n \times 3} \\
\hline 
2 \cdots 2 &  0 \cdots 0 & 2\ 3\ 2 \\
\hline 
1 \cdots 1 &  0 \cdots 0 & 0\ 2\ 1 \\
\hline 
\mathrm{Id}_{K_n\times K_n} &  0_{K_n\times K_n}& 0_{K_n \times 3} \\
\hline 
0 \cdots 0 &  0 \cdots 0 & 1\ 1\ 1 \\
\end{array} \right)
$
\end{enumerate}
%Let $n=3+4k$. Set $m=(n-1)/2=1+2k$. For any $l=1,\dots,m-1$ the matrix
%$V_{nKl}$ is primitive if and only if $l$ is odd. Moreover
%\begin{enumerate}
%\item If $l=2l'$ then $\theta_{n,K,l} = \theta_{n',K',l'}$ with $n'=2k+2$ and $K'=K/2$.
%\item If $l$ is odd then $\theta_{nKl}$ is the maximal real root of
%$$
%\frac{1  - 3 X^2 + 8 X^{2 + m} -   3 X^{2 + 2 m} + X^{4 + 2 m} - 2 X^{4 + m} - 2 X^m 
%+2\left( X^{4 + m-l} -X^l + X^{l + m} -  X^{4 + 2 m-l} \right) }{(X-1) (1 + X)^2}
%$$
%\end{enumerate}
\end{Proposition}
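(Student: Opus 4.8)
The plan is to handle the three parts in turn, following closely the strategy of Lemma~\ref{lm:reduce:gamma:nk} and Proposition~\ref{prop:n:even:nKl}: write the matrix $V_{n,K_n,l}$ explicitly in a labelling adapted to $k=K_n$, and then exploit the arithmetic of $n\equiv 3\bmod 4$. Recall that here $n-1-2K_n=2$, $L_n=(n-1)/2$, and $2n-2-3K_n=L_n+3$, so that for $l\le L_n$ the path $\gamma_{n,K_n,l}$ is obtained from $\gamma_{n,K_n}=b^{(n+1)/2}t^2$ by inserting a `t'-loop of length $(n+1)/2-l$ at the vertex $\pi_n.t^{K_n}b^{l}$, while for $L_n< l\le L_n+3$ it is obtained by inserting a `b'-loop; in particular $\gamma_{n,K_n,L_n+2}=b^{L_n+1}\,t\,b^{L_n}\,t$.

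\textbf{Part (3).} One simply multiplies the elementary Rauzy--Veech transvections in the order dictated by the word $b^{L_n+1}t\,b^{L_n}t$, starting from the labelling $\pi_n=\bigl(\begin{smallmatrix}\alpha_1&\dots&\alpha_{n-1}&n\\ n&\alpha_{n-1}&\dots&\alpha_1\end{smallmatrix}\bigr)$ of Lemma~\ref{lm:reduce:gamma:nk}, and post-composes with the permutation matrix relabelling the endpoint $s(\pi')$ to $\pi$. Tracking winners and losers along the path (exactly as in the computation of $C_{n,l}$ inside the proof of Proposition~\ref{prop:n:even:nKl}) yields termwise the block matrix displayed in the statement; this is pure bookkeeping, the entries $2,3,2$ coming from the two letters that win twice.

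\textbf{Part (1).} When $l$ is even, I would use the relabelling of the $d>1$ case of Lemma~\ref{lm:reduce:gamma:nk} with $d=\gcd(n-1,K_n)=2$ (indeed $n-1\equiv 2\bmod 4$ and $K_n=(n-3)/2$ is even), now applied to $\gamma_{n,K_n,l}$ rather than $\gamma_{n,K_n}$. The point is that the inserted loop, being based at $\pi_n.t^{K_n}b^{l}$ with $l$ even, is compatible with the $2$-to-$1$ identification of coordinates, so in this labelling $V_{n,K_n,l}$ acquires the block upper-triangular shape $\left(\begin{smallmatrix}V_{n',K_{n'},l'}&*\\0&*\end{smallmatrix}\right)$ with $n'=(n+1)/2$, $K_{n'}=K_n/2$, $l'=l/2$ and a permutation matrix in the bottom-right corner. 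Hence $V_{n,K_n,l}$ is reducible and $\theta_{n,K_n,l}=\rho(V_{n',K_{n'},l'})=\theta_{n',K_{n'},l'}$; the primitivity of $V_{n',K_{n'},l'}$ needed to identify this with its Perron root follows from Proposition~\ref{prop:n:even:nKl}, since $n'$ is even. The same computation shows that an odd $l$ is incompatible with the quotient, which is precisely what makes Part (2) the genuinely new case.

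\textbf{Part (2).} For $l\le L_n$ odd, write $V_{n,K_n,l}=V_{n,K_n}+C_{n,l}$, where $C_{n,l}$ records the arrows added by the `t'-loop from the vertex determined by $l$ (the positions shift relative to Proposition~\ref{prop:n:even:nKl} because $l$ is odd). Then apply the rome method with a three-element rome containing $1$, $n$ and that vertex: enumerate the first-return paths to obtain the $3\times3$ matrix $(V_{n,K_n,l})_R(X)$, form $P_{n,K_n,l}=-(X+1)\det\bigl((V_{n,K_n,l})_R(X)-\mathrm{Id}_3\bigr)$, and simplify; the stated numerator $S_n(X)+2\bigl(X^{(n+7)/2-l}-X^{l}+X^{l+(n-1)/2}-X^{n+3-l}\bigr)$ over $(X-1)(X+1)$ is exactly what this determinant produces. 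Two checks guide the simplification: $P_{n,K_n,l}$ must be anti-reciprocal of degree $n+1$ (for $n$ odd the homeomorphism acts on \emph{relative} homology), and the specialisation $l=L_n$ must recover the polynomial $P_{n,K_n,L_n}$ of Theorem~\ref{thm:n:3:mod4}. Primitivity of $V_{n,K_n,l}$ then follows because the added loop makes the graph strongly connected and the graph carries a diagonal entry, hence is aperiodic.

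\textbf{Main obstacle.} The delicate step is Part (2): keeping the rome bookkeeping honest when the parity of $l$ shifts every index, and coaxing the resulting $3\times3$ determinant into the advertised rational closed form. A secondary difficulty is exhibiting cleanly, in Part (1), the relabelling that makes the $2$-fold block structure visible and checking that odd $l$ destroys it.
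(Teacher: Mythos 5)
Your plan coincides with the paper's proof in all essentials: the paper likewise treats part (1) as immediate from the two-fold ($d=2$) structure, proves part (2) by writing $V_{n,K_n,l}$ explicitly in an adapted labelling as the matrix of $\gamma_{n,K_n}$ plus a one-row perturbation recording the added loop and then applying the rome method with a three-element rome (the paper takes $R=\{n,\,n-1,\,(n-1)/2\}$ rather than your $\{1,\,n,\,n-l\}$, but both are valid romes and lead to the same $3\times 3$ determinant computation), and obtains part (3) by direct bookkeeping from the same explicit matrix. The only point to keep in mind when executing the computation is that, since $\gcd(n-1,K_n)=2$ here, the explicit circulant-type form of $V_{n,K_n}$ from the coprime case is unavailable and the base matrix must be recomputed from scratch (the paper's matrix $A_n$); your opening remark about choosing a labelling adapted to $k=K_n$ anticipates this, and your sanity checks (anti-reciprocality and the specialisation $l=L_n$ recovering $P_{n,K_n,L_n}$) are indeed consistent with the stated closed form.
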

\begin{proof}[Proof of Proposition~\ref{prop:n:odd:nKl}]
We follow the strategy of the proof of the previous proposition.
The first point is clear. In the sequel, let $m=K_n+1=(n-1)/2$.
Again we appeal to the rome method in oder to compute the characteristic polynomial.
%\begin{figure}[!ht]
%\centering
%\includegraphics[scale=0.6]{matrixvnkl.pdf}
%\caption{\commenterwan{MATRICE A ENLEVER}}
%\label{fig:matrix:vnkl}
%\end{figure}
We have $V_{n,K_n,l} = A_n + B_{n,l}$ where
$$
A_n =
\left(\begin{array}{c|c|c}
0_{K_n\times K_n} &  \mathrm{Id}_{K_n\times K_n} & 0_{K_n \times 3} \\
\hline 
1 \cdots 1 &  0 \cdots 0 & 2\ 2\ 1 \\
\hline 
0 \cdots 0 &  0 \cdots 0 & 0\ 1\ 0 \\
\hline 
\mathrm{Id}_{K_n\times K_n} &  0_{K_n\times K_n}& 0_{K_n \times 3} \\
\hline 
0 \cdots 0 &  0 \cdots 0 & 1\ 1\ 1 \\
\end{array} \right)
$$
and the only non zero entries of $B_{n,l}=(b_{i,j})$ are
$$
\left\{\begin{array}{l}
b_{n-l,i} = 2 \textrm{ for } i=1,\dots,m-l \\
b_{n-l,l} = 1 \\
b_{n-l,n-1} = 2\\
b_{n-l,n-2} = 1
\end{array}
\right.
$$
It is helpful to represent the matrices in form of a combinatorial graph which amounts to draw 
all paths. 
\par
%\begin{figure}[ht]
%\centering
%\includegraphics[scale=0.65,angle=90]{graphvnkl.pdf}
%\end{figure}
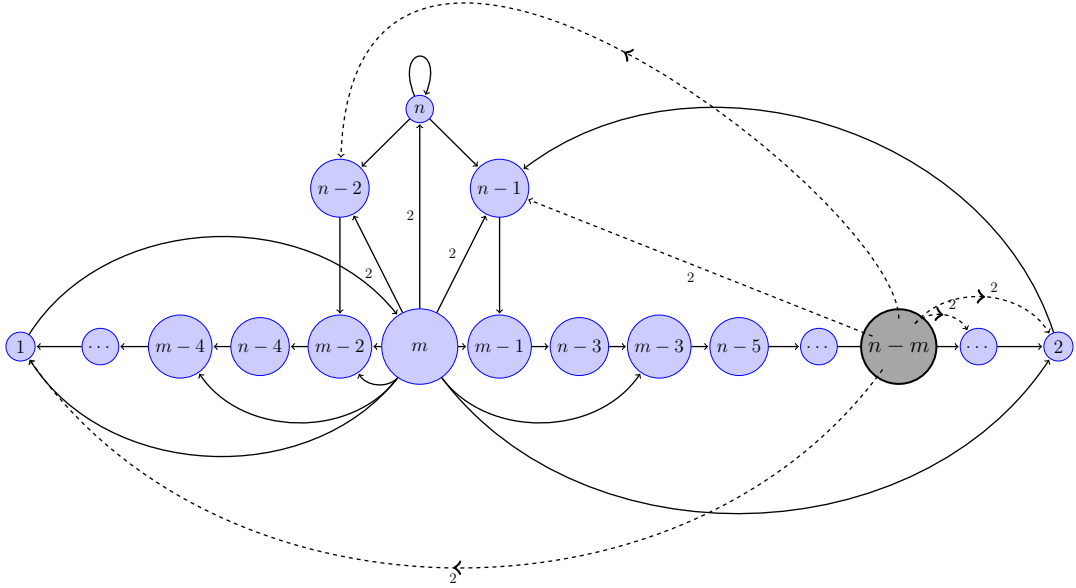
\begin{figure}[htbp]
\scalebox{0.5}{
\begin{tikzpicture}[scale=0.7,shorten >=1pt, auto, node distance=3cm, 
   edge_style/.style={line width=1pt,draw=black,->}]
   %\fill[fill=yellow!80!black!20,even   odd  rule]   (-13,-9)  rectangle (5,9);
   \foreach [count=\i] \x/\y/\t in {0/0/$1$,3/0/$\dots$,6/0/$m-4$,9/0/$n-4$,12/0/$m-2$,15/0/$m$,18/0/$m-1$,21/0/$n-3$,24/0/$m-3$,27/0/$n-5$,30/0/$\dots$,33/0/$n-l$,36/0/$\dots$,39/0/$2$,12/6/$n-2$,18/6/$n-1$,15/9/$n$}
\ifthenelse{\i=6}   
     {\node [circle,minimum size=2cm,draw=blue,fill=blue!20!,font=\sffamily\Large\bfseries]
        (v\i) at (\x,\y) {\t}}
     {\node [circle,draw=blue,fill=blue!20!,font=\sffamily\Large\bfseries]
        (v\i) at (\x,\y) {\t}};

   \foreach \i/\j/\t in {6/5/,5/4/,4/3/,3/2/,2/1/,6/7/,7/8/,8/9/,9/10/,10/11/,11/12/,
   12/13/,13/14/,6/15/2,6/16/2,6/17/2,17/16/,17/15/,15/5/,16/7/}
    \draw [edge_style]  (v\i) edge node {\t} (v\j); 

\node [circle,draw=black,fill=gray!70!,font=\sffamily\LARGE\bfseries,line width=1.5] at (33,0) {$n-m$};

    \foreach \i/\j/\t in {17/17/t} \draw [edge_style]   (v\i) ..  controls  +(-1,2.5) and +(1,2.5) .. (v\j); 
   % \draw  (15,12) node  {1};

    \foreach \i/\j in {6/5,6/3,6/1}   \draw[edge_style] (v\i) to[bend left=55](v\j);
    %\draw [edge_style]  (v\i) ..  controls  +(-0,-1.5) and +(-0.5,-5.5) .. (v\j);    
    
    \foreach \i/\j in {1/6}   \draw[edge_style] (v\i) to[bend left=55](v\j);% \draw [edge_style]  (v\i) ..  controls  +(-2,-6.5) and +(0,-5.5) .. (v\j);    

\foreach \i/\j in {6/9,6/14,14/16}    \draw[edge_style] (v\i) to[bend right=55](v\j);%\draw [edge_style]  (v\i) ..  controls  +(-0,-1.5) and +(-0.5,-5.5) .. (v\j);    
           
  \foreach \i/\j/\t in {12/16/2}     \draw [line width=1pt,dashed,draw=black,->]  (v\i) edge node {\t} (v\j); 
  
    \foreach \i/\j in {12/15}   \draw [dashed,edge_style,fleche=0.5:black]  (v\i) ..  controls  +(0,5.5) and +(0.5,15.5) .. (v\j);    
    
        \foreach \i/\j in {12/1,12/13,12/14}   \draw[dashed,edge_style,fleche=0.5:black] (v\i) to[bend left=55] node {2} (v\j);
\end{tikzpicture}
}
\caption{The graph associated to $V_{n,K_n,l}$. In
dashed line we have represented arrow coming from the matrix $B_{n,l}$.
The multiplicity is indicated only when it is $2$, otherwise it is $1$.}
\label{fig:graph:vnkl}
\end{figure}
\par
The graph associated to $A_n$ is rather simple. Clearly a rome is made of the subsets
labelled $R=\{n,n-1,m\}$. The matrix $A_R(X)$ in this label is
\begin{displaymath}
A_R(X) =
\left(\begin{array}{ccc}
X & X & X^m \\
0 & X^m & 0 \\
X  & R & S
\end{array} \right)
\end{displaymath}
$$
\begin{array}{lll}
R =2X + \sum_{i=3, i \textrm{ odd}}^m X^i &=& 2X + X^3\cdot \frac{1-X^{m-1}}{1-X^2}\\
S = 2X^m + \sum_{i=2, i \textrm{ even}}^{m-1} X^i&=&2X^m + X^2\cdot \frac{1-X^{m-1}}{1-X^2}
\end{array}
$$
Adding the matrix $B_{n,l}$ consists of adding arrows form the vertex labelled by $n-l$ 
to the vertices $1,\dots,m-l,n-2$ with multiplicity $1$ and to the vertex $n-1$ with multiplicity $2$.
In this situation $R$ is still a rome. To compute the matrix $(V_{n,K_n,l})_R$ we need to consider all
paths passing through a dashed  edge on the graph in Figure~\ref{fig:graph:vnkl}.
Thus $(V_{n,K_n,l})_R(X) = A_R(X) + C(X)$ where
$$
C(X) =
\left(\begin{array}{ccc}
0&0&0 \\
0 & P & Q \\
0  & \sum_{i=0}^{(l-1)/2-1}\frac{P}{X^{2i}} & \sum_{i=0}^{(l-1)/2-1}\frac{Q}{X^{2i}}
\end{array} \right)
$$
and 
$$
P = X^{m} + 2\sum_{i=l, i \textrm{ i odd}}^{m-2}X^{i} = X^{m} + 2 X^l\cdot \frac{1-X^{m-l}}{1-X^2},
$$
$$
Q = X^{m+l-1} + 2\sum_{i=l+1, i \textrm{ i even}}^{m-1}X^{i} = X^{m+l-1} + 2 X^{l+1}\cdot \frac{1-X^{m-l}}{1-X^2}.
$$
Hence by~\cite{BGMY} we draw $\chi_{V_{n,K_n,l}} = -\det(A_R(X) + C(X) - \mathrm{Id}_3)$. By using
the fact that $C(X) =\left(\begin{smallmatrix}
0&0&0 \\
0 & P & Q \\
0  & \frac{1-X^{1-l}}{1-X^{-2}} P & \frac{1-X^{1-l}}{1-X^{-2}} Q
\end{smallmatrix} \right)
$ and $Q= X^m(X^{l-1}-X)+XP$ we easily obtained the desired equality.

The last assertion is also easy to derive from the fact that $\gamma_{n,K_n,L_n+2}$ is obtained
from $\gamma_{n,K_n}$ by adding a 'b' loop at $t^{K_n+1}\pi_n$ and from the shape of $A_n$.
This ends the proof of Proposition~\ref{prop:n:odd:nKl}.
\end{proof}

%\begin{Remark}
%If $n\equiv 3 \mod 4$ and $m=(n-1)/2 = 1+2k$ is odd then 
%$\theta_{nKL}$ is the maximal real root of
%$$
%(2X^2 - 1) + X^m (4 - 4 X^2 - 2 X^m + X^{2 + m})
%$$
%and for $n$ even $\theta_{nKL}$ is the maximal real root of
%$$
%X^{n+1} - 2X^{n-1} -X^{n-3}-X^4- 2X^2 + 1
%$$
%\end{Remark}

%------------------------------------------------------------------------
\section{Comparing roots of polynomials}
%\commentcorentin{or subsection???}
%\commentcorentin{Comme cette partie est "en annexe", on ne parle que de polynomes et de racines. Presque aucune mention ne doit être faîte sur les "chemins irreductibles", les diagrammes de Rauzy etc\dots Ce type de considération doit être dans le coeur de l'article.}
This section is devoted to comparing $\theta_{n,k}$ and $\theta_{n,k,l}$ (the maximal real roots of the polynomials $P_{n,k}$ and $P_{n,k,l}$ introduced in Appendix~\ref{appendix:matrix}) for various $n,k,l$.
Observe that by construction $\theta_{n,k} > \sqrt{2}$ and $\theta_{n,k,l} > \sqrt{2}$.

%From the previous section we will always implicitly use the followings:
%\begin{itemize}
%\item For $P_{n,k}$ we have $\gcd(n-1,k)=1$.
%\item For $P_{n,k,l}$, if $n$ is odd then $k=K_n$ and $l$ is odd.% if  (if $n$ is even, $l$ may have any parity).
%\item $P_{n,k}$ and $P_{n,k,l}$ have a maximal real root, $\theta_{n,k}$ and $\theta_{n,k,l}$, that is greater than $\sqrt{2}$.
%\end{itemize}
%Also, since all polynomials satisfying the above conditions come from irreducible paths in the Rauzy diagram. They all have a top root (denoted $\theta_{n,k}$ or $\theta_{n,k,l}$) which is higher than $\sqrt{2}$. 
One key ingredient for comparing maximal real roots of these polynomials is the easy
\begin{Lemma}\label{compare:roots}
Let $P_1,P_2$ be two unitary polynomials of degree at least one such that for  $x>\sqrt{2}$, $P_1(x)-P_2(x)>0$. We assume that $P_1$ has a root $\theta_1>\sqrt{2}$. Then $P_2$ has a root $\theta_2>\theta_1$.
\end{Lemma}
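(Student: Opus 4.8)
The argument is a one-line application of the intermediate value theorem, so the plan is simply to locate a sign change of $P_2$ to the right of $\theta_1$. First I would evaluate $P_2$ at the point $\theta_1$. Since $\theta_1$ is a root of $P_1$ we have $P_1(\theta_1)=0$, and since $\theta_1>\sqrt{2}$ the hypothesis gives $P_1(\theta_1)-P_2(\theta_1)>0$; combining these two facts yields $P_2(\theta_1)=-\bigl(P_1(\theta_1)-P_2(\theta_1)\bigr)<0$.

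Next I would use that $P_2$ is unitary of degree at least one, so $P_2(x)\to +\infty$ as $x\to +\infty$; in particular there exists $M>\theta_1$ with $P_2(M)>0$. Since $P_2$ is continuous on $[\theta_1,M]$ and changes sign there ($P_2(\theta_1)<0$, $P_2(M)>0$), the intermediate value theorem produces a root $\theta_2\in(\theta_1,M)$, and in particular $\theta_2>\theta_1$, which is exactly the claim.

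There is essentially no obstacle here: the only things being used are the monicity of $P_2$ (to control the behaviour at $+\infty$) and the strict inequality $P_1(x)-P_2(x)>0$ evaluated at the single point $x=\theta_1>\sqrt{2}$ (to get $P_2(\theta_1)<0$). One should just take a little care to note that the hypothesis is needed only at $\theta_1$, not on an interval, and that "unitary of degree at least one" is what guarantees $P_2$ is eventually positive; the degree-zero case is excluded precisely so that this last step works.
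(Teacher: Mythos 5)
Your proof is correct and follows essentially the same route as the paper: evaluate at $\theta_1$ to get $P_2(\theta_1)<0$ from the hypothesis, then use monicity and the intermediate value theorem to find a larger root. The paper's own proof is just a terser version of exactly this argument.
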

\begin{proof}
The assumption implies $P_2(\theta_1)<0$. Since $P_2$ is unitary, the result follows by the mean value theorem.
\end{proof}

\begin{Lemma}
\label{lm:decreasing}
The followings hold:
\begin{enumerate}
\item The sequence $(\theta_{2n,K_n})_n$ is a decreasing sequence.
\item The sequence $(\theta_{1+4n,K_n})_n$ is a decreasing sequence.
\item The sequence $(\theta_{3+4n,K_n,L_n})_n$ is a decreasing sequence.
\end{enumerate}
%The sequence $(\theta_{2n,K})_n$ is a decreasing sequence.\\
%The sequence $(\theta_{1+4n,K})_n$ is a decreasing sequence.\\
%The sequence $(\theta_{3+4n,K,L})_n$ is a decreasing sequence.
\end{Lemma}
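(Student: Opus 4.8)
The plan is to combine the explicit formulas for the polynomials $P_\bullet$ from Appendix~\ref{appendix:matrix} (Lemma~\ref{lm:a} and Proposition~\ref{prop:n:odd:nKl}) with the elementary Lemma~\ref{compare:roots} and the following general remark: in each of the three cases the polynomial $P_N$ in play is monic and $\theta_N$ is, by definition, its largest real root, so $P_N(\theta_N)=0$ and $P_N(x)>0$ for every $x>\theta_N$; recall also that $\theta_N>\sqrt2$. I will write $N$ for the running index of the sequence (so $N=2n$, $N=1+4n$ and $N=3+4n$ respectively), and in each case the claim amounts to $\theta_N>\theta_{N+\Delta}$, with $\Delta=2$ in case $(1)$ and $\Delta=4$ in cases $(2)$ and $(3)$.

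Case $(1)$ is immediate. Subtracting $P_{N,K_N}=X^{N+1}-2X^{N-1}-2X^2+1$ from $P_{N+2,K_{N+2}}$ leaves $X^{N-1}(X^4-3X^2+2)=X^{N-1}(X^2-1)(X^2-2)$, which is strictly positive for $x>\sqrt2$. Applying Lemma~\ref{compare:roots} with $P_1=P_{N+2,K_{N+2}}$ (which has the root $\theta_{N+2,K_{N+2}}>\sqrt2$) and $P_2=P_{N,K_N}$ produces a root of $P_{N,K_N}$ strictly above $\theta_{N+2,K_{N+2}}$; since $\theta_{N,K_N}$ is the largest real root of $P_{N,K_N}$, this gives $\theta_{N,K_N}>\theta_{N+2,K_{N+2}}$.

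In cases $(2)$ and $(3)$ the bare difference $P_{N+4}-P_N$ is not of constant sign near $\sqrt2$, so instead I would compare $P_{N+4}$ with $X^4P_N$. Using the identity $2X^6-X^4-2X^2+1=(X^2-1)(2X^2-1)(X^2+1)$, a direct computation gives, in case $(2)$,
\[
X^4P_{N,K_N}-P_{N+4,K_{N+4}}=-(X^2-1)\bigl[\,2X^{(N+5)/2}+(2X^2-1)(X^2+1)\,\bigr],
\]
which is negative for $x>1$, and in case $(3)$, writing $m=(N-1)/2$ (so $m\geq3$, since $N\geq7$),
\[
X^4P_{N,K_N,L_N}-P_{N+4,K_{N+4},L_{N+4}}=(X^2-1)\bigl[\,(2X^2-1)(X^2+1)-4X^{m+2}(X^2-1)\,\bigr].
\]
In case $(3)$ I would bound the bracket using the crude chain $(2x^2-1)(x^2+1)<4x^4\leq4x^{m+2}\leq4x^{m+2}(x^2-1)$, valid for $x\geq\sqrt2$ and $m\geq3$, so that the bracket, hence the whole expression, is negative for $x\geq\sqrt2$. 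In both cases one then has $P_{N+4}(x)>X^4P_N(x)$ on $[\theta_N,\infty)$; since $X^4P_N(x)\geq0$ there (vanishing only at $x=\theta_N$), it follows that $P_{N+4}(x)>0$ for all $x\geq\theta_N$, whence the largest real root $\theta_{N+4}$ of the monic polynomial $P_{N+4}$ satisfies $\theta_{N+4}<\theta_N$, as desired.

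The only step requiring care is the sign of the bracket in case $(3)$: unlike in cases $(1)$ and $(2)$, its two terms genuinely compete, and one must use both that $x^2-1\geq1$ on $[\sqrt2,\infty)$ and that $m\geq3$ in order to absorb $(2x^2-1)(x^2+1)$ into $4x^{m+2}(x^2-1)$. Everything else is routine bookkeeping with the formulas of Appendix~\ref{appendix:matrix} and the monic-polynomial remark above.
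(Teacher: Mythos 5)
Your proposal is correct and follows essentially the same route as the paper: it uses the explicit formulas for $P_{n,K_n}$ and $P_{n,K_n,L_n}$ from Appendix~\ref{appendix:matrix}, compares $P_{N+\Delta}$ with $X^{\Delta}P_N$ (or with $P_N$ itself in case $(1)$), and concludes via the sign argument underlying Lemma~\ref{compare:roots}. The only differences are cosmetic: the paper compares $P_{N+2}$ with $X^2P_N$ in case $(1)$ and bounds the case-$(3)$ difference crudely rather than factoring it, while you factor the differences completely and phrase the last step as positivity of $P_{N+\Delta}$ on $[\theta_N,\infty)$ instead of citing the lemma verbatim; all your identities and sign estimates check out.
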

\begin{proof}[Proof of Lemma~\ref{lm:decreasing}]
We establish the lemma case by case.

\noindent {\bf Case 1.} By Lemma~\ref{lm:a}, $\theta_{2n,K_{2n}}$ is the largest root of
$$
P_{2n}=X^{2n+1}-2X^{2n-1}-2X^2+1.
$$
Observe that $P_{2n+2}-X^2P_{2n}=2X^4-X^2+1$. Thus $P_{2n+2}(x)-x^2P_{2n}(x)>0$ for $x>\sqrt{2}$ and Lemma~\ref{compare:roots} gives that $\theta_{2n,K_{2n}} < \theta_{2n+2,K_{2n+2}}$. \medskip

\noindent {\bf Case 2.} We observe that $\theta_{1+4n,K_{1+4n}}$  is the largest root of
$$
P_{1+4n,K_{1+4n}}=X^{4n+2} - 2X^{4n} -2X^{2n+1}- 2X^2 + 1.
$$
Again a simple computation establishes $P_{5+4n,K_{5+4n}}-X^4P_{1+4n,K_{1+4n}}=2X^{2n+5}-2X^{2n+3}+2X^6-2X^2-X^4+1$. Hence 
$P_{5+4n,K_{5+4n}}(x)-x^4P_{1+4n,K_{1+4n}}(x) > 0$ for $x>\sqrt{2}$ and by Lemma~\ref{compare:roots}:
$$
\theta_{5+4n,K}<\theta_{1+4n,K}.
$$

\noindent {\bf Case 3.} Similarly 
$$
P_{3+4n,K,L}=X^{4n+4}-2X^{4n+2}-4X^{2n+3}+4X^{2n+1}+2X^2-1.
$$
Hence, for $x>\sqrt{2}$:
$$
P_{7+4n,K,L}-x^4P_{3+4n,K,L}=(x^2-1)(4x^{2n+5}-4x^{2n+3}-2x^4-x^2+1)>4x^{2n+3}-2x^4-x^2+1>0.
$$
Hence Lemma~\ref{compare:roots} applies and $\theta_{7+4n,K,L}<\theta_{3+4n,K,L}$. The lemma is proved.
\end{proof}

\begin{Lemma}
\label{lm:compare:n}
Let $n\geq 4$ and $1\leq k<k' \leq K_n$. If $\gcd(n-1,k)=\gcd(n-1,k')=1$ then
$$
\theta_{n,k'}<\theta_{n,k}.
$$
\end{Lemma}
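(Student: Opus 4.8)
The strategy is to apply the comparison principle of Lemma~\ref{compare:roots} to the polynomials $P_{n,k'}$ and $P_{n,k}$, both of which are unitary of degree $n+1$ and have maximal real roots $\theta_{n,k'},\theta_{n,k}>\sqrt 2$. Concretely, I would show that
$$
P_{n,k'}(x)-P_{n,k}(x)>0 \qquad\text{for all } x>\sqrt 2,
$$
and then Lemma~\ref{compare:roots} (applied with $P_1=P_{n,k'}$, which has the root $\theta_1=\theta_{n,k'}>\sqrt2$, and $P_2=P_{n,k}$) immediately gives a root $\theta_{n,k}>\theta_{n,k'}$; since $\theta_{n,k}$ is by definition the \emph{maximal} real root of $P_{n,k}$, this is the desired inequality $\theta_{n,k'}<\theta_{n,k}$.

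**Key step: computing the difference of polynomials.** By Lemma~\ref{lm:a}, since $\gcd(n-1,k)=\gcd(n-1,k')=1$, we have the explicit formulas
$$
P_{n,k}=X^{n+1}-2X^{n-1}-2\!\!\sum_{j\in J_{n,k}}\!\! X^j-2X^2+1,\qquad
P_{n,k'}=X^{n+1}-2X^{n-1}-2\!\!\sum_{j\in J_{n,k'}}\!\! X^j-2X^2+1,
$$
where $J_{n,k}=\{3,\dots,n-2\}\setminus\{\lceil i(n-1)/k\rceil+\varepsilon : 1\le i\le k-1,\ \varepsilon\in\{0,1\}\}$, and similarly for $J_{n,k'}$. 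Hence
$$
P_{n,k'}-P_{n,k}=2\!\!\sum_{j\in J_{n,k}\setminus J_{n,k'}}\!\!\! X^j \;-\; 2\!\!\sum_{j\in J_{n,k'}\setminus J_{n,k}}\!\!\! X^j.
$$
The crux is therefore a combinatorial/arithmetic comparison of the two ``forbidden sets''. The forbidden indices for $k$ are the $2(k-1)$ integers clustered in pairs $\{\lceil i(n-1)/k\rceil,\lceil i(n-1)/k\rceil+1\}$ spread across $\{3,\dots,n-2\}$; for $k'>k$ there are $2(k'-1)$ such integers. So $J_{n,k}$ is \emph{larger} than $J_{n,k'}$ in cardinality ($\#J_{n,k}=n-4-2(k-1)>n-4-2(k'-1)=\#J_{n,k'}$), which is the heuristic reason the subtracted block $2\sum_{j\in J_{n,k}}X^j$ in $P_{n,k}$ is ``bigger'' and hence $P_{n,k}$ is smaller, forcing a larger root. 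To make this rigorous for $x>\sqrt2$ I expect one needs more than a count: one must pair up monomials. The natural approach is to exhibit an injection $\iota\colon J_{n,k'}\setminus J_{n,k}\hookrightarrow J_{n,k}\setminus J_{n,k'}$ with $\iota(j)\ge j$ for every $j$, plus at least one genuinely unmatched index left over in $J_{n,k}\setminus J_{n,k'}$; then
$$
P_{n,k'}(x)-P_{n,k}(x)=2\!\!\sum_{j\in J_{n,k}\setminus J_{n,k'}}\!\!\! x^j-2\!\!\sum_{j\in J_{n,k'}\setminus J_{n,k}}\!\!\! x^j
\;\ge\; 2\!\!\!\sum_{\substack{j\in J_{n,k}\setminus J_{n,k'}\\ j\notin\operatorname{im}\iota}}\!\!\! x^j>0
$$
for $x>1$ (using $x^{\iota(j)}\ge x^j$). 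Alternatively, one may use the refined description from Lemma~\ref{lm:a}: writing $J_{n,k}=\{j_1<\dots<j_r\}$ with $j_s=2i_s+s$ and $i_s=\lfloor sk/(n-1-2k)\rfloor+1$, one sees that for $k'>k$ each $s$-th element of $J_{n,k'}$ is $\ge$ the $s$-th element of $J_{n,k}$ (the map $k\mapsto \lfloor sk/(n-1-2k)\rfloor$ is nondecreasing in $k$), while $J_{n,k}$ has strictly more elements; this yields the same monomial-by-monomial domination after reindexing, together with the extra top terms of $J_{n,k}$ being strictly positive.

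**Main obstacle.** The genuine difficulty is this bookkeeping of the two forbidden sets: the forbidden indices come in \emph{pairs} $\{m,m+1\}$ and for different $k$ these pairs can overlap or interleave, so one must check that no cancellation destroys positivity and that at least one surplus monomial survives on the $J_{n,k}$ side — in particular handling the boundary pairs near $3$ and near $n-2$, and the case $k=1$ where $J_{n,1}=\{3,\dots,n-2\}$ is the full set. Once the injection (or the reindexing via $j_s=2i_s+s$) is set up correctly, the inequality $P_{n,k'}(x)-P_{n,k}(x)>0$ for $x>\sqrt2$ follows purely from $x>1$, and Lemma~\ref{compare:roots} closes the argument. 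I would first dispose of $k=1$ separately, then treat $2\le k<k'\le K_n$ by the monomial-pairing argument above.
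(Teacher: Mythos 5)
Your overall framework is the same as the paper's (reduce to the sign of $P_{n,k'}-P_{n,k}$ on $(\sqrt2,\infty)$ via Lemma~\ref{compare:roots}, using the explicit formulas of Lemma~\ref{lm:a}), but the heart of the matter — the positivity $P_{n,k'}(x)-P_{n,k}(x)>0$ — is exactly what you leave unproven. The existence of an injection $\iota\colon J_{n,k'}\setminus J_{n,k}\hookrightarrow J_{n,k}\setminus J_{n,k'}$ with $\iota(j)\ge j$ is only asserted as ``expected''; it is true, but it needs a counting argument (e.g.\ that for every $m$ the number of forbidden indices $\ge m$ is nondecreasing in $k$), and nothing of the sort is supplied. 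Worse, the one concrete route you do sketch points the wrong way: the monotonicity $i_s(k')\ge i_s(k)$ says that the $s$-th \emph{smallest} element of $J_{n,k'}$ is $\ge$ the $s$-th smallest element of $J_{n,k}$, so matching from the bottom makes the $J_{n,k'}$-monomials dominate the $J_{n,k}$-monomials, and the surplus ``top'' terms of $J_{n,k}$ must then beat the accumulated deficits $\sum_s\bigl(x^{j_s(k')}-x^{j_s(k)}\bigr)$, which does not follow. To salvage this line you would have to match from the top, using the reciprocity $j\in J_{n,k}\Leftrightarrow n+1-j\in J_{n,k}$ noted in Lemma~\ref{lm:a}, so that the $s$-th \emph{largest} element of $J_{n,k}$ dominates the $s$-th largest element of $J_{n,k'}$ and the unmatched monomials of $J_{n,k}$ give strict positivity; none of this is in your text.

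The paper sidesteps all of this bookkeeping by working with the \emph{forbidden} exponents rather than with their complement: from the proof of Lemma~\ref{lm:a}, $P_{n,k'}-P_{n,k}$ equals $(X+1)$ times $Q_{n,k'}-Q_{n,k}$ with $Q_{n,k}=\sum_{i=1}^{k-1}X^{\ceil{i(n-1)/k}}$, and then
$$
Q_{n,k'}-Q_{n,k}=\sum_{i=k}^{k'-1}X^{\ceil{\frac{i(n-1)}{k'}}}+\sum_{p=1}^{k-1}\Bigl(X^{\ceil{\frac{(k'-p)(n-1)}{k'}}}-X^{\ceil{\frac{(k-p)(n-1)}{k}}}\Bigr),
$$
where each bracket is $\ge 0$ because $\frac{k-p}{k}\le\frac{k'-p}{k'}$, and the first sum is nonempty; positivity for $x>1$ is immediate. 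In short: your plan is the right one, but the combinatorial step you flag as the ``main obstacle'' is the entire content of the lemma, it is not carried out, and your proposed reindexing as written gives an inequality in the wrong direction. Either complete the Hall-type/top-matching argument, or switch to the forbidden-exponent pairing above.
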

\begin{proof}[Proof of Lemma~\ref{lm:compare:n}]
From the proof of Lemma~\ref{lm:a}, we have $P_{n,k'}-P_{n,k}=(X+1)\left(Q_{n,k'}-Q_{n,k}\right)$, where 
$Q_{n,k}= \sum_{i=1}^{k-1} X^{\ceil{i(n-1)/k}}$.
From Lemma~\ref{compare:roots}, we need to show that $P_{n,k'}(x)-P_{n,k}(x)>0$ for $x> \sqrt{2}$.
First we observe that
$$
Q_{n,k'}-Q_{n,k}=\sum_{i=k}^{k'-1} X^{\ceil{\frac{i(n-1)}{k'}}}+\sum_{p=1}^{k-1}X^{\ceil{\frac{(k'-p)(n-1)}{k'}}}-X^{\ceil{\frac{(k-p)(n-1)}{k}}}.
$$
Now for any $p\in \{1,\dots ,k-1\}$
$$\ceil{\frac{(k-p)(n-1)}{k}}\leq \ceil{\frac{(k'-p)(n-1)}{k'}}.$$
So, for any $x>1$, $P_{n,k'}(x)-P_{n,k}(x)>0$, proving the lemma.
\end{proof}

Before comparing roots using polynomials, we end this subsection with a simple lemma:

\begin{Lemma}
\label{lm:comparing:matrix}
Let $n\geq 7$ be an integer satisfying $n\equiv3 \mod 4$. Then $\theta_{n,K_n,L_n+2} > 2$. \\
Let $n\geq 4$ be an even integer. Then $\theta_{n,K_n,L_n+1} > 3^{\frac1{2}}$.\\
Let $n\geq 6$ be an even integer. Then $\theta_{n,K_n,L_n+2} > 6^{\frac1{4}}$.
\end{Lemma}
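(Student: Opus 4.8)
The plan is to reduce each of the three inequalities to a single numerical check on a characteristic polynomial, using the rome technique of Section~\ref{sec:rome} to keep the computations small. In each case the matrix $V$ at hand is non-negative, so its maximal real eigenvalue $\theta$ is exactly the largest real root of the (monic) characteristic polynomial $\chi_V$; hence $\chi_V(c)<0$ already forces $\theta>c$. Choosing an appropriate rome $R$ of cardinality $r\in\{2,3\}$, the rome theorem expresses $\chi_V$ — up to the harmless overall sign relating $\det(V-X\,\mathrm{Id})$ to $\det(X\,\mathrm{Id}-V)$ and a power of $X$ — through $\det\!\bigl(V_R(X)-\mathrm{Id}_r\bigr)$, a $2\times2$ or $3\times3$ determinant with entries in $\Z[X^{\pm1}]$. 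The whole point is that at the three special values $c=2$, $c=\sqrt3$, $c=6^{1/4}$, chosen so that $c^2=2$, $c^2=3$, $c^4=6$, the ``bulk'' parts of these entries cancel in the determinant, leaving a small remainder whose sign can be read off.

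I would treat $\theta_{n,K_n,L_n+2}>2$ (with $n\equiv3\bmod4$, $n\ge7$) first, as it is the cleanest. Set $m=K_n+1=(n-1)/2$ and start from the explicit matrix $V_{n,K_n,L_n+2}$ of Proposition~\ref{prop:n:odd:nKl}(3). One checks that $R=\{K_n+1,\,K_n+2,\,n\}$ is a rome: after deleting these three vertices the two copies of $\mathrm{Id}_{K_n\times K_n}$ make every remaining vertex the origin of a strictly ``descending'' path that eventually leaves the complement, so no loop survives. Enumerating first-return paths, each entry of the resulting $3\times3$ matrix $V_R(X)$ is, up to the factors $X^{-1}$ and $X^{-m}$, an affine combination of the single geometric sum $S(X)=X^{-2}(1-X^{-K_n})/(1-X^{-2})$. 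Specializing at $X=2$ gives $S(2)=\tfrac13(1-2\cdot2^{-m})$, and a short expansion shows that the $2^0$-terms of $V_R(2)-\mathrm{Id}_3$ cancel in the determinant, leaving
\[
\det\bigl(V_R(2)-\mathrm{Id}_3\bigr)=2\cdot2^{-m}-\tfrac52\,2^{-2m}=2^{-m}\bigl(2-\tfrac52\,2^{-m}\bigr)>0,
\]
the last inequality holding for every $m\ge3$. Since $n$ is odd the monic characteristic polynomial of $V_{n,K_n,L_n+2}$ equals $-X^{\,n}\det\!\bigl(V_R(X)-\mathrm{Id}_3\bigr)$, hence is negative at $X=2$, and therefore $\theta_{n,K_n,L_n+2}>2$. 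Notice that this argument is uniform in $n$: there are no exceptional small cases.

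For the two ``$n$ even'' bounds I would proceed the same way, using that $\gcd(n-1,K_n)=1$ puts the explicit primitive matrix $V_{n,K_n}$ of Lemma~\ref{lm:reduce:gamma:nk} at our disposal (with $\{1,n\}$ a rome, as in the proof of Lemma~\ref{lm:a}). For $\theta_{n,K_n,L_n+2}>6^{1/4}$ one takes $V_{n,K_n,L_n+2}=V_{n,K_n}+B_n$ from Proposition~\ref{prop:n:even:nKl}(3); the extra edges of $B_n$ issue only from vertices $1$ and $n-2$, so enlarging the rome to $R=\{1,n-2,n\}$ still kills every loop, and $\chi$ reduces to a $3\times3$ determinant to evaluate at $X=6^{1/4}$. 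For $\theta_{n,K_n,L_n+1}>\sqrt3$ one must first write down the matrix, since this path is not among those tabulated: $\gamma_{n,K_n,L_n+1}$ runs \emph{twice} around the secondary ($b$-)loop at $\pi_n.t^{K_n}$ before the final $t$-step, and because in an all-$b$ loop the winner $w=\pi_b^{-1}(n)$ is constant it is never a loser; hence the one-pass matrix of that loop has the form $M=\mathrm{Id}+e_w v^{\mathsf T}$ with $v^{\mathsf T}e_w=0$, so $M^2=2M-\mathrm{Id}$ and $V_{n,K_n,L_n+1}=2\,V_{n,K_n}-V_0$, where $V_0$ is the bare ``one $t$-step plus relabelling'' matrix. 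Equivalently, $V_{n,K_n,L_n+1}$ is just $V_{n,K_n}$ with its $w$-th row increased by the sum of finitely many of its own rows; one then applies the rome method to this matrix and evaluates at $X=\sqrt3$. In both cases, retaining only the monomials that survive the relation $X^4=6$ (resp. $X^2=3$) collapses the determinant to a short expression of the correct sign, and the non-primitivity of $V_{n,K_n,L_n+1}$ is irrelevant since $\theta$ remains the largest real root of the monic $\chi$.

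The main obstacle is the ``$n$ even'' part, and within it the bookkeeping: pinning down $V_{n,K_n,L_n+1}$ (a matrix not needed before, precisely because the corresponding path retraces a loop and is non-primitive), carrying out the two $3\times3$ determinant computations with the entries equal to $2$ and $3$ contributing extra multiplicities, and verifying that the surviving remainder keeps the correct sign throughout the stated ranges — with the extreme cases $n=4$ (for the $\sqrt3$ bound) and $n=6$ (for the $6^{1/4}$ bound) checked directly, e.g. by computing the dominant root numerically. By contrast, as the computation above shows, the $n\equiv3\bmod4$ case goes through with no small-case subtleties.
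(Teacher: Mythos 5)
Your route is genuinely different from the paper's. The paper does not touch characteristic polynomials in this lemma: it invokes the classical lower bound $\rho(A)>\delta(A)$, with $\delta(A)=\min_{j}\sum_i a_{ij}$, valid for primitive non-negative matrices (quoted from \cite{BL12}), and applies it to $V_{n,K_n,L_n+2}^2$ in the case $n\equiv 3 \bmod 4$ (an easy computation from the explicit matrix of Proposition~\ref{prop:n:odd:nKl} gives $\delta>4$, hence $\theta>2$) and to $V_{n,K_n,L_n+2}^4$ in the even case (Proposition~\ref{prop:n:even:nKl} gives $\delta=6$, hence $\theta>6^{1/4}$), so everything reduces to column sums of small powers rather than to polynomial signs. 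Your treatment of the first claim is nonetheless correct and essentially complete: $\{K_n+1,K_n+2,n\}$ is indeed a rome for the matrix of Proposition~\ref{prop:n:odd:nKl}(3), the sign bookkeeping (monic polynomial, $n$ odd, $r=3$) is right, and your closed form $\det\bigl(V_R(2)-\mathrm{Id}_3\bigr)=2\cdot 2^{-m}-\tfrac52\,2^{-2m}$ checks out (for $n=7$ it gives $27/128$, matching $p(2)=-27$; for $n=11$ it also agrees), so $\theta_{n,K_n,L_n+2}>2$ follows uniformly in $n\equiv3\bmod4$, $n\ge 7$.

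For the two even-$n$ bounds, however, there is a genuine gap: you stop at a plan. The observation $V_{n,K_n,L_n+1}=2V_{n,K_n}-V_0$, via $M^2=2M-\mathrm{Id}$ for the one-pass matrix of the $b$-loop, is correct and a nice way to produce the matrix that the paper never writes down, and enlarging the rome to $\{1,n-2,n\}$ for $V_{n,K_n}+B_n$ is legitimate; but the decisive step — that the monic characteristic polynomial is negative at $\sqrt3$, respectively $6^{1/4}$ — is only asserted (``collapses \dots{} to a short expression of the correct sign''), not established. Nothing actually collapses at these values: the rome entries are $n$-dependent geometric sums, so evaluation leaves an expression in powers of $3^{-1/2}$, resp.\ $6^{-1/4}$, whose sign must be controlled uniformly in $n\ge 4$, resp.\ $n\ge 6$. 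Moreover, negativity of $p(c)$ is not a consequence of $\theta>c$ — it requires an odd number of real eigenvalues above $c$, and these symplectic matrices may well have several real eigenvalues — so it is precisely the content of the claim and cannot be waved through; as written, claims (2) and (3) of the lemma are unproven. Either carry out the two determinant evaluations and the uniform sign estimates explicitly (including the boundary cases $n=4$ and $n=6$ you defer to a numerical check), or adopt the paper's shortcut and bound the Perron root from below by the minimal column sum of $V^2$ and $V^4$, which bypasses characteristic polynomials and the sign-parity issue entirely.
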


\begin{proof}[Proof of Lemma~\ref{lm:comparing:matrix}]
We will use the following classical inequality for 
the Perron root $\rho(A)$ of a non negative primitive matrix $A=(a_{ij})_{i,j=1,\dots,n}$: 
$\rho(A) > \delta(A)$ where $\delta(A)=\min_{j=1}^n \sum_{i=1}^n a_{ij}$ (see {\em e.g.}~\cite[Proposition 4.2.]{BL12}).

We prove the first assertion. The matrix $V(\gamma_{n,K_n,L_n+2})$ is primitive (by
Proposition~\ref{prop:n:odd:nKl}) and $\theta_{n,K_n,L_n+2}^2$ is the Perron root of 
$V_{n,K_n,L_n+2}^2$. It suffices to show that  $\delta(V_{n,K_n,L_n+2}^2) > 4$. By Proposition~\ref{prop:n:odd:nKl} one has
$$
V_{n,K_n,L_{n+2}} =
\left(\begin{array}{c|c|c}
0_{K_n\times K_n} &  \mathrm{Id}_{K_n\times K_n} & 0_{K_n \times 3} \\
\hline 
2 \cdots 2 &  0 \cdots 0 & 2\ 3\ 2 \\
\hline 
1 \cdots 1 &  0 \cdots 0 & 0\ 2\ 1 \\
\hline 
\mathrm{Id}_{K_n\times K_n} &  0_{K_n\times K_n}& 0_{K_n \times 3} \\
\hline 
0 \cdots 0 &  0 \cdots 0 & 1\ 1\ 1 \\
\end{array} \right)
$$ 
The result then follows from an easy matrix computation.

For the second claim, The matrix $V(\gamma_{n,K_n,L_n+2})$ is primitive (by
Lemma~\ref{lm:reduce:gamma:nk}). By Proposition~\ref{prop:n:even:nKl} and
a matrix computation, we draw $\delta(V_{n,K_n,L_n+2}^4)=6$ for any $n\geq 6$. Hence
$\theta_{n,K_n,L_n+2} > 6^{\frac1{4}}$. Lemma~\ref{lm:comparing:matrix} is proved.
\end{proof}
%*********************************************************************************************************
\subsection{Comparing $\theta_{n,k,l}$ when $n\equiv 3 \mod 4$}

\begin{Lemma}
\label{cor:l:odd}
Let $n\geq 7$ such that $n\equiv 3 \mod 4$. If $l,l'\in \{1,\dots,L_n \}$ are odd and $l<l'$ then
$$
\theta_{n,K_n,l}>\theta_{n,K_n,l'}.
$$
\end{Lemma}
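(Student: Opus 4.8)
The plan is to reduce everything to a single elementary sign computation and then invoke Lemma~\ref{compare:roots}. Since both $l$ and $l'$ are odd and lie in $\{1,\dots,L_n\}$, Proposition~\ref{prop:n:odd:nKl}(2) applies to each of them and produces the closed formulas
$$
P_{n,K_n,l} = \frac{S_n(X) + 2f(X,l)}{(X-1)(X+1)}, \qquad f(X,l) := X^{\frac{n+7}{2}-l} - X^l + X^{l+\frac{n-1}{2}} - X^{n+3-l},
$$
with the \emph{same} auxiliary polynomial $S_n$ in both cases. Subtracting, the dependence on $S_n$ disappears and one is left with
$$
P_{n,K_n,l'}(X) - P_{n,K_n,l}(X) = \frac{2\bigl(f(X,l') - f(X,l)\bigr)}{(X-1)(X+1)},
$$
so it suffices to show this quantity is positive for $x>\sqrt2$.

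The first thing I would do is simplify $f$. Writing $a := \frac{n-1}{2}$ (an integer, since $n$ is odd), one has $\frac{n+7}{2} = a+4$ and $n+3 = 2a+4$, so grouping the first monomial with the fourth and the second with the third gives
$$
f(X,l) = X^{a+4-l}\bigl(1 - X^{a}\bigr) + X^{l}\bigl(X^{a}-1\bigr) = \bigl(X^{a}-1\bigr)\bigl(X^{l} - X^{a+4-l}\bigr).
$$
Hence
$$
f(X,l') - f(X,l) = \bigl(X^{a}-1\bigr)\Bigl[\bigl(X^{l'}-X^{l}\bigr) + \bigl(X^{a+4-l}-X^{a+4-l'}\bigr)\Bigr].
$$
For $x>1$ each factor on the right is strictly positive: $x^{a}-1>0$; $x^{l'}>x^{l}$ because $l'>l$; and $x^{a+4-l}>x^{a+4-l'}$ because $a+4-l>a+4-l'$. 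Since also $(x-1)(x+1)=x^{2}-1>0$ for $x>1$, we obtain $P_{n,K_n,l'}(x) - P_{n,K_n,l}(x) > 0$ for every $x>1$, a fortiori for $x>\sqrt2$.

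To finish, both $P_{n,K_n,l}$ and $P_{n,K_n,l'}$ are monic of degree $n+1$, and $P_{n,K_n,l'}$ has the real root $\theta_{n,K_n,l'}>\sqrt2$. Applying Lemma~\ref{compare:roots} with $P_1 = P_{n,K_n,l'}$ and $P_2 = P_{n,K_n,l}$ yields a real root of $P_{n,K_n,l}$ strictly larger than $\theta_{n,K_n,l'}$, and therefore its largest real root $\theta_{n,K_n,l}$ satisfies $\theta_{n,K_n,l}>\theta_{n,K_n,l'}$. There is no serious obstacle in this argument: the only points that need care are copying the exponents of Proposition~\ref{prop:n:odd:nKl} correctly and spotting the factorization of $f$, after which the monotonicity of $\theta_{n,K_n,l}$ in the odd parameter $l$ is immediate.
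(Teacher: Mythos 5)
Your proof is correct and follows essentially the same route as the paper: both subtract the two formulas from Proposition~\ref{prop:n:odd:nKl}(2) so that $S_n$ cancels, verify that $P_{n,K_n,l'}(x)-P_{n,K_n,l}(x)>0$ for $x>\sqrt{2}$ (your factorization $f(X,l)=(X^{a}-1)(X^{l}-X^{a+4-l})$ is just an explicit version of the paper's ``simple computation''), and conclude via Lemma~\ref{compare:roots}. No gaps.
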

\begin{proof}[Proof of Lemma~\ref{cor:l:odd}]
We follow the notation of the proof of Proposition~\ref{prop:n:odd:nKl}. A simple computation shows:
$$
P_{n,K_n,l}-P_{n,K_n,l'} = \frac{2(X^l - X^{l'}) (X^m-1) (X^{l + l'} + X^{4 + m})}{X^{l + l'}(X-1) (X+1)}.
$$
In particular $P_{n,K_n,l}(x)-P_{n,K_n,l'}(x)<0$ for $x>\sqrt{2}$. Lemma~\ref{compare:roots} gives 
$\theta_{n,K_n,l}>\theta_{n,K_n,l'}$.
\end{proof}

\begin{Proposition}\label{compare:roots:n:3:mod4}
Let $n\geq 7$ be an integer satisfying $n\equiv3 \mod 4$.
\begin{enumerate}
\item If $n'=\frac{n+1}{2}$ then $\theta_{n',K_{n'},L_{n'}}>\theta_{n,K_n,L_n}$.
\item If $n'=\frac{n+1}{2}$ then $\theta_{n',K_{n'},L_{n'}+2}>\theta_{n,K_n,L_n}$.
%\item $K-1$ and $n-1$ are relatively prime and $\theta_{n,K,L}<\theta_{n,K-1}$.
\item Let $1\leq k\leq K_n-1$. If $d=\gcd(k,n-1)$, $n'=\frac{n+1}{d}$ and $k'=k/d$ then 
$\theta_{n,K_n,L}<\theta_{n',k'}$.
\end{enumerate}
\end{Proposition}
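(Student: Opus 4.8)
The plan is to deduce all three inequalities from Lemma~\ref{compare:roots}. In each case I exhibit a monic polynomial $P_1$ whose largest real root is exactly $\theta_{n,K_n,L_n}$ (namely $P_{n,K_n,L_n}$, or a suitable power of $X$ times it so that degrees match) and a monic polynomial $P_2$ whose largest real root is the quantity to be bounded below, and then verify $P_1(x)-P_2(x)>0$ for every $x>\sqrt2$; since $\theta_{n,K_n,L_n}>\sqrt2$, Lemma~\ref{compare:roots} gives a root of $P_2$ above $\theta_{n,K_n,L_n}$, hence $\theta_{\mathrm{target}}>\theta_{n,K_n,L_n}$. The inputs are the closed forms of Appendix~\ref{appendix:matrix}: $P_{n,K_n,L_n}=X^{n+1}-2X^{n-1}-4X^{(n+3)/2}+4X^{(n-1)/2}+2X^2-1$ from Proposition~\ref{prop:n:odd:nKl}; since $n'=(n+1)/2$ is even, $P_{n',K_{n'},L_{n'}}=X^{n'+1}-2X^{n'-1}-X^{n'-3}-X^4-2X^2+1$ from Proposition~\ref{prop:n:even:nKl}; and, for indices coprime to $n'-1$, the lacunary form of $P_{n',k'}$ from Lemma~\ref{lm:a}. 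The recurring subtlety is that the difference polynomial must be positive on the \emph{whole} ray $(\sqrt2,\infty)$, not just at $\sqrt2$; I arrange this by pairing each leading negative monomial $-2X^{a}$ with a companion $+X^{a+2}$ and using $X^{a+2}-2X^{a}=X^a(X^2-2)\ge0$ for $x\ge\sqrt2$.

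For part (1), $\deg P_{n,K_n,L_n}-\deg P_{n',K_{n'},L_{n'}}=(n-1)/2=L_n$, so I compare $P_{n,K_n,L_n}$ with $X^{L_n}P_{n',K_{n'},L_{n'}}$. A direct expansion gives
$$
P_{n,K_n,L_n}(X)-X^{L_n}P_{n',K_{n'},L_{n'}}(X)=X^{n-3}+X^{(n+3)/2}(X^2-2)+3X^{(n-1)/2}+2X^2-1,
$$
which is manifestly $>0$ for $x>1$, so Lemma~\ref{compare:roots} yields $\theta_{n',K_{n'},L_{n'}}>\theta_{n,K_n,L_n}$. Part (2) follows the same scheme, but first one must produce $P_{n',K_{n'},L_{n'}+2}$: Proposition~\ref{prop:n:even:nKl}(3) gives $V_{n',K_{n'},L_{n'}+2}=V_{n',K_{n'}}+B_{n'}$ with $B_{n'}$ a very sparse nonnegative matrix, so the rome method of Section~\ref{sec:rome} (start from the rome used for $V_{n',K_{n'}}$, then correct by the first-return paths through the new edges of $B_{n'}$, exactly as in the proof of Proposition~\ref{prop:n:even:nKl}) produces a closed form; one then subtracts $X^{L_n}P_{n',K_{n'},L_{n'}+2}$ from $P_{n,K_n,L_n}$ and checks positivity on $(\sqrt2,\infty)$ after the same leading-monomial pairing. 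The identity $\theta_{n',K_{n'},L_{n'}+2}=\theta_{n,K_n,L_n+3}$ from the Combinatoric Statement is a convenient consistency check on the formula obtained.

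For part (3) one first trims the list of comparisons. By Lemma~\ref{lm:reduce:gamma:nk} (the Combinatoric Statement) one has $\theta_{n',k'}=\theta_{n,k}$ for the reduced pair attached to $d=\gcd(n-1,k)$, so it suffices to bound $\theta_{n,k}$ below by $\theta_{n,K_n,L_n}$ for each $k\in\{1,\dots,K_n-1\}$. When $d=1$ one compares the lacunary $P_{n,k}$ of Lemma~\ref{lm:a} directly with $P_{n,K_n,L_n}$; Lemma~\ref{lm:compare:n} (monotonicity of $\theta_{n,\cdot}$ among coprime indices) reduces this to the single worst index $k=K_n-1$, for which Lemma~\ref{lm:a}(3) supplies an explicit polynomial whose difference with $P_{n,K_n,L_n}$ I expand and sign-check. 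When $d>1$ the reduced pair has $n'<n$ with $\gcd(n'-1,k')=1$, and Lemmas~\ref{lm:compare:n} and~\ref{lm:decreasing} (monotonicity in $n$ of the spectrum bottoms) push $\theta_{n',k'}$ above $\theta_{n',K_{n'}}$ and then above the value already handled, so only a bounded list of explicit polynomial comparisons remains.

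I expect the main obstacle to be part (2): unlike $P_{n,K_n,L_n}$ and $P_{n,K_n,l}$ (odd $l\le L_n$), the polynomial $P_{n',K_{n'},L_{n'}+2}$ is not recorded in the excerpt and must be extracted from Proposition~\ref{prop:n:even:nKl}(3) by a delicate rome computation — the rome may need to be enlarged to absorb the extra vertices reached by $B_{n'}$ — after which the resulting difference polynomial is no longer a sum of nonnegative monomials after the obvious pairing, so its positivity on $(\sqrt2,\infty)$ will likely force a short finite list of small cases (roughly $n'\in\{4,6,8\}$) to be handled separately, either by hand or via the lower bound $\rho(A)>\delta(A)$ of Lemma~\ref{lm:comparing:matrix}, with the generic estimate covering all larger $n$.
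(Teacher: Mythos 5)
Your part~(1) is correct and is essentially the paper's own computation (the difference you display is exactly the one in the paper's proof). The problems are in parts~(2) and~(3).

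For part~(2) you have not actually produced a proof: everything hinges on an explicit formula for $P_{n',K_{n'},L_{n'}+2}$, which you acknowledge you have not extracted, and which the paper never computes either. The paper's route is different and avoids the polynomial entirely: since $n'=\frac{n+1}{2}$ is even and $\geq 6$ for $n\geq 11$, Lemma~\ref{lm:comparing:matrix} (the column-sum bound $\rho(A)>\delta(A)$ applied to $V_{n',K_{n'},L_{n'}+2}^4$) gives $\theta_{n',K_{n'},L_{n'}+2}>6^{1/4}$, while the equation $P_{n,K_n,L_n}(\theta)=0$ yields the quantitative estimate $\theta_{n,K_n,L_n}-\sqrt2<\tfrac{4}{\theta^{(n-1)/2}}<\sqrt2^{-(n-5)/2}$, which is $<6^{1/4}-\sqrt2$ for $n\geq 23$; the finitely many $n<23$ are checked directly. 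So the $\delta(A)$ bound you relegate to ``small cases'' is in fact the engine of the general case, paired with an upper bound on $\theta_{n,K_n,L_n}$; as written, your part~(2) is a plan with the hard step missing, not a proof.

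For part~(3) the reduction ``push $\theta_{n',k'}$ above $\theta_{n',K_{n'}}$ and then above the value already handled'' breaks down precisely in the case the paper has to treat separately, namely $d=2$ (recall $n-1\equiv 2\bmod 4$, so even $d$ occurs). There $n'=\frac{n+1}{2}$ is even and, by Lemma~\ref{lm:reduce:gamma:nk} applied to the non-primitive matrix $V_{n,K_n}$, one has $\theta_{n',K_{n'}}=\theta_{n,K_n}$, and $\theta_{n,K_n}<\theta_{n,K_n,L_n}$ by Proposition~\ref{prop:reduce:to:sub:path}; so lying above $\theta_{n',K_{n'}}$ gives no information about $\theta_{n,K_n,L_n}$. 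Nor can you fall back on Lemma~\ref{lm:compare:n} to compare $k'$ with $K_{n'}-1$, since that lemma needs both indices coprime to $n'-1$ and $\gcd(K_{n'}-1,n'-1)$ may fail to be $1$; the paper instead compares $P_{n,K_n,L_n}$ with $x^{n-n'}P_{n',k'}$ directly, using statement~(4) of Lemma~\ref{lm:a} (the $\ceil{2n'/3}$ bound on the highest secondary monomial). A smaller but related slip: when $n'$ is odd (so $n'\equiv 3\bmod 4$), $V_{n',K_{n'}}$ is again non-primitive, so the correct intermediate quantity is $\theta_{n',K_{n'},L_{n'}}$ — obtained from the already-proved coprime case for $n'$ and then Lemma~\ref{lm:decreasing}(3) — not $\theta_{n',K_{n'}}$, and Lemma~\ref{lm:decreasing} by itself does not compare an even-index bottom $\theta_{n',k'}$ with the odd-case minimum $\theta_{n,K_n,L_n}$. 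Your $d=1$ case (reduce to $k=K_n-1$ via Lemma~\ref{lm:compare:n} and compare with the explicit $P_{n,K_n-1}$ of Lemma~\ref{lm:a}(3)) does agree with the paper, up to the direct check at $n=7$.
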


\begin{proof}[Proof of Proposition~\ref{compare:roots:n:3:mod4}]
\noindent {\bf Case (1).} We start with the first statement. 
Using  Proposition~\ref{prop:n:even:nKl} and Proposition~\ref{prop:n:odd:nKl}, we have:
$$
P_{n',K_{n'},L_{n'}}=x^{n'+1} - 2x^{n'-1} - x^{n'-3} - x^4 - 2x^2 + 1
%=x^{2m+3}-2x^{2m+1}-x^{2m-1}-x^4-2x^2+1
$$
$$
P_{n,K_n,L_n} =x^{n+1} - 2 x^{n-1} -  4 x^{(n+3)/2} + 4  x^{(n-1)/2} + 2 x^2  - 1
%=x^{4m+4}-2x^{4m+2}-4x^{2m+3}+4x^{2m+1}+2x^2-1
$$
Noticing that $n-n'=\frac{n-1}{2}$, hence $n-n'+2=\frac{n+3}{2}$, we have:
$$
P_{n,K_n,L_n}-x^{n-n'}P_{n',K_{n'},L_{n'}} =x^{n-3}+x^{4+n-n'}-2x^{2+n-n'}+3x^{n-n'}+2x^2-1
% X^{4m}+X^{2m+5}-2X^{2m+3}+3X^{2m+1}+2X^2-1
$$
This polynomial clearly takes only positive values for $x>\sqrt{2}$, which proves the required 
inequality. \medskip

\noindent {\bf Case (2).} Now we come to the second statement. Assume $n \geq 11$ (for $n=7$ we directly 
prove the inequality). In this situation $n'\geq 4$ and Lemma~\ref{lm:comparing:matrix} gives
$$
\theta_{n',K_{n'},L_{n'}+2} > 6^{\frac1{4}}
$$
Let $\theta=\theta_{n,K_n,L_n}$ for simplicity. By Proposition~\ref{prop:n:odd:nKl}, we have:
$$
\theta^{n-1}(\theta^2 - 2) =  4 \theta^{(n+3)/2} - 4  \theta^{(n-1)/2} - 2 \theta^2  + 1
$$
Hence
$$
\theta - \sqrt{2} = \frac{1}{\theta+\sqrt{2}} \frac{4  \theta^{(n-1)/2}( \theta^{2} - 1) + 1 - 2 \theta^2}{\theta^{n-1}}
< \frac{4}{\theta^{(n-1)/2}} < \frac1{\sqrt{2}^{(n-5)/2}}
$$
Obviously $\frac1{\sqrt{2}^{(n-5)/2}} < 6^{\frac1{4}}-\sqrt{2}$ for $n\geq 23$. Hence
$\theta_{n,K_n,L_n} < 6^{\frac1{4}} < \theta_{n',K_{n'},L_{n'}+2}>$. For $n < 23$ we check directly 
the that this inequality holds. \medskip

\noindent {\bf Case (3).} 
Finally we prove the last statement. Assume first  that $\gcd(n-1,k)=1$, then $n'=n$ and $k'=k$.
%then the matrix $V(\gamma_{n,k})$ is irreducible. 
Note that $\gcd(n-1,K_{n-1}-1)=1$. From Lemma~\ref{lm:compare:n} $\theta_{n,k} \geq \theta_{n,K_n-1}$ 
for any $k=1,\dots,K_n-1$. Since $L$ is odd we have, by Proposition~\ref{prop:n:odd:nKl} and Lemma~\ref{lm:a}:
\begin{eqnarray*}
P_{n,K_n-1}&=&x^{n+1}-2x^{n-1}-2x^{n-2\floor{\frac{n-5}{8}}-2}-2x^{\frac{n+1}{2}}-2x^{2\floor{\frac{n-5}{8}}+3}-2x^2+1\\
P_{n,K_n,L}&=&x^{n+1} - 2 x^{n-1} -  4 x^{(n+3)/2} + 4  x^{(n-1)/2} + 2 x^2  - 1
\end{eqnarray*}
Hence, for $x>\sqrt{2}$
\begin{eqnarray*}
P_{n,K_n,L}-P_{n,K_n-1}&=&2x^{n-2\floor{\frac{n-5}{8}}-2}-4 x^{(n+3)/2}+2x^{\frac{n+1}{2}}+4  x^{(n-1)/2}+2x^{2\floor{\frac{n-5}{8}}+3}+4x^2-2 \\
&>&2x^{n-2\floor{\frac{n-5}{8}}-2}-4 x^{(n+3)/2}
%
%(x+1)(P_{n,K-1} -  P_{n,K,L}) = x^{2m+2} - 2x^{2m} - 2x^{\ceil{2n/3}}- 2x^{\floor{n/3}+1} - 4x^2 + 2 - x^m (4 - 4 x^2 - 2 x^m + x^{2 + m}) = \\
%2(1 - 2 x^2 - 2 x^m + 2 x^{2 + m}  - x^{\ceil{2n/3}}- x^{\floor{n/3}+1})
\end{eqnarray*}
For $n\geq 11$, $n-2\ceil{\frac{n-5}{8}}-2\geq \frac{n+3}{2}+2$, hence $P_{n,K_n,L}-P_{n,K_n-1}>0$. 
For $n=7$, we compute directly the roots: we have $\theta_{n,K_n-1}\approx 1,96$ and $\theta_{n,K_n,L}\approx 1,84$. \medskip

Now we assume that  $\gcd(k,n-1)=d>1$.
%then the matrix $V(\gamma_{n,k})$ is reducible and $\theta_{n,k} \geq \theta_{n',k'}$ where
%$n'=\frac{n-1}{d}+1$ and $k'=\frac{k}{d}$.
\begin{enumerate}
\item If $n'$ is odd (thus $n'\equiv3 \mod 4$) then by the above case $\theta_{n',k'} > \theta_{n',K_{n'},L_{n'}}$. We conclude with Lemma~\ref{lm:decreasing}.
\item If $n'$ is even then we need to show directly that $\theta_{n',k'} > \theta_{n,K_n,L}$. Note $n'$ even implies that $d$ is an odd multiple of 2. 
There are two cases: 
\begin{itemize}
\item $d\geq 6$. In this case, $\theta_{n',k'}\geq \theta_{n',K_{n'}}$ ($k'=K_{n'}$ is possible).
We have
\begin{eqnarray*}
P_{n,K_n,L}-x^{n-n'}P_{n',K_{n'}}&=&2x^{2+n-n'}-x^{n-n'}-4x^{\frac{n+3}{2}}+4x^{\frac{n-1}{2}}+2x^2-1 \\
&>& 2 x^{n-n'}- 4x^{\frac{n+3}{2}}
\end{eqnarray*}
We necessarily have $n> 10$ hence $n-n'\geq  \frac{n+3}{2}+2$ hence the above polynomial is positive for $x>\sqrt{2}$.
\item $d=2$. In this case $\theta_{n',K_{n'}}=\theta_{n,K_n}<\theta_{n,K_n,L}$, but $k'<K_{n'}$. Hence the previous strategy does not work. 
Since $n'$ and $K_{n'}-1$ are not necessarily relatively prime, 
 %$V_{n',K'-1}$ is not necessarily irreducible, 
 we compare directly $\theta_{n',k'}$ with $\theta_{n,K_n,L}$ by using Statement~4 of Lemma~\ref{lm:a}. We have
$$
P_{n,K_n,L}-x^{n-n'}P_{n',k'}\geq 2x^{n-n'+\ceil{\frac{2n'}{3}}}-4x^{\frac{n+3}{2}}
$$
For $n\geq 8$, we have $n-n'+\ceil{\frac{2n}{3}}\geq \frac{n+3}{2}+2$, implying the desired inequality. For $n=7$, we necessarily have $d=1$. 
\end{itemize}
\end{enumerate}
This ends the proof of the proposition.
\end{proof}

%*********************************************************************************************************
\subsection{Comparing $\theta_{n,k,l}$ when $n\equiv 0 \mod 2$}

\begin{Lemma}
\label{lm:n:even:nK:2}
Let $n\geq 4$ be an even integer. If $l,l'\in \{1,\dots,L_n \}$ satisfy $l<l'$ then
$$
\theta_{n,K_n,l}>\theta_{n,K_n,l'}.
$$
\end{Lemma}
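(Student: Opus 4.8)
The plan is to reduce this to the comparison formula of Lemma~\ref{lm:decreasing} and Lemma~\ref{cor:l:odd}, namely to compute the difference $P_{n,K_n,l} - P_{n,K_n,l'}$ using the explicit formula of Proposition~\ref{prop:n:even:nKl}(1) and show it has constant sign on $x > \sqrt 2$, so that Lemma~\ref{compare:roots} applies. Concretely, Proposition~\ref{prop:n:even:nKl}(1) gives
$$
P_{n,K_n,l} = P_{n,K_n} - \frac{X^{n-2l+2}+X^{n-2l+4}+2X^{n-1}-X^{2l+1}-X^{2l-1}-2X^4}{(X+1)(X-1)},
$$
so that the term $P_{n,K_n}$ and the $2X^{n-1}$ and $-2X^4$ terms in the numerator cancel when forming the difference, leaving
$$
P_{n,K_n,l} - P_{n,K_n,l'} = \frac{(X^{n-2l'+2}+X^{n-2l'+4}-X^{2l'+1}-X^{2l'-1}) - (X^{n-2l+2}+X^{n-2l+4}-X^{2l+1}-X^{2l-1})}{(X+1)(X-1)}.
$$

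Next I would factor the numerator. Writing the numerator as $(X^{n-2l'+2}-X^{n-2l+2}) + (X^{n-2l'+4}-X^{n-2l+4}) + (X^{2l+1}-X^{2l'+1}) + (X^{2l-1}-X^{2l'-1})$ and using $l < l'$, the first two groups are $-X^{n-2l'+2}(X^{2(l'-l)}-1) - X^{n-2l'+4}(X^{2(l'-l)}-1)$ (negative for $X>1$) and the last two are $X^{2l-1}(X^2\!-\!X^{2l'-2l+2}) + X^{2l-1}(1 - X^{2l'-2l})$; more cleanly, pull out the common factor $X^{2(l'-l)}-1$. Indeed grouping as
$$
-(X^{2(l'-l)}-1)\bigl(X^{n-2l'+2}+X^{n-2l'+4}\bigr) - (X^{2(l'-l)}-1)\bigl(X^{2l-1}+X^{2l+1}\bigr)
$$
exhibits the numerator as $-(X^{2(l'-l)}-1)(X^{n-2l'+2}+X^{n-2l'+4}+X^{2l-1}+X^{2l+1})$, which is visibly negative for $X>1$. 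Dividing by $(X+1)(X-1) > 0$ for $X>1$, we conclude $P_{n,K_n,l}(x) - P_{n,K_n,l'}(x) < 0$ for all $x > \sqrt 2$ (in fact for all $x>1$).

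Finally, since $\theta_{n,K_n,l}$ is a root of $P_{n,K_n,l}$ with $\theta_{n,K_n,l} > \sqrt 2$, and both polynomials are monic, Lemma~\ref{compare:roots} applied with $P_1 = P_{n,K_n,l'}$, $P_2 = P_{n,K_n,l}$ (so that $P_1 - P_2 = P_{n,K_n,l'} - P_{n,K_n,l} > 0$ on $x>\sqrt 2$) yields a root of $P_{n,K_n,l}$ strictly larger than $\theta_{n,K_n,l'}$... wait — one must be careful about the direction, so I would instead apply the lemma with $P_1 = P_{n,K_n,l}$, $P_2 = P_{n,K_n,l'}$: from $P_{n,K_n,l}(x) - P_{n,K_n,l'}(x) < 0$ we get $P_{n,K_n,l'}(\theta_{n,K_n,l}) > 0$... which is the wrong sign too. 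The clean statement is: $P_{n,K_n,l'}(\theta_{n,K_n,l}) = P_{n,K_n,l}(\theta_{n,K_n,l}) + (P_{n,K_n,l'}-P_{n,K_n,l})(\theta_{n,K_n,l}) = 0 + (\text{positive}) > 0$, and since $P_{n,K_n,l'}$ is monic it has a root greater than $\theta_{n,K_n,l}$ — so this shows $\theta_{n,K_n,l'} > \theta_{n,K_n,l}$, the opposite of what we want. Hence I should double-check the sign of the difference against the known special cases; comparing with Lemma~\ref{cor:l:odd} (where $\theta_{n,K_n,l} > \theta_{n,K_n,l'}$ for $l<l'$ in the odd-$n$ case, and the displayed difference there is negative), the correct reading is that $P_{n,K_n,l} - P_{n,K_n,l'} < 0$ forces $\theta_{n,K_n,l} > \theta_{n,K_n,l'}$ via Lemma~\ref{compare:roots} with $P_1 := P_{n,K_n,l'}$, $\theta_1 := \theta_{n,K_n,l'}$, $P_2 := P_{n,K_n,l}$: the hypothesis $P_1(x) - P_2(x) > 0$ holds, $P_1$ has root $\theta_1 > \sqrt 2$, so $P_2$ has a root $\theta_2 > \theta_1$, i.e. $\theta_{n,K_n,l} > \theta_{n,K_n,l'}$. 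The main (minor) obstacle is simply bookkeeping the factorization and the sign direction carefully; there is no conceptual difficulty once Proposition~\ref{prop:n:even:nKl}(1) is in hand.
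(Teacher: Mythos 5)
Your proof is correct and takes essentially the same route as the paper: both compute $P_{n,K_n,l}-P_{n,K_n,l'}$ from Proposition~\ref{prop:n:even:nKl}(1), check that the difference is negative for $x>\sqrt 2$ (your factorization of the numerator as $-(X^{2(l'-l)}-1)(X^{n-2l'+2}+X^{n-2l'+4}+X^{2l-1}+X^{2l+1})$ is an equivalent form of the paper's closed formula), and conclude with Lemma~\ref{compare:roots}. The only blemish is the detour at the end — the intermediate claim that $P_{n,K_n,l'}(\theta_{n,K_n,l})>0$ together with monicity would produce a root above $\theta_{n,K_n,l}$ is a non sequitur — but you discard it, and your final application of Lemma~\ref{compare:roots} with $P_1=P_{n,K_n,l'}$ and $P_2=P_{n,K_n,l}$ (which is in fact the application you wrote first) is the correct one.
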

\begin{proof}[Proof of Lemma~\ref{lm:n:even:nK:2}]
Proposition~\ref{prop:n:even:nKl} and a simple computation show
$$
P_{n,K_n,l}-P_{n,K_n,l'} = \frac{(X^l - X^{l'})(X^2 + 1)(X^{2l + 2l'} + X^{n + 3})(X^l + X^{l'})}{X^{2l + 2l' + 1}
(X+1) (X-1)}.
$$
In particular $P_{n,K_n,l}(x)-P_{n,K_n,l'}(x)<0$ for $x>\sqrt{2}$. Lemma~\ref{compare:roots} gives 
$\theta_{n,K_n,l}>\theta_{n,K_n,l'}$.
\end{proof}
\begin{Proposition}\label{compare:roots:second}
Let $n\geq 18$ be an even integer satisfying $n\not \equiv 4 \mod 6$. Then, $\gcd(n-1,K_n-1)=1$ and the followings hold:
\begin{enumerate}
\item For any $k=1,\dots,K_n-2$ one has $\theta_{n,k}>\theta_{n,K_n-1}$.
\item For any $l=1,\dots,L_n$ one has $\theta_{n,K_n,l} > \theta_{n,K_n-1}$.
\item For $l=L_n+1$, one has $\theta_{n,K_n,L_n+1} > \theta_{n,K_n-1}$.
\item For $l=L_n+2$, one has $\theta_{n,K_n,L_n+2} > \theta_{n,K_n-1}$.
\end{enumerate}
\end{Proposition}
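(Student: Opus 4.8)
The plan is to derive all four inequalities from comparisons of characteristic polynomials, using Lemma~\ref{compare:roots} together with the explicit formulas already at our disposal: $P_{n,K_n-1}$ from Lemma~\ref{lm:a}, the polynomials $P_{n,K_n,l}$ from Proposition~\ref{prop:n:even:nKl}, the reductions of Lemma~\ref{lm:reduce:gamma:nk} and Lemma~\ref{lm:compare:n}, and the crude lower bounds of Lemma~\ref{lm:comparing:matrix}. I first record two consequences of the standing hypotheses that I will use repeatedly. Since $n-1=2(K_n-1)+3$, one has $\gcd(K_n-1,n-1)=\gcd(K_n-1,3)$, which equals $1$ exactly because $n\not\equiv4\bmod6$; and the same hypothesis (with $n$ even) forces $3\nmid n-1$. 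Throughout, the comparison scheme is: to prove $\theta_{A}>\theta_{n,K_n-1}$ it suffices, by Lemma~\ref{compare:roots} and the fact that $\theta_{n,K_n-1}>\sqrt2$, to show $P_{n,K_n-1}(x)-P_A(x)>0$ for $x>\sqrt2$, possibly after multiplying $P_A$ by a monomial so that the two leading monomials cancel.

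For part (1) I would split on $d:=\gcd(n-1,k)$. If $d=1$, then $k<K_n-1$ with both $k$ and $K_n-1$ coprime to $n-1$, so Lemma~\ref{lm:compare:n} gives $\theta_{n,k}>\theta_{n,K_n-1}$ at once. If $d>1$, then $d$ is odd and, crucially, $d\neq3$ (as $3\nmid n-1$), hence $d\ge5$; by Lemma~\ref{lm:reduce:gamma:nk}, $\theta_{n,k}=\theta_{n',k'}$ with $n'=\tfrac{n-1}{d}+1\le\tfrac{n+4}{5}$ even, $k'=k/d\le K_{n'}$ and $\gcd(n'-1,k')=1$, so that $P_{n',k'}$ has the shape of Lemma~\ref{lm:a}. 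I would then show $P_{n,K_n-1}(x)-x^{n-n'}P_{n',k'}(x)>0$ for $x>\sqrt2$. Writing this difference out, the top two monomials cancel and one is left with
$2x^{n-n'+2}+2\sum_{j\in J_{n',k'}}x^{j+n-n'}-x^{n-n'}-2x^{\ceil{2n/3}}-2x^{\floor{n/3}+1}-2x^2+1$,
where the sum over $J_{n',k'}\subset\{3,\dots,n'-2\}$ is a nonnegative contribution. Because $d\ge5$ one has $n-n'\ge\tfrac{4(n-1)}{5}$, so the exponent $n-n'+2$ exceeds $\ceil{2n/3}$ by at least $3$ when $n\ge18$; combined with $2x^{n-n'+2}-x^{n-n'}=x^{n-n'}(2x^2-1)>0$ and the inequality $\ceil{2n/3}\ge\floor{n/3}+1+(\tfrac n3-1)$, a routine exponent count using $x\ge\sqrt2$ (so $x^2\ge2$, $x^5\ge4\sqrt2$) shows the displayed expression is positive. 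Lemma~\ref{compare:roots} then yields $\theta_{n,k}=\theta_{n',k'}>\theta_{n,K_n-1}$.

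For part (2), Lemma~\ref{lm:n:even:nK:2} shows $l\mapsto\theta_{n,K_n,l}$ is decreasing on $\{1,\dots,L_n\}$, so it suffices to treat $l=L_n$. By Proposition~\ref{prop:n:even:nKl}(2),
$P_{n,K_n-1}-P_{n,K_n,L_n}=X^{n-3}+X^4-2X^{\ceil{2n/3}}-2X^{\floor{n/3}+1}$;
grouping $X^{n-3}-2X^{\ceil{2n/3}}=X^{\ceil{2n/3}}\bigl(X^{\,n-3-\ceil{2n/3}}-2\bigr)$ with $n-3-\ceil{2n/3}\ge3$ for $n\ge18$, and using $\ceil{2n/3}-(\floor{n/3}+1)\ge5$, this is positive for $x>\sqrt2$, so Lemma~\ref{compare:roots} gives $\theta_{n,K_n,L_n}>\theta_{n,K_n-1}$. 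For parts (3) and (4) I would prove once and for all that $\theta_{n,K_n-1}<6^{1/4}$ by checking $P_{n,K_n-1}(x)>0$ for every $x\ge6^{1/4}$: there $x^2-2\ge\sqrt6-2>0$, and the degree gap $n-1-\ceil{2n/3}\ge5$ (for $n\ge18$) makes $x^{n-1}(x^2-2)$ dominate $2x^{\ceil{2n/3}}+2x^{\floor{n/3}+1}+2x^2$. Since $6^{1/4}<\sqrt3$, part (3) follows from $\theta_{n,K_n,L_n+1}>\sqrt3$ and part (4) from $\theta_{n,K_n,L_n+2}>6^{1/4}$, both furnished by Lemma~\ref{lm:comparing:matrix}.

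The \emph{main obstacle} is part (1) in the case $d>1$: one must carry the reduction $\theta_{n,k}=\theta_{n',k'}$ through and then control $P_{n',k'}$ for an a priori unknown $k'\le K_{n'}$ (the worst case being $k'=K_{n'}$, where $J_{n',k'}=\emptyset$ and no extra positive monomials help). This is precisely where the hypothesis $n\not\equiv4\bmod6$ is essential — it excludes $d=3$ and hence forces $n'$ to be small relative to $n$, leaving only a bookkeeping of exponents. As in the analogous results (e.g.\ Proposition~\ref{compare:roots:n:3:mod4}), the asymptotic estimates above are comfortable for $n\ge18$; the finitely many pairs $(n,k)$ with $n$ close to $18$, or with $d>1$ and $n'$ small, that might fall outside the margins would be dispatched by direct numerical verification of the relevant polynomials.
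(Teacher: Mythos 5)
Your proposal is correct and follows essentially the same route as the paper's proof: the same split of (1) according to $d=\gcd(k,n-1)$ (with $d\ge 5$ forced by $n\not\equiv 4\bmod 6$) using Lemmas~\ref{lm:compare:n} and~\ref{lm:reduce:gamma:nk}, the same reduction of (2) to $l=L_n$ via Lemma~\ref{lm:n:even:nK:2}, and the same use of Lemma~\ref{lm:comparing:matrix} for (3)--(4), all fed into Lemma~\ref{compare:roots} through the explicit polynomials of Lemma~\ref{lm:a} and Proposition~\ref{prop:n:even:nKl}. The only (harmless) deviations are cosmetic: you compare $P_{n,K_n-1}$ directly with $x^{n-n'}P_{n',k'}$, noting the $J_{n',k'}$-terms only help, where the paper first passes to $P_{n',K_{n'}}$, and your uniform bound $\theta_{n,K_n-1}<6^{1/4}$ for all $n\ge 18$ replaces the paper's finitely many direct numerical checks in (2)--(4).
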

\begin{proof}
Let us consider the first claim and set $d=\gcd(k,n-1)$. If $d=1$ then $\theta_{n,k}>\theta_{n,K_n-1}$ by Lemma~\ref{lm:compare:n}.
Otherwise let $n'=\frac{n-1}{d}+1<n$ and $k'=\frac{k}{d}$. Note that $\gcd(k',n'-1)=1$ and $\theta_{n,k}=\theta_{n',k'}$. By Lemma~\ref{lm:compare:n} $\theta_{n',k'} > \theta_{n',K_{n'}}$. It suffices to show $\theta_{n',K_{n'}} > \theta_{n,K_n-1}$.
We have, for $x>\sqrt{2}$,
\begin{eqnarray*}
P_{n,K_n-1}-x^{n-n'}P_{n',K_{n'}}&=&2x^{2+n-n'}-x^{n-n'}-2x^{\ceil{2n/3}}-2x^{\floor{n/3}+1}-2x^2+1\\
&>& 2x^{n-n'}-2x^{\ceil{2n/3}}+x^{n-n'}-2x^{\floor{n/3}+1}-2x^2+1
\end{eqnarray*}
Note that $n-1$ is odd and not a multiple of 3, hence $d\geq 5$. 
Since $n$ is large enough, we have
$$n-n'-\frac{2n}{3}=(n-1)(1-\frac{1}{d})-\frac{2n}{3}\geq \frac{4}{5}(n-1)-\frac{2n}{3}\geq 0$$
Hence, $\floor{n-n'-\frac{2n}{3}}=n-n'-\ceil{\frac{2n}{3}}\geq 0$. Similarly, 
$n-n'\geq 4+ \floor{n/3}+1$. 

Hence
$$P_{n,K_n-1}-x^{n-n'}P_{n',K_{n'}}> 2x^{\floor{n/3}+1}-2x^2+1>0. $$
The inequality $\theta_{n',K_{n'}} > \theta_{n,K_n-1}$ follows by Lemma~\ref{compare:roots}. \medskip

We now prove the second claim for $l=1,\dots,L_n$. Since $\theta_{n,K_n,l} > \theta_{n,K_n,L}$ by Lemma~\ref{lm:n:even:nK:2}, it suffices to show
$
\theta_{n,K_n,L} > \theta_{n,K_n-1}
$. We have:
$$P_{n,K_n-1}-P_{n,K_n,L}=x^{n-3}-2x^{\ceil{2n/3}}-2x^{\floor{n/3}+1}+x^4$$
If $n\geq 24$, we get $n-3\geq 4+ \ceil{2n/3}$, hence:
$$P_{n,K_n-1}-P_{n,K_n,L}>2x^{\ceil{2n/3}}-2x^{\floor{n/3}+1}+x^4>0$$
For $n\in \{18,20\}$, we check directly that $P_{n,K_n-1}-P_{n,K_n,L}>0$. The inequality $\theta_{n,K_n,L} > \theta_{n,K_n-1}$ follows by Lemma~\ref{compare:roots}. \medskip

Next we prove the third claim for $l=L_n+1$ (in this case, the computation is different).
By Lemma~\ref{lm:comparing:matrix} we have $\theta_{n,K_n,L_n+1}>3^{\frac1{2}}$.
For simplicity let $\theta=\theta_{n,K_n-1}$. By Lemma~\ref{lm:a}:
$$
\theta^{n-1}(\theta^{2} - 2) = 2\theta^{\ceil{2n/3}}+2\theta^{\floor{n/3}+1} + 2\theta^2 - 1 < 2\theta^{\ceil{2n/3}}+2\theta^{\floor{n/3}+1} + 2\theta^2
$$
Thus
$$
\theta-\sqrt{2} < \frac{2}{\theta+\sqrt{2}} \left( \frac1{\theta^{n-1-\ceil{2n/3}}} + \frac1{\theta^{n-2-\floor{n/3}}} + \frac1{\theta^{n-3}}  \right) < \frac{3}{\sqrt{2}^{n/3-1}}
$$
since $\theta>\sqrt{2}$ and $n>3$. Clearly $\frac{3}{\sqrt{2}^{n/3-1}} < 3^{\frac1{2}}-\sqrt{2}$ for $n>22$ hence $\theta<3^{\frac1{2}}<\theta_{n,K_n,L_n+1}$ that is the desired inequality.
For $n=20,22$ we directly check the inequality.

Finally we prove the last claim for $l=L_n+2$. By Lemma~\ref{lm:comparing:matrix} we have $\theta_{n,K_n,L_n+2}>6^{\frac1{4}}$. We can check that $\frac{3}{\sqrt{2}^{n/3-1}} < 6^{\frac1{4}}-\sqrt{2}$ for $n>28$ hence $\theta<6^{\frac1{4}}<\theta_{n,K_n,L_n+2}$ that is the desired inequality (for $n<30$ we directly check the inequality). The proposition is proved.
%$$
%\theta^2(\theta^{n-1} - 2\theta^{n-3} - 2\theta^{\ceil{2n/3}-2}- 2\theta^{\floor{n/3}-1} - 2) + 1=0.
%$$
%Hence
%$$
%\theta^{n-3}(\theta^2 - 2) - 2\theta^{\ceil{2n/3}-2}- 2\theta^{\floor{n/3}-1} - 2 < 0
%$$
%implying
%$$
%\theta -\sqrt{2} < \frac{2\theta^{\ceil{2n/3}-2}+ 2\theta^{\floor{n/3}-1}  + 2}{\theta^{n-3}(\theta+\sqrt{2})}
%< \frac{\theta^{2n/3-1}+ \theta^{n/3-1}  + 1}{\theta^{n-3}\sqrt{2}}.
%$$
%Since $\theta>\sqrt{2}$ we have
%$$
%\frac{\theta^{2n/3-1}+ \theta^{n/3-1}  + 1}{\theta^{n-3}\sqrt{2}} < \frac{1+ \sqrt{2}^{-n/3}
%+\sqrt{2}^{-2n/3+1}}{\sqrt{2}^{n/3-1}} = a_n
%$$
\end{proof}

%*********************************************************************************************************
\subsection{Case $n\equiv 1 \mod 4$}

\begin{Proposition}
\label{compare:roots:n:1:mod4}
Let $n\geq 5$ such that $n\equiv 1 \mod 4$. For any $1\leq k\leq K_n-1$ we define $d=\gcd(k,n-1)$
and $n'=\frac{n-1}{d}+1$, $k'=k/d$. Then  $\theta_{n',k'}>\theta_{n,K_n}$.
\end{Proposition}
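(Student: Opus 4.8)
The plan is to follow the template of Propositions~\ref{compare:roots:second}(1) and~\ref{compare:roots:n:3:mod4}(3): split on $d=\gcd(k,n-1)$, and in the case $d>1$ reduce the comparison to an inequality between characteristic polynomials, to which Lemma~\ref{compare:roots} then applies. Throughout I use that, by Lemma~\ref{lm:a}(2), $\gcd(n-1,K_n)=1$, the matrix $V_{n,K_n}$ is primitive, and $\theta_{n,K_n}$ is the largest real root of $P_{n,K_n}=X^{n+1}-2X^{n-1}-2X^{(n+1)/2}-2X^2+1$, with $\theta_{n,K_n}>\sqrt2$.

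\textbf{Case $d=1$.} Then $n'=n$ and $k'=k\le K_n-1<K_n$, with $\gcd(k,n-1)=\gcd(K_n,n-1)=1$, so Lemma~\ref{lm:compare:n} applies directly and gives $\theta_{n',k'}=\theta_{n,k}>\theta_{n,K_n}$.

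\textbf{Case $d>1$.} Here $n'=\tfrac{n-1}{d}+1<n$, $k'=k/d$, $\gcd(k',n'-1)=1$, $1\le k'\le K_{n'}$, and $\theta_{n,k}=\theta_{n',k'}$ by Lemma~\ref{lm:reduce:gamma:nk}; moreover $n-n'=(n-1)(1-\tfrac1d)\ge\tfrac{n-1}{2}$ because $d\ge2$. Using the formula of Lemma~\ref{lm:a} for $P_{n',k'}$ (legitimate since $\gcd(k',n'-1)=1$), a direct computation yields
$$
P_{n,K_n}(x)-x^{n-n'}P_{n',k'}(x)=2\sum_{j\in J_{n',k'}}x^{j+n-n'}+(2x^2-1)(x^{n-n'}-1)-2x^{(n+1)/2}.
$$
I would then prove the right-hand side is positive for every $x>\sqrt2$, distinguishing whether $J_{n',k'}$ is empty. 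If $J_{n',k'}\neq\emptyset$, let $j_1=\min J_{n',k'}\ge3$: then $j_1+n-n'\ge 3+\tfrac{n-1}{2}=\tfrac{n+5}{2}>\tfrac{n+1}{2}$, so $2x^{j_1+n-n'}>2x^{(n+1)/2}$ for $x>1$, while $(2x^2-1)(x^{n-n'}-1)>0$ for $x>\sqrt2$; this settles the subcase. If $J_{n',k'}=\emptyset$, then from Lemma~\ref{lm:a} one has $|J_{n',k'}|=n'-2k'-2$, hence $k'=(n'-2)/2$; since $k'\le K_{n'}=\lfloor n'/2\rfloor-1$ this forces $n'$ even and $k'=K_{n'}$, and then $4\mid(n-1)$ together with the oddness of $n'-1=(n-1)/d$ forces $4\mid d$. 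Consequently $n-n'\ge\tfrac34(n-1)$, and since $n'\ge4$ forces $n\ge13$ we obtain $n-n'\ge\tfrac{n+1}{2}+2$, so $x^{n-n'}\ge x^2x^{(n+1)/2}$. For $x>\sqrt2$ (where $x^2>2$ and $2x^2-1>3$) this gives
$$
(2x^2-1)(x^{n-n'}-1)-2x^{(n+1)/2}>3\bigl(2x^{(n+1)/2}-1\bigr)-2x^{(n+1)/2}=4x^{(n+1)/2}-3>0.
$$

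\textbf{Conclusion.} Since $P_{n,K_n}$ is unitary with root $\theta_{n,K_n}>\sqrt2$ and $x^{n-n'}P_{n',k'}$ is unitary of degree $n+1\ge1$, Lemma~\ref{compare:roots} produces a root of $x^{n-n'}P_{n',k'}$ exceeding $\theta_{n,K_n}$; its only roots larger than $\sqrt2$ are roots of $P_{n',k'}$, so the largest real root $\theta_{n',k'}$ of $P_{n',k'}$ satisfies $\theta_{n',k'}>\theta_{n,K_n}$, which is the assertion. The step I expect to require the most care is the combinatorial bookkeeping in the empty-$J$ subcase (establishing $n'$ even, $k'=K_{n'}$, $4\mid d$ and $n\ge13$) that makes the crude gap estimate $n-n'\ge\tfrac{n+1}{2}+2$ available; everything else reduces to elementary comparisons of monomials, and---in contrast with Proposition~\ref{compare:roots:second}---no separate verification for small $n$ is needed.
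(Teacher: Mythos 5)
Your proof is correct, and it takes a genuinely different route from the paper's. The paper handles $d>1$ by a three-way split on $n'\bmod 4$: for $n'\equiv 1\bmod 4$ it reduces to $\theta_{n',K_{n'}}$ and uses the monotonicity of the sequence $(\theta_{1+4m,K_{1+4m}})_m$ from Lemma~\ref{lm:decreasing}; for $n'$ even it compares $P_{n,K_n}$ with $x^{n-n'}P_{n',K_{n'}}$; for $n'\equiv 3\bmod 4$ it first invokes Proposition~\ref{compare:roots:n:3:mod4} to pass to $\theta_{n',K_{n'},L_{n'}}$ and then compares $P_{n,K_n}$ with $x^{n-n'}P_{n',K_{n'},L_{n'}}$. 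You instead compare $P_{n,K_n}$ with $x^{n-n'}P_{n',k'}$ directly, using the general formula of Lemma~\ref{lm:a}; your identity $P_{n,K_n}-x^{n-n'}P_{n',k'}=2\sum_{j\in J_{n',k'}}x^{j+n-n'}+(2x^2-1)\bigl(x^{n-n'}-1\bigr)-2x^{(n+1)/2}$ is correct, the generic case is killed by the single monomial $2x^{j_1+n-n'}$ with $j_1+n-n'\ge 3+\tfrac{n-1}{2}>\tfrac{n+1}{2}$, and your analysis of the exceptional case $J_{n',k'}=\emptyset$ (which forces $n'$ even, $k'=K_{n'}$, $4\mid d$, hence $n\ge 13$ and $n-n'\ge\tfrac{n+1}{2}+2$) is sound, including the boundary case $n=13$, $d=4$, $n'=4$. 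What each approach buys: the paper recycles already-established comparisons so that only the two extremal polynomials at level $n'$ ever get compared with $P_{n,K_n}$, at the price of the mod-$4$ case analysis and of dependence on Lemma~\ref{lm:decreasing} and Proposition~\ref{compare:roots:n:3:mod4}; yours is more uniform and self-contained (only Lemmas~\ref{lm:a}, \ref{lm:compare:n} and \ref{compare:roots} are used, and indeed no small-$n$ verification is needed), the cost being the bookkeeping on $J_{n',k'}$, which turns out to be light. One detail you assert without justification (the paper leaves it implicit as well): $1\le k'\le K_{n'}$, and hence $n'\ge 4$, which is what legitimizes applying Lemma~\ref{lm:a} to $(n',k')$ and the count $|J_{n',k'}|=n'-2k'-2$; it follows in one line from $k\le K_n-1$, since $2k\le n-5$ gives $2k'\le n'-1-4/d<n'-1$, hence $k'\le\lfloor (n'-2)/2\rfloor=K_{n'}$.
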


\begin{proof}[Proof of Proposition~\ref{compare:roots:n:1:mod4}]
Let $k\in \{1,\dots,K_n-1\}$. If $\gcd(k,n-1)=1$ then 
Lemma~\ref{lm:compare:n} implies that $\theta_{n,k} > \theta_{n,K_n}$ as desired.\\
If $\gcd(k,n-1)=d>1$ there are three cases depending the value of $n' \mod 4$.
\begin{enumerate}
\item If $n'\equiv1 \mod 4$ then the previous argument shows that $\theta_{n',k'} > \theta_{n',K_{n'}}$.
By Lemma~\ref{lm:decreasing}, the sequence $(\theta_{n,K_n})_n$ is decreasing for $n\equiv 1\mod 4$, so we have $\theta_{n',K_{n'}}> \theta_{n,K_n}$ as desired.

\item If $n'$ is even then Lemma~\ref{lm:compare:n} implies $\theta_{n',k'} > \theta_{n',K_{n'}}$. By Lemma~\ref{lm:a}, for any $x>\sqrt{2}$:
\begin{eqnarray*}
P_{n,K_n}-x^{n-n'}P_{n',K_{n'}}&=&2x^{2+n-n'}-x^{n-n'}-2x^{\frac{n+1}{2}}-2x^2+1\\
&>& 3x^{n-n'}-2x^{\frac{n+1}{2}}-2x^2 >0
\end{eqnarray*}
(the last inequality comes from $d\geq 4$ and $n\geq 5$). By Lemma~\ref{compare:roots} $\theta_{n',K_{n'}} > \theta_{n,K_n}$.

\item If $n'\equiv3 \mod 4$ then Proposition~\ref{compare:roots:n:3:mod4} implies
 $\theta_{n',k'} > \theta_{n',K_{n'},L_{n'}}$. For $x>\sqrt{2}$:
 \begin{eqnarray*}
 P_{n,K_n}-x^{n-n'}P_{n',K_{n'},L_{n'}}&=&4x^{n-n'+\frac{n'+3}{2}}-4x^{n-n'+\frac{n'-1}{2}} -2x^{2+n-n'}+x^{n-n'}-2x^{\frac{n+1}{2}}-2x^2+1\\
 &>& 4x^{n-n'+\frac{n'-1}{2}}-2x^{2+n-n'}-2x^{\frac{n+1}{2}}+x^{n-n'}-2x^2
\end{eqnarray*}
Assumption on $n,n'$ implies that $n\neq 5,9$, hence $n\geq 13$ and $n'\geq 7$. This implies that 
 $n-n'+\frac{n'-1}{2}=(n-1)(1-\frac{1}{2d})\geq \frac{n+1}{2}$, $n-n'+\frac{n'-1}{2}\geq 2+n-n'$ and $n-n'\geq 4$.
Thus $P_{n,K_n}-x^{n-n'}P_{n',K_{n'},L_{n'}}>0$ and Lemma~\ref{compare:roots} implies $\theta_{n,K_n}<\theta_{n',K_{n'},L_{n'}}$. 
 \end{enumerate}
Proof of Proposition~\ref{compare:roots:n:1:mod4} is complete.
\end{proof}

%*******************************************************
\section{A naive attempt to generalize the Rauzy--Veech construction}
\label{first:attempt}
The classical construction of pseudo-Anosov homeomorphism by Rauzy induction necessarily produces maps that preserves a singularity, and a horizontal  separatrix. This clearly comes from the fact that only right Rauzy induction is used. So, it is natural to expect to produce pseudo-Anosov homeomorphism that do not fix a separatrix by combining right and left induction. %We obtain paths in the \emph{extended} Rauzy diagram.

For instance, we consider a path $\gamma$ in the labeled (extended) Rauzy diagram such that.
\begin{itemize}
\item the image of $\gamma$ in the reduced extended Rauzy diagram is closed.
\item $\gamma$ is the concatenation of a path $\gamma_1$ that consists only of right Rauzy moves, and a path $\gamma_2$ that consists only of left Rauzy moves.
\end{itemize}

As above,  we associate to such path a matrix $V$ by multiplying the corresponding product of the transition matrices by a suitable permutation matrix. Assume now that the matrix $V$ is a primitive. Let   $\theta > 1$ be its Perron-Frobenius eigenvalue.  We choose a positive eigenvector $\lambda$   for   $\theta$.  As before, $V$ is symplectic, thus let  us choose an eigenvector $\tau$
for the  eigenvalue $\theta^{-1}$. It turns out that $\tau$ is not necessarily a suspension datum, but it is a weak suspension datum (up to replacing it by its opposite). Indeed, the set of weak suspension data is a open cone $W$, which is by construction invariant by $V^{-1}$, and we conclude as previously (the proof is the same as in Proposition~\ref{prop:construction:path}).

%\commentcorentin{Pourquoi a-t-on besoin de la convexit\'e de $W$? Connexit\'e?}

\begin{Proposition}
\label{prop:rauzy:leftright:fixed}
A pseudo-Anosov homeomorphism affine on a translation surface, and constructed as above fixes a vertical separatrix. In particular it is obtained by the usual Rauzy--Veech construction.
\end{Proposition}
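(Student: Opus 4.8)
The plan is to argue exactly as in the proof of Proposition~\ref{prop:centralpermutation}: I will exhibit a base segment on the surface that is, up to following vertical leaves, mapped into itself by $\phi$, and then extract an invariant vertical leaf carrying a fixed point.

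Write $\gamma=\gamma_1\gamma_2$ with $\gamma_1$ made only of right Rauzy moves and $\gamma_2$ only of left Rauzy moves, let $(\pi,\lambda,\tau)$ be the weak suspension datum built from the Perron data of $V(\gamma)$ as in the paragraph preceding the statement, fix a height $h$ and let $I=I_h\subset X:=X(\pi,\lambda,\tau)$. Since the image of $\gamma$ in the reduced Rauzy diagram is closed and $V(\gamma)$ is a product of genuine right and left Rauzy--Veech transition matrices followed by a relabelling permutation, the associated affine map $\phi$ has derivative $\mathrm{diag}(\theta^{-1},\theta)$, $\theta=\rho(V(\gamma))>1$; in particular $\phi$ preserves the orientation of both foliations. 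By the quoted theorem of Veech it then suffices to prove that $\phi$ fixes a vertical separatrix, for then the zero $P$ it emanates from is fixed and, $\phi$ acting on the separatrices at $P$ by an order and orientation preserving permutation which fixes one of them, it fixes all of them, in particular a horizontal one; hence $\phi$ is obtained by the usual Rauzy--Veech construction.

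To locate a fixed vertical separatrix, I would run $\gamma$ on $(\pi,\lambda,\tau,I)$: the block $\gamma_1$ replaces $I$ by a subinterval with the same left endpoint (right moves never touch the left endpoint), and the block $\gamma_2$ then replaces it by a subinterval with the same right endpoint (left moves never touch the right endpoint); keeping track, as in Section~\ref{sec:rauzy}, of the identifications of the surface with itself under $\mathcal R$ and $\mathcal R_L$ and of the identity $V(\gamma)^{-1}(\lambda,\tau)=(\theta^{-1}\lambda,\theta\tau)$, this realizes $\phi(I)$ as a subinterval of $I$ sitting at a (possibly zero) vertical offset, so that $\phi(I)$ and a subinterval $I''\subset I$ bound an immersed Euclidean rectangle. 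Following vertical leaves across that rectangle gives an isometry $f\colon\phi(I)\to I''$, whence $f\circ\phi\colon I\to I$ is an affine contraction of ratio $\pm\theta^{-1}$ and has a fixed point $x\in I$. Then $\phi(x)$ lies on the vertical leaf $\ell$ through $x$, so $\phi(\ell)=\ell$ ($X$ has no vertical saddle connection, carrying a pseudo-Anosov), and since $\phi|_\ell$ has multiplier $\theta\neq1$ it fixes a point $P$ on $\ell$; as at the end of the proof of Proposition~\ref{prop:centralpermutation}, $\ell$ then provides a $\phi$-fixed vertical separatrix, which finishes the proof.

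The step I expect to be genuinely delicate is the penultimate one: checking that after a block of right moves followed by a block of left moves the renormalized base interval really lies directly above (or below) a subinterval of the original, i.e.\ that the immersed rectangle exists, and identifying toward which endpoint of $I$ the contraction $f\circ\phi$ is attracted, so that the invariant leaf $\ell$ can be taken to issue from a zero of $\omega$ (so as not to be a generic dense leaf). All the rest is formal or copied verbatim from the proofs of Proposition~\ref{prop:construction:path} and Proposition~\ref{prop:centralpermutation}.
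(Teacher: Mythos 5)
Your proposal is correct and follows essentially the same route as the paper's proof: apply the prescribed right-then-left induction to reach $(\pi,\theta^{-1}\lambda,\theta\tau)$ with interval $I'_{h'}\subset I_h$, observe that $\phi(I_h)=I'_{\theta h}$ is the interval of that same weak suspension datum at height $\theta h$, follow vertical leaves across the resulting immersed rectangle to obtain the contraction $f\circ\phi$ of $I_h$, and extract an invariant vertical leaf with a fixed point, hence fixed vertical and horizontal separatrices and the usual Rauzy--Veech construction. The ``delicate step'' you flag is exactly what the two-heights property of weak suspension data from Section~\ref{sec:suspension} furnishes (two heights of one datum bound an immersed Euclidean rectangle), and the paper does not need the invariant leaf to emanate from a zero: a regular fixed point is also acceptable (it is handled as a marked point, consistent with Proposition~\ref{prop:fix:sing}), exactly as at the end of the proof of Proposition~\ref{prop:centralpermutation}.
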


\begin{proof}%[Proof of Proposition~\ref{prop:rauzy:leftright:fixed}]
Let $(\pi,\lambda,\tau)$ be the weak suspension datum defined as above and $h$ be a height. We will denote the associated surface by $X=X(\pi,\lambda,\tau)$ and by $I=I_h$ the corresponding horizontal interval.% The set of possible heights for $(\pi,\lambda,\tau)$ is an open interval $]h_{min},h_{max}$.

After the prescribed the prescribed sequence of right and left Rauzy induction, we obtain the suspension datum $(\pi,\lambda',\tau')=(\pi,\frac{1}{\theta}\lambda,\theta \tau)$ defining the same surface $X$, with corresponding interval $I'_{h'}=I'\subset I$ (recall the Rauzy--Veech induction corresponds to cutting the interval on the right, or on the left). Also, $\theta h$ is an obvious height for $(\pi,\frac{1}{\theta}\lambda,\theta \tau)$, and the corresponding interval $I'_{\theta h}$ is the image by $\pA$ of the interval $I_h$. 

Hence there is an isometry $f$ from $I'_{\theta h}$ to  $I'_{h'}$ obtained by following a vertical leaf (see Section~\ref{sec:suspension}). The map $f\circ \pA$ is therefore a contracting map from $I_{h}$ to itself (its derivative is $\theta^{-1}$), hence has a fixed point. It means that there is an element $x$ in $I_{h}$ whose image by $\pA$ is in the vertical leaf $l$ passing through $x$. Thus, this vertical leaf $l$ is preserved by $\phi$. Since $\phi$, restricted to $l$ as derivative $\theta\neq 1$, there is a fixed point of $\phi$ on $l$. This fixed point is either a conical singularity or a regular point. In any case, $\phi$ fixes a vertical separatrix. Hence $\phi$ fixes also a horizontal separatrix. It is therefore obtained by the usual Rauzy--Veech construction.
\end{proof}

%%****************************************************************
%\section{Limiting surfaces?}
%
%when $n\to \infty$ describe the flat surface with the systole.
%
%%****************************************************************
%\section{Trace fields}
%
%compute the trace field. Non Veech ?

%****************************************************************
%****************************************************************
%****************************************************************
%****************************************************************
%****************************************************************

\end{document}